\newtheorem{counter}{counter}[section]
\newtheorem{theorem}[counter]{Theorem}
\newtheorem{algorithm}[counter]{Algorithm}
\newtheorem{corollary}[counter]{Corollary}
\newtheorem{lemma}[counter]{Lemma}
\newtheorem{proposition}[counter]{Proposition}
\theoremstyle{definition}
\newtheorem{definition}[counter]{Definition}
\newtheorem{notation}[counter]{Notation}
\newtheorem{remark}[counter]{Remark}
\newtheorem{example}[counter]{Example}
\title{The Cohomology of Spherical vector bundles on K3 surfaces}
\author{Yeqin Liu}
\begin{document}
\maketitle
\begin{abstract}
We find an algorithm to compute the cohomology groups of spherical vector bundles on complex projective K3 surfaces, in terms of their Mukai vectors. In many good cases, we give significant simplifications of the algorithm. As an application, when the Picard rank is one, we show a numerical condition that is equivalent to weak Brill-Noether for a spherical vector bundle.
\end{abstract}

\tableofcontents

\section{Introduction}\label{section1}

The cohomology groups of a vector bundle are important invariants. Classical Brill-Noether theory, which studies the cohomology of line bundles on curves, has been an important topic in algebraic geometry for a long time \cite{ACGH85}. Much less is known for the cohomology of vector bundles on curves. On higher dimensional varieties, computing the cohomology of a vector bundle is in general a hard problem. There has been some success concerning weak Brill-Noether for some surfaces such as minimal rational surfaces and certain del Pezzo surfaces \cite{CH18, CH20, GH98, LZ19}. On K3 surfaces, \cite{Ley12, Ley06} studied the Brill-Noether theory, and recently there has also been progress on the cohomology for general sheaves \cite{CNY21}.
 
For a moduli space of sheaves, when weak Brill-Noether fails, less is known about how to compute the exact dimensions of the cohomology groups of a general vector bundle. On a K3 surface, we consider this question for spherical vector bundles. We find an algorithm to compute the cohomology of any stable spherical vector bundle in terms of its Mukai vector. In particular, the dimension of any linear system can be computed in principle. As an interesting corollary, we see that on Picard rank one K3 surfaces with a fixed degree, the cohomology groups of stable spherical vector bundles depend only on their Mukai vectors, but not on the underlying K3 surfaces.

When the Picard rank is one, we give a numerical condition that is equivalent to weak Brill-Noether for a spherical vector bundle. This is a generalization of the results in \cite{CNY21}. We also show a bound for the cohomology of a spherical vector bundle when the Picard rank is one. When there is no restriction on the Picard group, we prove an asymptotic estimate on the cohomology, in the sense that the Mukai vector is obtained by many spherical reflections from two other spherical Mukai vectors.

Our algorithm in fact computes the cohomology of any rigid vector bundle with a known Harder-Narasimhan filtration. It is also very likely that our algorithm can be generalized to compute the generic cohomology of moduli of sheaves on K3 surfaces for any Mukai vector.

\subsection{Main Theorem}

Let $(X,H)$ be any polarized K3 surface over $\mathbb{C}$. Let $v=(r,D,a)$ be a Mukai vector with $v^{2}=-2$, $r>0$ and $D\cdot H>0$. Let $E\in M_{H}(v)$ be the unique $H$-Gieseker stable vector bundle. Our main theorem is an algorithm that computes the cohomology groups of $E$.

\begin{theorem}\label{main}
Algorithm \ref{global reduction} computes the cohomology groups of $E$.
\end{theorem}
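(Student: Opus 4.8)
The plan is to show that each step of Algorithm~\ref{global reduction} preserves the cohomological data being computed, and that the procedure terminates. First, I would cut the problem down to computing $h^0$ of an arbitrary stable spherical bundle. Since $X$ is a K3 surface, Serre duality gives $H^2(X,E)\cong H^0(X,E^\vee)^\vee$, where $E^\vee$ is again a stable spherical bundle, with Mukai vector $v^\vee=(r,-D,a)$, and $\chi(E)=r+a$ is read off from $v$, so $h^1(E)=h^0(E)+h^0(E^\vee)-\chi(E)$. (As a first instance of the vanishing arguments used throughout: $\mathcal O_X$ is stable of slope $0$ while $\mu_H(E^\vee)<0$, so $\mathrm{Hom}(\mathcal O_X,E^\vee)=0$ and already $h^2(E)=0$ for the input $v$; the recursion, however, generates bundles of both signs of slope, so the general statement is genuinely needed.) Accordingly I would phrase the invariant the algorithm maintains as a pair $(F,G)$ of spherical objects of $D^b(X)$ — at every stage a shift of an $H$-Gieseker stable bundle or of a base case — together with the graded vector space $\mathrm{Hom}^\bullet(F,G)$, initialized at $(\mathcal O_X,E)$.

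Second, the reduction step. The algorithm applies to $(F,G)$ a composition $\Phi$ of standard autoequivalences — twists by line bundles and spherical twists $T^{\pm1}$ around the spherical objects in play — chosen from the numerics of $v(G)$ so as to move $\Phi(G)$ toward a base case; since $\Phi$ is an equivalence, $\mathrm{Hom}^\bullet(F,G)=\mathrm{Hom}^\bullet(\Phi F,\Phi G)$, so nothing is lost. Concretely, the basic move is the mutation triangle $\mathrm{RHom}(\mathcal O_X,G)\otimes\mathcal O_X\to G\to T_{\mathcal O_X}(G)$, which sends $v(G)=(r,D,a)$ to $(-a,D,-r)$ — trading the rank $r$ for $|a|$ — while line-bundle twists move $D$; alternating the two in the manner of the Euclidean algorithm drives down a complexity $c(v)$ built from $r$ and $D\cdot H$. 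The two things that need proof are: (i) using the structural results on how these autoequivalences act on the relevant heart and on $H$-slopes, $\Phi(G)$ has a Harder--Narasimhan (hence Jordan--H\"older) filtration whose graded pieces are shifts of stable spherical bundles with strictly smaller $c$; and (ii) feeding the triangles of this filtration into $\mathrm{Hom}^\bullet(\Phi F,-)$, the long exact sequences split into short ones because between the relevant pieces $\mathrm{Hom}^{\le 0}$ and $\mathrm{Hom}^{\ge 2}$ vanish — here the $H$-slope ordering of the Harder--Narasimhan factors and the vanishing of $\mathrm{Hom}^i(\mathcal O_X,-)$ for $i\notin\{0,1,2\}$ do the work. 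Thus the numbers $\dim\mathrm{Hom}^i(\Phi F,\Phi G)$ are \emph{determined} by — not merely bounded by — those of the pieces, and one recurses until each piece is a base case: $G$ equal to $\mathcal O_X$ up to shift, or a line bundle, in which case $\mathrm{Hom}^\bullet(F,G)$ is the cohomology of a line bundle on $X$, which is classical.

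The main obstacle is exactly claim (i) together with the slope inequalities in (ii): one must show that after every application of $\Phi$ the complex $\Phi(G)$ is not only spherical but has all its Harder--Narasimhan factors again (shifts of) \emph{stable spherical bundles}, and simultaneously that the slopes line up so the connecting maps vanish. In other words, the real content is to rule out cancellation — to show the relevant spectral sequences degenerate and the cohomology dimensions come out exactly — rather than merely to estimate; I expect this to require a careful analysis of the chamber structure for $H$-(semi)stability under the autoequivalences the algorithm uses, and it is where I would concentrate the effort. Granting this, termination is routine: one checks that the nonnegative integer $c(v)$ — equivalently, the combinatorial distance of $v$ from a fundamental domain, in the hyperbolic lattice spanned by the Mukai vectors in play, for the group generated by the autoequivalences used — strictly drops at each reduction step and is minimized exactly on the base cases, so only finitely many steps occur, proving Theorem~\ref{main}.
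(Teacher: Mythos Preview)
Your proposal does not engage with the actual content of Algorithm~\ref{global reduction}. That algorithm does not apply compositions of autoequivalences (line bundle twists and spherical twists) to move $v$ toward a base case; instead it fixes a path $\mathbf{b}$ in the Bridgeland stability manifold from the Gieseker chamber down past the Brill--Noether wall, and tracks how the Harder--Narasimhan filtration of the \emph{same} object $E$ changes as the stability condition moves along $\mathbf{b}$. The output is read off from Corollary~\ref{output}: once $\sigma$ is below the Brill--Noether walls of all factors, the last HN factor is $\mathcal O_X[1]^{\oplus h}$ with $h=\mathrm h^1(E)$. The substance of the paper is the \emph{local reduction} (Algorithm~\ref{local reduction}), which computes the new HN filtration of a rigid object after crossing a single wall, using the notion of an exhaustive filtration (Definition~\ref{exhaustive}) and an explicit, computable description of the loci of admissible and receptive extension classes inside certain Kronecker-module representation spaces (Propositions~\ref{admknown}, \ref{first factor}, \ref{REC1}, \ref{REC'}). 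None of this appears in your sketch; what you have written is a proposal for a \emph{different} algorithm, so even if it succeeded it would not prove the stated theorem.

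Beyond the mismatch, there is a genuine gap in the mechanism you do propose. You assert that the long exact sequences ``split into short ones because between the relevant pieces $\mathrm{Hom}^{\le 0}$ and $\mathrm{Hom}^{\ge 2}$ vanish,'' so that the dimensions are determined by additivity. This is precisely what fails and what the paper's machinery is built to handle: the connecting homomorphisms $\mathrm H^0(T)\otimes\mathrm{Hom}(E,T)^*\to\mathrm H^1(S)\otimes\mathrm{Hom}(S,E)$ are typically nonzero, and computing their ranks is the entire difficulty. In the favourable height~$\le 2$ case the paper proves they have \emph{maximal} rank (Theorem~\ref{global simplification}), which is already a nontrivial result requiring Proposition~\ref{injectivity} and the linear-algebra Lemmas~\ref{general}--\ref{full}; in general the rank is governed by the geometry of the admissible loci and must be computed (Proposition~\ref{first factor}). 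Example~\ref{badev1ex} exhibits a case where the relevant map $ev_1$ has rank strictly smaller than the na\"ive bound. You acknowledge at the end that ``ruling out cancellation'' is the main obstacle, but you offer no argument for it; the paper's approach does not rule it out but rather \emph{computes} the cancellation.
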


In particular, note that since a line bundle is stable and spherical, Theorem \ref{main} can be applied.

\begin{corollary}
Algorithm \ref{global reduction} computes the dimension of any linear system on $X$. 
\end{corollary}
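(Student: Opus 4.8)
The plan is to observe that a complete linear system on $X$ is, by definition, $|L| = \mathbb{P}(H^0(X,L))$ for a line bundle $L$, so that $\dim |L| = h^0(X,L) - 1$; it therefore suffices to compute $h^0(X,L)$ for an arbitrary line bundle $L$, and for this I would feed $L$ into Algorithm \ref{global reduction} through Theorem \ref{main}.

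First I would verify the hypotheses of Theorem \ref{main}. Writing $D = c_1(L)$, the Mukai vector of $L$ is $v(L) = (1, D, \tfrac12 D^2 + 1)$, and a one-line computation with the Mukai pairing gives $v(L)^2 = D^2 - 2\bigl(\tfrac12 D^2 + 1\bigr) = -2$, so $L$ is spherical. Being torsion-free of rank one, $L$ is $H$-Gieseker stable, hence it is the unique stable vector bundle in $M_H(v(L))$, and $r = 1 > 0$. Thus the only remaining point is the sign hypothesis $D\cdot H > 0$.

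To handle that, I would split into cases on $D\cdot H$. If $D\cdot H > 0$, Theorem \ref{main} applies verbatim and Algorithm \ref{global reduction} outputs $h^i(X,L)$ for all $i$, in particular $h^0$. If $D \neq 0$ and $D\cdot H \le 0$, then $D$ cannot be effective, since every effective divisor pairs strictly positively with the ample class $H$; hence $h^0(X,L) = 0$ and $|L| = \emptyset$. Finally, if $D = 0$ then $L \cong \mathcal{O}_X$ and $h^0(X,L) = 1$. In every case $h^0(X,L)$, and therefore $\dim |L|$, is produced algorithmically.

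The only substantive case is $D\cdot H > 0$, which is exactly Theorem \ref{main}; the boundary cases are immediate consequences of the ampleness of $H$ and present no difficulty, so I expect no real obstacle here. (If one reads ``linear system'' to include incomplete systems $\mathbb{P}(V) \subseteq \mathbb{P}(H^0(X,L))$, there is nothing to compute, as $\dim \mathbb{P}(V)$ is part of the given data; the corollary is precisely the assertion for complete linear systems, equivalently that $h^0$ of an arbitrary line bundle on $X$ is algorithmically computable.)
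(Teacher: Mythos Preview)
Your proposal is correct and follows the paper's approach: the paper's entire argument is the one-line observation ``since a line bundle is stable and spherical, Theorem \ref{main} can be applied,'' and you have simply spelled this out with the verification that $v(L)^2=-2$ and the treatment of the degenerate sign cases. Your handling of $D\cdot H\le 0$ via non-effectivity is a minor variant of the paper's general reduction at the start of Section~\ref{section3} (which dualizes when $D\cdot H<0$ and perturbs $H$ when $D\cdot H=0$), but for line bundles your shortcut is cleaner and entirely valid.
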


In Algorithm \ref{global reduction}, every step is totally determined by the Mukai vector $v$. Since the lattices for all Picard rank one K3 surfaces of a fixed degree are isomorphic, as a corollary we see in the Picard rank one case the cohomology groups are independent of the underlying K3 surfaces.

\begin{corollary}
  Let $v=(r,dH,a)\in \mathbb{Z}\oplus \mathbb{Z}H \oplus \mathbb{Z}$ be a Mukai vector with $v^{2}=-2$, $r>0$ and $d>0$. Then for any two K3 surfaces $X, X'$ with $\mbox{Pic}(X)\cong \mbox{Pic}(X') \cong \mathbb{Z}H$ and $E\in M_{X,H}(v)$, $E'\in M_{X', H'}(v)$, we have
  $$ \mbox{h}^{i}(X, E)=\mbox{h}^{i}(X', E'), \forall i\in \mathbb{Z}.$$
\end{corollary}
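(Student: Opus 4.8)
The plan is to show that Algorithm~\ref{global reduction} inspects nothing about $X$ beyond its algebraic Mukai lattice, its distinguished vector, and its ample class, and then to apply Theorem~\ref{main} on both sides. First I would record the lattice picture: for a K3 surface $Y$ with $\mbox{Pic}(Y)=\mathbb{Z}H$, the algebraic Mukai lattice $\mathbb{Z}\oplus\mbox{Pic}(Y)\oplus\mathbb{Z}$ with its Mukai pairing, the vector $v(\mathcal{O}_Y)=(1,0,1)$, and the ample class $H$ depend only on the integer $H^{2}$. Hence the hypothesis $\mbox{Pic}(X)\cong\mbox{Pic}(X')\cong\mathbb{Z}H$ --- equivalently, both surfaces have Picard rank one and the same degree --- yields an isometry of algebraic Mukai lattices carrying $(1,0,1)\mapsto(1,0,1)$, $H\mapsto H'$, and $v\mapsto v$; moreover, by the existence and uniqueness of stable spherical bundles on Picard rank one K3 surfaces, $M_{X,H}(v)$ and $M_{X',H'}(v)$ are each a single reduced point, so $E$ and $E'$ are well defined.

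Second, and this is the heart of the argument, I would run through Algorithm~\ref{global reduction} step by step and check that every operation it performs is a function of the data $(\mathbb{Z}\oplus\mbox{Pic}(Y)\oplus\mathbb{Z},\ \langle\,,\,\rangle,\ (1,0,1),\ H,\ v)$ alone, together with facts that hold uniformly on projective K3 surfaces: the canonical bundle is trivial, $\chi(\mathcal{O}_Y)=2$, Serre duality holds, moduli of stable spherical bundles are reduced points, spherical reflections and walls admit explicit numerical descriptions, and every vanishing the algorithm invokes follows from numerical positivity of a divisor class in $\mathbb{Z}H$. Concretely, Euler characteristics of $E$ and of the relevant $\mathcal{H}om$-complexes are read off from $v$ via the Mukai pairing; the Harder--Narasimhan and other filtrations driving the recursion are governed by slope inequalities, hence by the lattice; and the spherical twists and wall-crossings act on $\mathbb{Z}\oplus\mbox{Pic}(Y)\oplus\mathbb{Z}$ by formulas depending only on the pairing. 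Nothing refers to the transcendental lattice or to the complex structure of $Y$. Therefore, running the algorithm with input $v$ over $X$ and over $X'$ produces, step for step, the same sequence of Mukai vectors and the same output integers.

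Finally I would conclude: by Theorem~\ref{main}, the output of Algorithm~\ref{global reduction} on input $v$ over $X$ is $(\mbox{h}^{0}(X,E),\mbox{h}^{1}(X,E),\mbox{h}^{2}(X,E))$ and over $X'$ is $(\mbox{h}^{0}(X',E'),\mbox{h}^{1}(X',E'),\mbox{h}^{2}(X',E'))$; by the previous paragraph these tuples agree, and for $i\notin\{0,1,2\}$ both sides vanish since $X,X'$ are surfaces. The one place that genuinely requires care --- and the main obstacle --- is the middle step: one must verify that the algorithm never needs to decide something about $E$ that is not already forced by the Mukai lattice, for instance that a numerical subobject is realized by a saturated subsheaf, that a spherical class of positive rank is represented by an honest stable bundle, that a wall is actual rather than merely numerical, or that a moduli space has the expected dimension. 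In the Picard rank one case all of these are numerical, so the verification goes through; but because this is exactly where the internal structure of Algorithm~\ref{global reduction} is used, I would carry it out by isolating each step and listing its inputs explicitly, or equivalently by checking that the algorithm is equivariant under isometries of the algebraic Mukai lattice fixing $(1,0,1)$ and the ample class.
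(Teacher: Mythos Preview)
Your proposal is correct and follows essentially the same approach as the paper: the paper justifies this corollary (in the paragraph immediately preceding it) by observing that every step of Algorithm~\ref{global reduction} is determined by the Mukai vector $v$ and the algebraic Mukai lattice, and that Picard rank one K3 surfaces of a fixed degree have isomorphic such lattices. Your write-up is more explicit about which auxiliary facts (existence/uniqueness of stable spherical objects, numerical detection of actual walls via Proposition~\ref{criterion}, etc.) guarantee that the algorithm's steps are lattice-theoretic, but the underlying argument is the same.
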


In practical terms, we also make simplifications in some good cases. We will define the \emph{height} (Definition \ref{ht}) of an object that measures the complexity for carrying out this algorithm. An important simplification occurs for an object $E$ with height $\leq 2$. Such an object is an extension of $S\otimes \mbox{Hom}(S,E)$ and $T\otimes \mbox{Hom}(E,T)^{*}$ for some factors $S,T$, where the connecting homomorphisms always have maximal rank.

\begin{theorem}[Theorem \ref{global simplification}, Global simplification]\label{global simplification intro} 
Let $E\in M_{H}(v)$ be a height 2 spherical vector bundle and $S, T$ be its factors. Then in the long exact sequence induced by $ \mbox{H}^{0}(-)$ on
  $$0 \longrightarrow S\otimes \mbox{Hom}(S,E) \longrightarrow E \longrightarrow T\otimes \mbox{Hom}(E,T)^{*} \longrightarrow 0, $$
  the connecting homomorphism $ \mbox{H}^{0}(T)\otimes \mbox{Hom}(E,T)^{*} \longrightarrow  \mbox{H}^{1}(S)\otimes \mbox{Hom}(S,E)$ has maximal rank.
\end{theorem}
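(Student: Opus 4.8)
The plan is to reduce the maximal-rank statement to the sphericality of $E$ together with the structure of the factors $S$ and $T$. Since $E$ has height $2$, it fits into the exact sequence
$$0 \longrightarrow S\otimes \mathrm{Hom}(S,E) \longrightarrow E \longrightarrow T\otimes \mathrm{Hom}(E,T)^{*} \longrightarrow 0,$$
and $S$, $T$ being factors means (as in the height-$\le 2$ discussion preceding the theorem) that they are stable spherical bundles with controlled slopes, in particular with $\mathrm{Hom}(S,T)$, $\mathrm{Ext}^2(S,T)$ vanishing and $\mathrm{Ext}^1$ in one direction only. So first I would write down exactly what we know about $\mathrm{H}^\bullet(S)$ and $\mathrm{H}^\bullet(T)$: for a stable spherical bundle at most one cohomology group is nonzero in the relevant range, so $\mathrm{H}^0(S)$ or $\mathrm{H}^1(S)$ vanishes, and likewise for $T$. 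The only case where a connecting map $\mathrm{H}^0(T)\otimes\mathrm{Hom}(E,T)^*\to\mathrm{H}^1(S)\otimes\mathrm{Hom}(S,E)$ is not trivially zero on one side is when $\mathrm{H}^0(T)\neq 0$ and $\mathrm{H}^1(S)\neq 0$; that is the case to analyze.

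**The key computation.**

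In that case I would apply $\mathrm{Hom}(E,-)$, or rather $\mathrm{Hom}(-,E)$ and $\mathrm{Hom}(E,-)$ appropriately, to the defining sequence and use $v(E)^2=-2$, i.e. $\mathrm{ext}^1(E,E)=0$ while $\hom(E,E)=\mathrm{ext}^2(E,E)=1$. The sequence exhibits $E$ as the (unique, up to the relevant identifications) universal extension, so the connecting maps in $\mathrm{Hom}(S,-)$ applied to the sequence, namely $\mathrm{Hom}(S,T)\otimes\mathrm{Hom}(E,T)^*\to \mathrm{Ext}^1(S,S)\otimes\mathrm{Hom}(S,E)$, is an isomorphism (this is essentially what "$E$ is the universal extension" means, and is where height $2$ is used). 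Now I want to transport this to the cohomology statement. The trick is to realize $\mathrm{H}^0(-)=\mathrm{Hom}(\mathcal{O}_X,-)$ and $\mathrm{H}^1(-)=\mathrm{Ext}^1(\mathcal{O}_X,-)$, and to compare the connecting homomorphism for $\mathrm{Hom}(\mathcal O_X,-)$ with the one for $\mathrm{Hom}(S,-)$ using a suitable morphism relating $\mathcal O_X$ and a sum of copies of $S$ — concretely, the evaluation-type map or a resolution of $\mathcal O_X$ built from $S$ and $T$ (the "spherical twist" resolution). Pushing the exact sequence of the $\mathcal O_X$-cohomology into the exact sequence of $S$-Homs and $T$-Homs via functoriality of connecting maps, the maximal rank of the $S$–$T$ connecting map forces maximal rank of the $\mathrm{H}^0$–$\mathrm{H}^1$ connecting map.

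**Alternative and the main obstacle.**

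An alternative, possibly cleaner, route: let $K$ denote the image of the connecting map and chase the long exact sequence directly. Surjectivity (resp. injectivity) of the connecting homomorphism is equivalent to $\mathrm{H}^1$ of one of the two outer terms mapping isomorphically onto (resp. into) $\mathrm{H}^1(E)$, and maximal rank means one of these holds; so it suffices to show that $\mathrm{H}^1(E)$ cannot be "too big", i.e. $h^1(E)=\min\{h^1(S\otimes\mathrm{Hom}(S,E)),\, h^1(T\otimes\mathrm{Hom}(E,T)^*) + (\text{contribution from } h^1(T)\otimes\ldots)\}$ — and this should follow from the earlier part of the algorithm, which computes $h^1(E)$ exactly from $v$, combined with $\chi(E)$ being determined by $v^2=-2$. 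The hard part will be the bookkeeping: making precise the claim that $S$ and $T$ being "factors" of a height-$2$ object gives exactly the vanishings needed, and verifying that the computed value of $h^1(E)$ from Algorithm \ref{global reduction} agrees with the minimum predicted by maximal rank rather than something larger — in other words, ruling out that the connecting map degenerates. I expect this rigidity — that $E$ being spherical pins down the extension class to be as generic as possible — is the crux, and it is exactly where $v^2 = -2$ (so $\mathrm{Ext}^1(E,E)=0$) must be invoked.
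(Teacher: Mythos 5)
Your opening observation is based on a false premise: it is \emph{not} true that a stable spherical bundle has at most one nonzero cohomology group in the relevant range. The factors $S,T$ of a height-2 object have height $\le 1$, and height-1 objects routinely have both $\mbox{h}^0$ and $\mbox{h}^1$ nonzero (the Fibonacci bundles of Example~\ref{height one} are explicit instances; in Example~\ref{height two}, $S_0$ has $\mbox{h}^0=8$, $\mbox{h}^1=1$). So the dichotomy you set up, reducing to the case $\mbox{H}^0(T)\neq 0$ \emph{and} $\mbox{H}^1(S)\neq 0$, is not a reduction at all --- that is simply the generic situation, and the maximal-rank question is genuinely nontrivial there. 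The only cheap case the paper dispenses with is $S$ having height $0$, where $\mbox{H}^1(S)=0$ forces the connecting map to vanish trivially.

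Your proposed mechanism --- relating $\mathcal{O}_X$-cohomology to $\mbox{Hom}(S,-)$ via a ``spherical twist resolution of $\mathcal{O}_X$ built from $S$ and $T$'' --- is not what the paper does, and I don't see how to make it work: $\mathcal{O}_X$ does not in general admit a resolution by $S$ and $T$, and functoriality of connecting homomorphisms alone won't transport maximal rank from the $(S,T)$-pair to the $\mathcal{O}_X$-pair, since those live in different rank-2 lattices with unrelated extension spaces. The actual mechanism is a Bridgeland wall-crossing step. One compares the Brill-Noether walls $W_{S}$ and $W_{T}$ of the two factors, shows $W_{S}>W_{T}$, and picks $\sigma_-$ strictly between them. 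At $\sigma_-$, $S$ has Harder-Narasimhan filtration with subobject $S_-\otimes \mbox{Hom}(S_-,S)$ and quotient $\mathcal{O}_X[1]\otimes\mbox{H}^1(S)$. The crucial phase comparison $\phi_-(S_-)>\phi_-(\mathcal{O}_X[1])>\phi_-(T)$ then shows that $S_-\otimes\mbox{Hom}(S_-,S)\otimes\mbox{Hom}(S,E)$ is the first HN factor of $E$, and the quotient $Q_1$ is a \emph{rigid} (by Mukai's Lemma) extension of $T\otimes\mbox{Hom}(E,T)^*$ by $\mathcal{O}_X[1]\otimes\mbox{H}^1(S)\otimes\mbox{Hom}(S,E)$ --- with both pieces living inside the single rank-2 lattice $\mathcal{H}_{W_T}$. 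Rigidity forces the extension class of $Q_1$ to be general, and then Proposition~\ref{type2} (a purely local wall-crossing statement about rigid ``type II'' objects) gives maximal rank of the $\mbox{Hom}(-,\mathcal{O}_X[1])$-connecting map, which is exactly the map in the theorem after dualizing $\mbox{H}^1(-)=\mbox{Hom}(-,\mathcal{O}_X[1])^*$. Your intuition that $\mbox{Ext}^1(E,E)=0$ pins down the extension class to be generic is correct, but the genericity you need is genericity of the extension defining $Q_1$ inside $\mathcal{H}_{W_T}$, not genericity of the extension defining $E$ inside $\mathcal{H}_{W'}$.

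Your second approach (chase the long exact sequence and compare with the $\mbox{h}^1(E)$ that Algorithm~\ref{global reduction} computes) is circular: the algorithm computes $\mbox{h}^1(E)$ precisely by running local reductions across all HN walls down to $\mathcal{C}_0$, and the behavior of $E$ at the wall $W_T$ is governed by exactly the maximal-rank claim you are trying to prove. You cannot treat the algorithm's output as an independent input against which to check the connecting map.
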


We will give an explicit formula for the cohomology of $S$ and $T$ in Section \ref{section4}. In particular, the cohomology of $E$ is computed by Proposition \ref{global simplification intro}. The global simplification covers many stable spherical vector bundles $E$ with relatively small Mukai vectors. In fact, Proposition \ref{global simplification intro} is a consequence of a much more general local simplification (Theorem \ref{local simplification}). Among all examples that the author did, most of them can be computed by Theorem \ref{local simplification}. 

In the cases where we do not need the precise dimension of the cohomology, we give some straightforward results. First, we show a numerical condition that is equivalent to weak Brill-Noether for spherical vector bundles on Picard rank one K3 surfaces.

\begin{theorem} [Theorem \ref{weak BN}, weak Brill-Noether]
 Let $(X,H)$ be a polarized K3 surface of Picard rank one. Let $v=(r,dH,a)$ be a Mukai vector with $v^{2}=-2$ and $r, d>0$, $E\in M_{H}(v)$.
Let $y$ be the largest possible value of $\frac{a_{1}d-ad_{1}}{r_{1}d-rd_{1}} $ where $v_{1}=(r_{1}, d_{1}H, a_{1})\neq v$ satisfies 
$$v_{1}^{2}=-2, vv_{1}<0, \frac{a_{1}d-ad_{1}}{r_{1}d-rd_{1}}>0, 0<d_{1}\leq d.$$
Then $\mbox{H}^{1}(E)=0$ if and only if $y<1$.
\end{theorem}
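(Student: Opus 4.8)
Let me think about how to prove this weak Brill-Noether characterization. We have a polarized K3 of Picard rank one, a spherical Mukai vector $v = (r, dH, a)$ with $r, d > 0$, and the unique stable bundle $E \in M_H(v)$. We want $\mathrm{H}^1(E) = 0$ iff $y < 1$, where $y$ is the supremum of the "slope-like" quantity $\frac{a_1 d - a d_1}{r_1 d - r d_1}$ over competing spherical vectors $v_1$ with $vv_1 < 0$, $0 < d_1 \le d$, and the quantity positive.

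The plan is to apply Algorithm \ref{global reduction} (or rather its structural consequences) and interpret the condition $\mathrm{H}^1(E) \ne 0$ geometrically. First I would recall the standard facts: by Serre duality and stability, $\mathrm{H}^0(E) \ne 0$ forces a nonzero map $\mathcal{O}_X \to E$, hence (since $E$ is stable of positive rank and $\mu(E) > 0$) we only need to worry about $\mathrm{H}^1$; and $\mathrm{H}^2(E) = \mathrm{Hom}(E, \mathcal{O}_X)^* = 0$ because $\mu(E) > 0 = \mu(\mathcal{O}_X)$ and $E$ is stable. So $\chi(E) = \mathrm{h}^0(E) - \mathrm{h}^1(E)$, and by Riemann–Roch $\chi(E) = \frac{v^2}{2} + 2 = 1 > 0$, so $\mathrm{H}^0(E) \ne 0$ always. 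Thus weak Brill-Noether (= "general global generation"/"$\mathrm{h}^1 = 0$") reduces to showing $\mathrm{H}^1(E) = 0$ exactly when no destabilizing-type subobject obstructs it. The key step is to translate the height/factor analysis: $\mathrm{H}^1(E) \ne 0$ should be equivalent to the existence of a spherical $E_1 \in M_H(v_1)$ with $\mathrm{Hom}(E_1, E) \ne 0$ (or $\mathrm{Ext}^1$) forcing a nonzero contribution to $\mathrm{H}^1$, and the numerical shadow of "$\mathrm{Hom}(E_1, E) \ne 0$ with $E_1$ contributing to $\mathrm{H}^1$" is precisely $vv_1 < 0$ together with the sign and slope constraints. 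The constraint $0 < d_1 \le d$ reflects that any such $E_1$ is a subsheaf-type factor appearing in the relevant filtration, and the quantity $\frac{a_1 d - a d_1}{r_1 d - r d_1}$ compares to the threshold coming from when $E_1$ itself (twisted appropriately) starts having sections versus $\mathrm{H}^1$; the cutoff "$y < 1$" marks the boundary where the reduction algorithm terminates with vanishing $\mathrm{H}^1$.

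Concretely I would proceed as follows. Step one: reduce to $\mathrm{H}^1$ as above. Step two: invoke the structure theory behind Algorithm \ref{global reduction} — the spherical bundle $E$ fits in a canonical exact triangle determined by a "first factor" $S$ (a spherical bundle with $\mathrm{Hom}(S, E) \ne 0$ minimizing some slope), and inductively the cohomology of $E$ is assembled from that of $S$ and the quotient, with connecting maps of maximal rank (Theorem \ref{global simplification intro} and its local refinement). Step three: using Picard rank one, the relevant factors $S$ have Mukai vectors $v_1 = (r_1, d_1 H, a_1)$ with $0 < d_1 \le d$ (the upper bound because $S$ embeds with torsion-free quotient of positive rank, so the divisor part cannot exceed $d$; this needs stability of $E$ and the classification of which spherical vectors can be "sub-Mukai-vectors"), and the condition that $S$ contributes to $\mathrm{H}^1(E)$ rather than to $\mathrm{H}^0(E)$ translates, after chasing the long exact sequence, into the sign conditions $vv_1 < 0$ and $\frac{a_1 d - a d_1}{r_1 d - r d_1} > 0$. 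Step four: the maximal-rank property of the connecting homomorphisms means that $\mathrm{H}^1(E) = 0$ iff every such factor $S$ has $\mathrm{h}^1(S) = 0$ and the induced maps are injective/surjective as forced by Euler characteristics; a short induction on height shows this holds exactly when $y < 1$, with the value $y = 1$ being the critical case where $\mathrm{h}^1$ first becomes nonzero because the relevant $\mathrm{Hom}$ and $\mathrm{H}^1$ spaces have matching dimensions that no longer cancel. I expect the main obstacle to be Step three: rigorously pinning down the bound $0 < d_1 \le d$ and proving that the numerical inequalities $vv_1 < 0$, positivity of the ratio, exactly characterize "factor contributing to $\mathrm{H}^1$" rather than some larger or smaller set — this requires careful bookkeeping of the wall-crossing / spherical-reflection combinatorics in the rank-one lattice $\mathbb{Z} \oplus \mathbb{Z}H \oplus \mathbb{Z}$, and checking that the extremal $v_1$ achieving $y$ is the one that actually governs the vanishing. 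The inductive assembly in Step four should then be routine once the right statement about factors is in place, since Theorem \ref{global simplification intro} hands us the maximal-rank connecting maps for free.
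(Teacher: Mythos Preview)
Your proposal has genuine gaps, and also misses the direct route the paper takes.

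First, a small but telling error: $\chi(E) = r + a$, not $\frac{v^{2}}{2} + 2$. The latter is the dimension of the moduli space. Your claim that $\chi(E) = 1$ always is false.

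More seriously, your Steps 2--4 rest on Theorem \ref{global simplification intro}, which applies only to height $\leq 2$ objects. A general spherical bundle can have arbitrary height, so invoking maximal-rank connecting maps ``for free'' is unjustified. Your proposed ``induction on height'' in Step 4 has no content: you never say what the inductive hypothesis is, nor how the condition $y<1$ propagates to the factors $S,T$ of $E$ (it need not: $S$ and $T$ can individually fail weak Brill--Noether even when $E$ satisfies it, and vice versa).

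The key point you are missing is that the quantity $\frac{a_{1}d - ad_{1}}{r_{1}d - rd_{1}}$ is, up to a constant, the height $f_{0}(W)$ at which the numerical wall $W(v,v_{1})$ meets the vertical line $s=0$. Combined with Proposition \ref{criterion}, the number $y$ is exactly $f_{0}(W)/\sqrt{2/H^{2}}$ for the \emph{first} actual wall $W$ of $v$ on $\mathbf{b}$. So the trichotomy $y<1$, $y=1$, $y>1$ is simply whether that first wall lies below, on, or above the Brill--Noether wall. For $y<1$, Corollary \ref{output} applies immediately to the trivial filtration and gives $\mbox{h}^{1}(E)=0$---no induction, no algorithm. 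For $y=1$ the first wall is the Brill--Noether wall and Corollary \ref{JH} gives $\mbox{h}^{1}(E)>0$ directly.

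The case $y>1$ is where a real idea is needed, and it is \emph{not} a maximal-rank argument. The paper proves a lattice lemma (Lemma \ref{negative rank}): in Picard rank one, if $\mathcal{H}$ is the rank $2$ lattice of a wall not containing $(-1,0,-1)$, then the two stable spherical classes in $\mathcal{H}$ have ranks of opposite sign. This is a discriminant computation using that the signature of $\mbox{H}^{*}_{alg}(X)$ is $(2,1)$. Given this, the second factor $T$ in the first wall-crossing of $E$ has $r(T)<0$, hence $\chi(T)<0$, hence $\mbox{H}^{1}(T)\neq 0$; since $\mbox{H}^{2}(S)=0$ (as $S$ is still above its own Brill--Noether wall), the long exact sequence forces $\mbox{H}^{1}(E)\neq 0$. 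Your Step 3 gropes toward something like this but never isolates the sign-of-rank statement, which is the actual crux.
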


When the Picard rank is one and the degree is at least 4, we also give a bound on the number of global sections.

\begin{theorem}  [Proposition \ref{h0}, Bound of $\mbox{h}^{0}$]
 Let $X$ be a K3 surface with $\mbox{Pic}(X)=\mathbb{Z}H$ and $H^{2}\neq 2$. Then for any $v=(r,dH,a)\in  \mbox{H}^{*}_{alg}(X)$ with $v^{2}=-2$ and $r,d>0$, we have
  $$\mbox{h}^{0}(E)<2(r+a)$$
  for $E\in M_{H}(v)$.
\end{theorem}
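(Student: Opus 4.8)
My plan is to split into the ranges $r\le 2d$ and $r>2d$: the first I would handle by restricting $E$ to a general curve in $|H|$ and applying a Clifford-type bound there, the second by invoking weak Brill--Noether. First some reductions. Writing the Mukai pairing out, $v^{2}=-2$ gives $d^{2}H^{2}=2(ra-1)$, so $ra=\tfrac12 d^{2}H^{2}+1>0$, hence $a>0$ and $\chi(E)=r+a>0$ (recall $\chi(E)=r+a$ for a Mukai vector $(r,dH,a)$ on a K3, so the desired inequality reads $h^{0}(E)<2\chi(E)$). Moreover $E$ is $H$-Gieseker stable with $\mu_{H}(E)=dH^{2}/r>0$, so $\mathrm{Hom}(E,\mathcal{O}_{X})=0$ (compare reduced Hilbert polynomials), i.e.\ $h^{2}(E)=0$; thus $h^{0}(E)=\chi(E)+h^{1}(E)$ and it suffices to prove $h^{1}(E)<r+a$. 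If $r=1$ then $E\cong\mathcal{O}_{X}(dH)$ and $h^{0}(E)=\chi(E)=r+a<2(r+a)$, and if $h^{0}(E)=0$ there is nothing to prove; so assume $r\ge 2$. Then $r\nmid d$, since otherwise $E(-\tfrac{d}{r}H)$ would be a spherical vector bundle of rank $\ge 2$ with $c_{1}=0$, forcing $r\mid 1$.

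Suppose first $r\le 2d$. Since $\mathrm{Pic}(X)=\mathbb{Z}H$ and $H^{2}\ge 4$, the system $|H|$ is base-point free and a general member $C$ is a smooth curve of genus $g=\tfrac{H^{2}}{2}+1\ge 3$, with $\mathcal{O}_{X}(H)|_{C}\cong\omega_{C}$ by adjunction. Each $E(-kH)$ is again a stable spherical bundle, with $c_{1}=(d-kr)H$; for $k\ge k_{0}:=\lceil d/r\rceil$ it has negative degree and rank $\ge 2$, hence no sections. Chaining the restriction sequences $0\to E(-(k+1)H)\to E(-kH)\to E(-kH)|_{C}\to 0$ for $0\le k\le k_{0}-1$ gives $h^{0}(E)\le\sum_{k=0}^{k_{0}-1}h^{0}(E(-kH)|_{C})$, where $E(-kH)|_{C}\cong(E|_{C})\otimes\omega_{C}^{-k}$ has rank $r$, degree $(d-kr)(2g-2)$, and slope $\mu_{k}=(d-kr)(2g-2)/r$. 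As $d\bmod r\in\{1,\dots,r-1\}$, exactly one index, $k=k_{0}-1$, satisfies $0<\mu_{k}<2g-2$, and all others have $\mu_{k}>2g-2$. Granting that $E|_{C}$ is semistable for general $C$ (the crux; see below), the vector-bundle Clifford theorem on $C$ bounds the exceptional term by $\tfrac12\deg+r$ and gives $h^{0}=\chi$ for each of the rest (there $h^{1}=0$). Writing $d=qr+s$ with $q=k_{0}-1$ and $s=d\bmod r$, a short summation gives
$$h^{0}(E)\le(g-1)\big(rq^{2}+2qs+s\big)+r=2(r+a)-\frac{r^{2}+2+(g-1)\big(q^{2}r^{2}+2qrs+s(2s-r)\big)}{r},$$
and the bracketed quantity is $\ge 0$ precisely when $q\ge 1$ or $2s\ge r$ --- exactly the condition $r\le 2d$ --- so $h^{0}(E)<2(r+a)$. (The dominant, high-slope part of the sum is $\sim(g-1)d^{2}/r$, roughly half of $2(r+a)$, so there is genuine room.)

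Now suppose $r>2d$. I would show $H^{1}(E)=0$, so that $h^{0}(E)=\chi(E)=r+a<2(r+a)$. By Theorem \ref{weak BN} this reduces to checking that for every $v_{1}=(r_{1},d_{1}H,a_{1})\ne v$ with $v_{1}^{2}=-2$, $vv_{1}<0$ and $0<d_{1}\le d$, the ratio $\tfrac{a_{1}d-ad_{1}}{r_{1}d-rd_{1}}$ is $<1$ --- I expect it is always $\le 0$ when $r>2d$ --- which should follow from a direct analysis using $r_{1}a_{1}=\tfrac12 d_{1}^{2}H^{2}+1$, $ra=\tfrac12 d^{2}H^{2}+1$ and the inequality $r>2d$; this is presumably where the hypothesis $H^{2}\ne 2$ is genuinely needed.

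The main obstacle is the semistability input in the first case: one needs that $E|_{C}$, equivalently each $E(-kH)|_{C}$, is semistable --- or at least has Harder--Narasimhan slopes clustered tightly enough about $\mu_{k}$ to preserve the Clifford estimate --- for a general $C\in|H|$. I would try to deduce this from the extreme rigidity of spherical bundles: their Bogomolov discriminant is as small as possible, $\Delta(E)=2rc_{2}(E)-(r-1)c_{1}(E)^{2}=2(r^{2}-1)$, so a restriction theorem or a Bogomolov-type bound on the HN polygon should apply. The delicate point is that this is needed already for members of $|H|$ itself rather than of $|mH|$ for $m\gg 0$. The numerical check in the second case is the other, lighter, loose end.
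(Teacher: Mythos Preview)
Your approach is quite different from the paper's --- the paper proceeds by induction on the height of $E$ (Definition~\ref{ht}), proving the stronger statement $\frac{h^{0}(S)}{h^{1}(S)+2}\ge 2$ by repeatedly passing to the factor $S_{0}$ at the first wall below the Gieseker chamber and invoking a ratio comparison (Lemma~\ref{ratio}); the hypothesis $H^{2}\ge 4$ enters only in the height-zero base case, where weak Brill--Noether gives $h^{0}=r+a\ge 2\sqrt{ra}>3$. There is no restriction to curves anywhere.

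More importantly, your second case is simply false: the implication ``$r>2d\Rightarrow H^{1}(E)=0$'' does not hold. Take $H^{2}=4$ and $v=(11,4H,3)$; then $v^{2}=64-66=-2$, and $r=11>8=2d$. But with $v_{1}=(3,H,1)$ one has $v_{1}^{2}=-2$, $v\cdot v_{1}=16-11-9=-4<0$, $0<d_{1}=1\le 4$, and
\[
\frac{a_{1}d-ad_{1}}{r_{1}d-rd_{1}}=\frac{4-3}{12-11}=1,
\]
so $y\ge 1$ and Theorem~\ref{weak BN} gives $h^{1}(E)\neq 0$. (In fact $h^{1}(E)=1$: this $E$ is exactly $S_{-1}$ for the wall with $S_{0}\in M_{H}(3,H,1)$ and $T_{1}=\mathcal{O}_{X}[1]$, and Theorem~\ref{numbers} gives $\hom(S_{-1},T_{1})=a_{0}=1$.) The bound $h^{0}(E)=15<28=2(r+a)$ still holds here, but not for the reason you propose, and the numerical check you left open cannot be completed. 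More generally, iterating this wall produces $S_{-m}$ with $r/d\to 3/1$ and $h^{1}\to\infty$, so the failure is not isolated.

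Your first case has the gap you already flagged: semistability of $E|_{C}$ for $C\in|H|$ is not available from standard restriction theorems at $m=1$, and without it the Clifford bound does not apply. Given that the dichotomy itself collapses, patching this would not rescue the argument.
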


When there is no restriction on the Picard group, we can give an aysmptotic result, in the sense that the Mukai vector comes from many spherical transforms. A stable spherical vector bundle can be uniquely labeled as $S_{j}$ for some $j\leq 0$. It can be constructed inductively from two spherical objects $S_{0}, T_{1}$ by
\begin{align*}
  0 \longrightarrow S_{0}\otimes \mbox{Ext}^{1}(T_{1}, S_{0}) \longrightarrow S_{-1}  \longrightarrow T_{1}  \longrightarrow 0,\\ 
  0 \longrightarrow S_{j+1} \longrightarrow S_{j}\otimes \mbox{Hom}(S_{j+1},S_{j})^{*} \longrightarrow S_{j-1} \longrightarrow 0, ~j\leq -1.
  \end{align*}
We let $coev_{j}:  \mbox{H}^{0}(S_{j+1}) \longrightarrow  \mbox{H}^{0}(S_{j})\otimes \mbox{Hom}(S_{j+1},S_{j})^{*}$ be the induced maps on the cohomology.

\begin{theorem}[Theorem \ref{asymptotic}, Asymptotic result] \label{asymptotic}
 Let $X$ be any K3 surface. Then $coev_{j}$ is injective for $j\leq -3$. If $\mbox{Pic}(X)=\mathbb{Z}H$ and $H^{2}\neq 2$, then $coev_{j}$ is injective for $j\leq -1$.
\end{theorem}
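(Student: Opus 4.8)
The plan is to deduce the injectivity of $coev_j$ from a vanishing of global sections. First observe that $coev_j$ is exactly $\mathrm{H}^0$ applied to the coevaluation morphism $c_j\colon S_{j+1}\to S_j\otimes\mathrm{Hom}(S_{j+1},S_j)^{*}$ occurring in the defining sequence; since $\mathrm{H}^0$ is left exact, $\ker(coev_j)=\mathrm{H}^0(\ker c_j)$, where $\ker c_j=\bigcap_{\varphi\in\mathrm{Hom}(S_{j+1},S_j)}\ker\varphi$ is the largest subsheaf of $S_{j+1}$ annihilated by every morphism to $S_j$. Hence it suffices to show that for $j\le -3$ (resp.\ $j\le -1$) either $\ker c_j=0$ — equivalently, the defining sequence is an honest short exact sequence of sheaves, so $S_{j-1}$ is a sheaf in degree $0$ rather than a genuine two-term complex — or at least $\mathrm{H}^0(\ker c_j)=0$. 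In particular, as soon as $S_{j-1}$ is an actual sheaf the morphism $c_j$ is injective and we are done; the whole difficulty sits near the start of the chain, where $S_{j-1}$ can legitimately be a complex.

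Second, I would induct down the chain $\{S_j\}$. Using the mutation sequences that build $S_{j-1}$ from $S_{j+1}$ and $S_j$, together with the vanishing of $\mathrm{Ext}^{\ne 0}$ between consecutive terms (the transversality governed by the Mukai pairing of $v_{j+1}$ and $v_j$, available from the earlier sections), I would propagate the qualitative package: $S_j$ is a $\mu$-stable vector bundle, $\mathrm{rk}(S_{j+1})<\mathrm{rk}(S_j)$, the space $\mathrm{Hom}(S_{j+1},S_j)$ is large, $\mathrm{Ext}^{>0}(S_{j+1},S_j)=0$, and a general $\varphi\in\mathrm{Hom}(S_{j+1},S_j)$ is injective as a map of sheaves. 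In that regime $\ker c_j=0$ and $S_{j-1}$ is again a $\mu$-stable bundle, closing the induction. The induction is anchored by the finitely many shapes the start of the chain can take — the universal extension producing $S_{-1}$ from $S_0$ and $T_1$, followed by the first mutations — so the problem reduces to locating the threshold past which the package holds.

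Third comes the arithmetic that pins the threshold to $-3$, and to $-1$ in the Picard rank one case. Here I would run the resulting three-term linear recursion for $v_j=(r_j,D_j,a_j)$, whose multipliers are the dimensions $\dim\mathrm{Hom}(S_{j+1},S_j)$, and invoke the Hodge index theorem together with $v_j^2=-2$: tracking ranks, slopes, and the consecutive Mukai pairings shows that three mutations always carry a general chain into the stable regime, while the exceptional behaviour in the first steps after $S_0$ is exactly what blocks a uniformly better bound. When $\mathrm{Pic}(X)=\mathbb{Z}H$ with $H^2\ne 2$, the lattice is $\mathbb{Z}\oplus\mathbb{Z}H\oplus\mathbb{Z}$ and the inequality $H^2\ge 4$ leaves enough room that the stable regime is reached after a single mutation, giving $j\le -1$; the value $H^2=2$ is genuinely exceptional because $|H|$ is too small — the same obstruction that forces $H^2\ne 2$ in Proposition \ref{h0}. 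One may alternatively route the threshold case through the factor/height machinery: past the threshold the relevant configuration has height $2$, and the maximal-rank statement for the connecting maps (Theorem \ref{global simplification intro}, a case of Theorem \ref{local simplification}) specializes, for the morphism $c_j$ out of a vector bundle, to injectivity.

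The step I expect to be the main obstacle is precisely this base-case analysis. Before the chain stabilizes one must either prove $\ker c_j=0$ outright or rule out a destabilizing ``invisible'' subsheaf of $S_{j+1}$ — a subsheaf of slope $<\mu(S_{j+1})$ that lies in $\ker\varphi$ for every $\varphi$ yet carries a nonzero global section — and it is in pinning this down that the precise bounds $-3$ and $-1$, together with the exclusion $H^2=2$, are forced; everything downstream of the threshold is then formal, resting only on the left-exactness of $\mathrm{H}^0$.
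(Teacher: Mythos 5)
Your plan differs fundamentally from the paper's, and it has real gaps.

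The paper's proof of Theorem~\ref{asymptotic} never passes through sheaf kernels or $\mu$-stability of the chain $\{S_j\}$. It identifies the image of $\mbox{H}^{0}(T_n)$ inside $\mbox{H}^{0}(T_1)\otimes \mbox{Hom}(T_n,T_1)^{*}$ with the iterated kernel $K_n$ of the diagonal pairings $m_{j-1,j}$ (via Proposition~\ref{description}, Proposition~\ref{description2} and Theorem~\ref{decomposition}), and then proves a purely linear-algebraic surjectivity statement (Lemma~\ref{surjectivity2}, anchored on the Segre-variety observation Lemma~\ref{nondeg}/Lemma~\ref{pure tensor}) giving $K_{n+1}\otimes V_{n+2}\twoheadrightarrow K_n$ for $n\ge 3$; the Five Lemma converts this into surjectivity of $ev_i$ for $i\ge 4$. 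The Picard rank one improvement comes from the Euler characteristic bound in Proposition~\ref{h0} and Corollary~\ref{coarse}, which lets one verify $ev_2$ and $ev_3$ directly, not from Hodge index arithmetic on the $v_j$. Dually for $coev_j$. The thresholds $-3$ and $-1$ are pinned by the index hypotheses in Lemma~\ref{surjectivity2} ($m=2$, $n\ge m+1$, $\dim V_n\ge 2$), not by any geometric stabilization of the mutation chain.

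On the substance of your route. First, the reduction ``$\ker(coev_j)=\mbox{H}^{0}(\ker c_j)$'' already assumes $S_{j+1}$ and $S_j$ are honest sheaves and interprets $\ker c_j$ as a sheaf kernel. More precisely, the hypercohomology long exact sequence of the triangle $S_{j+1}\to S_j\otimes V^{*}\to S_{j-1}$ gives $\ker(coev_j)=\mbox{im}\bigl(\mbox{H}^{-1}(S_{j-1})\to \mbox{H}^{0}(S_{j+1})\bigr)$, so the quantity you must control is $\mbox{H}^{-1}(S_{j-1})=\mbox{H}^{0}(\mathcal{H}^{-1}(S_{j-1}))$. Your observation that $\mathcal{H}^{-1}(S_{j-1})=0$ would suffice is correct, but the paper never establishes this; by Corollary~\ref{JH}, $S_{j-1}$ is an extension of $T_1^{\oplus a}$ by $S_0^{\oplus b}$ with $T_1$ frequently $\mathcal{O}_X[1]$, so $\mathcal{H}^{-1}(S_{j-1})$ is the kernel of a sheaf map $\mathcal{O}_X^{a}\to S_0^{b}$ whose vanishing is of the same order of difficulty as the statement you want. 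Second, and more seriously, the inductive propagation of your ``package'' ($\mu$-stability of $S_j$, a general $\varphi\in\mbox{Hom}(S_{j+1},S_j)$ injective, etc.) is exactly what you flag as the main obstacle and is not carried out; it is where all the content lies. The claim that the base case reduces to ``finitely many shapes'' is not tenable: the theorem quantifies over all rank-two lattices $\mathcal{H}$ over all K3 surfaces, and $g(\mathcal{H})=\mbox{ext}^{1}(T_1,S_0)$ is unbounded, so a uniform argument is required, not a case check. Third, the alternative route through Theorem~\ref{global simplification intro} does not apply as you suggest: that result asserts maximal rank of the connecting map $\mbox{H}^{0}(T)\otimes\mbox{Hom}(E,T)^{*}\to\mbox{H}^{1}(S)\otimes\mbox{Hom}(S,E)$ for a height-$2$ object, which is a different morphism from $coev_j$, and ``height $2$'' is a global notion measuring distance from the Brill-Noether wall, not a local index condition matching $j\le -3$ at an arbitrary wall. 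As it stands, the proposal reformulates the statement in a plausible-looking geometric guise but does not supply the argument.
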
 

When all $coev_{j}$ are injective, the connecting homomorphism $ \mbox{H}^{0}(T_{1})\otimes \mbox{Hom}(S_{j}, T_{1})^{*} \longrightarrow  \mbox{H}^{1}(S_{0})\otimes \mbox{Hom}(S_{0},S_{j})$ is injective. Therefore the cohomology of $S_{j}$ is computed, provided the cohomology of $S_{0}$ and $T_{1}$ are known. The asymptotic result tells us that this could only fail for the first few values of $j$. As $j$ tends to $-\infty$, this estimate has relatively small error. 

\subsection{Idea of the Algorithm}

We illustrate the idea of the algorithm. For simplicity and without affecting the gist of the central idea, let us assume the K3 surface $(X,H)$ has Picard rank one. Then as we will explain in Section \ref{Bridgeland}, there is a complex manifold $\mbox{Stab}(X)$ parametrizing stability conditions on $\mathcal{D}^{b}(X)$, and there is a slice $\mathcal{L}(X)\subset \mbox{Stab}(X)$ which is an open subset of the upper half plane
$$\mathbb{H}=\{(sH, tH): s\in \mathbb{R}, t\in \mathbb{R}_{>0}\},$$
and the stability conditions in $\mathcal{L}(X)$ can be explicitly written. We consider the vertical line $\mathbf{b}_{\epsilon}=\{(\epsilon, t): t>0\}\subset \mathcal{L}(X)$ for $0< \epsilon \ll 1$. By \cite{Bri08} (Theorem \ref{large volume limit} below), when $t\gg 0$, $M_{\sigma_{(\epsilon,t)}}(v)=M_{H}(v)$. The set of such $\sigma_{(\epsilon, t)}$ is called the \emph{Gieseker chamber} of $v$.

The point $\sigma_{0}=(0,\sqrt{2/H^{2}})$ is not in $\mathcal{L}(X)$ because $Z_{\sigma_{0}}(\mathcal{O}_{X})=0$, hence for any Mukai vector $v$ which is independent from $v(\mathcal{O}_{X})=(1,0,1)$, the numerical wall $W(v,\mathcal{O}_{X})$ defined by $v$ and $\mathcal{O}_{X}$ must pass through $\sigma_{0}$. This wall is called the \emph{Brill-Noether wall} for $v$ \cite{Fey20}. Let $\mathcal{C}$ be the chamber below the Brill-Noether wall.

A wall for $v$ is called \emph{totally semistable} \cite{BM14a}, if for a stability condition $\sigma$, all members of $M_{\sigma}(v)$ are destablized when $\sigma$ passes through the wall. If there is no totally semistable wall of $v\in  \mbox{H}^{*}_{alg}(X)$ between its Gieseker chamber and $\mathcal{C}$, then $v$ has weak Brill-Noether. If there are totally semistable walls between the Gieseker chamber and $\mathcal{C}$, there is a chamber $\mathcal{C}_{0}$ such that for any $\sigma\in \mathcal{C}_{0}$, $\sigma$ is below the Brill-Noether walls of all $\sigma$-Harder-Narasimhan factors of $E$. For $\sigma\in \mathcal{C}_{0}$, the last $\sigma$-Harder-Narasimhan factor is $\mathcal{O}_{X}[1]^{\oplus h}$ for some $h\in \mathbb{Z}_{\geq 0}$, and $\mbox{h}^{1}(E)=h$ (Corollary \ref{output}). The process to get $\sigma$-Harder-Narasimhan filtration for $\sigma\in \mathcal{C}_{0}$ is called the global reduction (Algorithm \ref{global reduction}). Until this point we do not require that $v$ is spherical. We deal with global reduction in Section \ref{section3}.

To compute the $\sigma$-Harder-Narasimhan filtration for $\sigma\in \mathcal{C}_{0}$, we need to keep track of the Harder-Narasimhan filtration whenever it changes. The points on $\mathbf{b}_{\epsilon}$ such that the Harder-Narasimhan filtration changes are called the \emph{Harder-Narasimhan walls}. We want every Harder-Narasimhan wall on $\mathbf{b}_{\epsilon}$ to only involve a rank 2 sublattice of $ \mbox{H}^{*}_{alg}(X)$. This is indeed true for $\epsilon$ sufficiently small. Now if $v$ is spherical, then every time we cross such a wall, there are two stable spherical objects $S, T$ that generate all semistable spherical objects in the associated rank 2 lattice. There is a local reduction process (Algorithm \ref{local reduction}) that computes the Harder-Narasimhan filtration of a rigid semistable object on one side of a wall, provided that we know the Harder-Narasimhan filtration on the other side of that wall, in terms of the order of $S$ and $T$ appearing in the filtrations. Then we may compute the new Harder-Narasimhan filtration of $E$ whenever we encounter a Harder-Narasimhan wall, and the $\sigma$-Harder-Narasimhan filtration for $\sigma\in \mathcal{C}_{0}$ can be eventually computed.

The main idea of local reduction is to find an intermediate filtration that connects the information of the Harder-Narasimhan filtrations on two sides of the wall. This notion is Definition \ref{exhaustive}, it is the core of the whole algorithm. We deal with local reduction in Section \ref{section4} and Section \ref{section5}. In many good cases the local reduction can be significantly simplified, we discuss this in Section \ref{section6}.

\subsection{Acknowledgements}
I want to thank Izzet Coskun for many valuable advice and conversations. I also want to thank Benjamin Gould for many useful discussions.

\section{Preliminaries}\label{section2}

In this section we collect necessary preliminaries about theory of moduli spaces of sheaves and Bridgeland stability conditions on K3 surfaces. Some good references are \cite{HL10, Bri07, Bri08, BM14a, BM14b}.

\subsection{Lattices and the Mukai Pairing}

In this paper, a \emph{K3 surface} is a projective K3 surface $X$ over $\mathbb{C}$ together with a polarization $H$. We let
$$ \mbox{H}^{*}_{alg}(X)= \mbox{H}^{0}(X, \mathbb{Z})\oplus \mbox{Pic}(X) \oplus  \mbox{H}^{4}(X,\mathbb{Z})$$ be the algebraic part of its cohomology ring.

Let $\mathcal{D}^{b}(X)$ be the bounded derived category of $X$. For any object $E\in \mathcal{D}^{b}(X)$, its Chern character is defined by
$$\mbox{ch}(E)=(r(E), c_{1}(E), \frac{c_{1}(E)^{2}}{2}-c_{2}(E) )\in  \mbox{H}^{*}_{alg}(X), $$
and we define its Mukai vector
$$v(E)=\mbox{ch}(E)\cdot \sqrt{\mbox{td}(X)}=(r(E), c_{1}(E), r(E)+\mbox{ch}_{2}(E)). $$
By the Riemann-Roch Theorem, the Euler characteristic of two objects can be computed in terms of their Mukai vectors. Let $v(E)=(r,D,a)$ and $v(F)=(r',D',a')$, then 
$$\chi(E,F)=\sum_{i\in \mathbb{Z}}(-1)^{i}\mbox{ext}^{i}(E,F)=-D\cdot D'+ra'+r'a.$$
The pairing on $ \mbox{H}^{*}_{alg}(X)$ defined by
$$ (r,D,a)\cdot (r',D',a') =D\cdot D' -ra'-r'a$$
is called the Mukai pairing. A vector $v=(r,D,a)\in  \mbox{H}^{*}_{alg}(X)$ is called \emph{positive} \cite{Yos01}, if $v^{2}\geq -2$, and
\begin{itemize}
\item either $r>0$,
\item or $r=0$, $D$ is effective, and $a\neq 0$,
\item or $r=0$, $D=0$, $a>0$.
\end{itemize}
A vector $v$ is called \emph{spherical} if $v^{2}=-2$.

\subsection{Stability of Sheaves}

Throughout this paper, all sheaves are assumed to be coherent and pure dimensional. Two of the classical stability notions for sheaves are slope stability and Gieseker stability. 

\begin{definition}
  We call $\mu_{H}(E)=\frac{c_{1}(E)\cdot H}{r(E)}$ the $H$-slope of $E$. A sheaf $E$ on $X$ is $H$-slope (semi)stable if for any subsheaf $0\neq F\subset E$,
  $$\mu_{H}(F) <(\leq) \mu_{H}(E). $$
\end{definition}

  In general if the underlying variety has dimension $>1$, slope stability does not form good moduli spaces. A better notion is Gieseker stability. Recall that for any sheaf $E$, the $H$-Hilbert polynomial of $E$ is
  $$P_{E}(n)=\chi(E(nH)).$$
Let $a_{E}$ be the leading coefficient of $P_{E}$. The reduced Hilbert polynomial of $E$ is $p_{E}=\frac{P_{E}}{a_{E}} $.

\begin{definition}
  A sheaf $E$ is $H$-(Gieseker) (semi)stable if for any subsheaf $0\neq F \subset E$,
  $$p_{F}(n)<(\leq) p_{E}(n) \mbox{ for } n\gg 0.  $$
\end{definition}

On a K3 surface, Gieseker stability reads explicity in the following form

\begin{definition}
  On a polarized K3 surface $(X,H)$, a sheaf $E$ with Mukai vector $(r,D,a)$ is $H$-semistable if for any subsheaf $0\neq F \subset E$ with Mukai vector $(r',D'a')$,
  $$\frac{D'H}{r'}\leq \frac{DH}{r}, \mbox{ and if }\frac{D'H}{r'}=\frac{DH}{r}   \mbox{ then } \frac{a'}{r'}\leq \frac{a}{r}.  $$
The sheaf $E$ is stable if the equalities above cannot all hold.
\end{definition}

\begin{remark}
  The different notions of stability have the following implications:
  $$\mbox{slope stable} \implies \mbox{Gieseker stable} \implies \mbox{Gieseker semistable} \implies \mbox{slope semistable}.$$
\end{remark}

For a primitive Mukai vector $v$ (not a multiple of another integral vector), there exists a space $M_{H}(v)$ parametrizing $H$-semistable (hence stable by primitivity) sheaves with Mukai vector $v$ \cite{Gie77, Mar77, Mar78}. We collect some facts about these moduli spaces. 

\begin{theorem} [\cite{BM14b, KLS06, O'G99, PR14, Yos01, Yos99}]\label{moduli}
 Let $v\in  \mbox{H}^{*}_{alg}(X)$ be a primitive positive Mukai vector. If $H$ is generic in the sense of \cite{O'G97}, then
  \begin{enumerate}
  \item $M_{H}(v)$ is non-empty.
  \item The dimension of $M_{H}(v)$ is $v^{2}+2$.
  \item $M_{H}(v)$ is a smooth and irreducible, deformation equivalent to Hilbert scheme of $(\frac{v^{2}+2}{2})$ points on a K3 surface.
  \end{enumerate}
\end{theorem}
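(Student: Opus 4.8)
The plan is to assemble the statement from the established theory, treating the three items with a common tool — Fourier--Mukai transforms — and disposing of the formal part first. Item (2) is essentially deformation theory. Genericity of $H$ together with primitivity of $v$ forces every $H$-semistable sheaf $E$ with $v(E)=v$ to be stable, hence simple, so $\mathrm{Hom}(E,E)=\mathbb{C}$; Serre duality on the K3 surface then gives $\mathrm{Ext}^{2}(E,E)\cong \mathrm{Hom}(E,E)^{\vee}=\mathbb{C}$. The Zariski tangent space at $[E]$ is $\mathrm{Ext}^{1}(E,E)$ with obstructions in $\mathrm{Ext}^{2}(E,E)$, and Mukai's observation is that the obstruction class is trace-free, so it lies in the trace-free summand $\mathrm{Ext}^{2}(E,E)_{0}=0$. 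Hence $M_{H}(v)$ is smooth at every point, of dimension $\dim\mathrm{Ext}^{1}(E,E)=2-\chi(E,E)=2+v^{2}$, using $\chi(E,E)=-v^{2}$ from the Riemann--Roch formula recorded above. This yields smoothness and the dimension count at once.

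For (1) and (3) I would invoke Fourier--Mukai theory. A derived equivalence $\Phi\colon \mathcal{D}^{b}(X)\xrightarrow{\ \sim\ }\mathcal{D}^{b}(Y)$ of K3 surfaces induces a Hodge isometry on algebraic Mukai lattices, and for suitably chosen polarizations it identifies $M_{X,H}(v)$ with $M_{Y,H'}(\Phi_{*}v)$, up to a birational modification that — by smoothness and holomorphic symplecticity of the moduli space — is itself a deformation. Since the isometry group of the Mukai lattice acts with enough transitivity on primitive positive vectors of a fixed square, one can transport an arbitrary $v$ with $v^{2}=2n-2$ to $(1,0,1-n)$, whose moduli space is $\mathrm{Hilb}^{n}(X)$: manifestly non-empty, irreducible, smooth of dimension $2n$, and the standard deformation model. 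Varying $H$ across walls changes $M_{H}(v)$ only by a further birational modification within the same deformation class, so the deformation type depends only on $v^{2}$.

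The genuinely deep input — and the step I would not attempt to reprove — is the irreducibility in (3) together with the uniform upgrade from ``birational'' to ``deformation equivalent''; this rests on O'Grady's analysis of the boundary of these moduli spaces and Yoshioka's extension to all positive Mukai vectors, so I would cite \cite{O'G99, Yos01, Yos99} for non-emptiness and irreducibility and \cite{BM14b} for the birational geometry, rather than redo it. By contrast, the dimension and smoothness statements follow cleanly from the deformation-theoretic computation above, and non-emptiness follows from the Fourier--Mukai reduction to the Hilbert scheme base case.
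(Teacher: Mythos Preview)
The paper does not prove this theorem: it is stated in the preliminaries with citations to \cite{BM14b, KLS06, O'G99, PR14, Yos01, Yos99} and no argument is given. Your sketch is a correct outline of the standard route through these references --- Mukai's deformation-theoretic smoothness and dimension count, followed by Yoshioka's Fourier--Mukai reduction to Hilbert schemes for non-emptiness and deformation type, with irreducibility the deep input --- so there is nothing to compare against beyond noting that you have supplied what the paper deliberately omits.
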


In this paper we consider the following question. Let $v=(r,D,a)$ be a spherical Mukai vector with $r>0$ with respect to a generic polarization. Since it is spherical, it is primitive. And it is positive since $r>0$. By Theorem \ref{moduli}, $M_{H}(v)=\{*\}$ is a single point, let $E$ be the unique sheaf in this moduli space. It is a vector bundle \cite{HL10}. We would like to compute $ \mbox{H}^{0}(E)$. We will eventually answer this question by an algorithm.

\subsection{Bridgeland Stability Condition on K3 Surfaces}\label{Bridgeland}
The main tool of this paper is Bridgeland stability. We collect facts about stability conditions on K3 surfaces in this subsection, the main references are \cite{Bri07, Bri08}.

\begin{definition}
  A \emph{stability condition} on a triangulated category $\mathcal{D}$ is a pair $(Z,\mathcal{A})$, where $\mathcal{A}$ is the heart of a bounded t-structure on $\mathcal{D}$, and $Z: K(\mathcal{A}) \longrightarrow \mathbb{C}$ is a group homomorphism called the \emph{central charge}, such that
  \begin{itemize}
  \item For any $0\neq E\in \mathcal{A}$, $Z(E)=r(E)\cdot e^{i\pi \phi(E)}$ for some $r(E)> 0$ and $\phi\in (0,1]$. $\phi(E)$ is called the \emph{phase} of $E$.
  \item An object $G\in \mathcal{A}$ is called (semi)stable, if for any subobject $0\neq F\subset G$, we have $\phi(F)<(\leq)\phi(G)$. For any object $E\in \mathcal{A}$, there exists a filtration called the \emph{Harder-Narasimhan filtration}:
    $$0= E_{0}\subset E_{1}\subset \cdots \subset E_{n}=E,$$
    such that $G_{i}=E_{i}/E_{i-1}$ are semistable, and $\phi(G_{i})>\phi(G_{i+1})$ for all $i$.
  \end{itemize}
\end{definition}
A stability condition $\sigma=(Z,\mathcal{A})$ is called \emph{numerical} if the central charge $Z:K(X) \longrightarrow \mathbb{C}$ factors through the numerical group $\mathcal{N}(X)=K(X)/K(X)^{\perp}$. A stability condition $\sigma$ is called \emph{geometric} if all skyscraper sheaves $\mathcal{O}_{x}$ are $\sigma$-stable.

There is an effective way to construct concrete stability conditions, which we recall here. For any $\beta\in \mbox{Pic}(X)_{\mathbb{R}}$ and $\omega\in \mbox{Amp}(X)_{\mathbb{R}}$, we construct a torsion pair $(\mathcal{T}_{\beta, \omega}, \mathcal{F}_{\beta, \omega})$ on the abelian category $\mbox{Coh}(X)$ as follows:
\begin{itemize}
\item $\mathcal{F}_{\beta, \omega}=\{F\in \mbox{Coh}(X): \mbox{for all subsheaves }0\neq F'\subset F, \mu_{\omega}(F')\leq \beta\cdot \omega\},$
\item $\mathcal{T}_{\beta, \omega}=\{T\in \mbox{Coh}(X): T \mbox{ is torsion, or for all quotient }T\twoheadrightarrow T', \mu_{\omega}(T')> \beta\cdot \omega\}$.
\end{itemize}
Let
$$\mathcal{A}_{\beta, \omega}=\{E\in \mathcal{D}^{b}(X): \mathcal{H}^{-1}(E)\in \mathcal{F}_{\beta, \omega}, \mathcal{H}^{0}(E)\in \mathcal{T}_{\beta, \omega}, \mathcal{H}^{i}(E)=0 \mbox{ for all }i\neq -1,0\}$$
be the tilt of $\mbox{Coh}(X)$ with respect to this torsion pair. Let $Z_{\beta, \omega}: \mathcal{N}(X) \longrightarrow \mathbb{C}$ be
$$Z_{\beta, \omega}(r, D, a)=-a-r \frac{\beta^{2}-\omega^{2}}{2}+D\cdot \beta + i \omega \cdot (D-r\beta) .$$
We let
$$\mathcal{K}(X)=\{(\beta, \omega)\in \mbox{Pic}(X)_{\mathbb{C}} : \omega\in \mbox{Amp}(X)_{\mathbb{R}}\},$$
and $\mathcal{L}(X)$ be the locus of $(\beta, \omega)\in \mathcal{K}(X)$ such that $Z_{\beta, \omega}(v)\notin \mathbb{R}_{\leq 0}$ for all spherical Mukai vector $v$ with positive rank. 
Then $(Z_{\beta,\omega}, \mathcal{A}_{\beta, \omega})$ is a geometric stability condition for $(\beta, \omega)\in \mathcal{L}(X)$.

There is a natural complex manifold structure on the set of all stability conditions $\mbox{Stab}(X)$. Let $\widetilde{\mbox{GL}_{2}^{+}(\mathbb{R})}$ be the universal cover of the orientation preserving component of $\mbox{GL}_{2}(\mathbb{R})$. Then there is a natural action of $\widetilde{\mbox{GL}_{2}^{+}(\mathbb{R})}$ on $\mbox{Stab}(X)$. There exists a connected component $\mbox{Stab}^{\dagger}(X)\subset \mbox{Stab}(X)$ and an open subset $U(X)\subset \mbox{Stab}^{\dagger}(X)$, such that $\widetilde{\mbox{GL}_{2}^{+}(\mathbb{R})}$ acts freely on $U(X)$. A section of this action is naturally identified with $\mathcal{L}(X)$. We will carry all the work in this paper on $\mathcal{L}(X)$.

\subsection{Wall and Chamber Structure}

Our main usage of $\mathcal{L}(X)$ is the wall and chamber structure. Let $v$ be a Mukai vector. Then there is a locally finite collection of real codimension one submanifolds $\{W_{i}\}_{i\in I}$ of $\mathcal{L}(X)$, such that each connected component $C$ of $\mathcal{L}(X)\backslash \cup_{i\in I}W_{i}$ has the following property: If $E\in \mathcal{D}^{b}(X)$ is $\sigma$-semistable for some $\sigma \in C$, then $E$ is $\sigma$-semistable for all $\sigma\in C$. The walls $\{W_{i}\}_{i\in I}$ are called the \emph{(actual) walls for $v$}, and such connected components are called \emph{chambers}.

For two Mukai vectors $u,v$, the \emph{numerical wall} $W(u,v)$ is
$$W(u,v)=\{\sigma\in \mathcal{L}(X): \phi_{\sigma}(u)=\phi_{\sigma}(v)\}.$$
An actual wall must be numerical, but a numerical wall is not necessarily actual.

For a stability condition $\sigma=(Z,\mathcal{A})$ and a primitive Mukai vector $v$, the $\sigma$-semistable objects (hence stable by primitivity) in $\mathcal{A}$ with Mukai vector $v$ form a moduli space, which we shall denote by $M_{\sigma}(v)$. The following theorem is a generalization of Theorem \ref{moduli}.

\begin{theorem}[\cite{BM14a, BM14b}]\label{B-moduli}
  Let $\sigma\in \mathcal{L}(X)$ be a stability condition and $v\in  \mbox{H}^{*}_{alg}(X)$ be a primitive Mukai vector. Then $M_{\sigma}(v)$ is non-empty when $v^{2}\geq -2$. If in addition $\sigma$ is generic with respect to $v$, then
  \begin{enumerate}
  \item The dimension of $M_{\sigma}(v)$ is $v^{2}+2$,
  \item $M_{\sigma}(v)$ is an irreducible normal projective variety with $\mathbb{Q}$-factorial singularities.
  \end{enumerate}
\end{theorem}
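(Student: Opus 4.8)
The plan is to follow Bayer--Macrì \cite{BM14a, BM14b}: prove the statement first on one well-understood chamber, then transport it across all walls by wall-crossing, and finally upgrade from ``proper algebraic space'' to ``projective variety''. For non-emptiness, if $v$ is positive and $\sigma$ lies in the Gieseker chamber of $v$ (the large-volume part of a vertical line $\mathbf{b}_{\epsilon}$, as in the outline above), then $M_{\sigma}(v)=M_{H}(v)$ by Bridgeland's large-volume comparison \cite{Bri08}, which is non-empty by Theorem \ref{moduli}. For a general primitive $v$ with $v^{2}\geq -2$ and general $\sigma\in\mathcal{L}(X)$, I would use the simultaneous action of $\mathrm{Aut}(\mathcal{D}^{b}(X))$ on $\mathrm{Stab}^{\dagger}(X)$ and on $\mathrm{H}^{*}_{alg}(X)$: it is large enough to carry $(\sigma,v)$ to $(\sigma',v')$ with $v'$ positive, and $\sigma'$ can then be joined to the Gieseker chamber of $v'$ by a path crossing only finitely many walls. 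It then suffices to see that non-emptiness survives one wall-crossing, and here one uses the classification of walls — each wall $W$ is governed by a rank-$2$ hyperbolic sublattice $\mathcal{H}\subset \mathrm{H}^{*}_{alg}(X)$ containing a distinguished effective spherical or isotropic class $w$, and there is an autoequivalence (a spherical twist, or a Fourier--Mukai kernel built from a moduli space of class $w$) identifying $\sigma$-stability on the two sides of $W$. This also handles non-generic $\sigma$, by density of the generic ones together with the birational identification above.

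\textbf{Smoothness, dimension, normality.} Assume now $\sigma$ is generic with respect to the primitive $v$, so semistable $=$ stable and every point of $M_{\sigma}(v)$ is a $\sigma$-stable complex $E$ with $\mathrm{Hom}(E,E)=\mathbb{C}$. Its deformation theory in $\mathcal{D}^{b}(X)$ has tangent space $\mathrm{Ext}^{1}(E,E)$ and obstruction space $\mathrm{Ext}^{2}(E,E)\cong \mathrm{Hom}(E,E)^{*}=\mathbb{C}$ by Serre duality on the K3, with obstruction given by the Yoneda square; this square is skew and, via the symplectic pairing, nondegenerate, so all deformations are unobstructed and $M_{\sigma}(v)$ is smooth. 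By the Riemann--Roch formula recalled above, $\chi(E,E)=-v^{2}$, hence $\dim M_{\sigma}(v)=\mathrm{ext}^{1}(E,E)=2-\chi(E,E)=v^{2}+2$. Being smooth, it is automatically normal and $\mathbb{Q}$-factorial; the $\mathbb{Q}$-factoriality claim in the generality of imprimitive $v$ would instead follow from presenting $M_{\sigma}(v)$ as a good moduli space whose smooth locus carries a holomorphic symplectic form.

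\textbf{Projectivity and deformation type.} That the moduli functor is a proper algebraic space of finite type rests on openness of Bridgeland stability in families, boundedness of semistable objects (Toda), and representability of moduli of complexes. Projectivity then follows from the Bayer--Macrì positivity lemma: the central charge of a quasi-universal family yields a natural nef class $\ell_{\sigma}$ on $M_{\sigma}(v)$ which becomes strictly positive on every curve when $\sigma$ is $v$-generic, hence ample. For irreducibility and the deformation type, note that in the Gieseker chamber $M_{\sigma}(v)=M_{H}(v)$ is irreducible and deformation equivalent to $\mathrm{Hilb}^{(v^{2}+2)/2}$ of a K3 (Theorem \ref{moduli}); each wall-crossing between $v$-generic chambers is a birational map of smooth projective hyperkähler manifolds, and birational hyperkähler manifolds are deformation equivalent, so both properties propagate to every $v$-generic $\sigma$ after transporting along autoequivalences as above.

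\textbf{Main obstacle.} The real difficulty is concentrated at totally semistable walls: ensuring that non-emptiness and irreducibility are not lost requires the complete classification of wall types and, on each wall, the explicit autoequivalence matching its two sides (effective spherical classes, the minimal class $w$, derived duals, spherical twists). The smoothness and dimension count is essentially formal by comparison, and projectivity, though technical, reduces cleanly to the positivity lemma together with boundedness.
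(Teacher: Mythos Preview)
The paper does not prove this theorem: it is stated in the preliminaries as a background result and attributed entirely to Bayer--Macr\`i \cite{BM14a, BM14b}, with no argument given. So there is nothing in the paper to compare your proposal against.

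That said, your outline is a faithful sketch of the Bayer--Macr\`i strategy: reduce to the Gieseker chamber via autoequivalences and wall-crossing, use Mukai's deformation theory for smoothness and the dimension count, and invoke the positivity lemma for projectivity. One small comment: since the statement here is only for primitive $v$ and generic $\sigma$, the moduli space is in fact smooth (as you note), so the ``$\mathbb{Q}$-factorial singularities'' clause is vacuously satisfied and does not require the good-moduli-space machinery you allude to. Your identification of the totally semistable wall classification as the genuine hard step is accurate.
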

  In particular, if $v$ is spherical and $\sigma$ is generic, then $M_{\sigma}(v)$ consists of a single point. The following theorem connects Gieseker stability and Bridgeland stability.

  \begin{theorem}[\cite{Bri08}, Large volume limit]\label{large volume limit}
 Let $H$ be a polarization of $X$, $E\in \mathcal{D}^{b}(X)$ such that $r(E)>0$ and $c_{1}(E)\cdot H>0$. Then $E$ is $\sigma$-semistable for $\sigma=(Z_{0,tH}, \mathcal{A}_{0,tH}), t\gg 0$ if and only if some shift of $E$ is $H$-Gieseker semistable.
  \end{theorem}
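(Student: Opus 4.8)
The plan is to transport the problem to the ray $\{(0,tH):t>0\}$ and match the Bridgeland phase $\phi_{\sigma_{0,tH}}$ with the Gieseker reduced Hilbert polynomial as $t\to\infty$. I will use throughout that $Z_{0,tH}(r,D,a)=\tfrac12 rt^{2}H^{2}-a+i\,t(H\cdot D)$, so that the imaginary part of $Z_{0,tH}$ on a class $(r,D,a)$ is $t(H\cdot D)$ and the real part is $\tfrac12 rt^{2}H^{2}-a$, and that $\mathcal{F}_{0,tH}=\{F:\mu_{H,\max}(F)\le 0\}$ and $\mathcal{T}_{0,tH}=\{T:T\text{ torsion, or }\mu_{H,\min}(T)>0\}$ are independent of $t$. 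First I reduce to the case that $E$ is a torsion-free sheaf. Since $\sigma$-semistability is a property of the slicing it is shift-invariant, as is ``some shift of $E$ is $H$-Gieseker semistable'', so I may replace $E$ by whatever shift $E[k_{0}]$ is semistable inside $\mathcal{A}_{0,tH}$. Objects of the heart have non-negative $\mathrm{Im}\,Z_{0,tH}$, which with $H\cdot c_{1}(E)>0$ forces $k_{0}$ even, so the replacement $G=E[k_{0}]$ still has $r(G)>0$ and $H\cdot c_{1}(G)>0$; its phase then tends to $0^{+}$ as $t\to\infty$, whereas a nonzero $\mathcal{H}^{-1}(G)$ would make $\mathcal{H}^{-1}(G)[1]$ a heart subobject of phase tending to $1$, and a nonzero torsion subsheaf a subobject of phase $\ge\tfrac12$ or equal to $1$ --- both impossible for a semistable object. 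Hence $G$ is a torsion-free sheaf with $\mu_{H,\min}(G)>0$, i.e. $G\in\mathcal{T}_{0,tH}$; conversely any $H$-Gieseker semistable sheaf of positive rank is torsion-free, slope-semistable and of positive slope, hence in $\mathcal{T}_{0,tH}\subset\mathcal{A}_{0,tH}$ for all $t$. So it suffices to show: a torsion-free sheaf $E$ with $\mu_{H,\min}(E)>0$ is $\sigma_{0,tH}$-semistable in $\mathcal{A}_{0,tH}$ for $t\gg0$ if and only if it is $H$-Gieseker semistable.

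For the ``only if'' direction I use that for a sheaf $F$ of positive rank $\pi\,\phi_{\sigma_{0,tH}}(F)=\arg\!\big(\tfrac12 r(F)t^{2}H^{2}-a(F)+i\,t(H\cdot c_{1}(F))\big)$, so $\phi_{\sigma_{0,tH}}(F)\sim \tfrac{2\mu_{H}(F)}{\pi tH^{2}}$ as $t\to\infty$. If $E$ were not slope-semistable, its maximal-slope slope-Harder--Narasimhan subsheaf $E_{1}$ would satisfy $\mu_{H}(E_{1})>\mu_{H}(E)$ and lie in $\mathcal{T}_{0,tH}$, giving $\phi_{\sigma_{0,tH}}(E_{1})>\phi_{\sigma_{0,tH}}(E)$ for $t\gg0$. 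If $E$ were slope-semistable but not Gieseker semistable, its first Gieseker-Harder--Narasimhan factor $F\subset E$ has $0<r(F)<r(E)$, $\mu_{H}(F)=\mu_{H}(E)$, and $\tfrac{a(F)}{r(F)}>\tfrac{a(E)}{r(E)}$, and $F\in\mathcal{T}_{0,tH}$; after dividing the two central charges by the positive ranks the imaginary parts agree while the real parts become $\tfrac12 t^{2}H^{2}-\tfrac{a(F)}{r(F)}<\tfrac12 t^{2}H^{2}-\tfrac{a(E)}{r(E)}$, and since $\arg$ strictly increases as the real part decreases with positive imaginary part fixed, $\phi_{\sigma_{0,tH}}(F)>\phi_{\sigma_{0,tH}}(E)$. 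Either way $E$ is not $\sigma_{0,tH}$-semistable for $t\gg0$, so the contrapositive gives this implication.

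For the ``if'' direction the point is to make $\phi_{\sigma_{0,tH}}(A)\le\phi_{\sigma_{0,tH}}(E)$ hold \emph{uniformly} over all heart subobjects $A\subset E$ once $t$ is large. The ray $\{(0,tH):t>\sqrt{2/H^{2}}\}$ lies in $\mathcal{L}(X)$ (the only spherical class of positive rank that can make $Z_{0,tH}$ negative real is $v(\mathcal{O}_{X})$, and only for $t\le\sqrt{2/H^{2}}$), so by the wall-and-chamber structure for $v(E)$ there is $t_{1}$ past which $E$ is $\sigma_{0,tH}$-semistable for all $t>t_{1}$, or else its $\sigma_{0,tH}$-Harder--Narasimhan filtration has a fixed numerical type $v(E)=v_{1}+\dots+v_{m}$ with $m\ge2$. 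Suppose $E$ is Gieseker semistable but the second alternative holds. Then the first factor is a $\sigma_{0,tH}$-semistable sheaf $F_{1}\subset E$ of the fixed class $v_{1}$ with $\phi_{\sigma_{0,tH}}(F_{1})>\phi_{\sigma_{0,tH}}(E)$; the torsion-pair bookkeeping from the reduction step produces an honest subsheaf $\bar F_{1}=F_{1}/\mathcal{H}^{-1}(E/F_{1})\subseteq E$ with $\mathcal{H}^{-1}(E/F_{1})\in\mathcal{F}_{0,tH}$ and $\phi_{\min,\sigma_{0,tH}}(\bar F_{1})\ge\phi_{\sigma_{0,tH}}(F_{1})>\phi_{\sigma_{0,tH}}(E)$, and iterating ``take the first Harder--Narasimhan factor, then its image in $E$'' produces an honest subsheaf $A\subseteq E$ that is $\sigma_{0,tH}$-semistable with $\phi_{\sigma_{0,tH}}(A)>\phi_{\sigma_{0,tH}}(E)$. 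The imaginary-part inequality $0\le t(H\cdot c_{1}(A))\le t(H\cdot c_{1}(E))$ together with noetherianity of $\mathcal{A}_{0,tH}$ confines $v(A)$ to a finite set, so some class recurs along a sequence $t\to\infty$; for that fixed class the second-order comparison of the previous paragraph forces $\mu_{H}(A)=\mu_{H}(E)$ and $\tfrac{a(A)}{r(A)}>\tfrac{a(E)}{r(E)}$ --- a Gieseker-destabilizing subsheaf of $E$, a contradiction. Hence $E$ is $\sigma_{0,tH}$-semistable for $t\gg0$.

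The single non-formal ingredient, and the step I expect to be the main obstacle, is the boundedness used in the last paragraph: that the Mukai vectors of the potential destabilizing subobjects of a fixed $E$ remain in a finite set as $t\to\infty$ (equivalently, that $E$ lies in only finitely many walls for $v(E)$ along the ray, or that the relevant walls in this slice are bounded). This is exactly what lets all the ``$t\gg0$'' be chosen simultaneously; everything else is the asymptotic expansion of $\arg Z_{0,tH}$ controlled by the $t$-independent sign constraints of the torsion pair $(\mathcal{T}_{0,tH},\mathcal{F}_{0,tH})$ and the cohomology long exact sequences relating heart subobjects to honest subsheaves.
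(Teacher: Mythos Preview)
The paper does not prove this theorem: it is stated with the citation \cite{Bri08} and used as a black box (to identify the Gieseker chamber and to set up the initial shape in Algorithm~\ref{global reduction}), with no proof given in the text. So there is no ``paper's own proof'' to compare against.

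Your argument follows the standard route from Bridgeland's original paper: expand $\arg Z_{0,tH}$ in $1/t$, read off the slope at first order and the Gieseker correction $a/r$ at second order, and for the ``if'' direction invoke the local finiteness of walls along the ray to reduce to a fixed numerical destabilizer. You correctly isolate the one genuinely nontrivial ingredient, namely the boundedness of the set of Mukai vectors of possible destabilizing subobjects of $E$ along the ray (equivalently, that only finitely many walls for $v(E)$ meet $\{(0,tH):t>\sqrt{2/H^{2}}\}$). The paper uses exactly this finiteness elsewhere (Proposition~\ref{finite} and Corollary~\ref{finite}), and its proof there is essentially the argument from \cite[Section~9]{Bri08} that would close your gap.

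One place where your write-up is a little loose is the passage from the heart-subobject $F_{1}\subset E$ to the honest subsheaf $\bar F_{1}\subset E$ and the claim $\phi_{\min,\sigma}(\bar F_{1})\ge\phi_{\sigma}(F_{1})$. The short exact sequence of sheaves $0\to K\to F_{1}\to\bar F_{1}\to 0$ with $K=\mathcal{H}^{-1}(E/F_{1})\in\mathcal{F}_{0,tH}$ becomes, in $\mathcal{A}_{0,tH}$, the sequence $0\to F_{1}\to\bar F_{1}\to K[1]\to 0$, so what you actually get is $\phi_{\max,\sigma}(\bar F_{1})\ge\phi_{\sigma}(F_{1})$ directly, and controlling $\phi_{\min}$ requires the extra observation that $K[1]$ has phase tending to $1$ (since $K\in\mathcal{F}$ forces $H\cdot c_{1}(K)\le 0$). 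This is easily fixed and does not affect the strategy; a cleaner alternative is to compare $\phi_{\sigma}(A)$ and $\phi_{\sigma}(E)$ directly for an arbitrary heart-subobject $A$ of $E$, using that $A$ is a sheaf (as you noted) and that the imaginary part bounds $H\cdot c_{1}(A)$ between $0$ and $H\cdot c_{1}(E)$, then running the second-order comparison.
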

We shall refer the chamber that contains this stability condition the \emph{H-Gieseker chamber} of $v$.

\section{Global Reduction}\label{section3}

Let $v=(r,D,a)$ be a spherical Mukai vector with $r>0$. Let $H$ be a generic polarization. We first assume $D\cdot H\neq 0$, otherwise we may take some $H'$ in the ample cone that is sufficiently close to $H$, such that $M_{H'}=M_{H}$. Then we may assume $D\cdot H>0$: since $E\in M_{H}(v)$ is a vector bundle \cite{HL10}, $E^{*}\in M_{H}(r,-D,a)$. If $D\cdot H <0$, then $-D\cdot H>0$. By Serre duality, $ \mbox{h}^{i}(E)=\mbox{h}^{2-i}(E^{*})$, hence it suffices to compute the cohomology groups of $E^{*}$. Hence from now on we assume $v$ satisfies $r>0$ and $D\cdot H>0$. 

Since $E$ is an $H$-Gieseker stable vector bundle, $E^{*}$ is also stable. We have $ \mbox{H}^{2}(E)= \mbox{H}^{0}(E^{*})^{*}=0$ since
$$\mu(E^{*})=-\mu(E)=-D\cdot H<0=\mu(\mathcal{O}_{X}).$$
Hence the only possible non-zero cohomology groups of $E$ are $ \mbox{H}^{0}(E)$ and $ \mbox{H}^{1}(E)$. By Riemann-Roch we have
$$\mbox{h}^{0}(E)-\mbox{h}^{1}(E)=\chi(E)=r+a.$$
Hence if we know $\mbox{h}^{0}(E)$ or $\mbox{h}^{1}(E)$, the other is also known.

\subsection{Finiteness of Harder-Narasimhan Walls}

The strategy of the algorithm is to compute the Harder-Narasimhan filtration of a spherical object at a certain stability condition. To make this precise, we have to first prove that there are only finitely many walls at which the Harder-Narasimhan filtration changes.

\begin{definition}
Let $E\in \mathcal{D}^{b}(X)$ be an object. A Harder-Narasimhan chamber $U$ for $E$ is an open subset of $\mathcal{L}(X)$ such that the $\sigma$-Harder-Narasimhan filtration of $E$ is constant for $\sigma\in U$. A Harder-Narasimhan wall is the intersection of the closures of two adjacent Harder-Narasimhan chambers.
\end{definition}

For an ample divisor $\omega$ and any divisor $\beta$, consider the upper half plane slice $\mathbb{H}=\{(s\beta,t\omega)| t>0\}$. Let $\beta_{1}, \cdots , \beta_{\rho}$ be an effective basis for $\mbox{Pic}(X)$, and $\omega_{1}, \cdots , \omega_{\rho}$ be an ample basis for $\mbox{Pic}(X)$ with $\omega_{1}=H$. The $\mathcal{Q}(X)$ described in \cite{Bri08} can be embedded into
$$\mbox{Pic}(X)_{\mathbb{C}}\cong \mathbb{R}^{2\rho}=\{(s_{1} \beta_{1}, s_{2}\beta_{2}, \cdots , s_{\rho}\beta_{\rho}, t_{1}\omega_{1}, t_{2}\omega_{2} \cdots, t_{\rho}\omega_{\rho}| s_{i}, t_{j}\in \mathbb{R} \}.$$
For $\epsilon >0$ and $0<y_{0}<y_{1}$ let
$$\widetilde{\mathbb{B}}(\epsilon; y_{0}, y_{1})=\{(s_{1}, \cdots, s_{\rho},t_{1}, \cdots, t_{\rho})| 0\leq s_{1}, \cdots, s_{\rho} \leq \epsilon, y_{0}\leq t_{1} \leq y_{1}, 0\leq t_{2}, \cdots, t_{\rho}<\epsilon \}\subset \mathbb{H},$$
and let $\mathbb{B}=\mathbb{B}(\epsilon; y_{0}, y_{1})=\widetilde{\mathbb{B}}(\epsilon; y_{0}, y_{1})\cap \mbox{Stab}(X)$.

Let $E$ be some object which lies in $\mathcal{A}_{0,\omega_{1}}$ with $c_{1}(E)\cdot \omega_{1}>0$, and suppose $E$ is stable with respect to $(Z_{0, t_{1}\omega_{1}}, \mathcal{A}_{0, t_{1}\omega_{1}})$ for $t_{1}\gg 0$ (By \cite{Bri08}, this is clearly true for an $H$-stable vector bundle $E$). Then we may choose $\epsilon$ sufficiently small, such that  for any $p\in \mathbb{B}(\epsilon; y_{0}, y_{1})$, $E\in \mathcal{A}_{p}$. Let
$$\mathcal{S}_{\mathbb{B}}(E)=\{v(F)| F \mbox{ is a } \sigma-\mbox{Jordan-H\"{o}lder factor of } E \mbox{ for some }\sigma\in \mathbb{B}\}.$$
The following proof is essentially covered in \cite{Bri08} Section 9.

\begin{proposition}\label{finite}
Let the Mukai vector of $E$ be $(r,D,a)$, where $D\cdot H>0$. Then $\mathcal{S}_{\mathbb{B}(\epsilon; y_{0}, y_{1})}(E)$ is finite for fixed $y_{0}, y_{1}$ and suffciently small $\epsilon$. 
\end{proposition}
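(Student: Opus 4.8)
The plan is to follow the strategy of \cite{Bri08}, Section 9, adapting it to the specific box $\mathbb{B}(\epsilon; y_0, y_1)$. The key point is that any Jordan–H\"older factor $F$ of $E$ at any $\sigma\in\mathbb{B}$ has a Mukai vector $v(F)$ lying in a set that is a priori infinite, and we must cut it down using two kinds of bounds: a bound on the rank (and more generally the discriminant), and a bound coming from the fact that the phases of all the $\sigma$-factors must interleave with $\phi_\sigma(E)$ as $\sigma$ ranges over the compact-ish region $\mathbb{B}$.

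First I would set up notation: write $v(F)=(r',D',a')$ for a putative Jordan–H\"older factor, and recall that since $F\hookrightarrow E$ (or is a subquotient) in the heart $\mathcal{A}_p$, we have $\operatorname{Im} Z_p(F)\ge 0$ and in fact $0\le \operatorname{Im}Z_p(F)\le \operatorname{Im}Z_p(E)$ for all $p\in\mathbb{B}$. The imaginary part $\operatorname{Im}Z_{\beta,\omega}(r',D',a')=\omega\cdot(D'-r'\beta)$ is, for $p=(s_1\beta_1+\cdots, t_1\omega_1+\cdots)$ with the $s_i$ and $t_2,\dots,t_\rho$ in $[0,\epsilon)$ and $t_1\in[y_0,y_1]$, a quantity whose leading behaviour is $t_1\,\omega_1\cdot D' = t_1\, H\cdot D'$ plus terms of size $O(\epsilon)$. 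Evaluating at a generic point of $\mathbb{B}$ and using $\operatorname{Im}Z_p(F)\le\operatorname{Im}Z_p(E)$ together with $\operatorname{Im}Z_p(F)\ge0$ bounds $H\cdot D'$ into a finite set of integers $\{0,1,\dots,N\}$ where $N$ depends only on $D\cdot H$ (and mildly on $\epsilon,y_0,y_1$); this is exactly the argument that makes $\mathcal{A}_{0,\omega}$ Noetherian-like in the large volume regime, localized to the box. This handles the "$c_1\cdot H$-component" of $v(F)$.

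Next I would bound the rank $r'$ and the remaining data. Since $E$ is $\sigma$-stable for $t_1\gg 0$ and stays in $\mathcal{A}_p$ on $\mathbb{B}$, each Jordan–H\"older factor $F$ at a wall in $\mathbb{B}$ must have $\phi_\sigma(F)=\phi_\sigma(E)$ on that wall and hence $Z_\sigma(F)$ lies on the ray $\mathbb{R}_{>0}Z_\sigma(E)$; combined with the already-bounded $\operatorname{Im}Z_\sigma(F)\in[0,\operatorname{Im}Z_\sigma(E)]$ this forces $Z_\sigma(F)$ into a bounded region of $\mathbb{C}$, and unwinding $Z_{\beta,\omega}$ this bounds the real part $-a'-r'(\beta^2-\omega^2)/2+D'\cdot\beta$ as well. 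Here one uses that on $\mathbb{B}$ the quantity $\omega^2 = t_1^2 H^2 + O(\epsilon)$ is bounded below away from zero (because $t_1\ge y_0>0$), so a bound on $\operatorname{Re}Z$ together with bounded $D'\cdot H$, bounded $D'\cdot\beta$ (which also follows: $D'\cdot\omega$ bounded plus the discriminant inequality below), yields a bound on $r'$. To control $D'$ fully (not just $D'\cdot H$ and $D'\cdot\beta_i$) and to bootstrap a bound on $r'$, invoke the Bogomolov-type inequality for $\sigma$-semistable objects in $\mathcal{A}_p$ (or simply that Jordan–H\"older factors of $E$ have $v(F)^2\ge -2$, since in the K3 case every stable object is either rigid or has $v^2\ge 0$), i.e. $D'^2\ge 2r'a'-2$; this, together with the Hodge index theorem controlling $D'^2$ in terms of $D'\cdot H$ and the orthogonal component, closes the loop and confines $(r',D',a')$ to a finite set.

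The main obstacle I expect is the bootstrapping between the bound on $r'$ and the bound on $a'$: each of these inequalities involves the other, and the cross term $D'\cdot\beta$ and the sign of $\beta^2-\omega^2$ have to be tracked carefully so that the system of inequalities is genuinely finite rather than circular. The device that resolves it is that $\mathbb{B}$, while not compact in $\operatorname{Stab}(X)$, has compact closure in $\operatorname{Pic}(X)_{\mathbb{C}}$ with $\omega$ bounded away from the boundary of the ample cone in the $\omega_1$-direction (since $t_1\in[y_0,y_1]$ with $y_0>0$) and only bounded in the other directions — so one picks finitely many "test points" $p\in\mathbb{B}$, writes the finitely many linear inequalities $0\le\operatorname{Im}Z_p(F)\le\operatorname{Im}Z_p(E)$ and $Z_p(F)\in$ (a fixed bounded region determined by $Z_p(E)$ and the wall structure), and observes that the common solution set in the lattice $\operatorname{H}^*_{alg}(X)$ is finite. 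This is precisely the local finiteness of walls argument of \cite{Bri08}, Proposition 9.3, transported to $\mathbb{B}$, so I would present the proof as a direct citation-and-adaptation rather than reproving local finiteness from scratch, filling in only the estimates specific to the shape of $\mathbb{B}(\epsilon;y_0,y_1)$ and the hypothesis $D\cdot H>0$.
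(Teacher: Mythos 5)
Your overall plan is the same as the paper's: both are adaptations of \cite{Bri08}, Section 9, and both proceed by bounding the central charges of putative Jordan--H\"older factors on the box $\mathbb{B}$. However, there is a conceptual slip in your sketch that needs to be addressed.

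You assert that ``each Jordan--H\"older factor $F$ at a wall in $\mathbb{B}$ must have $\phi_\sigma(F)=\phi_\sigma(E)$ on that wall and hence $Z_\sigma(F)$ lies on the ray $\mathbb{R}_{>0}Z_\sigma(E)$.'' This is only true on the first wall crossed when entering $\mathbb{B}$ from the Gieseker chamber, where $E$ is still $\sigma$-semistable. The set $\mathcal{S}_{\mathbb{B}}(E)$ also includes the $\sigma$-Jordan--H\"older factors for $\sigma$ deeper inside $\mathbb{B}$, where $E$ is already unstable: there one first takes the Harder--Narasimhan filtration of $E$ and then the Jordan--H\"older factors of each graded piece, and these have a whole spread of phases, not $\phi_\sigma(E)$. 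So the step that confines $Z_\sigma(F)$ to a ray, and hence to a bounded region of $\mathbb{C}$, does not go through as stated. The paper's argument is careful precisely here: it first bounds the Mukai vectors of \emph{subobjects} of $E$ (this controls the first Harder--Narasimhan factor, whose phase is bounded below by that of $E$ by maximality), showing $r'$ is bounded below and $a'$ is bounded above via the sign of $\beta^2-\omega^2$ and a perturbation in $\lambda$; it then deduces that $\lvert Z_{\beta,\omega}(v')\rvert/\lvert Z_{\beta,\omega}(v)\rvert$ is universally bounded for first factors, and finally propagates this to \emph{all} Jordan--H\"older factors using the convexity of the Harder--Narasimhan polygon together with Lemma 9.2 of \cite{Bri08}. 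You do gesture at ``presenting the proof as a direct citation-and-adaptation'' of Bridgeland's local finiteness, and that is indeed the correct fallback, but as written your sketch omits the Harder--Narasimhan-polygon step that bridges from first factors to arbitrary factors, and without it the argument has a genuine hole. Your appeal to a Bogomolov-type inequality and the Hodge index theorem to close the rank/discriminant loop is a reasonable alternative to the paper's $\lambda$-perturbation, but it does not repair the phase issue.
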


\begin{proof}
  Let the Mukai vector of $E$ be $(r,D,a)$.
  Take any $z_{1}>\sqrt{2/H^{2}}$. Take sufficiently small $\epsilon'$ so that in the box
  $$B=\{(s,t)| 0 \leq s_{1}, \cdots, s_{\rho}, t_{2}, \cdots, t_{\rho} \leq \epsilon', z_{1}-\epsilon' \leq t_{1} \leq z_{1}\},$$
  the Harder-Narasimhan filtration of $E$ is fixed. Let $E'$ be the first factor. Let $B'$ be a slightly smaller box inside $B$, and let $\mu'=\mbox{min}\{-\frac{1}{\mu_{s,t}(A')} | (s,t)\in B \}$. Consider the set
  $$\mathcal{C}=\bigcup_{(s,t)\in B'}\{\mbox{Mukai vectors of subobjects of }E \mbox{ in }\mathcal{A}_{s,t}\}. $$
  I claim for all $(r',D',a')\in \mathcal{C}$, $r'$ is bounded below and $a'$ is bounded above. Suppose not, let $v_{i}=(r_{i}, D_{i}, a_{i})\in \mathcal{C}$ be a sequence such that $\mbox{lim}_{i \rightarrow \infty}r_{i}=-\infty$. Suppose $v_{i}$ is the Mukai vector of a suboject $E_{i}\subset E$ in $\mathcal{A}_{i}=\mathcal{A}_{\beta^{(i)}, \omega^{(i)}}$, let $(\beta, \omega)$ be an accumulation point of $\{(\beta^{(i)}, \omega^{(i)})\}$ and we may assume $\mbox{lim}(\beta^{(i)}, \omega^{(i)})=(\beta,\omega)$. Then since $E_{i}\subset E$ in $\mathcal{A}_{i}$, their central charges at $Z_{\beta^{(i)}, \omega^{(i)}}$ have imaginary parts no larger than that of $E$: 
  $$\omega^{(i)}(D_{i}-r_{i}\beta^{(i)})\leq \omega^{(i)}(D-r\beta^{(i)}).$$
Note that the right hand side has a finite limit, so $\omega^{(i)}(D_{i}-r_{i}\beta^{(i)})$ is bounded. On the other hand, note that for a $\lambda$ sufficiently close to 1, $(\beta^{(i)}, \lambda\omega^{(i)})\in B$
  $$\frac{-a_{i}-r_{i} \frac{(\beta^{(i)})^{2}-(\lambda \omega^{(i)})^{2}}{2}+D_{i}\beta^{(i)} }{\omega^{(i)}(D_{i}-r_{i}\beta^{(i)})} $$
  is universally bounded below by $\mu'$, otherwise $E_{i}$ has larger phase than $E'$ and would have been the first Harder-Narasimhan factor. Moreover, since $\omega^{(i)}H(D_{i}-r_{i}\beta^{(i)})$ is universally bounded,
  $$-a_{i}-r_{i}\frac{(\beta^{(i)})^{2}-(\lambda \omega^{(i)})^{2}}{2}+ D_{i}\beta^{(i)} $$
is universally bounded below. Note that since $\epsilon$ is small, $(\beta^{(i)})^{2}-(\lambda \omega^{(i)})^{2}$ are negative. If $a_{i}$ is not bounded above or $r_{i}$ not bounded below, we may perturb $\lambda$ so that this quantity is unbounded below, a contradiction. The claim is proved.

Now let
$$\mathcal{S}'_{\mathbb{B}(\epsilon; y_{0}, y_{1})}(E)=\{v(F)| F \mbox{ is the first Jordan-H\"{o}lder factor of } E \mbox{ for some }\sigma \in \mathbb{B} \}.$$
We may choose $\epsilon$ small enough so that for every $(\beta, \omega) \in \mathbb{B}$, there exists a scaling $(\beta, \nu \omega)\in B$ for some $\nu\in \mathbb{R}_{+}$. Note that $\mathcal{A}_{\beta, \nu \omega}$ is independent of $\nu$, we have $\mathcal{S}'\subset \mathcal{C}$. For any $v'=(r',D',a')\in \mathcal{S}'$, since $a'$ is bounded above and $r'$ is bounded below, the imaginary parts of the central charges are universally bounded, and the phase is universally bounded below by the phase of $E$, we see $\frac{|Z_{\beta, \omega}(v')|}{|Z_{\beta,\omega}(v)|} $ is universally bounded above and below for all $(\beta, \omega)\in \mathbb{B}$. Hence by the geometry of the Harder-Narasimhan polygon, $\frac{|Z_{\beta, \omega}(v'')|}{|Z_{\beta, \omega}(v)|} $ is universally bounded above and below for all $v''\in \mathcal{S}(\mathbb{B})$. By Lemma 9.2 of \cite{Bri08}, it suffices to show that $|Z_{\beta, \omega}(E)|$ is bounded above, but this is clear since we may extend $Z_{s,t}$ continuously to $\widetilde{\mathbb{B}}$.
\end{proof}

\begin{corollary}\label{finite}
The number of Harder-Narasimhan walls in $\mathbb{B}$ is finite.
\end{corollary}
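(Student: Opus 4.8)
The plan is to upgrade the finiteness of $\mathcal{S}_{\mathbb{B}}(E)$ just proved into finiteness of the Harder--Narasimhan walls by a standard wall-analysis. The first step is to record that the Mukai vectors of all $\sigma$-Harder--Narasimhan factors of $E$ occurring for $\sigma\in\mathbb{B}$ also form a finite set, call it $\mathcal{T}$. Indeed, every such factor $G$ is $\sigma$-semistable, so its Jordan--H\"older factors are Jordan--H\"older factors of $E$ and $v(G)$ is a sum of elements of $\mathcal{S}_{\mathbb{B}}(E)$; moreover $v(G)=v(E_{i})-v(E_{i-1})$ is a difference of Mukai vectors of subobjects of $E$ in the relevant hearts, and the proof above shows these have rank bounded below and last coordinate bounded above, while their $Z$-imaginary parts are bounded by that of $E$. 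These constraints leave only finitely many vectors, so $\mathcal{T}$ is finite, and $v(E)\in\mathcal{T}$.

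The second step is that, as $\sigma$ varies in $\mathbb{B}$, the $\sigma$-Harder--Narasimhan filtration of $E$ changes only when $\sigma$ crosses either an actual wall (in the sense of \S\ref{section2}) for some $w\in\mathcal{T}$, or a numerical wall $W(u,w)$ with $u,w\in\mathcal{T}$ not proportional. Fixing a Harder--Narasimhan chamber $U$ with factors $G_{1},\dots,G_{n}$ of classes in $\mathcal{T}$, the filtration fails to extend across $\partial U$ only if some $G_{i}$ stops being $\sigma$-semistable, which is possible only on an actual wall for $v(G_{i})\in\mathcal{T}$, or if two consecutive phases coincide, $\phi_{\sigma}(G_{i})=\phi_{\sigma}(G_{i+1})$, which is the numerical wall $W(v(G_{i}),v(G_{i+1}))$. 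When a factor becomes unstable its new Harder--Narasimhan pieces refine $G_{i}$ and are again Harder--Narasimhan factors of $E$, so the updated filtration still has all classes in $\mathcal{T}$ and the analysis continues inductively. Hence every Harder--Narasimhan wall of $E$ in $\mathbb{B}$ is contained in
$$\Big(\bigcup_{w\in\mathcal{T}}\{\text{actual walls for }w\}\Big)\ \cup\ \Big(\bigcup_{\substack{u,w\in\mathcal{T}\\ u\not\parallel w}}W(u,w)\Big).$$

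It remains to count. There are finitely many pairs from the finite set $\mathcal{T}$, and for $u\not\parallel w$ the locus $W(u,w)$ is a single real-analytic codimension-one submanifold, so the second union meets $\mathbb{B}$ in finitely many submanifolds; and since $\mathbb{B}=\widetilde{\mathbb{B}}(\epsilon;y_{0},y_{1})\cap\mathrm{Stab}(X)$ is a bounded region with compact closure in $\mathcal{L}(X)$, the local finiteness of the wall-and-chamber structure \cite{Bri08} gives only finitely many actual walls for each fixed $w$ meeting it, so the first union does too. On each connected component of the complement in $\mathbb{B}$ of this finite union the Harder--Narasimhan filtration of $E$ is constant (by the second step), so every Harder--Narasimhan chamber is a union of some of the finitely many such components; hence there are finitely many Harder--Narasimhan chambers, and since a Harder--Narasimhan wall is the intersection of the closures of two of them, there are finitely many Harder--Narasimhan walls in $\mathbb{B}$.

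The step I expect to be the main obstacle is the bookkeeping in the second paragraph: one must verify that \emph{every} change of the Harder--Narasimhan filtration --- including jumps in its length, or a factor acquiring several destabilizing subobjects at once --- is genuinely caused by a wall of one of the two listed types and keeps all intervening classes inside the finite set $\mathcal{T}$; this is exactly where the rank and $a$-coordinate bounds from the proof above are used. A secondary, more routine point is converting Bridgeland's \emph{local} finiteness of walls into honest finiteness over $\mathbb{B}$, for which the boundedness built into the definition of $\widetilde{\mathbb{B}}(\epsilon;y_{0},y_{1})$ is precisely what is needed.
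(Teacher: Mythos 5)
The paper states this corollary without a separate proof, treating it as immediate from Proposition~3.2, and your overall strategy — pass from finiteness of the Jordan--H\"older classes to finiteness of the HN factor classes $\mathcal{T}$, observe that HN walls are numerical walls among $\mathcal{T}$ (or actual walls for classes in $\mathcal{T}$), and count a finite union of codimension-one loci in a bounded region — is exactly the implicit route.

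There is, however, a soft spot in your first step that is worth naming. You justify the finiteness of $\mathcal{T}$ by saying that each $v(G)=v(E_i)-v(E_{i-1})$ has rank bounded below, last coordinate bounded above, and imaginary part of the central charge bounded, and that ``these constraints leave only finitely many vectors.'' In Picard rank one this works, because bounding $r$, $a$, and $\mathrm{Im}\,Z$ does pin down $D=dH$. But for Picard rank greater than one, $\mathrm{Im}\,Z_{\beta,\omega}(v)=\omega\cdot(D-r\beta)$ only controls $D\cdot\omega$, not $D$ itself, so these three bounds do \emph{not} cut down to a finite set of Mukai vectors; nor does the observation that $v(G)$ is a sum of elements of the finite set $\mathcal{S}_{\mathbb{B}}(E)$ help on its own, since the number of summands is not a priori bounded. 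The repair is to run the same argument the paper uses at the end of the proof of Proposition~3.2: the intermediate vertices $Z(E_i)$ of the Harder--Narasimhan polygon satisfy a uniform bound on $|Z_{\beta,\omega}(E_i)|/|Z_{\beta,\omega}(E)|$, hence so do the differences $Z(G_i)$, and Lemma~9.2 of \cite{Bri08} then forces $\mathcal{T}$ to be finite. With that substitution your second and third paragraphs go through as written, and the conclusion that there are only finitely many Harder--Narasimhan chambers, hence finitely many walls, in $\mathbb{B}$ follows exactly as you say.
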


We donote the set of Harder-Narasimhan walls by $HN(\mathbb{B})$.

\subsection{Separating Walls}

The idea of the algorithm is to compute the Harder-Narasimhan filtration at a certain stability condition $\sigma_{0}$. In this subsection, we will choose a path $\mathbf{b}$ from a stability condition $\sigma_{1}$ in the Gieseker chamber to $\sigma_{0}$, so that the intersections of all the walls in $HN(\mathbb{B})$ with $\mathbf{b}$ are pairwise distinct. The order on $\mathbf{b}$ induces an order on $HN(\mathbb{B})$ that records the order of touching walls when we move from $\sigma_{1}$ to $\sigma_{0}$ along $\mathbf{b}$. We also show how to compute this order.

The next lemma shows that all Harder-Narasimhan walls in $\mathbb{B}$ are locally a function in $s$.

\begin{lemma}\label{impossible}
  There is no divisor $D$ on a K3 surface such that
  $$D^{2}=-2, D\cdot H=0.$$
\end{lemma}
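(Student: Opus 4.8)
The statement is purely lattice-theoretic, so the plan is to work entirely inside the algebraic lattice $\mathrm{H}^*_{alg}(X)$ equipped with the Mukai pairing and its natural Hodge-theoretic structure, or more directly inside $\mathrm{Pic}(X)$ with the intersection form. First I would record that, since $H$ is ample, $H\cdot H > 0$, and the intersection form on $\mathrm{Pic}(X)$ has signature $(1,\rho-1)$ by the Hodge index theorem. The claim is then that a class $D$ cannot simultaneously satisfy $D^2 = -2$ and $D\cdot H = 0$.

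The key step is the observation that $H^\perp \subset \mathrm{Pic}(X)_{\mathbb R}$ is a negative-definite subspace: because $H$ spans a positive line and the form has signature $(1,\rho-1)$, its orthogonal complement must be negative definite. So if $D\cdot H = 0$ and $D$ is a nonzero real class, then $D^2 < 0$; this already shows $D^2 = -2$ is consistent only with $D\neq 0$, so that part is not yet a contradiction. To get the genuine obstruction I would instead use that $X$ is a K3 surface, hence $\mathrm{Pic}(X)$ is an \emph{even} lattice and $-2$ is an attainable self-intersection, so the naive parity or definiteness argument alone is not enough — and this is where I expect the real content to lie. The resolution is that for a K3 surface one does not merely have an abstract even lattice of signature $(1,\rho-1)$; one has the additional input that $X$ carries an \emph{ample} class $H$, and the cone of classes of negative square in $H^\perp$ that are effective is governed by $(-2)$-curves, which pair nontrivially with any ample divisor. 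Concretely: if $D^2 = -2$ then by Riemann–Roch $\chi(\mathcal O_X(D)) = 2 + \tfrac12 D^2 = 1 > 0$, so either $D$ or $-D$ is effective; but an effective nonzero divisor has strictly positive intersection with the ample class $H$, contradicting $D\cdot H = 0$. That contradiction is the heart of the proof, and I would present it in that order: Riemann–Roch forces effectivity of $\pm D$, ampleness of $H$ forbids $(\pm D)\cdot H = 0$.

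So the skeleton is: (1) recall Riemann–Roch on a K3, $\chi(\mathcal O_X(D)) = 2 + \tfrac12 D^2$; (2) from $D^2 = -2$ deduce $\chi(\mathcal O_X(D)) = 1$, hence $h^0(\mathcal O_X(D)) + h^0(\mathcal O_X(-D)) \geq 1$ by Serre duality ($h^2(\mathcal O_X(D)) = h^0(\mathcal O_X(-D))$ since $K_X = \mathcal O_X$), so $D$ or $-D$ is effective and nonzero; (3) invoke ampleness of $H$: any nonzero effective divisor $C$ satisfies $C\cdot H > 0$, whence $|D\cdot H| > 0$, contradicting $D\cdot H = 0$. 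The main obstacle, such as it is, is resisting the temptation to "prove" it by definiteness of $H^\perp$ alone — that only gives $D^2<0$, not the sharper non-existence — and instead using the effectivity input from Riemann–Roch. No delicate estimates are needed; the only subtlety is getting the logical structure right and remembering that $K_X$ is trivial so that the Serre-dual term is exactly $h^0(\mathcal O_X(-D))$.
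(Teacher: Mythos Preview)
Your proposal is correct and follows essentially the same argument as the paper: Riemann--Roch gives $\chi(\mathcal{O}_X(D))=1$, Serre duality with $K_X=\mathcal{O}_X$ forces $D$ or $-D$ to be effective, and then ampleness of $H$ contradicts $D\cdot H=0$. Your additional remark that the Hodge index theorem alone only yields $D^2<0$ (not a contradiction) is a helpful clarification, but the core proof is the same as the paper's.
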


\begin{proof}
  If $D^{2}=-2$, by Riemann-Roch, $\chi(\mathcal{O}_{X}(D))=2+\frac{1}{2}D^{2}=1 $. Either $ \mbox{H}^{0}(\mathcal{O}_{X}(D))$ or $ \mbox{H}^{2}(\mathcal{O}_{X}(D))= \mbox{H}^{0}(\mathcal{O}_{X}(-D))^{*}$ is non-zero. Hence $D$ or $-D$ is effective, so $D\cdot H\neq 0$.
\end{proof}

The Harder-Narasimhan wall $W$, on each slice $\mathbb{H}=\mathbb{H}_{s\beta, t\omega_{1}}$, is not a vertical wall, hence a function near $s=0$. Consequently, by the inverse function theorem locally near $\{(0, t_{1}\omega_{1})| t_{1}\in \mathbb{R}_{+}\}$ it is still a function in $(s_{1}, \cdots, s_{\rho}, t_{2}, \cdots, t_{\rho})$ valued in $t_{1}$. We may write it as $t_{1}=f_{W}(s_{1}, \cdots, s_{\rho}, t_{2}, \cdots, t_{\rho})$, where recall that we have chosen the embedding
$$\mathcal{Q}(X)\subset \mbox{Pic}(X)\otimes \mathbb{C}=\{(s_{1}\beta_{1}, \cdots, s_{\rho}\beta_{\rho}, t_{1}\omega_{1}, \cdots, t_{\rho}\omega_{\rho})| s_{i}, t_{j}\in \mathbb{R}\},$$
$\omega_{i}$ are ample, and $\beta_{i}$ are chosen to be effective. Then we may write
$$t_{1}=f_{0}(W)+\partial s_{1}(W)s_{1}+\cdots +\partial s_{\rho}(W)s_{\rho}+ \partial t_{2}(W) t_{2}+ \cdots +\partial t_{\rho} (W) t_{\rho} +o(s,t),$$
where $o(s,t)$ consists of higher order terms of $s_{i}, t_{j}$.
Let
$$\mathcal{W}(\mathbb{B})=\{\partial W = (f_{0}(W), \partial s_{1}(W), \cdots, \partial s_{\rho} (W), \partial t_{2}(W), \cdots, \partial t_{\rho}(W))| W\in HN(\mathbb{B})\}\subset \mathbb{R}^{2\rho}.$$
By Proposition \ref{finite}, this is a finite set. The next lemma shows that $\mathcal{W}(\mathbb{B})$ separates all walls.

\begin{lemma}
$HN(\mathbb{B}) \longrightarrow \mathcal{W}(\mathbb{B})$ is bijective. 
\end{lemma}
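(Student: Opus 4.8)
The claim is that the map $HN(\mathbb{B}) \longrightarrow \mathcal{W}(\mathbb{B})$ sending a Harder--Narasimhan wall $W$ to its jet $\partial W = (f_0(W), \partial s_1(W), \dots, \partial t_\rho(W))$ is a bijection. Surjectivity is automatic by construction, so the content is injectivity: two distinct Harder--Narasimhan walls cannot have the same first-order expansion along the coordinate directions at the origin of $\mathcal{L}(X)$ near $\{(0, t_1\omega_1)\}$. The plan is to reduce this to the rigidity of numerical walls. Recall that every Harder--Narasimhan wall is in particular a numerical wall (it is a locus where $\phi_\sigma(u) = \phi_\sigma(v)$ for the Mukai vectors of some subobject/quotient appearing in the filtration, and these Mukai vectors lie in the finite set $\mathcal{S}_{\mathbb{B}}(E)$ from Proposition \ref{finite}). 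A numerical wall $W(u,v)$ is cut out inside $\mathcal{L}(X)$ by a single real-analytic equation coming from $\mathrm{Im}\bigl(\overline{Z_{\beta,\omega}(u)} Z_{\beta,\omega}(v)\bigr) = 0$, and $Z_{\beta,\omega}$ is a polynomial (indeed quadratic) in the coordinates $(s_i, t_j)$. So each wall is the zero locus of an explicit low-degree polynomial, and I would show that such a polynomial is determined — up to the finitely many possibilities parametrized by the pairs of Mukai vectors involved — by its linearization at the chosen base point, after using that the wall passes through a prescribed point.

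Concretely, here are the steps. First, fix the base point: by Lemma \ref{impossible} there is no spherical divisor orthogonal to $H$, so on the line $\{(0,t_1\omega_1)\}$ the central charges $Z_{0,t_1\omega_1}$ are well-behaved and every relevant numerical wall crosses this line at a single point $f_0(W)$ (this is exactly why the wall is locally a graph $t_1 = f_W(s,t)$ by the inverse function theorem, as the excerpt sets up). Second, write the defining equation of $W(u,v)$: for $u = (r',D',a')$ and $v = (r,D,a)$ this is $g_{u,v}(s_1,\dots,t_\rho) = 0$ where $g_{u,v} = \mathrm{Re}(Z(u))\mathrm{Im}(Z(v)) - \mathrm{Re}(Z(v))\mathrm{Im}(Z(u))$, a polynomial whose coefficients are fixed universal expressions in the intersection numbers $\beta_i\cdot\beta_j$, $\beta_i\cdot\omega_j$, $\omega_i\cdot\omega_j$ and the entries of $u,v$. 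Third — the heart of the argument — show that the graph $t_1 = f_W(s,t)$ obtained by solving $g_{u,v} = 0$ for $t_1$ near the base point has a Taylor expansion whose constant and linear terms $(f_0(W), \partial s_1(W), \dots, \partial t_\rho(W))$ already determine the polynomial $g_{u,v}$ up to scalar, hence determine the wall. The cleanest route: the two vectors $u,v$ span a rank-2 primitive sublattice $\Lambda \subset \mathrm{H}^*_{alg}(X)$, the numerical wall depends only on $\Lambda$ (not on the chosen generators), and for a fixed rank-2 lattice the wall $\{Z(\Lambda) \text{ lies on a line through } 0\}$ is, in the $(s\beta, t\omega)$-type coordinates, literally a real quadric; one checks directly that its 1-jet at a point it passes through pins down the quadric among the (finitely many, by Proposition \ref{finite}) candidate lattices.

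I expect the main obstacle to be the third step: ruling out a genuine coincidence of $1$-jets between two different walls. The subtlety is that $\mathcal{W}(\mathbb{B})$ records only the linear part, and a priori two distinct quadrics could agree to first order at a common point. The way around this is precisely to exploit that we are not comparing arbitrary quadrics but the very rigid family coming from Bridgeland central charges, together with the normalization that both walls pass through a point on the axis $\{(0,t_1\omega_1)\}$ and are solved for the distinguished coordinate $t_1$: if $\partial W = \partial W'$ then $W$ and $W'$ have the same tangent hyperplane at the same point $f_0 = f_0'$ on this axis, and the constraint that each is the vanishing of a central-charge determinant forces the underlying rank-2 lattices to share enough numerical data that they must coincide (here one uses genericity of $H$ and, again, Lemma \ref{impossible} to exclude the degenerate configurations). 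This is morally the same phenomenon that makes the wall-and-chamber structure on $\mathcal{L}(X)$ locally finite with walls meeting the axis transversally; I would phrase it as: the assignment $W \mapsto (\text{point on the axis}, \text{tangent direction there})$ is injective on numerical walls not containing the axis, and Harder--Narasimhan walls are of this type by Lemma \ref{impossible}.
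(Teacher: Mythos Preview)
Your overall strategy is the same as the paper's: show that the $1$-jet $\partial W$ determines the wall. Surjectivity is indeed by definition, and the content is injectivity. The issue is your third step, which you yourself flag as the main obstacle and then do not resolve. Saying that ``the constraint that each is the vanishing of a central-charge determinant forces the underlying rank-2 lattices to share enough numerical data that they must coincide'' is a hope, not an argument; you give no mechanism by which equality of $1$-jets at a single point forces equality of rank-$2$ lattices. Invoking finiteness of the candidate lattices (Proposition~\ref{finite}) does not help either: finiteness by itself does not preclude two of the finitely many walls from sharing a $1$-jet.

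The paper closes this gap by an idea you do not use. Rather than analyzing lattices or wall equations in the full $2\rho$-dimensional space, it slices by every $2$-plane $\mathbb{R}^{2}_{\beta,\omega}=\{(s\beta,t\omega)\}$. On each such slice the numerical wall is a semicircle centered on the $\beta$-axis (this is Maciocia's structure theorem \cite{Mac14}). The data $\partial W$ determines, for each slice, the value $g(0)$ and the slope $g'(0)$ of the graph $t=g(s)$; but a semicircle centered on the horizontal axis is pinned down by a single point on it together with the tangent slope there (two conditions for the two parameters: center abscissa and radius). Hence $W\cap\mathbb{R}^{2}_{\beta,\omega}$ is determined on every slice, and since these slices cover $\mathcal{L}(X)$, the wall $W$ is determined globally. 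This sidesteps any lattice comparison or appeal to finiteness and gives the stronger statement that the $1$-jet determines $W$ among \emph{all} numerical walls, not just among finitely many candidates.
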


\begin{proof}
  We need to show that if $(f_{0}(W), \partial s_{1}(W), \cdots, \partial s_{\rho}(W), \partial t_{2}(W), \cdots, \partial t_{\rho}(W))$ is known, then the wall $W$ is determined. Viewing $W\subset \mathbb{R}^{2n}$, it has an equation
  $$F=F_{0}+\sum_{i=1}^{\rho}\frac{\partial F}{\partial s_{i}}s_{i}+\sum_{i=1}^{\rho}\frac{\partial F}{\partial t_{i}}+o(s,t). $$
  By assumption, $[F]=[F_{0}, \frac{\partial F}{\partial s_{1}}, \cdots, \frac{\partial F}{\partial s_{\rho}}, \frac{\partial F}{\partial t_{1}}, \cdots, \frac{\partial F}{\partial t_{\rho}}]\in \mathbb{P}^{2\rho}_{\mathbb{R}}$ is known. Hence if we take any slice $\mathbb{R}^{2}_{\beta, \omega}=\{(s\beta, t\omega)| s,t\in \mathbb{R}\}$ and consider $W\cap \mathbb{R}^{2}_{\beta, \omega}$, it is a circle with center lying on the $\beta$-axis by \cite{Mac14}. We may write its equation as $t=g(s)$, then $g(0)$, $g'(0)$ are already determined by $[F]$. Since $g$ is a circle centered on the $\beta$-axis, there is a unique such $g$ with given $g(0)$, $g'(0)$. Now since $\mathcal{L}(X)\subset \mathbb{R}^{2n}$ is covered by such $\mathbb{R}^{2}_{\beta, \omega}$ for $\beta\in \mbox{Pic}(X)_{\mathbb{R}}$ and $\omega \in \mbox{Amp}(X)_{\mathbb{R}}$, $W\cap \mathcal{L}(X)$ is uniquely determined. Hence in particular $HN(\mathbb{B})\longrightarrow \mathcal{W}(\mathbb{B})$ is bijective.  
\end{proof}

Before we proceed to separate the walls, we need a technical lemma that describes $\mathcal{L}(X)$ near the $\omega_{1}$-axis. Let $ \mathbb{H}:=\{(s\beta, t\omega)|s\in \mathbb{R}, t\in \mathbb{R}_{>0}\}$ be an upper half plane. Not all points in $\mathbb{H}$ are stability conditions, however we have the following estimate:

\begin{lemma}\label{slice}
Let $ V=\mathbb{H}\setminus \mathbb{H}\cap \mathcal{L}(X)$. Then 
$$ V \cap \{s=0\}=\{0\}\times (0,\sqrt{2/\omega^{2}}],$$
$$ \mbox{lim}_{\epsilon \rightarrow 0} \mbox{sup}\{t : (s,t)\in V, 0<s<\epsilon\}=0.$$
\end{lemma}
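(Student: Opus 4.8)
The plan is to make the central charge completely explicit on the slice and reduce both statements to one quadratic identity. Write $\beta,\omega$ for the two fixed classes cutting out $\mathbb{H}$; since $\beta$ is effective and $\omega$ is ample, $\omega\cdot\beta>0$ and $\omega^{2}>0$. For a spherical Mukai vector $v=(r,D,a)$ with $r>0$, a direct computation gives, at $(s\beta,t\omega)\in\mathbb{H}$,
\[
\operatorname{Im}Z_{s\beta,t\omega}(v)=t\,\omega\cdot(D-rs\beta),\qquad
\operatorname{Re}Z_{s\beta,t\omega}(v)=\tfrac{r\omega^{2}}{2}\,t^{2}-\Bigl(a+\tfrac{r}{2}s^{2}\beta^{2}-s\,D\cdot\beta\Bigr).
\]
Hence $Z_{s\beta,t\omega}(v)\in\mathbb{R}_{\le 0}$ forces $\omega\cdot(D-rs\beta)=0$, and on that locus, using $2ra=D^{2}+2$ (from $v^{2}=-2$), the condition $\operatorname{Re}Z_{s\beta,t\omega}(v)\le 0$ becomes $0<t\le t_{0}(v)$ where
\[
r^{2}\omega^{2}\,t_{0}(v)^{2}=2+(D-rs\beta)^{2}.
\]
Because $\omega\cdot(D-rs\beta)=0$ and the intersection form on $\omega^{\perp}\subset\mbox{Pic}(X)_{\mathbb{R}}$ is negative definite (Hodge index), $(D-rs\beta)^{2}\le 0$, so $t_{0}(v)^{2}\le 2/(r^{2}\omega^{2})\le 2/\omega^{2}$, and a genuine point of $V$ is produced only when $-2<(D-rs\beta)^{2}\le 0$.

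For the first identity I set $s=0$. Then $\operatorname{Im}Z=0$ forces $\omega\cdot D=0$, so $D^{2}\le 0$; since $\mbox{Pic}(X)$ is an even lattice and $D^{2}>-2$ is needed, $D^{2}=0$ and hence $D=0$. Then $v^{2}=-2$ with $r>0$ leaves only $v=v(\mathcal{O}_{X})=(1,0,1)$, and indeed $Z_{0,t\omega}(\mathcal{O}_{X})=-1+\tfrac{t^{2}\omega^{2}}{2}\in\mathbb{R}$, which is $\le 0$ exactly for $0<t\le\sqrt{2/\omega^{2}}$. Thus $V\cap\{s=0\}=\{0\}\times(0,\sqrt{2/\omega^{2}}\,]$.

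For the second identity, fix $\delta>0$; I will produce $\epsilon_{\delta}>0$ with $V\cap\{0<s<\epsilon_{\delta}\}\subset\{t<\delta\}$, which gives the limit as $\delta\to 0$. It suffices to take $\epsilon\le 1$, so suppose $(s,t)\in V$ with $0<s<1$ and $t\ge\delta$, cut out by a spherical $v=(r,D,a)$, $r>0$. From $t\le t_{0}(v)$ and $t_{0}(v)^{2}\le 2/(r^{2}\omega^{2})$ we get $r^{2}\le 2/(\delta^{2}\omega^{2})$, so $r$ is bounded in terms of $\delta$ alone. Next, in the positive definite form on $\omega^{\perp}$ we have $-(D-rs\beta)^{2}<2$, so the $\omega^{\perp}$-component of $D$ has bounded length, while $|\omega\cdot D|=rs(\omega\cdot\beta)$ is bounded (as $r$ is bounded and $s<1$), so the $\omega$-component of $D$ is bounded too; hence $D$ lies in a bounded, therefore finite, subset $\mathcal{D}_{\delta}$ of the lattice $\mbox{Pic}(X)$, depending only on $\delta$. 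If $D=0$ then $\omega\cdot D=0=rs(\omega\cdot\beta)$ forces $s=0$, excluded; so the $D$ that occur satisfy $\omega\cdot D>0$, hence $\omega\cdot D\ge m_{\delta}:=\min\{\omega\cdot D':D'\in\mathcal{D}_{\delta},\ \omega\cdot D'>0\}>0$. Then
\[
s=\frac{\omega\cdot D}{r\,(\omega\cdot\beta)}\ \ge\ \frac{m_{\delta}}{r\,(\omega\cdot\beta)}\ \ge\ \epsilon_{\delta}
\]
for a positive $\epsilon_{\delta}$ depending only on $\delta$ (again using that $r$ is bounded). Taking $\epsilon<\min(1,\epsilon_{\delta})$ proves $V\cap\{0<s<\epsilon\}\subset\{t<\delta\}$.

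The step I expect to need the most care is organizing this last argument so that it is not circular: the finite set $\mathcal{D}_{\delta}$ and the constants $m_{\delta},\epsilon_{\delta}$ must be extracted from $\delta$ and the fixed bound $s<1$ alone, never from $\epsilon$ — which is why $\delta$ is fixed first and $\epsilon_{\delta}$ produced afterwards. It is also worth recording that the hypothesis $\omega\cdot\beta>0$ is genuinely used: if $\beta$ were orthogonal to $\omega$, the single class $\mathcal{O}_{X}$ would already contribute a slab of $V$ of height approaching $\sqrt{2/\omega^{2}}$ as $\epsilon\to 0$; this never occurs for the effective classes $\beta$ appearing in our slices.
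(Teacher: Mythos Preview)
Your proof is correct. The key quadratic identity $r^{2}\omega^{2}\,t_{0}(v)^{2}=2+(D-rs\beta)^{2}$ is a clean way to organize both claims, and for the first claim your argument coincides with the paper's: both use $\omega\cdot D=0$ together with Hodge index and evenness of the lattice to force $D=0$, hence $v=(1,0,1)$.

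For the second claim you take a genuinely different route. The paper argues analytically: it writes $\beta=c\omega+\gamma$ with $\gamma\in\omega^{\perp}$, uses $D\cdot\omega=r\epsilon\,\omega\cdot\beta\ge 1$ to get $r\ge 1/(\epsilon\,\omega\cdot\beta)$, combines this with the Hodge-index bound $D^{2}\le r^{2}\epsilon^{2}(\omega\cdot\beta)^{2}/\omega^{2}$ to show $a/r=O(\epsilon^{2})$, and then reads off $t^{2}\omega^{2}/2\le a/r+O(\epsilon)\to 0$. You instead fix $\delta>0$ and run a lattice-finiteness argument: bounding $r$ from $t\ge\delta$, then bounding both the $\omega$- and $\omega^{\perp}$-components of $D$ to trap $D$ in a finite set $\mathcal{D}_{\delta}$, and finally extracting a uniform lower bound on $s=(\omega\cdot D)/(r\,\omega\cdot\beta)$. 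Your approach is arguably more conceptual and avoids the somewhat delicate estimate $|D\cdot\gamma|\le M\sqrt{|D^{2}|}\sqrt{|\gamma^{2}|}$ in the paper; the paper's approach, on the other hand, gives an explicit quantitative rate. Your remark that $\omega\cdot\beta>0$ is genuinely needed (and would fail for $\beta\in\omega^{\perp}$) is a useful observation that the paper leaves implicit.
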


\begin{proof}
Let $ v=(r,D,a)$ be a Mukai vector such that $ v^{2}=-2$ and $ r>0$.

For the first claim, by definition, 
$$Z_{0,t}(v)=-a+r\frac{t^{2}}{2}\omega^{2}+it \omega \cdot D.$$
Suppose $ Z_{0,t}(v)\in \mathbb{R}_{\leq 0}$. Then $ \omega\cdot D=0$. There are two cases. If $ D=0$, then $ v^{2}=-2ra=-2$, $ r=a=\pm 1$. By assumption $ r>0$, hence $ r=a=1$. In this case $ -a+r\frac{t^{2}}{2}\omega^{2}\leq 0$ is equivalent to $t\leq \sqrt{2/\omega^{2}} $. If $ D\neq 0$, then since $ D\omega=0$, by the Hodge Index Theorem, $ D^{2}<0$. Since on a K3 surface the pairing is even, $ D^{2}\leq -2$. Since $-2=v^{2}=D^{2}-2ra\leq -2-2ra $, $ ra\leq 0$. Since $ r>0$, $ a\leq 0$. In this case $ -a+r\frac{t^{2}}{2}\omega^{2}$ is always positive. The first claim is proved.

For the second claim, by definition, 
$$Z_{\epsilon, t}(v)=-a-r\frac{\epsilon^{2}\beta^{2}-t^{2}\omega^{2}}{2}+\epsilon D\cdot \beta  +i t\omega(D-r\epsilon \beta).$$
Suppose $ Z_{\epsilon, t}(v)\in \mathbb{R}_{\leq 0}$. Write $\beta=c\omega+\gamma$, where $\gamma\cdot \omega=0$. Then $|D\gamma|\leq M \sqrt{|D^{2}|}\sqrt{|\gamma^{2}|}$ for some constant $M>0$. Since $Im(Z(v))=0$, $ D\omega=r\epsilon \omega\beta$, 
$$-a-r\frac{\epsilon^{2}\beta^{2}-t^{2}\omega^{2}}{2}+\epsilon D\cdot\beta =-a-r\frac{\epsilon^{2}\beta^{2}-t^{2}\omega^{2}}{2}+r\epsilon^{2}c\omega\beta+\epsilon D\gamma\leq 0$$
is equivalent to
$$\frac{t^{2}\omega^{2}}{2} \leq \frac{a}{r}+ \frac{\epsilon^{2}\beta^{2}}{2}-\epsilon^{2}c\omega\beta+\epsilon \frac{D\gamma}{r}. $$
Since the left hand side is positive and
$$\frac{D\gamma}{r} \leq M\frac{\sqrt{|D^{2}|}}{r}\sqrt{|\gamma^{2}|}=\frac{\sqrt{|2ra-2|}}{r}\sqrt{|\gamma^{2}|}\leq \sqrt{2|\frac{a}{r}|}\sqrt{|\gamma^{2}|}+\sqrt{2|\gamma^{2}|}, $$
it suffices to prove $ a/r \rightarrow 0$ as $ \epsilon \rightarrow 0$. Now since $ \epsilon>0$, $ D\omega=r\epsilon \omega\beta$ is a positive integer, hence $ r=\frac{D\omega}{\epsilon \omega\beta} \geq \frac{1}{\epsilon \omega\beta} $. Also note that by the Hodge Index Theorem, $ \omega^{2}D^{2}\leq (D\omega)^{2}=r^{2}\epsilon^{2}(\omega\beta)^{2}$, namely $ D^{2}\leq r^{2}\epsilon^{2}\frac{(\omega\beta)^{2}}{(\omega^{2})}$. Since $ v^{2}=D^{2}-2ra=-2$, we have  
$$\frac{a}{r}=\frac{D^{2}+2}{2r^{2}}\leq \frac{r^{2}\epsilon^{2}\frac{(\omega\beta)^{2}}{(\omega^{2})}+2}{2r^{2}}\leq \frac{r^{2}\epsilon^{2}(\omega\beta)^{2}+2\omega^{2}}{2\omega^{2}(1/\epsilon \omega\beta)^{2}}=\epsilon^{2}(\omega\beta)^{2}(r^{2}\epsilon^{2}(\omega\beta)^{2}+2\omega^{2})/(2\omega^{2}).$$
The second claim is proved.
\end{proof}

Now we put a lexicographic order on $\mathcal{W}(\mathbb{B})$. Then there exists
$$0<\epsilon'_{\rho} \ll \cdots \ll \epsilon'_{2} \ll \epsilon_{\rho} \ll \cdots \ll \epsilon_{1} \ll 1, $$
such that if we let $t$ range from infinity to, say $\sqrt{2/\omega_{1}^{2}}/2$, the order of the stability condition
$$\{(Z_{(\epsilon_{1}\beta_{1}+\cdots + \epsilon_{\rho}\beta_{\rho}, t(\omega_{1}+\epsilon'_{2}\omega_{2}+\cdots + \epsilon'_{\rho}\omega_{\rho}))}, \mathcal{A}_{(\epsilon_{1}\beta_{1}+\cdots + \epsilon_{\rho}\beta_{\rho}, t(\omega_{1}+\epsilon'_{2}\omega_{2}+\cdots + \epsilon'_{\rho}\omega_{\rho}))}), t>\sqrt{2/\omega_{1}^{2}}/2\}$$
touching walls in $HN(\mathbb{B})$ coincide with the order of walls in $\mathcal{W}(\mathbb{B})$. We fix this set once and for all and call it $\mathbf{b}$. By Lemma \ref{slice}, $\mathbf{b}\subset \mathcal{L}(X)$. Note that the abelian category $\mathcal{A}=\mathcal{A}_{(\epsilon_{1}\beta_{1}+\cdots + \epsilon_{\rho}\beta_{\rho}, t(\omega_{1}+\epsilon'_{2}\omega_{2}+\cdots + \epsilon'_{\rho}\omega_{\rho}))}$ is independent of $t$.

\subsection{Global Reduction}

In this subsection we state the global reduction, which is the algorithm to compute the cohomology of a spherical vector bundle.

We say a \emph{shape} of $E$ is a pair
$$(E^{\bullet}, \partial W)\in \mbox{Fil}(\mathcal{A})\times \mathcal{W}(\mathbb{B}),$$
where $E^{\bullet}$ is a filtration of $E$.
Now suppose $\sigma\in \mathbf{b}$ is a stability condition that is not on a wall in $HN(\mathbb{B})$. It determines a shape of $E$
$$sh(\sigma)=(E^{\bullet}_{\sigma},\partial W)\in \mbox{Fil}(\mathcal{A})\times \mathcal{W}(\mathbb{B}),$$
where $E^{\bullet}_{\sigma}$ is the $\sigma$-Harder-Narasimhan filtration, and $W\in HN(\mathbb{B})$ is the wall right below $\sigma$. Note that by the construction of $\mathbf{b}$, all walls are distinct points on $\mathbf{b}\cong \mathbb{R}_{+}$, this is well-defined. The next simple but important observation shows how shapes are related to the cohomology of $E$.

\begin{proposition}\label{preoutput}
  Let $\sigma\in \mathbf{b}$ be a stability condition with $sh(\sigma)=(E^{\bullet}, \partial W)$, and write
  $$\partial W = (f_{0}(W), \partial s_{1}(W), \cdots, \partial s_{\rho} (W), \partial t_{2}(W), \cdots, \partial t_{\rho}(W)).$$
  If $f_{0}(W)< \sqrt{2/\omega_{1}^{2}}$, then the last $\sigma$-Harder-Narasimhan factor of $E^{\bullet}$ is $\mathcal{O}_{X}[1]^{\oplus h}$ for some $h\in \mathbb{Z}_{\geq 0}$ ($h$ can be zero, in this case we still have $\mbox{Hom}(E_{i},\mathcal{O}_{X}[1])=0$ for any factor $E_{i}$).
\end{proposition}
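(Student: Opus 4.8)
The plan is to pick a convenient stability condition in the Harder--Narasimhan chamber of $E$ lying directly above $W$, and to exploit that near $\sigma_{0}=(0,\sqrt{2/\omega_{1}^{2}}\,\omega_{1})$ the object $\mathcal{O}_{X}[1]$ has smaller phase than any $\sigma$-stable object whose Mukai vector can occur as a Jordan--H\"older factor of $E$.

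\emph{Normalisation.} The $\sigma$-Harder--Narasimhan filtration of $E$ is constant on the chamber bounded below by $W$, so I may move $\sigma$ within that chamber without changing $E^{\bullet}$. Since $\mathcal{S}_{\mathbb{B}}(E)$ is finite (Proposition \ref{finite}), the walls in $HN(\mathbb{B})$ realise only finitely many values of $f_{0}$; and since every numerical wall $W(u,v(\mathcal{O}_{X}))$ passes through $\sigma_{0}$ it has $f_{0}=\sqrt{2/\omega_{1}^{2}}$, so the hypothesis $f_{0}(W)<\sqrt{2/\omega_{1}^{2}}$ forces $f_{0}(W)$ to be bounded away from $\sqrt{2/\omega_{1}^{2}}$ once $\epsilon$ is small (here Lemma \ref{slice} guarantees $\mathbf{b}\subset\mathcal{L}(X)$). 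Hence I take $\sigma=\sigma_{t}\in\mathbf{b}$ in this chamber with $t$ bounded above, away from $\sqrt{2/\omega_{1}^{2}}$; for such $\sigma$ one has $\mathcal{O}_{X}\in\mathcal{F}_{\beta,\omega}$, so $\mathcal{O}_{X}[1]\in\mathcal{A}_{\sigma}$, and $Z_{\sigma}(\mathcal{O}_{X}[1])=1+\tfrac{\beta^{2}-\omega^{2}}{2}+i\,\omega\cdot\beta$ lies in the open first quadrant with $\mathrm{Re}$ bounded below by a positive constant and $\mathrm{Im}=\omega\cdot\beta=O(\epsilon)$; thus $\phi_{\sigma}(\mathcal{O}_{X}[1])=O(\epsilon)$. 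I also use that $\mathcal{O}_{X}[1]$ is $\sigma$-stable (standard for $\sigma\in\mathcal{L}(X)$, cf. \cite{Bri08}) and choose $\sigma$ generic with respect to $v(\mathcal{O}_{X})$ and the classes in $\mathcal{S}_{\mathbb{B}}(E)$.

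\emph{Key claim.} If $F\in\mathcal{A}_{\sigma}$ is $\sigma$-stable with $v(F)\in\mathcal{S}_{\mathbb{B}}(E)$ and $\phi_{\sigma}(F)\le\phi_{\sigma}(\mathcal{O}_{X}[1])$, then $v(F)=v(\mathcal{O}_{X}[1])=(-1,0,-1)$. Write $v(F)=(r',D',a')$; the hypothesis says $\mathrm{Re}\,Z_{\sigma}(F)>0$ and $\mathrm{Im}\,Z_{\sigma}(F)\le O(\epsilon)\cdot\mathrm{Re}\,Z_{\sigma}(F)$. As $\mathrm{Im}\,Z_{\sigma}(F)=t\bigl(\omega'\!\cdot D'-r'(\omega'\!\cdot\beta)\bigr)$ with $\omega'\!\cdot D'=H\cdot D'+O(\epsilon')$, and $\mathrm{Re}\,Z_{\sigma}(F)=-a'+r'\tfrac{t^{2}(\omega')^{2}}{2}+O(\epsilon)$ is $O(1)$ because $(r',D',a')$ lies in a finite set, the inequalities first force $H\cdot D'=0$ (otherwise $\mathrm{Im}\,Z_{\sigma}(F)$ is $O(1)$, violating the ratio bound). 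If $D'\neq 0$ the Hodge index theorem gives $D'^{2}\le-2$; combined with $v(F)^{2}=D'^{2}-2r'a'\ge-2$ (valid since $F$ is $\sigma$-stable on a K3, so $\chi(F,F)\le 2$) and with $\mathrm{Im}\,Z_{\sigma}(F)>0$ under the scale hierarchy $\epsilon'_{\rho}\ll\cdots\ll\epsilon_{1}$ one is forced to $r'<0,\ a'\ge0$, whence $\mathrm{Re}\,Z_{\sigma}(F)<0$, contradicting $\phi_{\sigma}(F)<\tfrac12$; also $r'=0$ forces $D'^{2}=-2$ with $H\cdot D'=0$, excluded by Lemma \ref{impossible}. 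So $D'=0$. Then $\mathrm{Im}\,Z_{\sigma}(F)=-r'\,\omega\cdot\beta>0$ gives $r'<0$, $\mathrm{Re}\,Z_{\sigma}(F)=-a'+r'\tfrac{t^{2}(\omega')^{2}-\beta^{2}}{2}>0$ together with $\tfrac{t^{2}(\omega')^{2}-\beta^{2}}{2}<1$ (our choice of $t$) gives $a'<0$, and $v(F)^{2}=-2r'a'\ge-2$ gives $|r'|=|a'|=1$, i.e. $v(F)=(-1,0,-1)$.

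\emph{Conclusion.} Let $G$ be the last $\sigma$-Harder--Narasimhan factor of $E$; its Jordan--H\"older factors are $\sigma$-stable of phase $\phi_{\sigma}(G)$ with Mukai vectors in $\mathcal{S}_{\mathbb{B}}(E)$. If $\phi_{\sigma}(G)>\phi_{\sigma}(\mathcal{O}_{X}[1])$, then every Harder--Narasimhan factor of $E$ has phase $>\phi_{\sigma}(\mathcal{O}_{X}[1])$, so $\mbox{Hom}(E_{i},\mathcal{O}_{X}[1])=0$ for every factor $E_{i}$ by d\'evissage; this is the case $h=0$. Otherwise $\phi_{\sigma}(G)\le\phi_{\sigma}(\mathcal{O}_{X}[1])$ and the key claim applies to each Jordan--H\"older factor of $G$, forcing its Mukai vector to be $(-1,0,-1)$; being $\sigma$-stable with $M_{\sigma}(-1,0,-1)$ a single point (Theorem \ref{B-moduli}), each such factor is $\mathcal{O}_{X}[1]$. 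Since $\mbox{Ext}^{1}(\mathcal{O}_{X}[1],\mathcal{O}_{X}[1])=\mbox{H}^{1}(X,\mathcal{O}_{X})=0$, the iterated extension splits and $G\cong\mathcal{O}_{X}[1]^{\oplus h}$.

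\textbf{Main obstacle.} The real work is the key claim: one must control $Z_{\sigma}(F)$ for all $\sigma$-stable $F$ with $v(F)$ in the finite set $\mathcal{S}_{\mathbb{B}}(E)$, and make the ``$O(\epsilon)$'' estimates genuinely uniform. This is exactly where the finiteness in Proposition \ref{finite} (so that shrinking $\epsilon$ cannot push a wall's $f_{0}$ arbitrarily close to $\sqrt{2/\omega_{1}^{2}}$) and the scale hierarchy $\epsilon'_{\rho}\ll\cdots\ll\epsilon_{1}\ll1$ built into $\mathbf{b}$ are indispensable; the case $H\cdot D'=0$ in higher Picard rank is handled by Lemma \ref{impossible}, and the $\sigma$-stability of $\mathcal{O}_{X}[1]$ is the remaining standard input.
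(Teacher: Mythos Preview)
Your argument is correct, and it is genuinely different in organisation from the paper's. The paper does not carry out a direct central-charge analysis of small-phase stable objects. Instead it invokes the forward-referenced Lemma~\ref{di>0}, proved by induction along the algorithm, which says that every $\sigma$-Harder--Narasimhan factor other than $\mathcal{O}_{X}[1]^{\oplus m}$ has $c_{1}\cdot\omega_{1}>0$; from that positivity the paper compares the phase of each such factor with $\phi_{\sigma}(\mathcal{O}_{X}[1])$ via the numerical wall $W(E_{i},\mathcal{O}_{X}[1])$ (which always has $f_{0}=\sqrt{2/\omega_{1}^{2}}$) and the ordering of $\mathcal{W}(\mathbb{B})$. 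Your route bypasses the induction entirely: you fix $\sigma$ with $t$ strictly between the built-in lower bound $\sqrt{2/\omega_{1}^{2}}/2$ of $\mathbf{b}$ and $\sqrt{2/\omega_{1}^{2}}$, use the finiteness of $\mathcal{S}_{\mathbb{B}}(E)$ to make all $O(1)$ and $O(\epsilon)$ estimates uniform, and then classify by hand the $\sigma$-stable $F$ with $\phi_{\sigma}(F)\le\phi_{\sigma}(\mathcal{O}_{X}[1])$. This is more self-contained and avoids the forward reference; the paper's version is shorter once Lemma~\ref{di>0} is available and fits the inductive flow of the algorithm. One small point worth making explicit in your write-up: the reason you can pick $t$ bounded below is that $\mathbf{b}$ is defined only for $t>\sqrt{2/\omega_{1}^{2}}/2$, so the normalisation step always lands $\sigma$ in the window $(\sqrt{2/\omega_{1}^{2}}/2,\sqrt{2/\omega_{1}^{2}})$; this is what makes the sign computation $\mathrm{Re}\,Z_{\sigma}(F)<0$ in the case $D'\neq 0$, $r'<0$ go through uniformly in $\epsilon$.
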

 
\begin{proof}
  Let $E_{i}$ be any Harder-Narasimhan factor of $E$ that is not a direct sum of $\mathcal{O}_{X}[1]$. By Lemma \ref{di>0} that we shall show below, $d_{i}>0$. Then we may shrink $\epsilon_{i}, \epsilon_{j}'$ further such that $\phi_{\infty}(E_{i})<\phi_{\infty}(\mathcal{O}_{X}[1])$.

  Then there are two cases. First if $\phi_{\sigma}(E_{i})< \phi_{\sigma}(\mathcal{O}_{X}[1])$ for some factor $E_{i}$, consider the wall
  $$W'=W(E_{i}, \mathcal{O}_{X}[1]).$$
  By assumption, $W'$ is under $\sigma$. By construction, $\partial W'$ has $f_{0}(W')=\sqrt{2/\omega_{1}^{2}}$, which contradicts our assumption. Hence $W'\notin HN(\mathbb{B})$, namely when crossing $W'$ the Harder-Narasimhan filtration does not change. Hence $\mathcal{O}_{X}[1]$ cannot appear in $E^{\bullet}$, and $\mbox{Hom}(E_{i},\mathcal{O}_{X}[1])=0$. For those $E_{j}$ with $\phi_{\sigma}(E_{j})\geq \phi_{\sigma}(\mathcal{O}_{X}[1])$, they are not direct sums of $\mathcal{O}_{X}[1]$, hence by the assumption that $\sigma$ is not on a wall they have $\phi_{\sigma}(E_{j})> \phi_{\sigma}(\mathcal{O}_{X}[1])$. Hence $E_{j}$ also have $\mbox{Hom}(E_{j},\mathcal{O}_{X}[1])=0$. Hence the proposition is proved in this case.

  The second case is that $\phi_{\sigma}(E_{i})\geq \phi_{\sigma}(\mathcal{O}_{X}[1])$. By assumption $\sigma$ is not on a wall, hence for those $E_{i}\neq \mathcal{O}_{X}[1]^{\oplus h}$, they have $\phi_{\sigma}(E_{i})> \phi_{\sigma}(\mathcal{O}_{X}[1])$. Hence if $\mathcal{O}_{X}[1]$ appears in $E^{\bullet}$, it must be the last Harder-Narasimhan factor.
\end{proof}

\begin{corollary}\label{output}
  Let $\sigma$ satisfy the condition in Proposition \ref{preoutput}. Then
  $$\mbox{h}^{1}(E)=h,$$
  where the last Harder-Narasimhan factor of $E^{\bullet}$ is $\mathcal{O}_{X}[1]^{\oplus h}$. 
\end{corollary}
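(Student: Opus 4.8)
The plan is to deduce the corollary directly from Proposition \ref{preoutput} together with the basic cohomological facts established at the start of Section \ref{section3}. First I would recall that for the stable vector bundle $E$ we have $\mbox{H}^{2}(E)=0$ by the slope computation $\mu(E^{*})<0=\mu(\mathcal{O}_{X})$, so the only possibly nonzero cohomology groups are $\mbox{H}^{0}(E)$ and $\mbox{H}^{1}(E)$. By Proposition \ref{preoutput}, the $\sigma$-Harder-Narasimhan filtration $E^{\bullet}$ has last factor $\mathcal{O}_{X}[1]^{\oplus h}$, and moreover $\mbox{Hom}(E_{i},\mathcal{O}_{X}[1])=0$ for every Harder-Narasimhan factor $E_{i}$ other than this last one. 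The key point is to translate these $\mbox{Hom}$-vanishings in $\mathcal{D}^{b}(X)$ into the statement $\mbox{Hom}(E,\mathcal{O}_{X}[1])=\mbox{Ext}^{1}(E,\mathcal{O}_{X})$ has dimension exactly $h$, since $\mbox{Ext}^{1}(E,\mathcal{O}_{X})\cong \mbox{H}^{1}(E^{*})^{*}$... but I actually want $\mbox{H}^{1}(E)$, so the cleaner route is via $\mbox{Hom}(\mathcal{O}_{X}[1],E[?])$ — more precisely $\mbox{H}^{1}(E)=\mbox{Hom}_{\mathcal{D}^{b}(X)}(\mathcal{O}_{X},E[1])$.

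So the main step is: compute $\mbox{Hom}(\mathcal{O}_{X},E[1])$ using the Harder-Narasimhan filtration of $E$ in the heart $\mathcal{A}$. Since $\mathcal{O}_{X}\in \mathcal{A}$ and $E\in \mathcal{A}$, and $\mathcal{A}$ is the heart of a bounded t-structure, $\mbox{Hom}(\mathcal{O}_{X},E[1])=\mbox{Ext}^{1}_{\mathcal{A}}(\mathcal{O}_{X},E)$. I would filter $E$ by $E^{\bullet}$ with graded pieces $G_{1},\dots,G_{n}$, where $G_{n}=\mathcal{O}_{X}[1]^{\oplus h}$; but note $\mathcal{O}_{X}[1]\notin \mathcal{A}$, so I should instead argue that $\mbox{H}^{1}(E)$ is the sum of contributions from the factors. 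The clean formulation: apply the functor $\mbox{Hom}(\mathcal{O}_{X},-)$, i.e. $\mbox{RHom}(\mathcal{O}_{X},-)=\mbox{R}\Gamma(-)$, to the filtration triangles of $E^{\bullet}$. For each factor $E_{i}$ with $i<n$ (not of the form $\mathcal{O}_{X}[1]^{\oplus h}$), Proposition \ref{preoutput} gives $\mbox{Hom}(E_{i},\mathcal{O}_{X}[1])=0$; by Serre duality on the K3 surface this says $\mbox{Hom}(\mathcal{O}_{X},E_{i}[1])=\mbox{Hom}(\mathcal{O}_{X}[1],E_{i})^{*}$ up to the relevant shift — I need to be careful here and instead argue directly that each such $E_{i}$, being a $\sigma$-semistable object of phase $>\phi_{\sigma}(\mathcal{O}_{X}[1])$, has $\mbox{H}^{1}(E_{i})=0$. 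This is where the condition $f_{0}(W)<\sqrt{2/\omega_{1}^{2}}$ enters: it forces every non-$\mathcal{O}_{X}[1]$ factor to lie strictly below its own Brill-Noether wall, which is exactly the numerical condition guaranteeing $\mbox{Hom}(\mathcal{O}_{X},E_{i}[1])=\mbox{Ext}^{1}(\mathcal{O}_{X},E_{i})=0$. Granting this, the long exact sequences in cohomology applied iteratively to the filtration triangles collapse, yielding $\mbox{H}^{1}(E)=\mbox{H}^{1}(\mathcal{O}_{X}[1]^{\oplus h})=\mbox{H}^{0}(\mathcal{O}_{X})^{\oplus h}=\mathbb{C}^{h}$, so $\mbox{h}^{1}(E)=h$.

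The main obstacle I anticipate is making rigorous the claim that a $\sigma$-semistable factor $E_{i}$ with $\phi_{\sigma}(E_{i})>\phi_{\sigma}(\mathcal{O}_{X}[1])$ and lying below its Brill-Noether wall actually has $\mbox{H}^{1}(E_{i})=0$: one must relate the Brill-Noether wall $W(E_{i},\mathcal{O}_{X})$ passing through $\sigma_{0}=(0,\sqrt{2/\omega_{1}^{2}})$ to the vanishing of $\mbox{Hom}(\mathcal{O}_{X},E_{i}[1])$, and then propagate this through the extensions in $E^{\bullet}$ while checking that no unwanted contributions to $\mbox{H}^{1}$ arise from the connecting maps — i.e. that $\mbox{Hom}(\mathcal{O}_{X},E_{i}[2])$-type terms also vanish or do not interfere. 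Since $\mathcal{O}_{X}$ is spherical and $\mbox{H}^{2}(E)=0$ already kills the top piece, I expect this bookkeeping to go through, but it is the step that requires the most care; I would isolate it as a short lemma stating that a $\sigma$-semistable object $F\in \mathcal{A}$ with $\phi_{\sigma}(F)>\phi_{\sigma}(\mathcal{O}_{X}[1])$ and $r(F)>0$ has $\mbox{Ext}^{1}(\mathcal{O}_{X},F)=0$, proved by noting that any nonzero element would give a subobject or quotient violating the phase inequality on the appropriate side of $W(F,\mathcal{O}_{X})$.
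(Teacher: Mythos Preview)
Your approach works but takes a detour around the direct route the paper uses, and the ``main obstacle'' you flag is not actually an obstacle.

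The paper's proof is three lines: from the exact sequence $0\to E_{m-1}\to E\to \mathcal{O}_X[1]^{\oplus h}\to 0$ and the vanishing $\mbox{Hom}(G_i,\mathcal{O}_X[1])=0$ for each earlier factor $G_i$ (which is literally the conclusion of Proposition~\ref{preoutput}), one gets $\mbox{Hom}(E_{m-1},\mathcal{O}_X[1])=0$ by iterating the contravariant long exact sequence, hence $\mbox{Hom}(E,\mathcal{O}_X[1])\cong\mbox{Hom}(\mathcal{O}_X[1]^{\oplus h},\mathcal{O}_X[1])=\mathbb{C}^h$. Then Serre duality on the K3 surface gives $\mbox{Hom}(E,\mathcal{O}_X[1])=\mbox{Ext}^1(E,\mathcal{O}_X)\cong \mbox{Ext}^1(\mathcal{O}_X,E)^*=\mbox{H}^1(E)^*$, so $\mbox{h}^1(E)=h$.

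Your route computes $\mbox{H}^1(E)=\mbox{Hom}(\mathcal{O}_X,E[1])$ covariantly instead. This is Serre-dual to the paper's computation, and the vanishing you need, $\mbox{H}^1(G_i)=\mbox{Hom}(\mathcal{O}_X,G_i[1])=0$, is \emph{exactly} $\mbox{Hom}(G_i,\mathcal{O}_X[1])^*=0$ by Serre duality---i.e.\ it is immediate from Proposition~\ref{preoutput} and requires no separate lemma, no Brill-Noether-wall argument, and no rank hypothesis. (Your Serre duality line is also miswritten: $\mbox{Hom}(\mathcal{O}_X,G_i[1])\cong\mbox{Hom}(G_i,\mathcal{O}_X[1])^*$, not $\mbox{Hom}(\mathcal{O}_X[1],G_i)^*$.) The residual worry about $\mbox{H}^2(E_{m-1})$ is also harmless: by Serre duality this is $\mbox{Hom}(E_{m-1},\mathcal{O}_X)^*=\mbox{Hom}(E_{m-1},\mathcal{O}_X[1][-1])^*=0$, since both $E_{m-1}$ and $\mathcal{O}_X[1]$ lie in the heart $\mathcal{A}$. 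Finally, the condition $r(F)>0$ in your proposed lemma is both unnecessary and not satisfied by all the factors that appear (cf.\ Lemma~\ref{negative rank} and the examples), so that formulation would not cover what you need.

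In short: apply $\mbox{Hom}(-,\mathcal{O}_X[1])$ rather than $\mbox{Hom}(\mathcal{O}_X,-[1])$, use Proposition~\ref{preoutput} verbatim, and invoke Serre duality once at the end.
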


\begin{proof}
  We have the exact sequence
  $$0 \longrightarrow E_{m-1} \longrightarrow E \longrightarrow \mathcal{O}_{X}[1]^{\oplus h} \longrightarrow 0, $$
  where $\mathcal{O}_{X}[1]^{\oplus h}$ is the last Harder-Narasimhan factor. By Proposition \ref{preoutput}, $E_{m-1}$ has a filtration whose factors have no map to $\mathcal{O}_{X}[1]$. Hence $\mbox{Hom}(E,\mathcal{O}_{X}[1])=\mbox{Hom}(Q_{m},\mathcal{O}_{X}[1])$.
\end{proof}

Hence to compute the cohomology groups of $E$, it suffices to compute the shape at a certain stability condition. Now suppose we know the shape $\mathcal{P}=(E^{\bullet}_{\mathcal{P}}, \partial W_{\mathcal{P}})$ of $E$ at a stability that is not on a wall. We call the shape $\mathcal{P}'=(E^{\bullet}_{\mathcal{P}'}, \partial W_{\mathcal{P}'})$ the \emph{next} shape after $\mathcal{P}$ if $\mathcal{P}'=sh(\sigma)$ for $\sigma$ in the chamber right below $W_{\mathcal{P}}$. By definition, $W_{\mathcal{P}'}<W_{\mathcal{P}}$. Note that $\mathcal{W}(\mathbb{B})$ is finite, hence if we keep computing the next shapes, in finite steps it must happen that $f_{0}(W_{\mathcal{P}})<\sqrt{2/\omega_{1}^{2}}$ for some shape $\mathcal{P}$. By Corollary \ref{output}, the cohomology is computed and the algorithm terminates here.

We have to examine closly when shapes change. Let $\mathcal{P}=(E^{\bullet}, \partial W)$ be the shape at $\sigma_{+}$ with filtration $0=E_{0}^{+}\subset E_{1}^{+}\subset \cdots \subset E_{m}^{+}=E$ and $\mathcal{P}'=sh(\sigma_{-})$ be the next shape. We know that by definition of the Harder-Narasimhan filtration, $G_{i}$ are semi-stable, and
$$\phi_{+}(G_{1}^{+})> \phi_{+}(G_{1}^{+}) > \cdots > \phi_{+}(G_{m}^{+}).$$
It will stop being a Harder-Narasimhan filtration at $\sigma_{-}$, if either one or both of the following happpen:
\begin{itemize}
\item Some $G_{i}$ become unstable;
\item For some pair $j<k$, the phases become $\phi_{-}(G_{j})< \phi_{-}(G_{k})$.
\end{itemize}
In the first case, an actual wall for some factor is in between $\sigma_{\pm}$, and in the second case, the numerical wall of a pair $G_{j}, G_{k}$ is in between $\sigma_{\pm}$. By construction of $\mathbf{b}$, all such walls are separated, hence $W$ corresponds to a unique rank 2 lattice $\mathcal{H}\subset  \mbox{H}^{*}_{alg}(X)$, the $G_{i}, G_{j}, G_{k}$ described above all have Mukai vectors lying in $\mathcal{H}$. Furthermore, let $i$/$j$ be the minimal/maximal index such that $G_{i/j}\in \mathcal{H}$, then for any $i\leq k \leq j$ and $\sigma_{0}\in W\cap \mathbf{b}$, $\phi_{0}(G_{i})=\phi_{0}(G_{k})=\phi_{0}(G_{j})$. Since all walls are separated, $G_{k}$ also has Mukai vector in $\mathcal{H}$. This provides us a way to determine the next wall:
\\
Compute all numerical walls of all adjacent factors of the current shape, and all actual walls for all factors, denote them by $\{W_{l}\}$. Then we pick the $W_{l}$ with maximal $\partial W_{l}\in \mathcal{W}(\mathbb{B})$, that is the next wall. We may then track back the $E_{j}/E_{i}$ that produced it.

Let
  $$0=F_{0}^{-}\subset F_{1}^{-} \subset \cdots \subset F_{n}^{-}=E_{j}/E_{i-1}$$
  be the $\sigma_{-}$-Harder-Narasimhan filtration of $E_{j}/E_{i}$. Then it has a lift that replaces the segment $E_{j}/E_{i-1}$ of the original filtration of $E$:
  $$0=E_{0}^{+}\subset E_{1}^{+}\subset \cdots \subset E_{i-1}^{+} \subset \widetilde{F_{1}^{-}}\subset \cdots \subset \widetilde{F_{n-1}^{-}}\subset E_{j}^{+}\subset \cdots \subset E_{m}^{+}=E.$$
 By continuity, it is clearly the $\sigma_{-}$-Harder-Narasimhan filtration for $\sigma_{-}$ being sufficiently close to $W$, hence the Harder-Narasimhan filtration of the next shape. Hence the rest of the algorithm is to compute the Harder-Narasimhan filtration of all possible $E_{j}/E_{i-1}$ that can appear above at an adjacent chamber from its known Harder-Narasimhan filtration of the given chamber. Then from a shape, we can compute its next shape, and by iterating this, eventually the algorithm terminates. Before we do the computation, we need to capture some common properties of all such possible $E_{j}/E_{i-1}$. 

\begin{lemma}\label{di>0}
Preserve the notation in Proposition \ref{preoutput}. Let $G$ be a $\sigma$-Harder-Narasimhan factor. If $G $ is not a direct sum of $\mathcal{O}_{X}[1]$, then its Mukai vector $(r,D,a)$ satisfies $D\cdot \omega_{1}>0$. 
\end{lemma}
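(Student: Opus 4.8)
The plan is to prove the contrapositive: if $G$ is a $\sigma$-Harder--Narasimhan factor of $E$ (for $\sigma\in\mathbf{b}$) whose Mukai vector $(r,D,a)$ satisfies $D\cdot\omega_{1}\leq 0$, then $G\cong\mathcal{O}_{X}[1]^{\oplus h}$ for some $h\geq 0$. Write $\omega=t\omega_{1}'$ with $\omega_{1}'=\omega_{1}+\epsilon_{2}'\omega_{2}+\cdots+\epsilon_{\rho}'\omega_{\rho}$. The first step is to pin down $D\cdot\omega_{1}=0$. Since $0\neq G\in\mathcal{A}$, the imaginary part of $Z_{\sigma}(G)$ is $\geq 0$, that is $\omega_{1}'\cdot(D-r\beta)\geq 0$. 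By Corollary \ref{finite} the Harder--Narasimhan filtration of $E$ along $\mathbf{b}$ takes only finitely many forms, so the Mukai vectors of all factors that arise lie in a fixed finite set; in particular $|r|$ and $D$ are bounded there. Using the freedom to shrink the parameters $\epsilon_{i},\epsilon_{j}'$ in the construction of $\mathbf{b}$, I would assume $|r|\,(\omega_{1}'\cdot\beta)<\tfrac14$ and $|\epsilon_{2}'\,\omega_{2}\cdot D+\cdots+\epsilon_{\rho}'\,\omega_{\rho}\cdot D|<\tfrac14$ for every such $(r,D,a)$. Then $\omega_{1}\cdot D\geq r(\omega_{1}'\cdot\beta)-\tfrac14>-\tfrac12$, so $\omega_{1}\cdot D\geq 0$ as an integer, hence $\omega_{1}\cdot D=0$ by hypothesis.

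Next I would pass to the $\sigma$-stable Jordan--Hölder factors $S_{1},\dots,S_{k}$ of $G$, all of phase $\phi_{\sigma}(G)$; by Proposition \ref{finite} their Mukai vectors again lie in a fixed finite set, so the same estimate gives $\omega_{1}\cdot c_{1}(S_{l})\geq 0$ for each $l$, and since these numbers sum to $\omega_{1}\cdot D=0$ they all vanish. Fixing $S=S_{l}$, I claim $r(S)<0$. If $r(S)>0$ then, since $\omega_{1}\cdot c_{1}(S)=0$, the imaginary part of $Z_{\sigma}(S)$ is $t\bigl(\epsilon_{2}'\,\omega_{2}\cdot c_{1}(S)+\cdots-r(S)(\omega_{1}'\cdot\beta)\bigr)$, which is negative by the hierarchy $\epsilon_{\rho}'\ll\cdots\ll\epsilon_{\rho}\ll\epsilon_{1}$ together with $\omega_{1}'\cdot\beta>0$, contradicting $S\in\mathcal{A}$. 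If $r(S)=0$ and $\mathcal{H}^{-1}(S)=0$, then $S$ is a torsion sheaf with effective $c_{1}(S)$, and $c_{1}(S)\cdot\omega_{1}=0$ forces $c_{1}(S)=0$, so $S$ is zero-dimensional and, being stable, $S\cong\mathcal{O}_{x}$. If $r(S)=0$ and $\mathcal{H}^{-1}(S)\neq 0$, then (using that the factors of a spherical object are spherical) $v(S)^{2}=c_{1}(S)^{2}=-2$ would exhibit a $(-2)$-class orthogonal to $\omega_{1}$, contradicting Lemma \ref{impossible}. Thus $r(S)=0$ forces $S\cong\mathcal{O}_{x}$, which has phase $1$; but $\phi_{\sigma}(G)<1$, since otherwise $G$ would be the first Harder--Narasimhan factor of $E$, hence a subobject of $E$ in $\mathcal{A}$, hence --- as $E$ is locally free --- a torsion-free sheaf of positive rank, for which the imaginary part of $Z_{\sigma}$ is negative by the computation above. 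Hence $r(S)<0$.

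With $r(S)<0$, $c_{1}(S)\cdot\omega_{1}=0$ and $v(S)^{2}=-2$, the Hodge Index Theorem gives $c_{1}(S)^{2}\leq 0$, Lemma \ref{impossible} excludes $c_{1}(S)^{2}=-2$, and the relation $-2=c_{1}(S)^{2}-2r(S)a(S)$ with $r(S)\leq -1$ forces $c_{1}(S)^{2}=0$, hence $c_{1}(S)=0$ and $r(S)=a(S)=-1$, i.e. $v(S)=v(\mathcal{O}_{X}[1])$. On the part of $\mathbf{b}$ where such a factor occurs, $\mathcal{O}_{X}[1]$ is itself $\sigma$-stable --- above the Brill--Noether wall $\mathcal{O}_{X}$, hence $\mathcal{O}_{X}[1]$, is already the unique stable object of its Mukai vector --- so $S\cong\mathcal{O}_{X}[1]$. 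Therefore every Jordan--Hölder factor of $G$ is $\mathcal{O}_{X}[1]$, and since $\mbox{Ext}^{1}(\mathcal{O}_{X}[1],\mathcal{O}_{X}[1])=\mbox{H}^{1}(X,\mathcal{O}_{X})=0$ all of these extensions split, giving $G\cong\mathcal{O}_{X}[1]^{\oplus h}$.

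The main obstacle is the interplay between the finiteness inputs and the hierarchy of $\epsilon$'s in the first two paragraphs: one must be sure that the finitely many Mukai vectors of occurring factors are genuinely controlled (this is exactly where Corollary \ref{finite} and Proposition \ref{finite} enter) and that every comparison of imaginary parts of central charges comes out strict, so none of the above dichotomies is borderline. The other delicate point is the last step, where one needs that a factor with Mukai vector $v(\mathcal{O}_{X}[1])$ can only appear once $\mathcal{O}_{X}[1]$ is stable; when $\mbox{Pic}(X)=\mathbb{Z}H$ this is automatic because $\omega_{1}^{\perp}=0$ forces $c_{1}(S)=0$ outright, but in higher Picard rank one must also rule out the spherical classes orthogonal to $\omega_{1}$ with $c_{1}^{2}\leq -4$, using that the rank-$2$ lattices attached to Harder--Narasimhan walls on $\mathbf{b}$ sit very close to the $\omega_{1}$-axis.
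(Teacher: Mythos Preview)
Your approach is genuinely different from the paper's: you attempt a direct analysis of a single $\sigma$-Harder--Narasimhan factor via case analysis on the rank of its Jordan--H\"older pieces, whereas the paper proceeds by induction on the finite ordered set $\mathcal{W}(\mathbb{B})$ of walls. In the paper's argument the base case is the Gieseker chamber (trivial filtration), and the inductive step uses that at each wall the new Jordan--H\"older factors $G_i^-$ share the phase of an object whose $\omega_1$-degree is already positive by induction; one then slides along the wall from the central charge $Z'=Z_{(0,f_0(W)\omega_1)}$ to $Z=W\cap\mathbf{b}$ and uses continuity plus positivity of stability conditions to force $\mathrm{Im}\,Z'(v_i)>0$, handling the borderline case $f_0(W)=\sqrt{2/\omega_1^2}$ separately with Lemma~\ref{impossible}.

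Your direct approach has a real gap in higher Picard rank, which you yourself flag but do not close. In the case $r(S)<0$ you assert that ``the relation $-2=c_1(S)^2-2r(S)a(S)$ with $r(S)\leq -1$ forces $c_1(S)^2=0$''. This is false: $v(S)=(-1,D,1)$ with $D\cdot\omega_1=0$ and $D^2=-4$ satisfies $v(S)^2=-2$ and all your constraints, and such classes exist as soon as $\mathrm{Pic}(X)$ contains a $(-4)$-class orthogonal to $\omega_1$. Nothing in your argument excludes such an $S$ from occurring as a stable Jordan--H\"older factor, and your suggested fix (``rank-$2$ lattices sit close to the $\omega_1$-axis'') is not developed into an argument. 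The paper's inductive scheme sidesteps this entirely: it never needs to classify spherical classes with $c_1\cdot\omega_1=0$, because positivity of $\mathrm{Im}\,Z'$ is propagated from the previous chamber via the common phase on the wall. There are also smaller wrinkles in your write-up---the detour through $\mathcal{O}_x$ is unnecessary once you know $S$ is spherical, and the claim that $\mathcal{O}_X[1]$ is the stable object of its Mukai vector ``above the Brill--Noether wall'' needs justification for arbitrary $\sigma\in\mathbf{b}$---but the $c_1^2\leq -4$ issue is the substantive one.
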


\begin{proof}
  We use induction on the finite ordered set $\mathcal{W}(\mathbb{B})$. Let $\partial W_{0}$ be the initial element in $\mathcal{W}(\mathbb{B})$. Since $E$ is stable, the corresponding shape of $E$ is $(E, \partial W_{0})$, here the filtration is trivial. By assumption $E$ has $c_{1}(E)\cdot \omega_{1}>0$, the base case is true.

  Now suppose we have proved the lemma for $sh(\sigma_{+})=(E^{\bullet}_{+}, \partial W)$ and $\mathcal{P}'=sh(\sigma_{-})$ is the next shape. By the discussion above, it suffices to show $G_{i}^{-}=F_{i}^{-}/F_{i-1}^{-}$ has positive degree with respect to $\omega_{1}$. Let the Mukai vector of $E_{j}/E_{i-1}$ be $v=(r,D,a)$ and the Mukai vectors of any Jordan-H\"{o}lder factor of $G_{i}^{-}$ be $v_{i}=(r_{i},D_{i}, a_{i})$, $Z'=Z_{(0,f_{0}(W)\cdot \omega_{1})}$.

  If $f_{0}(W)>\sqrt{2/\omega_{1}^{2}}$, by induction $Im(Z'(v))>0$, and by semi-stability of $G_{i}^{-}$, $Z'(v_{i})\neq 0$. Hence
  $$Im(Z'(v_{i}))=f_{0}(W)\omega_{1}\cdot D_{1}$$
  is either positive or negative, since $Z'$ is a stability condition on the wall. It cannot be negative. Otherwise let $Z=W\cap \mathbf{b}$, then there is a continuous path $\gamma \subset W$ between $Z'$ and $Z$. Since $Im(Z(v_{i}))>0$ there must be some $Z''$ in between such that $Im(Z''(v_{i}))=0$. Since we have chosen $\mathbb{B}$ sufficiently small such that $Im(Z''(v))>0$ for any $Z''\in \mathbb{B}$, $Z''(v_{i})=0$. This is impossible by the positivity of stability conditions.

  If $f_{0}(W)=\sqrt{2/\omega_{1}^{2}}$, $Z'$ is no longer a stability condition, but it is still a function with $Im(Z'(v))>0$. The only difference is that $Im(Z'(v_{i}))$ could be zero. In this case, since $v$ and $v_{i}$ have the same phase at $Z'$, $Z'(v_{i})=0$. Explicitly,
  $$-a_{i}+r_{i}\frac{\omega_{1}^{2}}{2}\cdot \frac{2}{\omega_{1}^{2}}=r_{i}-a_{i}=0, D_{i}\cdot \omega_{1}=0.  $$
  From the second equation, either $D_{i}=0$ or $D_{i}^{2}<0$ by the Hodge Index Theorem. If $D_{i}=0$, since $v_{i}$ is spherical, $-2=D_{i}^{2}-2r_{i}a_{i}$, we know $r_{i}=a_{i}=\pm 1$. Since $G_{i}^{-}\in \mathcal{A}$, $r_{i}=a_{i}=-1$ and $G_{i}^{-}$ is a direct sum of $\mathcal{O}_{X}[1]$, which is excluded. Hence $D_{i}^{2}<0$. Since the K3 lattice is even, $D_{i}^{2}\leq -2$. Since $r_{i}-a_{i}=0$, we have
  $$-2=D_{i}^{2}-2r_{i}a_{i}\leq -2-2r_{i}^{2}\leq -2.$$
  Hence all inequalities are equalities. In particular, $D_{i}^{2}=-2$. By Lemma \ref{impossible}, this is impossible. Hence $Im(Z'(v_{i}))\neq 0$, and all arguments for $f_{0}(W)\neq \sqrt{2/\omega_{1}^{2}}$ case apply without change.  
\end{proof}

\begin{definition}
We say an object $E\in \mathcal{D}^{b}(X)$ is \emph{rigid} if $\mbox{Ext}^{1}(E,E)=0$.
\end{definition}

A very important fact for rigid objects is Mukai's Lemma.

\begin{lemma}[\cite{Bri08}, Lemma 5.2]\label{Mukai's Lemma}
  Let $\mathcal{A}$ be the heart of a bounded $t$-structure of $\mathcal{D}^{b}(X)$. Let
  $$0 \longrightarrow F \longrightarrow E \longrightarrow G \longrightarrow 0 $$
  be a short exact sequence in $\mathcal{A}$ such that $\mbox{Hom}(F,G)=0$. Then
  $$\mbox{ext}^{1}(E,E)\geq \mbox{ext}^{1}(F,F)+\mbox{ext}^{1}(G,G).$$
\end{lemma}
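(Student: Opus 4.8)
The plan is to analyze the long exact sequences in $\mbox{Ext}$ obtained from applying $\mbox{Hom}(-,-)$ to the short exact sequence $0 \to F \to E \to G \to 0$ in both variables, and to exploit the hypothesis $\mbox{Hom}(F,G)=0$ together with Serre duality on the K3 surface. Since $\mathcal{A}$ is the heart of a bounded $t$-structure on $\mathcal{D}^b(X)$ and the sequence is exact in $\mathcal{A}$, for objects of $\mathcal{D}^b(X)$ the groups $\mbox{Ext}^i$ are the usual hyperext groups; in particular $\mbox{Ext}^i(A,B)=\mbox{Ext}^{2-i}(B,A)^*$ by Serre duality (the canonical bundle of $X$ is trivial). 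The key numerical input is that the Euler form on $\mathcal{D}^b(X)$ satisfies $\chi(A,A)=-A^2\le 2$ for any object, but more usefully we will extract $\mbox{ext}^1$ from $\mbox{ext}^0$, $\mbox{ext}^2$ and $\chi$.

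First I would apply $\mbox{Hom}(F,-)$ to the sequence to get the exact piece
$$\mbox{Hom}(F,G) \to \mbox{Ext}^1(F,F) \to \mbox{Ext}^1(F,E) \to \mbox{Ext}^1(F,G).$$
Since $\mbox{Hom}(F,G)=0$, the map $\mbox{Ext}^1(F,F)\hookrightarrow\mbox{Ext}^1(F,E)$ is injective. Next, apply $\mbox{Hom}(-,E)$ to the sequence to get
$$\mbox{Ext}^1(G,E) \to \mbox{Ext}^1(E,E) \to \mbox{Ext}^1(F,E) \to \mbox{Ext}^2(G,E).$$
By Serre duality $\mbox{Ext}^2(G,E)=\mbox{Hom}(E,G)^*$, and applying $\mbox{Hom}(-,G)$ to the sequence gives $\mbox{Hom}(E,G)\hookrightarrow\mbox{Hom}(F,G)=0$, so $\mbox{Ext}^2(G,E)=0$. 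Hence $\mbox{Ext}^1(E,E)\twoheadrightarrow\mbox{Ext}^1(F,E)$ is surjective, and composing with the injection above yields $\mbox{ext}^1(E,E)\ge\mbox{ext}^1(F,E)\ge\mbox{ext}^1(F,F)$. This already handles one summand.

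To bring in $\mbox{ext}^1(G,G)$ as well, I would work inside $\mbox{Ext}^1(F,E)$ more carefully. Apply $\mbox{Hom}(F,-)$ again: the cokernel of $\mbox{Ext}^1(F,F)\hookrightarrow\mbox{Ext}^1(F,E)$ injects into $\mbox{Ext}^1(F,G)$. Then apply $\mbox{Hom}(-,G)$ to get $\mbox{Ext}^1(G,G)\to\mbox{Ext}^1(E,G)\to\mbox{Ext}^1(F,G)\to\mbox{Ext}^2(G,G)$, and use Serre duality $\mbox{Ext}^2(G,G)=\mbox{Hom}(G,G)^*$, which is nonzero, so I cannot immediately conclude surjectivity here — this is the delicate point. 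The cleaner route, which I expect to be the actual argument, is to instead combine the surjection $\mbox{Ext}^1(E,E)\twoheadrightarrow\mbox{Ext}^1(F,E)$ with a parallel statement obtained from $\mbox{Hom}(-,F)$: namely $\mbox{Hom}(G,F)$ need not vanish, but one shows the composite $\mbox{Ext}^1(F,F)\to\mbox{Ext}^1(F,E)$ lands in the kernel of $\mbox{Ext}^1(F,E)\to\mbox{Ext}^1(F,G)$, while a lift of $\mbox{Ext}^1(G,G)$ via $\mbox{Ext}^1(G,E)\to\mbox{Ext}^1(E,E)$ lands in a complementary subspace. Concretely: $\mbox{Ext}^1(G,E)\hookrightarrow\mbox{Ext}^1(E,E)$ because its kernel is a quotient of $\mbox{Hom}(F,E)$ mapping to... — here I would track that $\mbox{Hom}(F,E)\to\mbox{Ext}^1(G,E)$ has image killed by the boundary, using $\mbox{Ext}^2(G,G)$-duality only on the $\mbox{Hom}(F,G)=0$ side. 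The main obstacle is exactly this bookkeeping: showing that the image of $\mbox{Ext}^1(F,F)$ in $\mbox{Ext}^1(E,E)$ and the image of $\mbox{Ext}^1(G,G)$ in $\mbox{Ext}^1(E,E)$ together span a subspace of dimension $\ge\mbox{ext}^1(F,F)+\mbox{ext}^1(G,G)$, i.e. that these two images are in "sufficiently general position." I expect this to follow from a diagram chase showing the sum of the two images surjects onto $\mbox{Ext}^1(F,E)$ with the first image as a direct complement to a lift of $\mbox{Ext}^1(G,G)$, with the vanishing $\mbox{Hom}(F,G)=0$ (and its Serre dual $\mbox{Ext}^2(G,F)=0$) supplying precisely the injectivity needed at each stage.
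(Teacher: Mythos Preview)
The paper does not prove this lemma; it is quoted from \cite{Bri08} without argument, so there is no in-paper proof to compare against. I will therefore just assess the correctness of your attempt.

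There is a genuine error in your second paragraph. You claim that applying $\mbox{Hom}(-,G)$ to $0\to F\to E\to G\to 0$ yields $\mbox{Hom}(E,G)\hookrightarrow\mbox{Hom}(F,G)=0$, hence $\mbox{Ext}^2(G,E)=0$. But the contravariant functor $\mbox{Hom}(-,G)$ gives
\[
0\longrightarrow \mbox{Hom}(G,G)\longrightarrow \mbox{Hom}(E,G)\longrightarrow \mbox{Hom}(F,G),
\]
so $\mbox{Hom}(E,G)$ contains $\mbox{Hom}(G,G)\ni \mbox{id}_G$ and is never zero. Consequently $\mbox{Ext}^2(G,E)\neq 0$ and your surjection $\mbox{Ext}^1(E,E)\twoheadrightarrow\mbox{Ext}^1(F,E)$ does not follow. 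The remainder of your sketch is, as you acknowledge, incomplete.

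The fix is to use the vanishing you actually have, namely $\mbox{Ext}^2(G,F)=\mbox{Hom}(F,G)^*=0$, and to run the diagram chase along the long exact sequence obtained from $\mbox{Hom}(E,-)$. Exactness at $\mbox{Ext}^1(E,E)$ gives
\[
\mbox{ext}^1(E,E)=\dim\bigl(\mathrm{im}\,[\mbox{Ext}^1(E,F)\to\mbox{Ext}^1(E,E)]\bigr)+\dim\bigl(\mathrm{im}\,[\mbox{Ext}^1(E,E)\to\mbox{Ext}^1(E,G)]\bigr).
\]
For the second summand: by naturality the composite $\mbox{Ext}^1(G,G)\hookrightarrow\mbox{Ext}^1(E,G)\to\mbox{Ext}^2(E,F)$ factors through $\mbox{Ext}^2(G,F)=0$, so $\mbox{Ext}^1(G,G)$ lies in the image of $\mbox{Ext}^1(E,E)\to\mbox{Ext}^1(E,G)$. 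For the first summand: the kernel of $\mbox{Ext}^1(E,F)\to\mbox{Ext}^1(E,E)$ is the image of $\mbox{Hom}(E,G)\to\mbox{Ext}^1(E,F)$, and by naturality this image lies in $\ker[\mbox{Ext}^1(E,F)\twoheadrightarrow\mbox{Ext}^1(F,F)]$ (the composite factors through $\mbox{Hom}(F,G)=0$); hence the image of $\mbox{Ext}^1(E,F)\to\mbox{Ext}^1(E,E)$ has dimension at least $\mbox{ext}^1(F,F)$. Adding the two bounds gives the inequality.
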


\begin{corollary}
Using the notation of Lemma \ref{Mukai's Lemma}, if $E$ is rigid, then $F$ and $G$ are rigid.
\end{corollary}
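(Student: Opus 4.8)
The plan is to read the conclusion directly off the inequality in Mukai's Lemma together with the non-negativity of $\operatorname{ext}^{1}$. By hypothesis $\operatorname{Hom}(F,G)=0$, so Lemma \ref{Mukai's Lemma} applies verbatim and yields
$$\operatorname{ext}^{1}(E,E)\geq \operatorname{ext}^{1}(F,F)+\operatorname{ext}^{1}(G,G).$$
Since $E$ is rigid, the left-hand side is $0$, and both summands on the right are non-negative integers, so each must vanish. Hence $\operatorname{Ext}^{1}(F,F)=0$ and $\operatorname{Ext}^{1}(G,G)=0$, i.e.\ $F$ and $G$ are rigid.

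There is essentially no obstacle here: the only thing to check is that the hypothesis $\operatorname{Hom}(F,G)=0$ needed to invoke Lemma \ref{Mukai's Lemma} is exactly the one carried over from that lemma's statement, which it is, and that $F,G$ genuinely lie in $\mathcal{A}$ so that $\operatorname{ext}^{1}(F,F),\operatorname{ext}^{1}(G,G)$ make sense and are finite and non-negative — again immediate since $F,G$ are terms of a short exact sequence in $\mathcal{A}\subset\mathcal{D}^{b}(X)$. So the corollary is a one-line consequence of Mukai's Lemma, and I would present it as such.
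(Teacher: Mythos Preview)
Your proof is correct and matches the paper's approach: the paper states the corollary without proof, treating it as the immediate consequence of Mukai's Lemma that you have spelled out. There is nothing to add.
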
 

The next lemma shows that all possible $E_{j}/E_{i-1}$ that appear in the Harder-Narasimhan filtration at some stability condition are rigid.
 
\begin{lemma}\label{rigid}
  Let $E$ be a rigid object, and
  $$0=E_{0}\subset E_{1}\subset \cdots \subset E_{m}=E$$
  be the $\sigma$-Harder-Narasimhan filtration for some $\sigma\in \mbox{Stab}(X)$. Then $E_{j}/E_{i-1}$ is rigid for any $i\leq j$. 
\end{lemma}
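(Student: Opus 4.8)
The plan is to iterate Mukai's Lemma (Lemma \ref{Mukai's Lemma}) and its corollary along the Harder-Narasimhan filtration, reducing the claim to the case where $E_j/E_{i-1}$ sits as a kernel or cokernel of maps between consecutive quotients $E_k/E_{k-1}$. First I would observe that it suffices to show that for any subfiltration
$$0 = E_{i-1} \subset E_i \subset E_{i+1}\subset \cdots \subset E_j = E_j',$$
the object $E_j' := E_j/E_{i-1}$ is rigid; replacing $E$ by $E_j'$ we may as well assume $i-1=0$ and $j=m$, i.e.\ we must show that $E$ itself being rigid forces each $E_j$ (and then by the same argument applied to the dual filtration, each $E/E_{i-1}$) to be rigid, and combining these two reductions gives the general $E_j/E_{i-1}$.

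The key input is that in a Harder-Narasimhan filtration the phases are strictly decreasing, so $\mathrm{Hom}(G_a, G_b)=0$ whenever $a<b$ (a map from a semistable object to a semistable object of strictly smaller phase vanishes). Consequently, for the short exact sequence $0\to E_{j-1}\to E_j\to G_j\to 0$ I need $\mathrm{Hom}(E_{j-1}, G_j)=0$: since $E_{j-1}$ has a filtration with quotients $G_1,\dots,G_{j-1}$, all of phase strictly larger than $\phi(G_j)$, any map $E_{j-1}\to G_j$ vanishes. Then Lemma \ref{Mukai's Lemma} gives $\mathrm{ext}^1(E_j, E_j) \le \mathrm{ext}^1(E_j,E_j) \le$ (applied the other way) — more precisely, apply the Corollary to Lemma \ref{Mukai's Lemma}: from $0\to E_{j-1}\to E_m = E\to E/E_{j-1}\to 0$ with $\mathrm{Hom}(E_{j-1}, E/E_{j-1})=0$ (again by the phase ordering, since $E/E_{j-1}$ is filtered by $G_{j},\dots,G_m$, all of smaller phase), rigidity of $E$ forces $E_{j-1}$ rigid. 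Iterating downward through $j$ shows every $E_{j}$ is rigid. Symmetrically, from $0\to E_{i-1}\to E\to E/E_{i-1}\to 0$ with $\mathrm{Hom}(E_{i-1}, E/E_{i-1})=0$, rigidity of $E$ gives rigidity of $E/E_{i-1}$. Now fix $i\le j$: the object $E_j/E_{i-1}$ is a quotient object $E_j / E_{i-1}$, and applying the first reduction to the rigid object $E/E_{i-1}$ (whose HN filtration is the image of $E_\bullet$) shows that its sub-object $E_j/E_{i-1}$ is rigid. This finishes the argument.

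The only genuinely delicate point is making sure the $\mathrm{Hom}$-vanishing hypotheses of Mukai's Lemma hold at each stage; this is exactly where the strict inequality of phases in the HN filtration is used, and it is clean because all the relevant quotients are $\sigma$-semistable with comparable phases. I would state the phase-vanishing fact ($\mathrm{Hom}$ between semistable objects of decreasing phase is zero) as a one-line observation, then run the two inductions. I do not expect any obstacle beyond bookkeeping: everything is a formal consequence of Lemma \ref{Mukai's Lemma}, its Corollary, and the defining property of Harder-Narasimhan filtrations.
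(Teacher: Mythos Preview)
Your proposal is correct and follows essentially the same approach as the paper: both use the phase-ordering to get the $\mathrm{Hom}$-vanishing needed for Mukai's Lemma, and then peel off pieces of the filtration. The paper organizes the induction slightly more cleanly---it just shows $E_{m-1}$ and $E/E_1$ are rigid (one application of Mukai's Lemma each) and then repeats on the shorter filtration---whereas you first show all $E_j$ and all $E/E_{i-1}$ are rigid and then combine; note that your ``iterating downward'' is actually unnecessary, since for each fixed $j$ the single sequence $0\to E_j\to E\to E/E_j\to 0$ already gives $E_j$ rigid directly from $E$.
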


\begin{proof}
  It suffices to prove $E_{m-1}/E_{0}$ and $E_{m}/E_{1}$ are rigid, then we may repeat this and the lemma is proved. Since this is a Harder-Narasimhan filtration,
  $\mbox{Hom}(E_{1},  G_{i})=0$ for $i\geq 2$, where $G_{i}=E_{i}/E_{i-1}$. Hence $\mbox{Hom}(E_{1},E_{m}/E_{1})=0$. By Mukai's Lemma (Lemma \ref{Mukai's Lemma}),
  $$0=\mbox{ext}^{1}(E,E)\geq \mbox{ext}^{1}(E_{1},E_{1})+\mbox{ext}^{1}(E_{m}/E_{1},E_{m}/E_{1}),$$
  $E_{m}/E_{1}$ is rigid. For $E_{m-1}/E_{0}$, the argument is similar.
\end{proof}

Hence what we still need is a local reduction process, which we describe here. Let $W$ be a wall and $\mathcal{H}$ be its rank 2 lattice. Let $\mathcal{A}_{\mathcal{H}}$ be the full subcategory of $\mathcal{A}$ that consists of objects whose Mukai vectors lie in $\mathcal{H}$. Let $\sigma_{\pm}$ be two stability conditions on two sides of $W$. Let $E\in \mathcal{A}_{\mathcal{H}}$ be a rigid object with $\omega_{1}$-degree positive. The objective of local reduction is to compute the $\sigma_{-}$-Harder-Narasimhan filtration of $E$, provided that we know the $\sigma_{+}$-Harder-Narasimhan filtration. We will develop the local reduction in Section \ref{section4} and Section \ref{section5} formally. Assuming local reduction, we may summarize the global reduction formally as follows.

\begin{algorithm}[Global reduction]\label{global reduction}
  Let $E\in M_{H}(v)$ be a spherical vector bundle with $c_{1}(E)\cdot H>0$. By Theorem \ref{large volume limit}, the initial shape is $(E,\partial W_{0})$, where $E$ is the trivial filtration, and $W_{0}$ is the largest wall of $v$ on $\mathbf{b}$.

  For the current shape $(E^{\bullet}, \partial W)$, let the rank 2 lattice of $W$ be $\mathcal{H}$.
  
  \begin{enumerate}
  \item Find the minimal index $i$ and maximal index $j$ such that $G_{i}=E_{i}/E_{i-1}$ and $G_{j}=E_{j}/E_{j-1}$ are in $\mathcal{H}$.
    \item Let $\sigma_{-}$ be a stability condition right below $W$. Do local reduction (Algorithm \ref{local reduction}) on $E_{j}/E_{i-1}$ to find the $\sigma_{-}$-Harder-Narasimhan filtration of $E_{j}/E_{i-1}$. It lifts to the $\sigma_{-}$-Harder-Narasimhan filtration of $E$, which we denote by $E_{-}^{\bullet}$. 
  \item Compute all numerical walls for all adjacent factors of $E_{-}^{\bullet}$, and compute all actual walls for all factors of $E_{-}^{\bullet}$ by Proposition \ref{criterion}, denote the union of them by $\{W_{l}\}$.
  \item Pick the maximal $W_{-}\in \{W_{l}\}$ with respect to the order on $\mathcal{W}(\mathbb{B})$. Let $(E_{-}^{\bullet}, W_{-})$ be the current shape.
    \item If $f_{0}(W_{-})<\sqrt{2/\omega_{1}^{2}}$, then by Corollary \ref{output}, the last Harder-Narasimhan factor of $E_{-}^{\bullet}$ is $\mathcal{O}_{X}[1]^{\oplus h}$ and $\mbox{h}^{1}(E)=h$, the algorithm terminates. Otherwise, do (1)-(5) again.
    \end{enumerate}
    By doing (1)-(5) each time, $\partial W_{-}<\partial W$ in $\mathcal{W}(\mathbb{B})$. Since $\mathcal{W}(\mathbb{B})$ is finite by Corollary \ref{finite}, the algorithm terminates in finitely many steps.
\end{algorithm}

For an example of global reduction (Algorithm \ref{global reduction}), see Example \ref{height three}.

\section{Wall Crossing for A Single Stable Spherical Object}\label{section4}

In this section we compute the Harder-Narasimhan filtration of a stable spherical object in an adjacent chamber. A good reference for background is \cite{BM14a}.

\subsection{Rank 2 Lattice}

As mentioned in Section \ref{section3}, to carry out the global reduction (Algorithm \ref{global reduction}), we need to know the Harder-Narasimhan walls. There are two types of walls: an actual wall for a factor, and a numerical wall defined by two adjacent factors. The latter type is easy to compute. In this section we deal with the former type. We show how to compute all actual walls for a spherical Mukai vector. 

Let $v\in  \mbox{H}^{*}_{alg}$ be a spherical Mukai vector. Since $M_{\sigma}(v)$ is a single point, every actual wall for $v$ is totally semistable \cite{BM14a}. Proposition 5.7 in \cite{BM14a} gives a numerical criterion for such walls when the associated rank 2 lattice is hyperbolic, however when $v$ is spherical, the proposition needs the following modification.

\begin{proposition} \label{criterion}
Let $\mathcal{H}\subset  \mbox{H}^{*}_{alg}$ be a rank 2 lattice (not neccesarily hyperbolic). Then $\mathcal{H}$ defines an actual wall for $v$ (hence totally semistable) if and only if there exists a spherical Mukai vector $v_{1}\in \mathcal{H}$ such that $vv_{1}<0$ and $v_{1}, v-v_{1}$ are both effective.
\end{proposition}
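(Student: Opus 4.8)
The plan is to compare $\mathcal{H}$ with the hyperbolic case treated in Proposition 5.7 of \cite{BM14a}, identifying exactly where sphericity of $v$ forces a modification. First I would recall the general mechanism: a rank 2 lattice $\mathcal{H}$ containing $v$ defines a numerical wall $W$, and $W$ is actual if and only if some $\sigma_0 \in W$ admits a $\sigma_0$-semistable object with Mukai vector $v$ whose Jordan--Hölder filtration has a proper sub/quotient object whose class lies in $\mathcal{H}$; equivalently, there is an effective decomposition $v = v_1 + (v - v_1)$ into classes of $\sigma_0$-semistable objects of the same phase, all living in $\mathcal{H}$. Since $M_\sigma(v)$ is a single reduced point for generic $\sigma$ (as $v^2 = -2$, by Theorem \ref{B-moduli}), any actual wall for $v$ is automatically totally semistable, so there is no loss in phrasing things this way.

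For the ``if'' direction I would argue as follows. Suppose $v_1 \in \mathcal{H}$ is spherical with $v v_1 < 0$ and $v_1$, $v - v_1$ both effective. Sphericity of $v_1$ gives $v_1^2 = -2$, and one computes $(v - v_1)^2 = v^2 - 2 v v_1 + v_1^2 = -2 - 2 v v_1 - 2 \geq 0$ precisely because $v v_1 < 0$; so $v - v_1$ is a positive class with a semistable representative on $W$ by Theorem \ref{B-moduli}. Then I want to realize $v$ as an extension (or, more robustly, as a class appearing in a Jordan--Hölder filtration) of a semistable object of class $v_1$ by one of class $v - v_1$, matched in phase along $W$ after possibly shrinking to a sub-wall; the inequality $v v_1 < 0$ guarantees $\chi(v_1, v - v_1) + \chi(v - v_1, v_1) = -2 v v_1 > 0$ wait, more carefully: $\operatorname{ext}^1$ between the two pieces is forced to be positive, so a nontrivial extension exists and is strictly semistable of class $v$ on $W$, hence the wall is actual. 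The standard subtlety — that the extension might decompose or fail to have the right HN behavior on one side — is handled exactly as in \cite{BM14a} and does not interact with whether $\mathcal{H}$ is hyperbolic.

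For the ``only if'' direction, assume $W$ is an actual (hence totally semistable) wall for $v$. Then on $W$ the unique object $E$ of class $v$ is strictly semistable, so its Jordan--Hölder factors have classes in $\mathcal{H}$ (all walls being separated, as arranged in Section \ref{section3}) of the same phase; pick the factor, or partial sum of factors, $v_1$ with $v_1$ and $v - v_1$ both effective and nonzero — possible since the JH factors are themselves positive classes summing to $v$. It remains to see that one can choose such a $v_1$ that is moreover \emph{spherical} with $v v_1 < 0$. This is the point where hyperbolicity of $\mathcal{H}$ is not available, so the rank 2 arithmetic must be redone. I would analyze the restriction of the Mukai pairing to $\mathcal{H}$: if $\mathcal{H}$ is hyperbolic we are in the situation of \cite[Prop.\ 5.7]{BM14a}; if the form on $\mathcal{H}$ is negative (semi)definite, then since $v^2 = -2$ and every class in the JH filtration has $w^2 \geq -2$, a counting/convexity argument on $\mathbb{Z}^2 \cong \mathcal{H}$ shows the JH factors are themselves spherical, and pairing with $v$: from $v = \sum w_k$ with each $w_k^2 = -2$ and $v^2 = -2$, expanding gives $\sum_{k} v w_k = -2$, hence $v w_k < 0$ for at least one $k$ wait — actually $\sum_k v w_k = v^2 = -2 < 0$, so some JH factor $w_k$ has $v w_k < 0$; taking $v_1 = w_k$ (and $v - v_1$ effective since it is the sum of the remaining factors) finishes it. The degenerate case where $\mathcal{H}$ has a rank 1 radical needs a brief separate check but reduces to the same inequality.

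The main obstacle I anticipate is the ``only if'' direction's lattice-theoretic step: controlling the signature of the Mukai form on an arbitrary rank 2 sublattice $\mathcal{H}$ and showing that in every case a spherical $v_1$ with $v v_1 < 0$ can be extracted from the Jordan--Hölder data. In the hyperbolic case \cite{BM14a} already does this; the new content is precisely the non-hyperbolic (definite or degenerate) cases, where I expect one must use that a spherical class exists in $\mathcal{H}$ (namely $v$ itself) together with the fact that all JH factors have square $\geq -2$ to pin down that they are spherical, after which the pairing identity $\sum_k v w_k = v^2 = -2$ does the rest.
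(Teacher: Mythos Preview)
Your ``only if'' direction is essentially the paper's argument: decompose $E$ at $\sigma_0\in W$ into Jordan--H\"older factors, observe these are rigid (hence spherical) by Mukai's Lemma, and use $\sum_k a_k(v\cdot w_k)=v^2=-2$ to extract a factor with $v\cdot w_k<0$. Two small corrections: the reason the JH factors are spherical is Mukai's Lemma applied to the rigid object $E$, not lattice arithmetic on $\mathcal H$; and there is no need to split by signature of $\mathcal H$ --- the argument is uniform.

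The ``if'' direction, however, has a genuine gap. You build a nontrivial extension $E'$ of class $v$ that is strictly $\sigma_0$-semistable on $W$, and conclude the wall is actual. But ``actual wall for $v$'' means that the unique $\sigma_+$-stable object $E\in M_{\sigma_+}(v)$ is destabilized at $\sigma_0$; producing some \emph{other} semistable object $E'$ of the same class does not show this, since $E'$ need not be $\sigma_+$-semistable (indeed, on the side where $\phi_+(v_1)>\phi_+(v-v_1)$ your extension is already unstable), so you cannot identify $E'$ with $E$. The paper proceeds differently: take $E\in M_{\sigma_+}(v)$ and $F\in M_{\sigma_+}(v_1)$, use $\chi(E,F)=-vv_1>0$ to force $\mathrm{Hom}(F,E)\neq 0$ or $\mathrm{Hom}(E,F)\neq 0$, and then argue directly with this map. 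Effectiveness of $v-v_1$ rules out surjectivity, and after passing to a $\sigma_0$-stable JH factor $F_i$ of $F$ (spherical by Mukai's Lemma) one gets a nonzero map $F_i\to E$ with $\phi_0(F_i)=\phi_0(E)$; stability of $F_i$ forces it to be injective, so $F_i\hookrightarrow E$ destabilizes $E$ at $\sigma_0$. This is the missing step in your argument. (Incidentally, your computation $(v-v_1)^2\geq 0$ is off: one only gets $(v-v_1)^2=-4-2vv_1\geq -2$; but effectiveness of $v-v_1$ is a hypothesis anyway, so this is moot.)
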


\begin{proof}
  If $\mathcal{H}$ defines an actual wall $W$, let $\sigma_{+}$ be a generic stability condition in an adjacent chamber of $W$. Then there is a unique object $E\in M_{+}(v)$. By definition, $E$ is destabilized at $\sigma_{0}\in W$. By Mukai's Lemma (Lemma \ref{Mukai's Lemma}), at $\sigma_{0}$ all Jordan-H\"{o}lder factors are effective $\sigma_{0}$-stable spherical objects, whose Mukai vectors are denoted by $v_{1}, \cdots , v_{n}$. Then $v=\sum_{i=1}^{n}a_{i}v_{i}$, $a_{i}>0$. By assumption $v$ is spherical, hence
  $$-2=v^{2}=v(\sum_{i=1}^{n}a_{i}v_{i})=\sum_{i=1}^{n}a_{i}vv_{i}.$$
  There must exist some $i$ such that $a_{i}vv_{i}<0$. Since $a_{i}>0$, we have $vv_{i}<0$. Furthermore, $v-v_{1}$ is clearly effective since $E$ is not $\sigma_{0}$-stable, and the Jordan-H\"{o}lder filtration is non-trivial.

  Conversely if there is some effective spherical $v_{1}$ with $vv_{1}<0$ and $v_{1}, v-v_{1}$ both being effective, then since $\sigma_{+}$ is generic, there is a unique $F\in M_{+}(v_{1})$. Since
  $$0<-vv_{1}=\chi(v,v_{1})=\mbox{hom}(E, F) -\mbox{ext}^{1}(E,F) +\mbox{ext}^{2}(E,F),$$
either $\mbox{Hom}(E,F)$ or $\mbox{Hom}(F,E)=\mbox{Ext}^{2}(E,F)^{*}$ is non-zero, say $\mbox{Hom}(F,E)$. Consider any non-zero map $F \rightarrow E$. First note that it cannot be surjective, otherwise $v-v_{1}$ is not effective, since $E, F \in \mathcal{A}$. Now let $F_{1}, \cdots , F_{n}$ be Jordan-H\"{o}lder factors of $F$ at $\sigma_{0}$, whose Mukai vectors are denoted by $v_{1}, \cdots , v_{n}$. By Mukai's Lemma (Lemma \ref{Mukai's Lemma}), they are all spherical. Then since $\mbox{Hom}(F,E)\neq 0$, there exists some $i$ such that $\mbox{Hom}(F_{i},E)\neq 0$. Since any map $F \rightarrow E$ is not surjective, the map $F_{i} \rightarrow E$ cannot be surjective. On the other hand, since $F_{i}$ is $\sigma_{0}$-stable and $\phi_{0}(E)=\phi_{0}(F_{i})$, the map $F_{i} \rightarrow E$ must be injective. Hence $E$ is destablized by $F_{i}$, and the wall is actual.  
\end{proof}

The next goal is to classify all such possible rank 2 lattices $\mathcal{H}$. Now if $\mathcal{H}$ is hyperbolic, then everything in \cite{BM14a} applies. Let $v_{1}$ be any Mukai vector as in Proposition \ref{criterion}, and let $-\chi=vv_{1}$. If $\chi \geq 3$, then $\mathcal{H}$ is hyperbolic. By Proposition \ref{criterion}, $\chi>0$. Hence there are two cases left: $\chi=1$ or $\chi=2$.

We first look at the case $\chi=2$. In this case $\eta'= v-v_{1}$ is an isotropic vector in $ \mathcal{H}$. Take any spherical $w \in \mathcal{H}$, then $w=av+bv_{1}$, $a,b \in \mathbb{Q}$. We have
$$-2=(av+bv_{1})^{2}=-2a^{2}-2b^{2}+2ab(-2)=-2(a+b)^{2},$$
hence $a+b=\pm 1$, $w=\pm v+t \eta'$ for some $t \in \mathbb{Q}$. Now let $F_{1}, F_{2}, F_{3}$ be three $\sigma_{+}$-stable spherical objects with Mukai vectors $w_{1}, w_{2}, w_{3}$. Then there exist two of them whose difference is isotropic, say $w_{1}, w_{2}$. Then
$$0=(w_{1}-w_{2})^{2}=-2-2w_{1}w_{2}-2,$$
namely $w_{1}w_{2}=-2$. Hence $F_{i}, i=1,2,3$, cannot all be $\sigma_{0}$-stable, there are at most two $\sigma_{0}$-stable spherical objects whose Mukai vectors lie in $\mathcal{H}$. On the other hand, there are at least two such, since $\mathcal{H}$ defines an actual wall. Hence there are exactly two, denoted by $s_{0}, t_{1}$, and let $\eta=s_{0}+t_{1}$. We have $s_{0}t_{1}=2$ and $\eta^{2}=0$.

In this case, there are infinitely many spherical classes in $\mathcal{H}$, and all of them still come from spherical reflections, as in \cite{BM14a}. For any $w=at_{1}+b\eta \in \mathcal{H}$, $a, b \in \mathbb{Q}$, since the Mukai pairing is even, we have
$$w^{2}=a^{2}t_{1}^{2}=-2a^{2}\in 2 \mathbb{Z},$$
hence $a\in \mathbb{Z}$, $b\eta \in \mathcal{H}$. If $b\notin \mathbb{Z}$, since $\eta \in  \mathcal{H}$, there exists $0< b' <1$ such that $b'\eta \in \mathcal{H}$. Then $t_{1}'=t_{1}-b'\eta$ is an effective class. Let $T_{1}' \in M_{+}(t_{1}')$ and $T_{1}\in M_{0}(t_{1})$, then $\chi(T_{1}, T_{1}')=-t_{1}t_{1}'=2$, either $\mbox{Hom}(T_{1},T_{1}')$ or $\mbox{Hom}(T_{1}',T_{1})$ is non-zero, say $\mbox{Hom}(T_{1}',T_{1})$. Since $t_{1}-t_{1}'=b'\eta$ is effective, the map $T_{1}' \rightarrow T_{1}$ cannot be surjective. On the other hand since $T_{1}$ is $\sigma_{0}$-stable and $\phi_{0}(T_{1}')=\phi_{0}(T_{1})$, the image of this non-zero map must be $T_{1}$ itself, a contradiction. Hence $b\in \mathbb{Z}$, we see $s_{0}, t_{1}$ is an integral basis of $\mathcal{H}$. Then all effective spherical classes are given recursively: $t_{2}=s_{0}+(s_{0}t_{1})t_{1}$, $t_{i}=-(t_{i-1}t_{i-2})t_{i-1}-t_{i-2}$ for $i\geq 3$. Similarly $s_{-1}=t_{1}+(t_{1}s_{0})s_{0}$, $s_{j}=-s_{j+2}-(s_{j+2}s_{j+1})s_{j+1}$ for $j\leq -2$.

We are left with the case where $\chi=1$. In this case $\mathcal{H}$ is negative definite. We claim there are exactly 3 effective spherical objects whose Mukai vectors lie in $\mathcal{H}$, among which exactly two are $\sigma_{0}$-stable. Note that the discriminant
$$\mbox{disc}(v,v_{1})=\begin{pmatrix}
                         v^{2} & v_{1}v \\
                         vv_{1} & v_{1}^{2}
                       \end{pmatrix}
                       =4-1=3$$
is square free, hence $v,v_{1}$ is a basis for $\mathcal{H}$.                   

Now suppose $w=xv+yv_{1}$, $x,y \in \mathbb{Z}$ is spherical, then
$$-2=w^{2}=(xv+yv_{1})^{2}=-2x^{2}-2y^{2}-2xy=-x^{2}-y^{2}-(x+y)^{2}.$$
We have either $x=0$, $y=0$, or $x+y=0$. Solving this we get
\begin{eqnarray*}
  x=0, y=\pm 1 \\
  x=\pm 1, y=0 \\
  x=1, y=-1\\
  x=-1, y=1
\end{eqnarray*}
Hence there are exactly 6 spherical Mukai vectors, with 3 of them being effective. By assumption $\mathcal{H}$ defines a wall for $v$, hence $E$ is not $\sigma_{0}$-stable. However there are at least two effective $\sigma_{0}$-stable objects, hence there are exactly two. Denote them by $S_{0}, T_{1}$ whose Mukai vectors are $s_{0}, t_{1}$. In this case we see $v=s_{0}+t_{1}$. Explicitly, $E\in M_{+}(v)$ is a general extension
\[
0 \longrightarrow S_{0} \longrightarrow E \longrightarrow T_{1} \longrightarrow 0.
\]

We summarize the section by separate the rank 2 lattices into two cases in the following proposition.

\begin{proposition}\label{lattice}
  Let $\mathcal{H}$ be the rank 2 lattice associated to a wall of $v$, whose Mukai vectors of the two stable spherical objects are $s_{0}, t_{1}$. Let $v_{1}$ be any Mukai vector as in Proposition \ref{criterion}, and let $\chi=-vv_{1}$. Then
\begin{enumerate}
\item  If $\chi \geq 2$, then there are infinitely many effective spherical Mukai vectors in $\mathcal{H}$, constructed recursively by
  \begin{align*}
    t_{2}=s_{0}+(s_{0}t_{1})t_{1}, ~ t_{i}=-(t_{i-1}t_{i-2})t_{i-1}-t_{i-2} \mbox{ for } i\geq 3,\\
    s_{-1}=t_{1}+(t_{1}s_{0})s_{0}, ~ s_{j}=-s_{j+2}-(s_{j+2}s_{j+1})s_{j+1} \mbox{ for } j\leq -2.
  \end{align*}
\item If $\chi =1$, then there are exactly 6 spherical Mukai vectors in $\mathcal{H}$, with 3 of them being effective, which are $s_{0}, t_{1}, v=s_{0}+t_{1}$. Let $E\in M_{\sigma_{+}}(v)$, then $E$ fits into the unique non-trivial extension
  $$0 \longrightarrow S_{0} \longrightarrow E \longrightarrow T_{1} \longrightarrow 0 .$$ 
\end{enumerate}
\end{proposition}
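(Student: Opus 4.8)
The plan is to split on the value of $\chi=-vv_1$, which is a positive integer by Proposition \ref{criterion}. Since the Gram matrix of $\{v,v_1\}$ has determinant $v^2v_1^2-(vv_1)^2=4-\chi^2$, the value of $\chi$ controls the signature of $\mathcal{H}$: for $\chi\ge 3$ it is hyperbolic, for $\chi=2$ it is degenerate with isotropic radical spanned by $v-v_1$, and for $\chi=1$ it is negative definite. When $\chi\ge 3$ I would simply invoke \cite{BM14a}: Proposition 5.7 there and the surrounding discussion describe the spherical classes of a hyperbolic rank-$2$ lattice as a bi-infinite reflection orbit $\dots,s_{-1},s_0,t_1,t_2,\dots$, show that exactly two of its effective members, $s_0$ and $t_1$, are $\sigma_0$-stable, and produce precisely the reflection recursions in (1). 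This settles assertion (1) in that range, with infinitely many effective spherical classes.

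For $\chi=2$ I would argue directly. First $(v-v_1)^2=0=v\cdot(v-v_1)$, so $v-v_1$ spans the radical and any spherical $w=av+bv_1$ has $w^2=-2(a+b)^2$, forcing $a+b=\pm1$; thus every spherical class is of the form $\pm v+(\text{radical})$. Among any three $\sigma_0$-stable spherical objects, two share the sign of their $v$-component, so their Mukai vectors differ by an isotropic class and hence pair to $-2$; but distinct $\sigma_0$-stable objects of equal phase admit no nonzero morphism (such a morphism would be an isomorphism), so their Mukai pairing cannot be $-2$. Hence there are exactly two $\sigma_0$-stable classes $s_0,t_1$ (at least two since the wall is actual), and $\eta:=s_0+t_1$ is isotropic with $s_0t_1=2$. \emph{The step I expect to be the main obstacle is showing that $\{s_0,t_1\}$ is an integral, not merely rational, basis of $\mathcal{H}$}: for $w=at_1+b\eta\in\mathcal{H}$, pairing with $t_1$ forces $2a\in\mathbb{Z}$ while evenness of the Mukai form forces $a^2\in\mathbb{Z}$, hence $a\in\mathbb{Z}$; and if $b'\eta\in\mathcal{H}$ for some $0<b'<1$, then $t_1':=t_1-b'\eta$ would be an effective spherical class with $\chi(T_1,T_1')=-t_1t_1'=2>0$ for $T_1'\in M_{\sigma_+}(t_1')$ and $T_1\in M_{\sigma_0}(t_1)$, and the same morphism/effectivity argument already used in the proof of Proposition \ref{criterion} yields a contradiction. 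Once integrality is established, the reflection recursions of (1) enumerate all effective spherical classes, of which there are now infinitely many.

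For $\chi=1$: the determinant $\det\{v,v_1\}=3$ is squarefree and $\mathcal{H}$ is an integral lattice of nonzero discriminant, so $\{v,v_1\}$ is already an integral basis and $\mathcal{H}$ is negative definite of discriminant $3$. The equation $w^2=-2$ for $w=xv+yv_1$ reduces to $x^2+xy+y^2=1$, whose six solutions give the spherical classes $\pm v,\pm v_1,\pm(v-v_1)$; exactly three of them are effective, namely $v$ (positive rank), $v_1$ and $v-v_1$ (the latter two by the hypothesis of Proposition \ref{criterion}), since the others have negative rank. As $\mathcal{H}$ is an actual wall, $E$ is not $\sigma_0$-stable, so the $\sigma_0$-stable classes are exactly $t_1:=v_1$ and $s_0:=v-v_1$, whence $v=s_0+t_1$. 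Finally $\chi(T_1,S_0)=-s_0t_1=-1$ and $\mathrm{hom}(S_0,T_1)=\mathrm{hom}(T_1,S_0)=0$ (distinct $\sigma_0$-stable of equal phase), so $\mathrm{ext}^1(T_1,S_0)=1$; since $E\in M_{\sigma_+}(v)$ is indecomposable with $\sigma_0$-Jordan--H\"older factors $S_0$ and $T_1$, it is the unique nontrivial extension $0\to S_0\to E\to T_1\to 0$, the direction being determined by the side on which $\sigma_+$ lies.
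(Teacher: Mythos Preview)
Your proposal is correct and follows essentially the same route as the paper: split on $\chi$, cite \cite{BM14a} for the hyperbolic case $\chi\ge 3$, handle $\chi=2$ by identifying the isotropic radical, bounding the number of $\sigma_0$-stable spherical objects to two, and proving $\{s_0,t_1\}$ is an integral basis via the $t_1'=t_1-b'\eta$ argument, and handle $\chi=1$ via the squarefree-discriminant basis, the finite enumeration of spherical classes, and the resulting one-dimensional $\mathrm{Ext}^1$. The only cosmetic slip is the parenthetical ``since the others have negative rank'' in the $\chi=1$ case---effectivity here is in the heart $\mathcal{A}$, not detected by rank alone---but the conclusion is still correct since the negatives of the three effective classes cannot themselves be effective.
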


Note that item (2) in Proposition \ref{lattice} can happen. See Example \ref{ndlattice}.

\subsection{Construction of Spherical Objects}

Let $v$ be a spherical Mukai vector. Let $W$ be a wall for $v$, and $\mathcal{H}$ its rank 2 lattice. Let $\sigma_{0}\in W$, and $\sigma_{+}, \sigma_{-}$ be two sufficiently close stability conditions near $\sigma_{0}$ on two sides of $W$. Then there are two stable spherical objects $S_{0}, T_{1}$ on $W$. We assume that $\phi_{+}(T_{1})>\phi_{+}(S_{0})$.

In this section we explicitly construct all $\sigma_{\pm}$-stable spherical objects whose Mukai vectors are in $\mathcal{H}$ from $S_{0}, T_{1}$. From this construction, we get the $\sigma_{-}$(resp. $\sigma_{+}$)-Harder-Narasimhan filtration of a $\sigma_{+}$(resp. $\sigma_{-}$)-stable spherical object (Corollary \ref{JH}). 

If $\mathcal{H}$ is negative definite, then the description is already given in Proposition \ref{lattice}. Hence in the following we assume $\mathcal{H}$ is degenerate or hyperbolic. By Proposition \ref{lattice} there are infinitely many $\sigma_{+}$-stable objects whose Mukai vectors are in $\mathcal{H}$. Let $T_{i}\in M_{\sigma_{+}}(t_{i}), i\geq 1$ and $S_{j}\in M_{\sigma_{+}}(s_{j}), j\leq 0$.

\begin{proposition} \label{construction}
The $T_{i}$ are constructed as follows:
\begin{gather*}
  0 \longrightarrow S_{0} \longrightarrow T_{2} \longrightarrow T_{1}\otimes \mbox{Ext}^{1}(T_{1}, S_{0}) \longrightarrow 0, \\
    0 \longrightarrow T_{m+1} \longrightarrow T_{m}\otimes \mbox{Hom}(T_{m},T_{m-1}) \longrightarrow T_{m-1}  \longrightarrow 0, m\geq 2.
    \end{gather*}
    where $T_{2}$ is a general extension, and $T_{i}\otimes \mbox{Hom}(T_{i}, T_{i-1}) \longrightarrow T_{i-1}$ are evaluation maps. \\
    Similarly, $S_{j}$ are constructed as follows: 
\begin{gather*}
  0 \longrightarrow S_{0}\otimes \mbox{Ext}^{1}(T_{1}, S_{0}) \longrightarrow S_{-1} \longrightarrow T_{1} \longrightarrow 0,\\
  0 \longrightarrow S_{m+1} \longrightarrow S_{m}\otimes \mbox{Hom}(S_{m+1}, S_{m})^{*} \longrightarrow S_{m-1} \longrightarrow 0, m\leq -1.
\end{gather*}
where $S_{-1}$ is a general extension, and $S_{j+1} \longrightarrow S_{j} \otimes \mbox{Hom}(S_{j+1}, S_{j})^{*} $ are coevaluation maps.
\end{proposition}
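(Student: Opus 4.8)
The plan is to establish both chains of short exact sequences by induction --- on $i\geq 1$ for the $T_i$ and, entirely analogously, on $-j\geq 0$ for the $S_j$ --- carrying through the induction the auxiliary facts that each $T_i$ (resp.\ $S_j$) is $\sigma_+$-stable with the Mukai vector predicted by Proposition~\ref{lattice}, that consecutive members of the chain form an exceptional pair on $W$ (exactly one of $\mathrm{Hom}$ or $\mathrm{Ext}^1$ between them is nonzero, of the dimension forced by Riemann--Roch), and the phase inequalities on the $+$ side needed to continue. Since $\mathcal{A}$ is the heart of a $t$-structure and $S_0,T_1\in\mathcal{A}$, anything built inside $\mathcal{A}$ as an extension or as the kernel of a surjection again lies in $\mathcal{A}$, so there is nothing to verify on that front; the content is sphericity, rigidity and stability. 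For readability I will carry out the $T_i$ case and only indicate at the end the (identical) modifications for the $S_j$.

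For the base case I would take $T_2$ to be the universal extension of $T_1\otimes\mathrm{Ext}^1(T_1,S_0)$ by $S_0$, i.e.\ $T_2:=\mathrm{ST}_{T_1}(S_0)$, the spherical twist of $S_0$ along $T_1$. Unwinding the triangle $\mathrm{RHom}(T_1,S_0)\otimes T_1\to S_0\to\mathrm{ST}_{T_1}(S_0)$ with $\mathrm{RHom}(T_1,S_0)=\mathrm{Ext}^1(T_1,S_0)[-1]$ --- the vanishings $\mathrm{Hom}(T_1,S_0)=\mathrm{Ext}^2(T_1,S_0)=\mathrm{Hom}(S_0,T_1)^*=0$ hold because $S_0,T_1$ are non-isomorphic $\sigma_0$-stable objects of equal phase --- produces exactly the stated sequence $0\to S_0\to T_2\to T_1\otimes\mathrm{Ext}^1(T_1,S_0)\to 0$, with $n:=\mathrm{ext}^1(T_1,S_0)=-\chi(T_1,S_0)=s_0\cdot t_1>0$ and hence $v(T_2)=s_0+n t_1=t_2$ by the recursion of Proposition~\ref{lattice}. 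Rigidity ($\mathrm{ext}^1(T_2,T_2)=0$, consistent with $t_2^2=-2$) is automatic since $\mathrm{ST}_{T_1}$ is an autoequivalence, or follows by hand from $\mathrm{Hom}(T_1,T_2)=0$; $\sigma_0$-semistability holds because the only $\sigma_0$-stable classes in $\mathcal{H}$ are $s_0$ and $t_1$, so every $\sigma_0$-Jordan--H\"{o}lder filtration of $T_2$ has factors among $S_0,T_1$, all of equal phase. It remains to see $T_2$ is $\sigma_+$-stable, not $\sigma_-$-stable: a rigid $\sigma_0$-semistable object with spherical Mukai vector is $\sigma$-stable on exactly one side of $W$, and on the $+$ side --- where $\phi_+(T_1)>\phi_+(S_0)$ --- the subobject $S_0\subset T_2$ has strictly smaller phase than $T_2$, while the universality of the extension ($\mathrm{Hom}(T_1,T_2)=0$) rules out the only other candidate destabilizing subobject from $\mathcal{H}$, namely a copy of $T_1$; on the $-$ side $S_0$ would destabilize.

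For the inductive step, given $T_{m-1},T_m$ with all the inductive data, set $T_{m+1}:=\ker\bigl(\mathrm{ev}\colon T_m\otimes\mathrm{Hom}(T_m,T_{m-1})\to T_{m-1}\bigr)$, equivalently $T_{m+1}=\mathrm{ST}_{T_m}(T_{m-1})[-1]$. The crucial point is that this evaluation is surjective in $\mathcal{A}$, so that $T_{m+1}$ is an honest object of $\mathcal{A}$ rather than a two-term complex; equivalently $\mathrm{RHom}(T_m,T_{m-1})$ is concentrated in degree $0$, of dimension $-t_m\cdot t_{m-1}>0$, and $\mathrm{ST}_{T_m}(T_{m-1})$ is sheaf-like. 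This is exactly where the exceptional-pair behaviour of $(T_m,T_{m-1})$ on $W$ --- which must be propagated through the induction --- is used, together with Serre duality and Riemann--Roch. Granting it, $v(T_{m+1})=(-t_m\cdot t_{m-1})\,t_m-t_{m-1}=t_{m+1}$ by the recursion; sphericity is then automatic, rigidity follows from Mukai's Lemma (Lemma~\ref{Mukai's Lemma}) applied to $0\to T_{m+1}\to T_m\otimes\mathrm{Hom}(T_m,T_{m-1})\to T_{m-1}\to 0$ once $\mathrm{Hom}(T_{m+1},T_{m-1})=0$ is checked, and $\sigma_0$-semistability again follows from the classification of $\sigma_0$-stable classes in $\mathcal{H}$. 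That $T_{m+1}$ is $\sigma_+$-stable, and the refreshed phase inequalities and vanishings needed for the next step, are obtained exactly as in the base case. For the $S_j$, replace the first twist by $\mathrm{ST}_{S_0}^{-1}(T_1)$ (arranged so that the lower-phase factor $S_0$ is again the subobject) and replace every evaluation by a coevaluation; the argument is otherwise verbatim.

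The step I expect to be the main obstacle is pinning down, uniformly along the induction, that each constructed object is stable on the $+$ side of $W$ and not the $-$ side: this demands careful bookkeeping of the phase inequalities in the chamber adjacent to $W$ on the $+$ side and the observation that the universal extension / mutation is precisely the construction producing no destabilizing subobject with Mukai vector in $\mathcal{H}$. The closely related secondary point is the surjectivity of the evaluation (and coevaluation) maps --- i.e.\ that the defining triangles genuinely are short exact sequences in $\mathcal{A}$ --- which rests on the exceptional-pair behaviour of consecutive members of each chain restricted to $W$.
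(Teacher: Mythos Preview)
Your plan via spherical twists is a clean reformulation and the overall inductive scheme matches the paper's. However, two of your key steps do not go through as written.

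First, and most seriously, your argument for surjectivity of the evaluation map $T_m\otimes\mathrm{Hom}(T_m,T_{m-1})\to T_{m-1}$ in $\mathcal{A}$ is incomplete. Knowing that $\mathrm{RHom}(T_m,T_{m-1})$ is concentrated in degree $0$ --- which is indeed what the exceptional-pair behaviour plus Serre duality and Riemann--Roch gives you --- is \emph{not} equivalent to surjectivity in $\mathcal{A}$: it only tells you the spherical-twist triangle has the right shape in $\mathcal{D}^b(X)$, not that the cone lands in $\mathcal{A}[1]$. The paper supplies a genuine argument here: if the image $I$ were a proper subobject of $T_{m-1}$, then a $\sigma_+$-stable spherical factor $I_1$ of $I$ (produced via Mukai's Lemma) would satisfy $\phi_+(T_m)\le\phi_+(I_1)<\phi_+(T_{m-1})$; but by the numerical classification of spherical classes in $\mathcal{H}$ there is nothing strictly between, forcing $I\cong T_m^{\oplus a}$ with $a<\mathrm{hom}(T_m,T_{m-1})$, which contradicts the evaluation-map property. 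You correctly flag this step as the main obstacle, but the justification you offer does not address it.

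Second, your appeal to Mukai's Lemma for the rigidity of $T_{m+1}$ is wrong: applying $\mathrm{Hom}(-,T_{m-1})$ to the defining sequence shows $\mathrm{hom}(T_{m+1},T_{m-1})=g^2-1>0$, so the hypothesis $\mathrm{Hom}(T_{m+1},T_{m-1})=0$ fails. Rigidity \emph{is} immediate from your own framework --- $\mathrm{ST}_{T_m}$ is an autoequivalence, so $\mathrm{Ext}^1(T_{m+1},T_{m+1})\cong\mathrm{Ext}^1(T_{m-1},T_{m-1})=0$ --- but not via Mukai's Lemma. The paper instead proves rigidity by an explicit diagram chase, first establishing $\mathrm{Ext}^1(T_m,T_{m+1})=0$ and then unwinding the evaluation/coevaluation compatibility. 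Relatedly, ``stability exactly as in the base case'' hides real work: ruling out every $T_j$ with $j\le m$ as the first destabilizing factor of $T_{m+1}$ requires showing $\mathrm{Hom}(T_j,T_{m+1})=0$ for all such $j$, which the paper extracts from the defining sequence; your one-line assertion that a rigid $\sigma_0$-semistable spherical object is stable on exactly one side of $W$ is not something you can take for granted.
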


\begin{proof}
  We only prove the statement for $T_{i}$, $S_{j}$ are similar. First note that the Mukai vectors of $T_{i}$'s are spherical, hence it suffices to prove $T_{i}$ are $\sigma_{+}$-stable. We start with $T_{2}$. It is in $\mathcal{A}$, since it is an extension of two objects in $\mathcal{A}$.
  Applying $\mbox{Hom}(T_{2},-)$ to the sequence
  $$ 0 \longrightarrow S_{0} \longrightarrow T_{2} \longrightarrow T_{1}\otimes \mbox{Ext}^{1}(T_{1}, S_{0}) \longrightarrow 0$$
  we get
  $$\mbox{Ext}^{1}(T_{2},S_{0}) \rightarrow \mbox{Ext}^{1}(T_{2}, T_{2}) \rightarrow \mbox{Ext}^{1}(T_{2}, T_{1})\otimes \mbox{Ext}^{1}(T_{1},S_{0}).$$
  To show $\mbox{Ext}^{1}(T_{2},T_{2})=0$, it suffices to show $\mbox{Ext}^{1}(T_{2},T_{1})=\mbox{Ext}^{1}(T_{2},S_{0})=0$. Applying $\mbox{Hom}(T_{1},-)$ to the same sequence, we have
  $$\mbox{Hom}(T_{1}, T_{1})\otimes \mbox{Ext}^{1}(T_{1}, S_{0}) \overset{\delta}{\rightarrow} \mbox{Ext}^{1}(T_{1},S_{0}) \rightarrow \mbox{Ext}^{1}(T_{1},T_{2}) \rightarrow \mbox{Ext}^{1}(T_{1}, T_{1}) \otimes \mbox{Ext}^{1}(T_{1}, S_{0})=0.$$
  Note that $\delta$ is an isomorphism, $\mbox{Ext}^{1}(T_{1}, T_{2})=0$. By a similar argument we also have $\mbox{Ext}^{1}(S_{0}, T_{2})=0$. Hence $\mbox{Ext}^{1}(T_{2}, T_{2})=0$.
  
  Let $T$ be the first Jordan-H\"{o}lder factor of the first Harder-Narasimhan factor, by Mukai's Lemma (Lemma \ref{Mukai's Lemma}), $T$ is $\sigma_{+}$-stable spherical object. Hence $\phi_{+}(T)\geq \phi_{+}(T_{2})$. The only possibilities are $v(T)=v(T_{1})$ or $v(T)=v(T_{2})$. By stability of $T$, any non-zero map $T \rightarrow T_{2}$ is an injection. If $v(T)=v(T_{2})$, then it is an isomorphism. If $v(T)=v(T_{1})$, then since $T_{1}$ is $\sigma_{+}$-stable, $T=T_{1}$. Now apply $\mbox{Hom}(T_{1},-)$ to
  $$0 \longrightarrow S_{0} \longrightarrow T_{2} \longrightarrow T_{1}\otimes \mbox{Ext}^{1}(T_{1}, S_{0}) \longrightarrow 0, $$
  we get
  $$ 0=\mbox{Hom}(T_{1}, S_{0}) \rightarrow \mbox{Hom}(T_{1},T_{2}) \rightarrow \mbox{Hom}(T_{1}, T_{1}) \otimes \mbox{Ext}^{1}(T_{1}, S_{0}) \overset{\delta'}{\rightarrow} \mbox{Hom}(T_{1}, S_{0}[1]),$$
  where $\delta'$ is an isomorphism. Hence $\mbox{Hom}(T_{1},T_{2})=0$, $T=T_{2}$, and therefore $T_{2}$ is $\sigma_{+}$-stable.
  
  Suppose we have proven that for $n\leq i$, the $T_{n}$ that are constructed as in the claim are $\sigma_{+}$-stable. Now consider the evaluation map
  $$T_{i}\otimes \mbox{Hom}(T_{i}, T_{i-1}) \longrightarrow T_{i-1}$$
  I claim that this map is surjective if $\mathcal{H}$ is hyperbolic or degenerate.
  Denote the image by $I$ and assume for contradiction that $I\neq T_{i-1}$. Let $I_{1}$ be the first Jordan-H\"{o}lder factor at $\sigma_{+}$. By Mukai's Lemma (Lemma \ref{Mukai's Lemma}), $I_{1}$ is $\sigma_{+}$-stable spherical object. If $I_{1}\neq T_{i-1}$, by semistability of $T_{i}\otimes \mbox{Hom}(T_{i}, T_{i-1})$ and stability of $T_{i-1}$, we have
  $$\phi_{+}(T_{i-1})>\phi_{+}(I_{1})\geq \phi_{+}(I) \geq \phi_{+}(T_{i})$$
  Since there is no spherical Mukai vectors in $\mathcal{H}$ whose $\sigma_{+}$-phase is strictly in between $\phi_{+}(T_{i})$ and $\phi_{+}(T_{i-1})$, the equalities have to hold, namely $I$ is semistable whose Jordan-H\"{o}lder factors all have Mukai vectors equal to $v(T_{i})$. Since $M_{+}(v(T_{i}))=\{T_{i}\}$, all Jordan-H\"{o}lder factors of $I$ are $T_{i}$. Since $\mbox{Ext}^{1}(T_{i}, T_{i})=0$, $I=T_{i}^{\oplus a}$. Since $\mathcal{H}$ is hyperbolic or degenerate, $v(T_{i})\mbox{hom}(T_{i}, T_{i-1})-v(T_{i-1})=v(T_{i+1})$ is effective, $a<\mbox{hom}(T_{i}, T_{i-1})$. This is impossible, since $T_{i}\otimes \mbox{Hom}(T_{i}, T_{i-1}) \rightarrow T_{i-1}$ is the evaluation map. If $a< \mbox{hom}(T_{i}, T_{i-1})$ then there is a non-zero $f\in \mbox{Hom}(T_{i}, T_{i-1})$ such that $T_{i}\otimes f$ is mapped to zero in $T_{i}^{\oplus a}$, a contradiction. Hence $I=I_{1} = T_{i-1}$, the evaluation map $T_{i} \otimes \mbox{Hom}(T_{i}, T_{i-1}) \rightarrow T_{i-1}$ is surjective. Hence $T_{i+1}$ is the kernel of this map, which is in $\mathcal{A}$, since $\mathcal{A}$ is abelian.

  Now we need to prove $T_{i+1}$ is $\sigma_{+}$-stable. First I claim $\mbox{Ext}^{1}(T_{i+1}, T_{i+1})=0$. Applying $\mbox{Hom}(T_{i}, -)$ to the exact sequence
  $$0 \longrightarrow T_{i+1} \longrightarrow T_{i} \otimes \mbox{Hom}(T_{i},T_{i-1}) \longrightarrow T_{i-1} \longrightarrow 0 $$
  we get
  $$\mbox{Hom}(T_{i}, T_{i}) \otimes \mbox{Hom}(T_{i},T_{i-1}) \rightarrow \mbox{Hom}(T_{i},T_{i-1}) \rightarrow \mbox{Ext}^{1}(T_{i},T_{i+1}) \rightarrow \mbox{Ext}^{1}(T_{i},T_{i}) \otimes \mbox{Hom}(T_{i},T_{i-1}).$$
  the first map is an isomorphism, and $\mbox{Ext}^{1}(T_{i},T_{i})=0$ by the induction hypothesis, hence $\mbox{Ext}^{1}(T_{i+1}, T_{i})=\mbox{Ext}^{1}(T_{i},T_{i+1})^{*}=0$.

  Applying $\mbox{Hom}(-,T_{i})$ we get
  $$0=\mbox{Hom}(T_{i-1},T_{i}) \rightarrow \mbox{Hom}(T_{i},T_{i-1})^{*} \rightarrow \mbox{Hom}(T_{i+1}, T_{i}) \rightarrow \mbox{Ext}^{1}(T_{i}, T_{i-1})=0.$$
  Hence there is a natural identification
  $$\mbox{Hom}(T_{i}, T_{i-1})^{*}=\mbox{Hom}(T_{i+1},T_{i}).$$
  The map of functors $\mbox{Hom}(-, T_{i}\otimes \mbox{Hom}(T_{i},  T_{i-1})) \rightarrow \mbox{Hom}(-,T_{i-1})$ applied to the evaluation map $T_{i+1} \longrightarrow T_{i}\otimes \mbox{Hom}(T_{i},T_{i-1})^{*}$ gives rise to a commutative diagram
  \\
  
  \begin{adjustbox}{scale=0.9, center}
\begin{tikzcd}
  \mbox{Hom}(T_{i},T_{i})\otimes \mbox{Hom}(T_{i},T_{i-1})\otimes \mbox{Hom}(T_{i},T_{i-1})^{*} \arrow{r}\arrow{d} & \mbox{Hom}(T_{i+1}, T_{i})\otimes \mbox{Hom}(T_{i},T_{i-1}) \arrow{r} \arrow{d} & 0\\
   \mbox{Hom}(T_{i},T_{i-1}) \otimes \mbox{Hom}(T_{i},T_{i-1})^{*} \arrow{r}\arrow{d} & \mbox{Hom}(T_{i+1},T_{i-1}) \arrow{r}\arrow{d} & 0\\
   0 & \mbox{Ext}^{1}(T_{i+1},T_{i+1}) \arrow[d] & \\
   & \mbox{Ext}^{1}(T_{i+1},T_{i})\otimes \mbox{Hom}(T_{i},T_{i-1})=0 & 
\end{tikzcd}
\end{adjustbox}
\\

Hence $\mbox{Ext}^{1}(T_{i+1},T_{i+1})=0$. Let $T$ be the first Jordan-H\"{o}lder factor of the first Harder-Narasimhan factor of $T_{i+1}$, by Mukai's Lemma (Lemma \ref{Mukai's Lemma}), $T$ is $\sigma_{+}$-stable spherical object. If $v(T)=v(T_{i+1})$, then by stability of $T$ we have $T=T_{i+1}$. Otherwise, $T=T_{j}$ for some $j\leq i-1$, by stability $\mbox{Hom}(T_{j}, T_{i})=0$, and 
$$0=\mbox{Ext}^{-1}(T_{j}, T_{i-1}) \rightarrow \mbox{Hom}(T_{j},T_{i+1}) \rightarrow \mbox{Hom}(T_{j},T_{i}) \otimes \mbox{Hom}(T_{i},T_{i-1})=0.$$
Hence $\mbox{Hom}(T_{j},T_{i+1})=0$, a contradiction. Hence $T_{i+1}$ is $\sigma_{+}$-stable. By induction, the $T_{n}$ that are constructed in the claim are $\sigma_{+}$-stable for all $n\geq 1$.

\end{proof}

\begin{remark}\label{coevaluation}
The map $T_{i+1} \rightarrow T_{i}\otimes \mbox{Hom}(T_{i}, T_{i-1})$ is the coevaluation map.
\end{remark}

\begin{proof}
  Let $V$ be a vector space. In general, if we have a map $A \rightarrow B\otimes V$, then to check it is the coevaluation map, we need to check the map $$\mbox{id}\otimes V^{*}\subset \mbox{Hom}(B,B) \otimes V^{*}=\mbox{Hom}(B\otimes V, B) \rightarrow \mbox{Hom}(A, B)$$ is an isomorphism. We apply $\mbox{Hom}(-, T_{i})$ to the sequence in Proposition \ref{construction} and get
  $$0=\mbox{Hom}(T_{i-1}, T_{i}) \rightarrow \mbox{Hom}(T_{i}, T_{i}) \otimes \mbox{Hom}(T_{i}, T_{i-1})^{*} \rightarrow \mbox{Hom}(T_{i+1}, T_{i}) \rightarrow \mbox{Ext}^{1}(T_{i-1}, T_{i})$$
  By the proof of Proposition \ref{construction}, $\mbox{Ext}^{1}(T_{i-1}, T_{i})=0$. Hence the remark is proved.
\end{proof}

Then there is a natural identification $\epsilon_{i}: \mbox{Hom}(T_{i+1}, T_{i})^{*}\overset{\cong}{\longrightarrow}\mbox{Hom}(T_{i}, T_{i-1})$. We shall denote
$$m_{i}: \mbox{Hom}(T_{i}, T_{i-1}) \otimes \mbox{Hom}(T_{i+1}, T_{i}) \overset{\mbox{id}\otimes (\epsilon_{i}^{*})^{-1}}{\longrightarrow } \mbox{Hom}(T_{i}, T_{i-1})\otimes \mbox{Hom}(T_{i},T_{i-1})^{*}\rightarrow \mathbb{C}$$
where the second map is the natural pairing. There is also a comultiplication induced by the sequence itself. Since $T_{i+1} \rightarrow T_{i}\otimes \mbox{Hom}(T_{i}, T_{i-1})$ is the coevaluation map, applying $\mbox{Hom}(T_{i+1},-)$ to this sequence gives us a map
$$\delta_{i}: \mathbb{C}=\mbox{Hom}(T_{i+1},T_{i+1}) \rightarrow \mbox{Hom}(T_{i+1},T_{i})\otimes \mbox{Hom}(T_{i},T_{i-1}).$$
The identifications here satisfy the following commutativity:
\\
\begin{lemma}\label{identification}
  Let $\delta: \mathbb{C} \rightarrow \mbox{Hom}(T_{i},T_{i-1})^{*}\otimes \mbox{Hom}(T_{i},T_{i-1})$ be the diagonal map. Then $(\epsilon_{i}\otimes \mbox{id})\circ \delta = \delta_{i}$, namely there is a commutative diagram
  \[
\begin{tikzcd}
  \mathbb{C} \arrow{rr}{\delta}\arrow{dr}{\delta_{i}} & & \mbox{Hom}(T_{i},T_{i-1})^{*}\otimes \mbox{Hom}(T_{i},T_{i-1})\arrow{dl}{\epsilon_{i}^{*}\otimes \mbox{id}} \\
  & \mbox{Hom}(T_{i+1},T_{i}) \otimes \mbox{Hom}(T_{i},T_{i-1}) & 
\end{tikzcd}
\]
Dually, let $m: \mbox{Hom}(T_{i},T_{i-1})^{*}\otimes \mbox{Hom}(T_{i},T_{i-1}) \rightarrow \mathbb{C}$ be the natural pairing. Then $m\circ (\mbox{id}\otimes \epsilon_{i}) = \delta_{i}^{*}$, namely there is a commutative diagram
\[
\begin{tikzcd}
  \mbox{Hom}(T_{i},T_{i-1})^{*}\otimes \mbox{Hom}(T_{i+1},T_{i})^{*} \arrow{rr}{\mbox{id}\otimes \epsilon_{i}} \arrow{dr}{\delta_{i}^{*}} & & \mbox{Hom}(T_{i},T_{i-1})^{*}\otimes \mbox{Hom}(T_{i},T_{i-1}) \arrow{dl}{m} \\
  & \mathbb{C} & 
\end{tikzcd}
\]
\end{lemma}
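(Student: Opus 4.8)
The plan is to trace both ``natural identifications'' occurring in the statement back to the single coevaluation map
$$c\colon T_{i+1}\longrightarrow T_{i}\otimes \mathrm{Hom}(T_{i},T_{i-1})$$
of Remark \ref{coevaluation}; once this is done, the asserted commutativity becomes the tautology that two bookkeeping descriptions of $c$ agree. Write $V=\mathrm{Hom}(T_{i},T_{i-1})$, present the defining sequence of $T_{i+1}$ as $0\to T_{i+1}\xrightarrow{c}T_{i}\otimes V\xrightarrow{\mathrm{ev}}T_{i-1}\to 0$ with $\mathrm{ev}$ the evaluation, and use throughout the canonical isomorphism $\mathrm{Hom}(A,B\otimes V)\cong\mathrm{Hom}(A,B)\otimes V$ (legitimate since $V$ is finite dimensional).

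First I would pin down $\delta_{i}$: applying $\mathrm{Hom}(T_{i+1},-)$ to the sequence, the map $\delta_{i}\colon\mathbb{C}=\mathrm{Hom}(T_{i+1},T_{i+1})\to\mathrm{Hom}(T_{i+1},T_{i})\otimes V$ is exactly postcomposition $c_{*}$, so $\delta_{i}(1)=c_{*}(\mathrm{id}_{T_{i+1}})=c$, viewed as an element of $\mathrm{Hom}(T_{i+1},T_{i})\otimes V$. Next I would pin down $\epsilon_{i}$: applying $\mathrm{Hom}(-,T_{i})$ and using the vanishings $\mathrm{Hom}(T_{i-1},T_{i})=0=\mathrm{Ext}^{1}(T_{i-1},T_{i})$ already established in the proof of Proposition \ref{construction}, precomposition with $c$ is an isomorphism $\psi\colon V^{*}=\mathrm{Hom}(T_{i}\otimes V,T_{i})\xrightarrow{\sim}\mathrm{Hom}(T_{i+1},T_{i})$ given explicitly by $\psi(\phi)=(\mathrm{id}_{T_{i}}\otimes\phi)\circ c$; this $\psi$ is precisely the isomorphism underlying $\epsilon_{i}$, i.e.\ $\epsilon_{i}=\psi^{*}$ and hence $\epsilon_{i}^{*}=\psi$. (I read the triangle in the statement as drawn, with lower-right arrow $\epsilon_{i}^{*}\otimes\mathrm{id}$; the prose ``$(\epsilon_{i}\otimes\mathrm{id})\circ\delta=\delta_{i}$'' contains a harmless typo for $\epsilon_{i}^{*}$.)

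With these descriptions the first identity is immediate. Choosing a basis $e_{1},\dots,e_{k}$ of $V$ and writing $c=\sum_{l}c_{l}\otimes e_{l}$ with $c_{l}\in\mathrm{Hom}(T_{i+1},T_{i})$, one has $c_{l}=(\mathrm{id}_{T_{i}}\otimes e_{l}^{*})\circ c=\psi(e_{l}^{*})$; since the diagonal sends $1$ to $\sum_{l}e_{l}^{*}\otimes e_{l}$, it follows that $(\epsilon_{i}^{*}\otimes\mathrm{id})\bigl(\delta(1)\bigr)=\sum_{l}\psi(e_{l}^{*})\otimes e_{l}=\sum_{l}c_{l}\otimes e_{l}=c=\delta_{i}(1)$. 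The second, dual diagram is then obtained simply by applying $(-)^{*}$ to the first identity, using $(\epsilon_{i}^{*})^{*}=\epsilon_{i}$ and that the transpose of the diagonal $\delta$ is the evaluation pairing $m$ (with the evident interchange of tensor factors to match the way the diagram is drawn).

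I do not anticipate a genuine obstacle: this is a formal naturality statement, not a substantive computation. The two points that need care are (i) checking that the ``natural identifications'' $\delta_{i}$ and $\epsilon_{i}$ of the statement really are the ones produced above, which rests on Remark \ref{coevaluation} and on the vanishings $\mathrm{Hom}(T_{i-1},T_{i})=\mathrm{Ext}^{1}(T_{i-1},T_{i})=0$ from the proof of Proposition \ref{construction}; and (ii) keeping the duals and the order of the two tensor factors straight when deducing the second diagram from the first. If one prefers to avoid bases, the computation can be phrased invariantly: both $\delta_{i}$ and $(\epsilon_{i}^{*}\otimes\mathrm{id})\circ\delta$ equal the element of $\mathrm{Hom}(T_{i+1},T_{i})\otimes V$ that classifies the coevaluation map $c$, by the very definitions of $\epsilon_{i}$ and of the comultiplication $\delta_{i}$.
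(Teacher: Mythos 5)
Your proof is correct and follows essentially the same route as the paper's: pin down $\delta_i$ as the element of $\mathrm{Hom}(T_{i+1},T_i)\otimes\mathrm{Hom}(T_i,T_{i-1})$ classifying the coevaluation map $c$, pin down $\epsilon_i^*$ as the map $e_l^*\mapsto c_l$ in a chosen basis (equivalently precomposition with $c$), and then both sides of the first triangle manifestly equal $c$; the second triangle is the dual of the first. You also correctly note the prose/diagram discrepancy ($\epsilon_i$ vs.\ $\epsilon_i^*$), which the paper's proof itself implicitly resolves in favor of the diagram.
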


\begin{proof}
  We prove the first claim, the second follows by dualizing. Perhaps the most direct way to prove this is to calculate with an explicit basis. Let $e_{1}, \cdots , e_{n}$ be a basis for $\mbox{Hom}(T_{i},T_{i-1})$. Applying $\mbox{Hom}(-, T_{i}\otimes \mbox{Hom}(T_{i},T_{i-1}))$ to $\Phi: T_{i+1} \rightarrow T_{i}\otimes \mbox{Hom}(T_{i},T_{i-1})$ gives the isomorphism
  $$\phi: \mbox{Hom}(\mbox{Hom}(T_{i},T_{i-1}), \mbox{Hom}(T_{i},T_{i-1})) \rightarrow \mbox{Hom}(T_{i+1},T_{i})\otimes \mbox{Hom}(T_{i},T_{i-1}). $$
  Since $\Phi: T_{i+1} \rightarrow T_{i}\otimes \mbox{Hom}(T_{i},T_{i-1})$ is the coevaluation map, we may write $\Phi \in \mbox{Hom}(T_{i+1},T_{i})\otimes \mbox{Hom}(T_{i},T_{i-1})$ as $\sum f_{k}\otimes e_{k}$, where $f_{1}, \cdots , f_{n}$ is a basis for $\mbox{Hom}(T_{i+1}, T_{i})$. Suppose $A= (a_{kl})_{1\leq k,l \leq n}\in \mbox{Hom}(\mbox{Hom}(T_{i},T_{i-1}),\mbox{Hom}(T_{i},T_{i-1}))$ is a matrix, then it is naturally identified with $\sum a_{kl}e_{l}^{*}\otimes e_{k}$ in $\mbox{Hom}(T_{i},T_{i-1})^{*}\otimes \mbox{Hom}(T_{i},T_{i-1})$. By definition, $\phi(A)=A \circ \Phi \in \mbox{Hom}(T_{i+1}, T_{i})\otimes \mbox{Hom}(T_{i},T_{i-1})$, explicitly
  $$A\circ \Phi=\sum_{l} f_{l}\otimes Ae_{l}= \sum_{k,l}f_{l}\otimes a_{kl}e_{k}=\sum_{k,l}a_{kl}f_{l}\otimes e_{k},$$
  hence the identification $\epsilon_{i}^{*}: \mbox{Hom}(T_{i},T_{i-1})^{*} \rightarrow \mbox{Hom}(T_{i+1},T_{i})$ sends $e_{l}^{*}$ to $f_{l}$ for all $1\leq l \leq n$.

  On the other hand, by definition applying $\mbox{Hom}(T_{i+1},-)$ to $\Phi: T_{i+1} \rightarrow T_{i}\otimes \mbox{Hom}(T_{i},T_{i-1})$ sends $\mbox{id}\in \mbox{Hom}(T_{i+1},T_{i+1})$ to $\Phi\in \mbox{Hom}(T_{i+1}, T_{i})\otimes \mbox{Hom}(T_{i},T_{i-1})$, namely $\delta_{i}(\mbox{id})=\Phi$. We have
  $$(\epsilon_{i}^{*}\otimes \mbox{id})\circ \delta (1)=(\epsilon_{i}^{*}\otimes \mbox{id})(\sum e_{k}^{*}\otimes e_{k})=\sum f_{k}\otimes e_{k}= \Phi = \delta_{i}(\mbox{id}), $$
  the lemma is proved.

\end{proof}

The next goal is to compute homomorphisms between $T_{i}$ and $T_{1}$ or $S_{0}$.

\begin{lemma}\label{commutativity}
  Consider the maps
  \begin{gather*}coev_{i+1}: T_{i+1} \rightarrow T_{i}\otimes \mbox{Hom}(T_{i+1}, T_{i})^{*}, \\
    ev_{i+1}: T_{i+1}\otimes \mbox{Hom}(T_{i+1}, T_{i}) \rightarrow T_{i}.
  \end{gather*}
  Denote $m: \mbox{Hom}(T_{i+1}, T_{i})^{*}\otimes \mbox{Hom}(T_{i+1}, T_{i}) \rightarrow \mathbb{C}$ the natural pairing. Then we have
  $$(\mbox{id}\otimes m) \circ (coev_{i+1}\otimes \mbox{id}) = ev_{i+1}.$$ In other words, we have the following commutative diagram
  \[
\begin{tikzcd}
  T_{i+1}\otimes \mbox{Hom}(T_{i+1}, T_{i}) \arrow{rr}{coev_{i+1}\otimes \mbox{id}}\arrow{rd}{ev_{i+1}} & & T_{i}\otimes \mbox{Hom}(T_{i+1}, T_{i})^{*}\otimes \mbox{Hom}(T_{i+1}, T_{i}) \arrow{ld}{\mbox{id}\otimes m}\\
   & T_{i} &
\end{tikzcd}
  \]
\end{lemma}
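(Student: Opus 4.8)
The statement is a form of the triangle (``zig-zag'') identity relating the canonical evaluation and coevaluation maps attached to the pair $(T_{i+1},T_{i})$ and the finite-dimensional vector space $W:=\mbox{Hom}(T_{i+1},T_{i})$. The plan is to first isolate what the two maps are canonically, and then verify the identity in a basis, exactly in the spirit of the proof of Lemma \ref{identification}.

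First I would record the two maps as elements of $\mbox{End}(W)$. The evaluation map $ev_{i+1}\colon T_{i+1}\otimes W\to T_{i}$ is the tautological one, so under the natural chain of identifications $\mbox{Hom}(T_{i+1}\otimes W,T_{i})\cong \mbox{Hom}(T_{i+1},T_{i})\otimes W^{*}= W\otimes W^{*}=\mbox{End}(W)$ it corresponds to $\mbox{id}_{W}$. By Remark \ref{coevaluation}, the identification $\epsilon_{i}$, and the compatibility proved in Lemma \ref{identification}, the coevaluation map $coev_{i+1}\colon T_{i+1}\to T_{i}\otimes W^{*}$ likewise corresponds to $\mbox{id}_{W}$ under $\mbox{Hom}(T_{i+1},T_{i}\otimes W^{*})\cong W\otimes W^{*}=\mbox{End}(W)$. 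In other words $coev_{i+1}$ is the image of $ev_{i+1}$ under the Hom--tensor adjunction $-\otimes W\dashv -\otimes W^{*}$, and the equality in the lemma is exactly the standard formula $ev_{i+1}=(\mbox{id}_{T_{i}}\otimes m)\circ(coev_{i+1}\otimes\mbox{id}_{W})$ recovering a morphism from its adjoint, with $m$ playing the role of the counit $W^{*}\otimes W\to\mathbb{C}$. So once the normalization is fixed, the lemma is formal.

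To make this hands-on, I would pick a basis $f_{1},\dots,f_{n}$ of $W=\mbox{Hom}(T_{i+1},T_{i})$ with dual basis $f_{1}^{*},\dots,f_{n}^{*}$ of $W^{*}$. By the previous paragraph $coev_{i+1}=\sum_{k}f_{k}\otimes f_{k}^{*}\in W\otimes W^{*}=\mbox{Hom}(T_{i+1},T_{i}\otimes W^{*})$; that is, as a morphism it sends $x$ to $\sum_{k}f_{k}(x)\otimes f_{k}^{*}$. Then $coev_{i+1}\otimes\mbox{id}_{W}$ sends $x\otimes g\mapsto\sum_{k}f_{k}(x)\otimes f_{k}^{*}\otimes g$, and $\mbox{id}_{T_{i}}\otimes m$ contracts $f_{k}^{*}$ against $g$, giving $\sum_{k}f_{k}^{*}(g)\,f_{k}(x)=g(x)$ since $g=\sum_{k}f_{k}^{*}(g)f_{k}$. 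On the other hand $ev_{i+1}(x\otimes g)=g(x)$, so the two morphisms $T_{i+1}\otimes W\to T_{i}$ agree. This is the same basis bookkeeping as in Lemma \ref{identification}.

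The one step that is not pure linear algebra, and which I expect to be the only real obstacle, is confirming that the $coev_{i+1}$ built from the defining short exact sequence in Proposition \ref{construction} genuinely corresponds to $\mbox{id}_{W}$ and not to $\mbox{id}_{W}$ composed with some automorphism of $W$. Here I would lean on the earlier set-up: the identification $\mbox{Hom}(T_{i+1},T_{i})^{*}\cong\mbox{Hom}(T_{i},T_{i-1})$ and the comultiplication $\delta_{i}$ are read off directly from that exact sequence, and Lemma \ref{identification} identifies $\delta_{i}$, via $\epsilon_{i}$, with the diagonal $\delta$, which is normalized by an identity map; hence no extra automorphism can enter, and the computation above applies verbatim. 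Everything else is routine.
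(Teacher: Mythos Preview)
Your proposal is correct and the core argument---the basis computation in your second paragraph---is essentially identical to the paper's proof: choose a basis $f_1,\dots,f_n$ of $\mbox{Hom}(T_{i+1},T_i)$, write $coev_{i+1}$ as $\sum_k f_k\otimes f_k^*$, and observe that contracting against $g$ recovers $g$. The paper does this in two lines by restricting to $T_{i+1}\otimes f$ for an arbitrary $f$, exactly as you do for $x\otimes g$.

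Your first and third paragraphs are extra baggage. In the lemma as stated, $coev_{i+1}\colon T_{i+1}\to T_i\otimes\mbox{Hom}(T_{i+1},T_i)^*$ is simply the canonical coevaluation map, so there is no normalization issue to resolve here; the identification with the map from Proposition~\ref{construction} via $\epsilon_i$ is the content of Remark~\ref{coevaluation}, which is proved separately and is not needed for this lemma. So you can drop the appeal to Lemma~\ref{identification} and the worry about an automorphism of $W$: the second paragraph alone suffices.
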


\begin{proof}
  Let $f_{1}, \cdots , f_{n}$ be a basis of $\mbox{Hom}(T_{i+1}, T_{i})$, and let $f_{1}^{*}, \cdots , f_{n}^{*}$ be the dual basis. Take any non-zero $f\in \mbox{Hom}(T_{i+1}, T_{i})$. Restricting to $T_{i+1} \otimes f$, we have
  \begin{gather*}
    coev_{i+1}\otimes \mbox{id}=f_{1}(-)\otimes f_{1}^{*}\otimes f + \cdots + f_{n}(-)\otimes f_{n}^{*}\otimes f, \\
    (\mbox{id}\otimes m)(f_{1}(-)\otimes f_{1}^{*}\otimes f + \cdots + f_{n}(-)\otimes f_{n}^{*}\otimes f)=f(-).
  \end{gather*}
  On the other hand, the restriction of $ev_{i+1}$ on $T_{i+1}\otimes f$ is equal to $f$ by definition.
\end{proof}

Note that $\mbox{Hom}(T_{1}, T_{i})=0$ for $i\geq 2$, $\mbox{Hom}(T_{1}, S_{j})=0$ for all $j$. Hence the only relevant functors are $\mbox{Ext}^{1}(T_{1},-)$ and $\mbox{Hom}(-, T_{1})=\mbox{Ext}^{2}(T_{1},-)^{*}$.

We compute $\mbox{Ext}^{1}(T_{1},T_{i})$ for $1\leq i \leq 4$ by directly chasing the long exact sequences.
For $i=1$, $\mbox{Ext}^{1}(T_{1},T_{1})=0$ by definition. For $i=2$, we also have $\mbox{Ext}^{1}(T_{1},T_{2})=0$.\\
For $i=3$, we have $\mbox{Ext}^{1}(T_{1},T_{3})\cong \mbox{Hom}(T_{1}, T_{1})=\mathbb{C}$. For $i=4$, we have $\mbox{Ext}^{1}(T_{1},T_{4})=\mbox{Hom}(T_{3}, T_{2})$. For larger $i$, we have the following

\begin{proposition}\label{description}
  For $i\geq 3$,
  $$\mbox{Ext}^{1}(T_{1},T_{i+1})=\bigcap_{j=3}^{i-1}\mbox{ker}(m_{j})\subset \bigotimes_{j=3}^{i}\mbox{Hom}(T_{j}, T_{j-1}),$$
  where by abuse of notation $m_{j}$ denotes $\mbox{id}\otimes \cdots \otimes \mbox{id}\otimes m_{j} \otimes \mbox{id}\otimes \cdots \otimes \mbox{id}$.
\end{proposition}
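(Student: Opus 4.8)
The plan is to argue by induction on $i$, using the defining short exact sequences of Proposition \ref{construction} together with the vanishing $\mbox{Hom}(T_{1},T_{k})=0$ for $k\geq 2$, and the key point will be to identify the connecting map that appears with the contraction operator $m_{i-1}$. For the base case $i=3$ we already computed above that $\mbox{Ext}^{1}(T_{1},T_{4})=\mbox{Hom}(T_{3},T_{2})$; this is the asserted formula with the convention that the empty intersection is the whole ambient space $\bigotimes_{j=3}^{3}\mbox{Hom}(T_{j},T_{j-1})=\mbox{Hom}(T_{3},T_{2})$. We also keep in hand $\mbox{Ext}^{1}(T_{1},T_{3})\cong\mathbb{C}$ (empty tensor product), which is needed to prime the two-step recursion.

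For the inductive step, fix $i\geq 4$ and apply $\mbox{Hom}(T_{1},-)$ to the short exact sequence
$$0\longrightarrow T_{i+1}\longrightarrow T_{i}\otimes \mbox{Hom}(T_{i},T_{i-1})\longrightarrow T_{i-1}\longrightarrow 0$$
from Proposition \ref{construction}. Since $\mbox{Hom}(T_{1},T_{k})=0$ for every $k\geq 2$, the long exact sequence collapses to
$$0\longrightarrow \mbox{Ext}^{1}(T_{1},T_{i+1})\longrightarrow \mbox{Ext}^{1}(T_{1},T_{i})\otimes \mbox{Hom}(T_{i},T_{i-1})\overset{\psi_{i}}{\longrightarrow}\mbox{Ext}^{1}(T_{1},T_{i-1}),$$
so that $\mbox{Ext}^{1}(T_{1},T_{i+1})=\ker(\psi_{i})$, where $\psi_{i}$ is the map induced by the evaluation $ev_{i}\colon T_{i}\otimes \mbox{Hom}(T_{i},T_{i-1})\to T_{i-1}$.

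The heart of the argument is to identify $\psi_{i}$ with the contraction $m_{i-1}$ on the last two tensor slots. By Lemma \ref{commutativity}, applied with indices shifted down by one, the evaluation factors as $ev_{i}=(\mbox{id}_{T_{i-1}}\otimes\, m)\circ(coev_{i}\otimes \mbox{id})$, where $coev_{i}\colon T_{i}\to T_{i-1}\otimes \mbox{Hom}(T_{i},T_{i-1})^{*}$ is the coevaluation and $m$ is the natural pairing; transporting the target of $coev_{i}$ through the identification $\epsilon_{i-1}\colon \mbox{Hom}(T_{i},T_{i-1})^{*}\xrightarrow{\ \cong\ }\mbox{Hom}(T_{i-1},T_{i-2})$ and invoking Lemma \ref{identification} to check that this is the same $\epsilon$ used to define $m_{i-1}$ and the defining sequences, one rewrites $ev_{i}=(\mbox{id}_{T_{i-1}}\otimes\, m_{i-1})\circ(coev_{i}\otimes \mbox{id})$ as a map $T_{i}\otimes \mbox{Hom}(T_{i},T_{i-1})\to T_{i-1}\otimes \mbox{Hom}(T_{i-1},T_{i-2})\otimes \mbox{Hom}(T_{i},T_{i-1})\to T_{i-1}$. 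Now apply $\mbox{Ext}^{1}(T_{1},-)$: by the inductive hypothesis $coev_{i}$ realizes exactly the inclusion $\mbox{Ext}^{1}(T_{1},T_{i})\hookrightarrow \mbox{Ext}^{1}(T_{1},T_{i-1})\otimes \mbox{Hom}(T_{i-1},T_{i-2})$ that iterates to the identification $\mbox{Ext}^{1}(T_{1},T_{i})=\bigcap_{j=3}^{i-2}\ker(m_{j})\subset \bigotimes_{j=3}^{i-1}\mbox{Hom}(T_{j},T_{j-1})$, so that under $\mbox{Ext}^{1}(T_{1},T_{i})\otimes \mbox{Hom}(T_{i},T_{i-1})\hookrightarrow \bigotimes_{j=3}^{i}\mbox{Hom}(T_{j},T_{j-1})$ the map $\psi_{i}$ is precisely the operator $m_{i-1}$ contracting the last two slots. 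Since every $m_{j}$ with $3\leq j\leq i-2$ acts only on slots of index $\leq i-1$, flatness of vector spaces gives $\mbox{Ext}^{1}(T_{1},T_{i})\otimes \mbox{Hom}(T_{i},T_{i-1})=\bigcap_{j=3}^{i-2}\ker(m_{j})$ inside $\bigotimes_{j=3}^{i}\mbox{Hom}(T_{j},T_{j-1})$, and intersecting with $\ker(\psi_{i})=\ker(m_{i-1})$ yields
$$\mbox{Ext}^{1}(T_{1},T_{i+1})=\bigcap_{j=3}^{i-2}\ker(m_{j})\ \cap\ \ker(m_{i-1})=\bigcap_{j=3}^{i-1}\ker(m_{j}),$$
completing the induction. (That $m_{i-1}$ carries $\bigcap_{j=3}^{i-2}\ker(m_{j})$ into $\mbox{Ext}^{1}(T_{1},T_{i-1})=\bigcap_{j=3}^{i-3}\ker(m_{j})$ is automatic, because for $j\leq i-3$ the slots of $m_{j}$ and of $m_{i-1}$ are disjoint, so the two contractions commute.)

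\textbf{Main obstacle.} The only genuinely delicate step is the identification of the induced map $\psi_{i}$ with the formal contraction $m_{i-1}$. This is pure bookkeeping of duality isomorphisms, but one must verify that the identifications $\epsilon_{j}$ implicit in the defining sequences of Proposition \ref{construction}, the ones used to define the pairings $m_{j}$, and the ones appearing in the iterated coevaluation all coincide; this compatibility is exactly the content of Lemmas \ref{identification} and \ref{commutativity}, and everything else reduces to a routine long exact sequence chase plus exactness of $-\otimes V$ over $\mathbb{C}$.
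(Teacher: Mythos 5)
Your proposal is correct and follows essentially the same route as the paper: induction on $i$, applying $\mbox{Hom}(T_{1},-)$ to the short exact sequence $0\to T_{i+1}\to T_{i}\otimes \mbox{Hom}(T_{i},T_{i-1})\to T_{i-1}\to 0$ to realize $\mbox{Ext}^{1}(T_{1},T_{i+1})$ as the kernel of the induced map, and then identifying that map with the contraction $m_{i-1}$ via Lemmas \ref{commutativity} and \ref{identification}. The only cosmetic difference is that the paper records this identification in a single commutative diagram rather than in prose.
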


\begin{proof}
  We use induction on $i$. The proposition is true for $i=3$. Suppose it is proven for $3\leq n\leq i$, and denote the embedding $\mbox{Ext}^{1}(T_{1}, T_{i+1})\subset \bigotimes_{j=3}^{i}\mbox{Hom}(T_{j}, T_{j-1})$ by $\iota$. We have an exact sequence
  $$0 \longrightarrow \mbox{Ext}^{1}(T_{1},T_{i+1}) \longrightarrow \mbox{Ext}^{1}(T_{1},T_{i})\otimes \mbox{Hom}(T_{i}, T_{i-1}) \overset{ev_{i}}{\longrightarrow}\mbox{Ext}^{1}(T_{1},T_{i-1}). $$
\\
Hence $\mbox{Ext}^{1}(T_{1},T_{i+1})$ is identified with
$$\mbox{ker}(ev_{i}) \subset \mbox{Ext}^{1}(T_{1},T_{i})\otimes \mbox{Hom}(T_{i}, T_{i-1}) \subset \bigotimes_{j=3}^{i}\mbox{Hom}(T_{j}, T_{j-1}).$$
By induction, an element in $\bigotimes_{j=3}^{i}\mbox{Hom}(T_{j}, T_{j-1})$ is inside $\mbox{Ext}^{1}(T_{1},T_{i})\otimes \mbox{Hom}(T_{i}, T_{i-1})$ precisely if it is inside  $\mbox{ker}(m_{j})$ for $3\leq j\leq i-2$. On the other hand, we have a commutative diagram
\\

\begin{adjustbox}{scale=0.9, center}
\begin{tikzcd}
  \mbox{Ext}^{1}(T_{1},T_{i})\otimes \mbox{Hom}(T_{i}, T_{i-1}) \arrow{r}{ev_{i}}\arrow{d}{((\mbox{id}\otimes \epsilon_{i-1})\circ coev_{i})\otimes \mbox{id}} & \mbox{Ext}^{1}(T_{1},T_{i-1})\arrow{d}{\mbox{id}} \\
  \mbox{Ext}^{1}(T_{1},T_{i-1})\otimes \mbox{Hom}(T_{i-1}, T_{i-2})\otimes \mbox{Hom}(T_{i}, T_{i-1})\arrow{r}{\mbox{id}\otimes m_{i-1}}\arrow{d}{\iota\otimes \mbox{id}\otimes \mbox{id}} & \mbox{Ext}^{1}(T_{1},T_{i-1}) \arrow{d}{\iota}\\
  \bigotimes_{j=3}^{i}\mbox{Hom}(T_{j}, T_{j-1}) \arrow{r}{m_{i-1}} & (\bigotimes_{j=3}^{i-2}\mbox{Hom}(T_{j}, T_{j-1})) 
\end{tikzcd}
\end{adjustbox}
\\
\\
where commutativity of the top square is by Lemma \ref{commutativity} and the definition of $m_{i-1}$, and commutativity of the bottom square is clear. Since all vertical maps are inclusions, being mapped to zero by $ev_{i}$ is equivalent to being inside $\mbox{ker}(m_{i-1})$. The theorem is proved.
  
\end{proof}

We want to compute the dimension of $\mbox{Ext}^{1}(T_{1},T_{i+1})$. By the long exact sequence
$$\mbox{Ext}^{1}(T_{1}, T_{i}\otimes \mbox{Hom}(T_{i}, T_{i-1})) \rightarrow \mbox{Ext}^{1}(T_{1}, T_{i-1}) \rightarrow \mbox{Ext}^{2}(T_{1}, T_{i+1}) \rightarrow \mbox{Ext}^{2}(T_{1}, T_{i}\otimes \mbox{Hom}(T_{i}, T_{i-1})),$$
$\mbox{ext}^{1}(T_{1},T_{i+1})$ can be computed by an inductive formula if we know the surjectivity of
$\mbox{Ext}^{1}(T_{1}, T_{i}\otimes \mbox{Hom}(T_{i}, T_{i-1})) \longrightarrow \mbox{Ext}^{1}(T_{1}, T_{i-1})$, which is equivalent to the surjectivity of 
$$\mbox{Hom}(T_{i} \otimes \mbox{Hom}(T_{i+1}, T_{i})^{*}, T_{1})=\mbox{Hom}(T_{i+1}, T_{i})\otimes \mbox{Hom}(T_{i}, T_{1})\rightarrow \mbox{Hom}(T_{i+1}, T_{1}).$$
By Remark \ref{coevaluation}, this is the composition map, since it is the dual map of the coevaluation map. By induction, we have a surjection
$$\mbox{Hom}(T_{i+1}, T_{i})\otimes \mbox{Hom}(T_{i},T_{i-1}) \otimes \cdots \otimes \mbox{Hom}(T_{2}, T_{1}) \longrightarrow \mbox{Hom}(T_{i+1}, T_{1}),$$

By an argument similar to the one in Theorem \ref{description}, the kernel of this map is generated by all diagonals $\delta_{j}$, $2\leq j \leq i$.

\begin{proposition} \label{description2}
  For $i\geq 2$,
  $$\bigotimes_{j=i}^{1} \mbox{Hom}(T_{j+1},T_{j})\twoheadrightarrow \mbox{Hom}(T_{i+1}, T_{1})=\mbox{coker}(\sum_{j=i}^{2}\delta_{j}),$$
  where by abuse of notation $\delta_{j}$ denotes $\mbox{id}\otimes \cdots \otimes \mbox{id}\otimes \delta_{j} \otimes \mbox{id}\otimes \cdots \otimes \mbox{id}$.
\end{proposition}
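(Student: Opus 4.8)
The plan is to run an induction on $i \geq 2$ that simultaneously establishes two facts: the surjectivity of the composition map
$$\bigotimes_{j=i}^{1} \mbox{Hom}(T_{j+1},T_{j})\longrightarrow \mbox{Hom}(T_{i+1}, T_{1}),$$
and the identification of its kernel with the image of $\sum_{j=i}^{2}\delta_{j}$. The base case $i=2$ is the single coevaluation sequence $0 \to T_{3} \to T_{2}\otimes \mbox{Hom}(T_{2},T_{1}) \to T_{1} \to 0$: applying $\mbox{Hom}(-,T_{1})$ and using $\mbox{Hom}(T_{3},T_{1})$-computation already done before Proposition \ref{description} (together with $\mbox{Ext}^{1}(T_{1},T_{2})=0$, hence $\mbox{Ext}^{1}(T_{3},T_{1})$ vanishes via Serre duality on the rank $2$ lattice), one gets an exact sequence realizing $\mbox{Hom}(T_{3},T_{1})$ as the cokernel of the comultiplication $\delta_2: \mathbb{C} \to \mbox{Hom}(T_3,T_2)\otimes \mbox{Hom}(T_2,T_1)$. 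For the inductive step, I would apply $\mbox{Hom}(-,T_{1})$ to the defining sequence $0 \to T_{i+1} \to T_{i}\otimes \mbox{Hom}(T_{i},T_{i-1}) \to T_{i-1} \to 0$; since $\mbox{Ext}^{1}(T_{i},T_{1})=\mbox{Ext}^{2}(T_1,T_i)^*$ and the relevant $\mbox{Ext}^{1}(T_1,-)$ groups have just been controlled in Proposition \ref{description}, the long exact sequence collapses to
$$0 \to \mbox{Hom}(T_{i-1},T_1) \to \mbox{Hom}(T_{i},T_1)\otimes \mbox{Hom}(T_{i+1},T_i) \to \mbox{Hom}(T_{i+1},T_1) \to 0,$$
where the first arrow is induced by the coevaluation $coev_{i+1}$ (dualized), i.e.\ by $\delta_{i}$ in the $(i)$-th slot composed with the inductive surjection. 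This already gives surjectivity of the composition map onto $\mbox{Hom}(T_{i+1},T_1)$ once we precompose with the inductive surjection $\bigotimes_{j=i-1}^{1}\mbox{Hom}(T_{j+1},T_j)\twoheadrightarrow \mbox{Hom}(T_i,T_1)$ in the remaining slots.

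For the kernel computation, I would set up a commutative diagram exactly parallel to the one in the proof of Proposition \ref{description}, but with $\mbox{Hom}(-,T_1)$ in place of $\mbox{Ext}^1(T_1,-)$ and with the comultiplications $\delta_j$ (via the identification $\epsilon_j$ of Lemma \ref{identification}) in place of the pairings $m_j$. Concretely, the exact sequence above says the kernel of $\bigotimes_{j=i}^{1}\mbox{Hom}(T_{j+1},T_j)\to \mbox{Hom}(T_{i+1},T_1)$ equals the preimage of $\mbox{Hom}(T_{i-1},T_1)$; by the inductive hypothesis that preimage is (the span of $\delta_i$ in the top slot) $\;+\;$ (the preimage of $\ker(\bigotimes_{j=i-2}^{1}\to \mbox{Hom}(T_{i-1},T_1))$), and the latter is $\sum_{j=i-1}^{2}\delta_j$ by induction. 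The content is checking that the map $\mbox{Hom}(T_{i-1},T_1)\to \mbox{Hom}(T_i,T_1)\otimes \mbox{Hom}(T_{i+1},T_i)$ in the long exact sequence really is $\delta_i$ composed with the inductive surjection; this is where Lemma \ref{identification} — comparing the coevaluation-induced comultiplication with the diagonal map — is used, exactly as Lemma \ref{commutativity} was used in Proposition \ref{description}.

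The main obstacle I anticipate is bookkeeping rather than geometry: one must be careful that the surjectivity input needed at stage $i+1$ — namely surjectivity of $\mbox{Hom}(T_{i+1},T_i)\otimes \mbox{Hom}(T_i,T_1)\to \mbox{Hom}(T_{i+1},T_1)$, i.e.\ the dual of $coev_{i+1}$ splitting off $\mbox{Hom}(T_{i-1},T_1)$ — is precisely what the vanishing $\mbox{Ext}^1(T_{i-1},T_1)=0$ from the proof of Proposition \ref{construction} provides, so no circularity creeps in. The other delicate point is matching the "diagonal inserted in slot $j$" on the nose with the comultiplication $\delta_j$, up to the canonical identifications $\epsilon_j$; writing everything in an explicit dual basis (as in the proof of Lemma \ref{identification}) makes this transparent and I would do it that way to avoid sign/identification ambiguities. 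Once these two points are pinned down, the induction closes and the cokernel description follows.
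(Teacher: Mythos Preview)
Your overall strategy—running an induction parallel to Proposition~\ref{description}, with the comultiplications $\delta_j$ playing the role of the pairings $m_j$ and Lemma~\ref{identification} playing the role of Lemma~\ref{commutativity}—is exactly what the paper sketches (``by an argument similar to the one in Theorem~\ref{description}''). The kernel bookkeeping you describe is fine.

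There is, however, a real gap in your surjectivity step. You assert that the long exact sequence from $\mbox{Hom}(-,T_1)$ collapses to a short exact sequence because ``the vanishing $\mbox{Ext}^{1}(T_{i-1},T_{1})=0$ from the proof of Proposition~\ref{construction}'' holds. This is false for $i\geq 4$: what Proposition~\ref{construction} (via Remark~\ref{coevaluation}) establishes is $\mbox{Ext}^{1}(T_{i-1},T_{i})=0$, not $\mbox{Ext}^{1}(T_{i-1},T_{1})=0$. Indeed the paper computes $\mbox{Ext}^{1}(T_{1},T_{3})\cong\mathbb{C}$ just before Proposition~\ref{description}, so by Serre duality $\mbox{Ext}^{1}(T_{3},T_{1})\cong\mathbb{C}\neq 0$; more generally Theorem~\ref{numbers} gives $\mbox{ext}^{1}(T_{i},T_{1})=a_{i-3}$. (Relatedly, Serre duality reads $\mbox{Ext}^{1}(T_{i},T_{1})\cong\mbox{Ext}^{1}(T_{1},T_{i})^{*}$, not $\mbox{Ext}^{2}(T_{1},T_{i})^{*}$.)

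The correct justification is the one the paper signals in the paragraph immediately preceding the proposition: surjectivity of the composition $\mbox{Hom}(T_{i+1},T_{i})\otimes\mbox{Hom}(T_{i},T_{1})\to\mbox{Hom}(T_{i+1},T_{1})$ is \emph{equivalent}, via the long exact sequence and Serre duality, to surjectivity of the evaluation $\mbox{Ext}^{1}(T_{1},T_{i})\otimes\mbox{Hom}(T_{i},T_{i-1})\to\mbox{Ext}^{1}(T_{1},T_{i-1})$. So the two surjectivities must be proved together (a joint induction for Propositions~\ref{description} and~\ref{description2}), rather than deducing one from a nonexistent $\mbox{Ext}^{1}$-vanishing. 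Once you make that adjustment, the rest of your argument goes through.
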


Hence now we know the dimension of $\mbox{Ext}^{1}(T_{1},T_{i+1})$ and $\mbox{Hom}(T_{i+1},T_{1})$ by an inductive formula. We make the following definition.

\begin{definition}\label{fundamental sequence}
  Let $W$ be a wall whose corresponding rank 2 lattice is $\mathcal{H}$. Let $S_{0}, T_{1}$ be the two $\sigma_{0}$-stable objects in $\mathcal{H}$ for general $\sigma_{0}\in \mathcal{H}$.
  Write $g(\mathcal{H})=\mbox{ext}^{1}(T_{1},S_{0})$. The following sequence $\{a_{n}(\mathcal{H})\}_{n\in \mathbb{Z}_{\geq 0}}$ completely determined by $\mathcal{H}$ is called the \emph{fundamental sequence} of $\mathcal{H}$:
  $$a_{0}(\mathcal{H})=1,~ a_{1}(\mathcal{H})=g(\mathcal{H}),~ a_{n}(\mathcal{H})=g(\mathcal{H})\cdot a_{n-1}(\mathcal{H})-a_{n-2}(\mathcal{H}), n\geq 2.$$
  We will write $g$ for $g(\mathcal{H})$ and $a_{n}$ for $a_{n}(\mathcal{H})$ if the context is clear.
\end{definition}

Then the argument above and a symmetric argument yield the following theorem.

\begin{theorem}\label{numbers}
  For all $i\geq 1$, we have
  $$\mbox{hom}(T_{i}, T_{1})=a_{i-1},~ \mbox{hom}(S_{0}, T_{i})=a_{i-2},~ \mbox{ext}^{1}(T_{i},T_{1})=a_{i-3},~ \mbox{ext}^{1}(T_{i},S_{0})=a_{i}.$$
  Similarly, for all $j\geq 0$, we have
  $$\mbox{hom}(S_{0}, S_{-j})= a_{j},~ \mbox{hom}(S_{-j}, T_{1})=a_{j-1},~ \mbox{ext}^{1}(S_{0},S_{-j})=a_{j-2},~ \mbox{ext}^{1}(T_{i},S_{-j})=a_{j+1}.$$
\end{theorem}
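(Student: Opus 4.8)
The plan is to prove all eight identities in Theorem \ref{numbers} simultaneously by induction, using the recursive extension sequences from Proposition \ref{construction}, the surjectivity/injectivity of the evaluation and coevaluation maps established there, and the kernel descriptions in Propositions \ref{description} and \ref{description2}. Since the $T_i$ and $S_{-j}$ statements are proven by symmetric arguments, I would write out the $T_i$ side in full and indicate that the $S_{-j}$ side follows by exchanging the roles of $S_0$ and $T_1$ (equivalently, by applying the other spherical reflection). The base cases $i=1,2,3$ have already been checked in the discussion preceding Proposition \ref{description}: $\mathrm{hom}(T_1,T_1)=1=a_0$, $\mathrm{ext}^1(T_1,T_1)=0$, $\mathrm{hom}(S_0,T_2) = \mathrm{ext}^1(T_1,S_0)=g=a_1$, $\mathrm{ext}^1(T_1,T_3)=\mathbb C$ so $\mathrm{ext}^1(T_3,T_1)=a_0=1$, and so forth; I would tabulate these explicitly and confirm they match $a_{-1}=0$, $a_{-2}=-1$ (using the recursion $a_n = g a_{n-1}-a_{n-2}$ extended to negative $n$, so $a_{-1}=0$, $a_{-2}=-1$, consistent with $\mathrm{ext}^1(T_2,T_1)=0$ and $\mathrm{ext}^1(T_1,T_2)=0$).

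\textbf{The inductive step.} Apply $\mathrm{Hom}(T_1,-)$ to the defining sequence $0 \to T_{i+1} \to T_i \otimes \mathrm{Hom}(T_i,T_{i-1}) \to T_{i-1} \to 0$ from Proposition \ref{construction}. This yields a long exact sequence
\[
0 \to \mathrm{Hom}(T_1,T_{i+1}) \to \mathrm{Hom}(T_1,T_i)\otimes \mathrm{Hom}(T_i,T_{i-1}) \to \mathrm{Hom}(T_1,T_{i-1}) \to \mathrm{Ext}^1(T_1,T_{i+1}) \to \cdots
\]
For $i \geq 2$ we have $\mathrm{Hom}(T_1,T_i)=0$, so this immediately gives $\mathrm{Ext}^1(T_1,T_{i+1}) \hookrightarrow \mathrm{Ext}^1(T_1,T_i)\otimes \mathrm{Hom}(T_i,T_{i-1})$ with cokernel injecting into $\mathrm{Ext}^1(T_1,T_{i-1})$; by Proposition \ref{description} together with the surjectivity of the composition map $\mathrm{Hom}(T_{i+1},T_i)\otimes \mathrm{Hom}(T_i,T_1)\to\mathrm{Hom}(T_{i+1},T_1)$ (which is the dual of the coevaluation map, hence surjective by Remark \ref{coevaluation} and Proposition \ref{description2}), the map $\mathrm{Ext}^1(T_1,T_i)\otimes\mathrm{Hom}(T_i,T_{i-1})\to\mathrm{Ext}^1(T_1,T_{i-1})$ is surjective. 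Taking dimensions and writing $e_i=\mathrm{ext}^1(T_1,T_{i+1})$, $h_i=\mathrm{hom}(T_i,T_{i-1})$ we get $e_i = e_{i-1}h_i - e_{i-2}$ where $h_i = \mathrm{hom}(T_i,T_{i-1}) = \mathrm{hom}(S_0,\text{-shifted})$; one checks $h_i = a_{i-2}$ from the sequence itself, so this is exactly the recursion for $a_i$ with $\mathrm{ext}^1(T_1,T_{i+1})=\mathrm{ext}^1(T_{i+1},S_0)$ (using Serre duality and $v(T_1)^\perp$ relations, or directly $\mathrm{Ext}^1(T_1,T_{i+1})^* \cong \mathrm{Ext}^1(T_{i+1},S_0)$ via the standard identification on the wall). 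Similarly, applying $\mathrm{Hom}(-,T_1)$ to the same sequence and using $\mathrm{Hom}(T_{i-1},T_1)^* \to$ terms gives the inductive formula for $\mathrm{hom}(T_{i+1},T_1)$ via Proposition \ref{description2}, yielding $\mathrm{hom}(T_{i+1},T_1) = a_i$; and $\mathrm{ext}^1(T_{i+1},T_1) = a_{i-2}$ follows from the four-term exact sequence $\mathrm{Hom}(T_i,T_1)\otimes\mathrm{Hom}(T_i,T_{i-1})\to\mathrm{Hom}(T_{i+1},T_1)\to\mathrm{Ext}^1(T_{i-1},T_1)\to\mathrm{Ext}^1(T_i,T_1)\otimes\mathrm{Hom}(T_i,T_{i-1})$, together with the already-established identities.

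\textbf{The main obstacle} will be bookkeeping the various identifications — $\mathrm{Ext}^1(T_1,T_{i+1})^* \cong \mathrm{Ext}^2(T_1,T_{i+1}) = \mathrm{Hom}(T_{i+1},T_1)$ is false in general, so one must be careful which Serre-duality and wall-stability identifications are being invoked to pass between, e.g., $\mathrm{ext}^1(T_i,T_1)$ and $\mathrm{ext}^1(T_1,T_i)$. The cleanest route is to prove the four quantities $\mathrm{hom}(T_i,T_1)$, $\mathrm{hom}(S_0,T_i)$, $\mathrm{ext}^1(T_i,T_1)$, $\mathrm{ext}^1(T_i,S_0)$ directly and in tandem, each via its own long exact sequence obtained by applying $\mathrm{Hom}(-,T_1)$, $\mathrm{Hom}(S_0,-)$, etc., to the Proposition \ref{construction} sequence, using $\chi(T_i,T_1)=-t_it_1=-a_{i-1}+\text{(correction)}$ — in fact since everything is spherical, $\chi(T_i,T_1) = -t_i t_1$ and one knows $t_i t_1$ from the recursion on Mukai vectors, giving $\mathrm{hom}(T_i,T_1)-\mathrm{ext}^1(T_i,T_1)+\mathrm{ext}^2(T_i,T_1) = -t_i t_1$; combined with vanishing of one of $\mathrm{hom}$ or $\mathrm{ext}^2$ from phase considerations ($\phi_+(T_i)$ versus $\phi_+(T_1)$ on the relevant side), the Euler characteristic pins down the remaining two once their difference or one of them is known inductively. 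I would organize the write-up as: (1) record base cases; (2) fix the Euler-characteristic identities $\chi(T_i,T_1), \chi(S_0,T_i)$ in terms of $a_n$ by induction on the Mukai-vector recursion of Proposition \ref{lattice}; (3) use phase/stability to kill one Ext group in each pairing; (4) run the long exact sequences to get the inductive recursions; (5) invoke Propositions \ref{description}, \ref{description2} for the surjectivity statements needed to make the dimension count exact; (6) note the $S_{-j}$ identities follow by the symmetry $S_0 \leftrightarrow T_1$.
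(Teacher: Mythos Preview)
Your approach is essentially the paper's: the paper derives Theorem \ref{numbers} directly from the long exact sequences attached to the recursive constructions of Proposition \ref{construction}, together with the surjectivity of the composition map established in Proposition \ref{description2} (equivalently, the kernel description of Proposition \ref{description}). Your final six-step plan is exactly this, and the symmetry argument for the $S_{-j}$ side is also what the paper does.

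That said, two concrete errors in your inductive-step paragraph should be fixed before writing this up. First, $h_i = \mathrm{hom}(T_i,T_{i-1})$ is \emph{not} $a_{i-2}$; it is the constant $g$. This follows from the identification $\mathrm{Hom}(T_i,T_{i-1})^* \cong \mathrm{Hom}(T_{i+1},T_i)$ proved in Proposition \ref{construction} (see Remark \ref{coevaluation}), which forces all consecutive $\mathrm{hom}$'s to have the same dimension, namely $\mathrm{hom}(T_2,T_1)=g$. With $h_i=g$ the recursion $e_i = g e_{i-1} - e_{i-2}$ is exactly the fundamental-sequence recursion, as you want. Second, the claimed identification $\mathrm{ext}^1(T_1,T_{i+1}) = \mathrm{ext}^1(T_{i+1},S_0)$ is false: the left side is $a_{i-2}$ and the right side is $a_{i+1}$. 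Serre duality gives $\mathrm{ext}^1(T_1,T_{i+1}) = \mathrm{ext}^1(T_{i+1},T_1)$, not anything involving $S_0$. You already flag this bookkeeping hazard in your ``main obstacle'' paragraph, so just make sure the write-up treats the four quantities $\mathrm{hom}(T_i,T_1)$, $\mathrm{hom}(S_0,T_i)$, $\mathrm{ext}^1(T_i,T_1)$, $\mathrm{ext}^1(T_i,S_0)$ via their own long exact sequences (applying $\mathrm{Hom}(-,T_1)$, $\mathrm{Hom}(S_0,-)$, $\mathrm{Hom}(T_1,-)$, $\mathrm{Hom}(-,S_0)$ respectively) rather than attempting to pass between them by any spurious identification.
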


So the dimensions of these groups are all known in terms of $\mbox{ext}^{1}(T_{1}, S_{0})$.

\subsection{The Harder-Narasimhan Filtration}

In this subsection we compute the $\sigma_{-}$-Harder-Narasimhan filtration of a $\sigma_{+}$-stable spherical object.

We first prove a commutativity lemma:

\begin{lemma}\label{conn}
  For any $i\geq j$, the following diagram commutes:
  \[
\begin{tikzcd}
  T_{j}\otimes \mbox{Hom}(T_{i},T_{j})^{*} \arrow{r}{coev_{j, j-1}\otimes \mbox{id}} \arrow{d}{\mbox{id}\otimes ev_{j+1, j}^{*}} & T_{j-1}\otimes \mbox{Hom}(T_{j}, T_{j-1})^{*}\otimes \mbox{Hom}(T_{i},T_{j})^{*} \arrow{d}{\mbox{id}\otimes coev_{j+1, j}^{*}} \\
  T_{j}\otimes \mbox{Hom}(T_{j+1}, T_{j})^{*}\otimes \mbox{Hom}(T_{i}, T_{j+1})^{*} \arrow{r}{(ev_{j,j-1}\circ \epsilon_{j})\otimes \mbox{id}} & T_{j-1}\otimes \mbox{Hom}(T_{i},T_{j+1})^{*}
\end{tikzcd}
\]
Similarly, for any $i\geq j$, the following diagram commutes:
\[\begin{tikzcd}
    S_{i+1}\otimes \mbox{Hom}(S_{i-1}, S_{j}) \arrow{r}{(\epsilon_{i}\circ coev_{i+1, i})\otimes \mbox{id}}\arrow{d}{\mbox{id}\otimes ev_{i, i-1}^{*}} & S_{i}\otimes \mbox{Hom}(S_{i},S_{i-1})\otimes \mbox{Hom}(S_{i-1}, S_{j})\arrow{d}{\mbox{id}\otimes coev_{i, i-1}^{*}} \\
    S_{i+1}\otimes \mbox{Hom}(S_{i+1}, S_{i})\otimes \mbox{Hom}(S_{i},S_{j}) \arrow{r}{ev_{i+1, i}\otimes \mbox{id}} & S_{i}\otimes \mbox{Hom}(S_{i}, S_{j})
  \end{tikzcd}\]
\end{lemma}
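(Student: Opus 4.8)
The statement asserts the commutativity of two diagrams built out of evaluation and coevaluation maps between the spherical objects $T_{i}$ (respectively $S_{j}$), so the plan is to reduce both claims to purely linear-algebraic identities about coevaluation maps and then verify those by a computation in an explicit basis, exactly in the spirit of the proof of Lemma \ref{identification}. I would treat the first diagram in detail and obtain the second by the usual duality/symmetry $T\leftrightarrow S$ (replace $\mathrm{Hom}(T_j,T_{j-1})$ by $\mathrm{Hom}(S_j,S_{j-1})$ and reverse arrows), since the construction of the $S_j$ in Proposition \ref{construction} is formally dual to that of the $T_i$.

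First I would unwind the definitions. By Remark \ref{coevaluation} and the identification $\epsilon_{j}\colon \mathrm{Hom}(T_{j+1},T_{j})^{*}\xrightarrow{\sim}\mathrm{Hom}(T_{j},T_{j-1})$ preceding Lemma \ref{identification}, each map in the square is, after tensoring with a fixed multiplicity space, either a coevaluation $T_{j}\to T_{j-1}\otimes\mathrm{Hom}(T_{j},T_{j-1})^{*}$, an evaluation $T_{j+1}\otimes\mathrm{Hom}(T_{j+1},T_{j})\to T_{j}$, or an isomorphism induced by $\epsilon$. The key observation is that $\mathrm{Hom}(T_i,T_{j-1})$ is built from $\mathrm{Hom}(T_i,T_{j+1})$, $\mathrm{Hom}(T_{j+1},T_j)$ and $\mathrm{Hom}(T_j,T_{j-1})$ via the surjection of Proposition \ref{description2} (composition of maps), and the coevaluation maps $coev_{i,k}\colon T_i\to T_k\otimes\mathrm{Hom}(T_i,T_k)^{*}$ are compatible with these compositions: $coev_{i,j-1}$ factors through $coev_{i,j+1}$ followed by $coev_{j+1,j}$ followed by $coev_{j,j-1}$, up to the canonical identification of the multiplicity spaces. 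So I would first record this multi-step compatibility of coevaluation maps (it is the coassociativity of the comultiplication, which is what Lemma \ref{identification} encodes at one step), and then show that the square in question is exactly the diagram expressing that $ev_{j,j-1}$ is the transpose of $coev_{j,j-1}$ inserted into this coassociativity square. Concretely: pick bases $e_1,\dots,e_n$ of $\mathrm{Hom}(T_j,T_{j-1})$, $g_1,\dots$ of $\mathrm{Hom}(T_{j+1},T_j)$, and $h_1,\dots$ of $\mathrm{Hom}(T_i,T_{j})$, dual bases on the duals, write each arrow as an explicit sum $\sum(\text{comp of maps})\otimes(\text{dual basis vector})$ as in the proof of Lemma \ref{identification}, and check that both composites send a generic element of $T_j\otimes\mathrm{Hom}(T_i,T_j)^{*}$ to $\sum_{k,l} e_k(-)\otimes(\text{pairing terms})$ in the same order. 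The identity $m\circ(\mathrm{id}\otimes\epsilon_i)=\delta_i^{*}$ from Lemma \ref{identification} is exactly the ingredient needed to see that the $\epsilon$-twist in the bottom horizontal arrow matches the comultiplication on the right vertical arrow.

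The main obstacle I anticipate is purely bookkeeping: keeping the several multiplicity spaces $\mathrm{Hom}(T_i,T_j)^{*}$, $\mathrm{Hom}(T_{j+1},T_j)^{*}$, $\mathrm{Hom}(T_j,T_{j-1})^{*}$ straight, and making sure the canonical identifications $\epsilon_j$ (and the composition surjection of Proposition \ref{description2}) are inserted on the correct side and are genuinely the ones induced functorially from the defining exact sequences, rather than some other isomorphism differing by a transpose or a sign. There is no geometric input beyond what is already in Proposition \ref{construction} and Remark \ref{coevaluation} — once everything is phrased as maps of vector spaces via $\mathrm{Hom}(T_{i+1},-)$ applied to the fundamental sequences, the commutativity is forced. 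I would therefore structure the write-up as: (i) state the coassociativity of the $coev$ maps as a sublemma, proving it by the basis argument of Lemma \ref{identification}; (ii) deduce the first square by transposing one step and invoking Lemma \ref{identification} for the $\epsilon$-compatibility; (iii) note the second square follows by the dual construction for the $S_j$, and leave the (identical) verification to the reader.
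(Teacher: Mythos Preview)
Your plan is correct and follows essentially the same route as the paper: reduce via the embedding $\mathrm{Hom}(T_k,T_l)\hookrightarrow\bigotimes\mathrm{Hom}(T_{j+1},T_j)^{*}$ from Proposition \ref{description2}, then verify a single-step identity $(\mathrm{id}\otimes\delta_j^{*})\circ(coev_{j,j-1}\otimes\mathrm{id})=ev_{j,j-1}\circ(\mathrm{id}\otimes\epsilon_j)$ using Lemma \ref{identification} together with the zig-zag identity. Note that the ``coassociativity sublemma'' you propose is already available as Lemma \ref{commutativity}, so no new sublemma is needed; the paper simply assembles Lemmas \ref{identification} and \ref{commutativity} into a four-triangle diagram rather than redoing the basis computation.
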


\begin{proof}
We prove the commutativity of the first diagram; the second is similar. By Theorem \ref{description2}, for any $k\geq l$,
  $$\mbox{Hom}(T_{k}, T_{l})\subset \bigotimes_{j=l}^{k-1}\mbox{Hom}(T_{j+1}, T_{j})^{*},$$
  where the embedding is given by the composition of the duals of the evaluation maps. Hence it suffices to check the commutativity of the following diagram
  \[
\begin{tikzcd}
  T_{j}\otimes \bigotimes_{k=j}^{i-1}\mbox{Hom}(T_{k+1}, T_{k})^{*} \arrow{rr}{coev_{j,j-1}}\arrow{d}{\mbox{id}} & & T_{j-1}\otimes \bigotimes_{k=j-1}^{i-1}\mbox{Hom}(T_{k+1},T_{k})^{*}\arrow{d}{\mbox{id}\otimes \delta_{j}^{*}}\\
  T_{j}\otimes \bigotimes_{k=j}^{i-1}\mbox{Hom}(T_{k+1}, T_{k})^{*} \arrow{rr}{ev_{j, j-1}\circ (\mbox{id}\otimes \epsilon_{j})} & & T_{j-1}\otimes \bigotimes_{k=j+1}^{i-1}\mbox{Hom}(T_{k+1},T_{k})^{*}
\end{tikzcd}
\]
By Lemma \ref{identification} and Lemma \ref{commutativity},
\begin{align*}(\mbox{id}\otimes \delta_{j}^{*})\circ (coev_{j, j-1}\otimes \mbox{id})&=(\mbox{id}\otimes m) \circ (\mbox{id}\otimes \epsilon_{j})\circ (coev_{j, j-1}\otimes \mbox{id})\\
  &=(\mbox{id}\otimes m)\circ (coev_{j, j-1}\otimes \mbox{id})\circ( \mbox{id}\otimes \epsilon_{j})=ev_{j, j-1}\circ (\mbox{id}\otimes \epsilon_{j}),\end{align*}
namely a commutative diagram
\\
  \begin{adjustbox}{scale=0.8, center}
\begin{tikzcd}
  T_{j}\otimes \mbox{Hom}(T_{j+1},T_{j})^{*} \arrow{rr}{coev_{j,j-1}\otimes \mbox{id}} \arrow{dr}{coev_{j, j-1}\otimes \epsilon_{j}} \arrow{dd}{\mbox{id}\otimes \epsilon_{j}} & & T_{j-1}\otimes \mbox{Hom}(T_{j},T_{j-1})^{*}\otimes \mbox{Hom}(T_{j+1},T_{j})^{*}\arrow{dl}{\mbox{id}\otimes \epsilon_{j}}\arrow{dd}{\mbox{id}\otimes \delta_{j}^{*}}\\
  & T_{j-1}\otimes \mbox{Hom}(T_{j},T_{j-1})^{*}\otimes \mbox{Hom}(T_{j},T_{j-1}) \arrow{dr}{\mbox{id}\otimes m} & \\
  T_{j}\otimes \mbox{Hom}(T_{j},T_{j-1}) \arrow{ur}{coev_{j, j-1}\otimes \mbox{id}}\arrow{rr}{ev_{j}} & & T_{j-1},
\end{tikzcd}
\end{adjustbox}
commutativity of the left and upper triangles are clear, commutativity of the bottom triangle is by Lemma \ref{commutativity}, and commutativity of the right triangle is by Lemma \ref{identification}.

\end{proof}

  We let
  $$\mbox{conn}_{(i,j),(i,j-1)}: T_{j}\otimes \mbox{Hom}(T_{i},T_{j})^{*} \longrightarrow T_{j-1}\otimes \mbox{Hom}(T_{i},T_{j+1})^{*}$$
  $$\mbox{conn}_{(i+1, j), (i,j)}: S_{i+1}\otimes \mbox{Hom}(S_{i-1}, S_{j}) \longrightarrow S_{i}\otimes \mbox{Hom}(S_{i}, S_{j})$$
  be the composition maps described in Lemma \ref{conn}. We sometimes simply write $\mbox{conn}$ if the context is clear.

  \begin{theorem}\label{decomposition}
    There are short exact sequences
  \begin{gather*}0 \longrightarrow T_{i} \longrightarrow T_{j}\otimes \mbox{Hom}(T_{i},T_{j})^{*} \overset{\mbox{conn}}{\longrightarrow} T_{j-1}\otimes \mbox{Hom}(T_{i}, T_{j+1})^{*} \longrightarrow 0,\\
    0 \longrightarrow S_{i+1}\otimes \mbox{Hom}(S_{i-1},S_{j}) \overset{\mbox{conn}}{\longrightarrow} S_{i}\otimes \mbox{Hom}(S_{i},S_{j}) \longrightarrow S_{j} \longrightarrow 0,
    \end{gather*}
for all $i\geq j$, where $T_{0}:=S_{0}[1]$, $S_{1}:=T_{1}[-1]$.  
\end{theorem}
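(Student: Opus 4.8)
The plan is to establish the family of $T$-sequences by descending induction on $j$ for each fixed $i$ (with $1\le j\le i$), and to deduce the $S$-sequences by the mirror argument that uses the second diagram of Lemma \ref{conn} in place of the first. Recall that we are in the situation where $\mathcal{H}$ is degenerate or hyperbolic, so all the $T_i$ ($i\ge 1$) and $S_j$ ($j\le 0$) are available. Two base cases come for free. When $j=i$ the asserted sequence is $0\to T_i\to T_i\otimes\mbox{Hom}(T_i,T_i)^*\to T_{i-1}\otimes\mbox{Hom}(T_i,T_{i+1})^*\to 0$, which degenerates to the identity on $T_i$, since $T_i,T_{i+1}$ are $\sigma_+$-stable with $\phi_+(T_i)>\phi_+(T_{i+1})$ and hence $\mbox{Hom}(T_i,T_{i+1})=0$. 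When $j=i-1$, using $\mbox{Hom}(T_i,T_i)=\mathbb{C}$ and the identification $\epsilon_{i-1}\colon\mbox{Hom}(T_i,T_{i-1})^*\cong\mbox{Hom}(T_{i-1},T_{i-2})$, it is exactly the construction sequence of Proposition \ref{construction} (for $i=2$, read as the rotation of $0\to S_0\to T_2\to T_1\otimes\mbox{Ext}^1(T_1,S_0)\to 0$; whenever $T_0=S_0[1]$ intervenes below the argument is to be carried out in $\mathcal{D}^b(X)$ rather than in $\mathcal{A}$).

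For the inductive step from $j$ to $j-1$ (so $2\le j\le i-1$) I would write the known sequence at $(i,j)$ as $0\to T_i\xrightarrow{\alpha}A\xrightarrow{\,\mathrm{conn}\,}C\to 0$, with $A=T_j\otimes\mbox{Hom}(T_i,T_j)^*$ and $C=T_{j-1}\otimes\mbox{Hom}(T_i,T_{j+1})^*$, and then use the two factorizations of $\mathrm{conn}=\mathrm{conn}_{(i,j),(i,j-1)}$ provided by Lemma \ref{conn}. Along the top--right path, $\mathrm{conn}=q\circ\beta$, where $\beta=coev_{j,j-1}\otimes\mathrm{id}$ sends $A$ into $D:=T_{j-1}\otimes\mbox{Hom}(T_j,T_{j-1})^*\otimes\mbox{Hom}(T_i,T_j)^*$ and $q=\mathrm{id}\otimes coev_{j+1,j}^*$ sends $D$ to $C$. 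Tensoring the construction sequence $0\to T_j\xrightarrow{coev_{j,j-1}}T_{j-1}\otimes\mbox{Hom}(T_j,T_{j-1})^*\xrightarrow{r}T_{j-2}\to 0$ with $\mbox{Hom}(T_i,T_j)^*$ identifies $\beta$ as an injection with cokernel $E_0:=T_{j-2}\otimes\mbox{Hom}(T_i,T_j)^*$ and $r':=r\otimes\mathrm{id}$ as the quotient $D\twoheadrightarrow E_0$; dualizing the composition surjection $\mbox{Hom}(T_i,T_j)\otimes\mbox{Hom}(T_j,T_{j-1})\twoheadrightarrow\mbox{Hom}(T_i,T_{j-1})$ of Proposition \ref{description2}, whose kernel is $\mbox{Hom}(T_i,T_{j+1})$, gives $\ker q=A':=T_{j-1}\otimes\mbox{Hom}(T_i,T_{j-1})^*$, with the inclusion $A'\hookrightarrow D$ equal to $\mathrm{id}\otimes ev_{j,j-1}^*$.

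The heart of the step is then a short diagram chase inside $D$. From $q\beta\alpha=\mathrm{conn}\circ\alpha=0$ the composite $\beta\alpha\colon T_i\to D$ lands in $\ker q=A'$, so $T_i\subseteq A'\subseteq D$; and $\beta^{-1}(A')=\ker(q\beta)=\ker\mathrm{conn}=\alpha(T_i)$ shows $A'\cap\beta(A)=\beta\alpha(T_i)=T_i$ inside $D$. Hence in $D/T_i$ the subobjects $A'/T_i$ and $\beta(A)/T_i\cong C$ meet trivially, and since $(D/T_i)/(\beta(A)/T_i)=D/\beta(A)=E_0$ the composite $A'/T_i\hookrightarrow D/T_i\twoheadrightarrow E_0$ is injective; a Mukai-vector count via Theorem \ref{numbers} (in particular $\mbox{hom}(T_i,T_{j-1})=a_{i-j+1}$, $\mbox{hom}(T_i,T_j)=a_{i-j}$, together with the recursion for the $t_k$) shows $v(A'/T_i)=v(E_0)$, so this injection is an isomorphism. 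This yields the exact sequence $0\to T_i\to A'\to E_0\to 0$; and since the connecting map $A'\to E_0$ is $r'|_{A'}$, while $r'=(ev_{j-1,j-2}\circ\epsilon_{j-1})\otimes\mathrm{id}$ and $A'\hookrightarrow D$ is $\mathrm{id}\otimes ev_{j,j-1}^*$ — the bottom and left arrows of the Lemma \ref{conn} diagram at index $j-1$ — this connecting map is precisely $\mathrm{conn}_{(i,j-1),(i,j-2)}$, which closes the induction. The $S$-sequences are obtained by the same scheme with all arrows reversed: fix $j$, induct on $i$ increasing towards $0$, start from $0\to S_{j+2}\to S_{j+1}\otimes\mbox{Hom}(S_{j+1},S_j)^*\to S_j\to 0$, and recognize the connecting maps as $\mathrm{conn}_{(i+1,j),(i,j)}$ using the second diagram of Lemma \ref{conn}.

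I expect the main difficulty to be organizational rather than conceptual. All the essential commutativities have already been isolated in Lemmas \ref{identification}, \ref{commutativity} and \ref{conn}, so what remains is to run the chase above while keeping the many dualizations and the identifications $\epsilon_k$, $coev^*$, $ev^*$ mutually consistent — in particular matching $\beta$, $q$ and $r$ with the correct edges of the Lemma \ref{conn} diagrams at indices $j$ and $j-1$ — together with the single genuinely quantitative input, the equality $v(A'/T_i)=v(E_0)$ that promotes the injection $A'/T_i\hookrightarrow E_0$ to an isomorphism, which rests entirely on the dimension formulas of Theorem \ref{numbers}.
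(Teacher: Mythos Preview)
Your argument is correct and runs the same induction on $i-j$ as the paper, built on the same commutative square of Lemma \ref{conn} and the same short exact sequence of $\mbox{Hom}$-spaces (what the paper calls ``an argument similar to Proposition \ref{description}''); the paper simply packages your chase as a single application of the Snake Lemma to the $3\times 3$ grid whose middle row is the construction sequence tensored with $\mbox{Hom}(T_{i},T_{j})^{*}$ and whose columns are your $A'\hookrightarrow D\twoheadrightarrow C$ and $A\hookrightarrow D\twoheadrightarrow E_0$. One small cleanup: your Mukai-vector step is unnecessary --- since $q\beta=\mbox{conn}$ is surjective by induction and $\ker q=A'$, you get $A'+\beta(A)=D$ directly, hence $A'/T_i\cong D/\beta(A)=E_0$ without any numerics (and note Theorem \ref{numbers} as stated only records $\mbox{hom}(T_i,T_1)$, not $\mbox{hom}(T_i,T_j)$).
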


\begin{proof}
We prove the first sequence, the second is similar. Use induction on $i-j$. When $i-j=0$, $\mbox{Hom}(T_{i}, T_{j})^{*}=\mbox{Hom}(T_{i}, T_{i})^{*}=\mathbb{C}$, and $\mbox{Hom}(T_{i}, T_{j+1})^{*}=\mbox{Hom}(T_{i}, T_{i+1})^{*}=0$, hence the claim reads
$$0 \longrightarrow T_{i} \longrightarrow T_{i} \longrightarrow 0 $$
which is clearly true. When $i-j=1$, $\mbox{Hom}(T_{i},T_{j})^{*}=\mbox{Hom}(T_{i},T_{i-1})^{*}=\mbox{Hom}(T_{i-1},T_{i-2})$, and $\mbox{Hom}(T_{i}, T_{j+1})^{*}=\mbox{Hom}(T_{i}, T_{i})^{*}=\mathbb{C}$, hence the sequence reads
$$0 \longrightarrow T_{i} \longrightarrow T_{i-1}\otimes \mbox{Hom}(T_{i-1},T_{i-2}) \longrightarrow T_{i-2} \longrightarrow 0 $$
which is true by construction.

Now suppose the claim is true for up to $i-j$. We have the following commutative diagram
\\
\begin{adjustbox}{scale=0.8, center}
\begin{tikzcd}
  & 0\arrow{d} & 0\arrow{d}  & \\
  \mbox{ker}\arrow{r} & T_{i+1} \arrow{r}{coev_{i+1, j}}\arrow{d}{coev_{i+1, j+1}} & T_{j}\otimes \mbox{Hom}(T_{i+1},T_{j})^{*}\arrow{r}\arrow{d}{\mbox{id}\otimes ev_{j+1, j}^{*}} & \mbox{coker} \\
0 \arrow{r} & T_{j+1}\otimes \mbox{Hom}(T_{i+1},T_{j+1})^{*} \arrow{r}{coev_{j+1,j}\otimes \mbox{id}}\arrow{d}{\mbox{conn}} & T_{j}\otimes \mbox{Hom}(T_{j+1},T_{j})^{*}\otimes \mbox{Hom}(T_{i+1},T_{j+1})^{*} \arrow{r}{ev_{j, j-1}\circ (\mbox{id}\otimes \epsilon_{j})}\arrow{d}{\mbox{id}\otimes coev_{j+2, j+1}^{*}} & T_{j-1}\otimes \mbox{Hom}(T_{i+1},T_{j+1})^{*} \\
0 \arrow{r} & T_{j}\otimes \mbox{Hom}(T_{i+1}, T_{j+2})^{*} \arrow{r}{=}\arrow{d} & T_{j}\otimes \mbox{Hom}(T_{i+1},T_{j+2})^{*} \arrow{r}\arrow{d} & 0\\
 & 0 & 0 & .
\end{tikzcd}
\end{adjustbox}
The commutativity of the top square is clear, and the commutativity of the bottom square is by Lemma \ref{conn}. The first vertical sequences is exact by induction hypothesis. The second vertical sequence is exact by an argument similar to the one in Proposition \ref{description}. Hence by the Snake Lemma, $\mbox{ker}=0$ and $\mbox{coker}=T_{j-1}\otimes \mbox{Hom}(T_{i+1}, T_{j+1})^{*}$. By definition, the composition of
$T_{j}\otimes \mbox{Hom}(T_{i+1}, T_{j})^{*} \overset{\mbox{id}\otimes ev_{j+1, j}^{*}}{\longrightarrow} T_{j}\otimes \mbox{Hom}(T_{j+1},T_{j})^{*}\otimes \mbox{Hom}(T_{i+1},T_{j+1})^{*}$
and
$T_{j}\otimes \mbox{Hom}(T_{j+1},T_{j})^{*}\otimes \mbox{Hom}(T_{i+1},T_{j+1})^{*} \overset{ev_{j, j-1} \circ (\mbox{id}\otimes \epsilon_{j})}{\longrightarrow} T_{j-1}\otimes \mbox{Hom}(T_{i+1}, T_{j+1})^{*}$
is exactly $\mbox{conn}_{(i+1, j),(i+1, j-1)}$. By induction, the theorem is proved.

\end{proof}

\begin{corollary} \label{JH}
 For all $i\geq 1$, the Jordan-H\"{o}lder filtration of $T_{i}$ is given by
 $$ 0 \longrightarrow S_{0}\otimes \mbox{Hom}(T_{i},T_{2})^{*} \longrightarrow T_{i} \longrightarrow T_{1}\otimes \mbox{Hom}(T_{i}, T_{1})^{*} \longrightarrow 0, $$
 similarly, for all $j\leq 0$, the Jordan-H\"{o}lder filtration of $S_{j}$ is given by
 $$0 \longrightarrow S_{0}\otimes \mbox{Hom}(S_{0},S_{j}) \longrightarrow S_{j} \longrightarrow T_{1}\otimes \mbox{Hom}(S_{-1}, S_{j}) \longrightarrow 0.$$
\end{corollary}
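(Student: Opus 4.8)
The plan is to obtain both displayed sequences as immediate specializations of Theorem \ref{decomposition}, and then to recognize them as Jordan--H\"older filtrations at $\sigma_{0}$ using the fact that $S_{0}$ and $T_{1}$ are the only $\sigma_{0}$-stable spherical objects with Mukai vector in $\mathcal{H}$.

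First I would apply the first family of short exact sequences in Theorem \ref{decomposition} with the filtration index $j=1$. Since there $T_{0}:=S_{0}[1]$, this yields the exact triangle
\[
T_{i} \longrightarrow T_{1}\otimes \mathrm{Hom}(T_{i},T_{1})^{*} \overset{\mathrm{conn}}{\longrightarrow} S_{0}[1]\otimes \mathrm{Hom}(T_{i},T_{2})^{*} \longrightarrow T_{i}[1]
\]
in $\mathcal{D}^{b}(X)$. Rotating once so that the shifted vertex becomes the first term gives
\[
S_{0}\otimes \mathrm{Hom}(T_{i},T_{2})^{*} \longrightarrow T_{i} \longrightarrow T_{1}\otimes \mathrm{Hom}(T_{i},T_{1})^{*} \overset{+1}{\longrightarrow}.
\]
Now all three vertices lie in the heart $\mathcal{A}$: the outer two are finite direct sums of copies of $S_{0}$ and of $T_{1}$, which lie in $\mathcal{A}$, and $T_{i}\in \mathcal{A}$ by Proposition \ref{construction}. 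A triangle all of whose vertices lie in a heart is a short exact sequence there, so we recover exactly the first asserted sequence. For the second, I would apply the second family of sequences in Theorem \ref{decomposition} with $i=0$ --- the hypothesis $i\geq j$ holds since $j\leq 0$, and there $S_{1}:=T_{1}[-1]$, $S_{0}=S_{0}$ --- producing the triangle $T_{1}[-1]\otimes \mathrm{Hom}(S_{-1},S_{j}) \to S_{0}\otimes \mathrm{Hom}(S_{0},S_{j}) \to S_{j} \overset{+1}{\to}$, which after the same rotation-and-heart argument becomes $0 \to S_{0}\otimes \mathrm{Hom}(S_{0},S_{j}) \to S_{j} \to T_{1}\otimes \mathrm{Hom}(S_{-1},S_{j}) \to 0$.

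It remains to check that these two-step filtrations are Jordan--H\"older filtrations at $\sigma_{0}$. By the defining property of the wall, $\phi_{\sigma_{0}}(S_{0})=\phi_{\sigma_{0}}(T_{1})=:\phi_{0}$; by additivity of $Z_{\sigma_{0}}$, the charge $Z_{\sigma_{0}}(T_{i})$ is a nonnegative combination of $Z_{\sigma_{0}}(S_{0})$ and $Z_{\sigma_{0}}(T_{1})$, hence lies on the same ray, so $\phi_{\sigma_{0}}(T_{i})=\phi_{0}$ as well (and likewise $\phi_{\sigma_{0}}(S_{j})=\phi_{0}$). Thus in each sequence the sub- and quotient objects are $\sigma_{0}$-semistable of phase $\phi_{0}$, and breaking $S_{0}\otimes \mathrm{Hom}(T_{i},T_{2})^{*}$ (resp.\ $T_{1}\otimes \mathrm{Hom}(T_{i},T_{1})^{*}$) into its stable constituents --- all isomorphic to $S_{0}$ (resp.\ $T_{1}$), since $S_{0}$ and $T_{1}$ are $\sigma_{0}$-stable --- refines the displayed sequence into a genuine Jordan--H\"older filtration of $T_{i}$; the multiplicities agree with Theorem \ref{numbers}. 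The argument for $S_{j}$ is identical.

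I do not anticipate a real difficulty: the substance is already contained in Theorem \ref{decomposition}. The only points needing care are the bookkeeping of the shifts $T_{0}=S_{0}[1]$ and $S_{1}=T_{1}[-1]$ under the rotation of the triangle, and invoking the standard fact that an exact triangle with all vertices in a heart is short exact there; once these are handled, the Jordan--H\"older statement is forced by the $\sigma_{0}$-stability of $S_{0}$ and $T_{1}$.
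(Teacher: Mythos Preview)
Your proposal is correct and follows exactly the paper's approach: the paper's proof simply says to set $j=1$ (resp.\ $i=0$) in Theorem \ref{decomposition}. You add the careful rotation of the triangle to account for the conventions $T_{0}=S_{0}[1]$ and $S_{1}=T_{1}[-1]$, and the explicit verification that the resulting filtration is Jordan--H\"older at $\sigma_{0}$; these are details the paper leaves implicit, but the substance is identical.
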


\begin{proof}
For the first claim, let $j=1$ in Theorem \ref{decomposition}, for the second claim, let $i=0$ in Theorem \ref{decomposition}. 
\end{proof}

\begin{remark}
Since $\mbox{Hom}(S_{0}, T_{1})=\mbox{Hom}(T_{1},S_{0})=0$, we see $S_{0}\otimes \mbox{Hom}(T_{i},T_{2})^{*} \longrightarrow T_{i}$ is the evaluation map, and $S_{j}\longrightarrow T_{1}\otimes \mbox{Hom}(S_{-1}, S_{j})$ is the coevaluation map. 
\end{remark}

\section{Local Reduction}\label{section5}

In this section, we develop the local reduction algorithm that appeared in the global reduction (Algorithm \ref{global reduction}). The objective is to find the $\sigma_{-}$(resp. $\sigma_{+}$)-Harder-Narasimhan filtration of a rigid object (not necessarily stable), from its $\sigma_{+}$(resp. $\sigma_{-}$)-Harder-Narasimhan filtration. This can be viewed as a generalization of Corollary \ref{JH}. For a rough idea of local reduction, see Section \ref{section3}.

\subsection{Exhaustive Filtration}

In this subsection we introduce the notion of an exhaustive filtration. This notion is the central idea of the whole algorithm. It connects the information of the Harder-Narasimhan filtrations at $\sigma_{+}$ and $\sigma_{-}$. 

Let $W$ be a wall and $\mathcal{H}$ be the rank 2 lattice of $W$ with the two $\sigma_{+}$-stable spherical objects $S_{0}, T_{1}$. Assume $\phi_{+}(T_{1})>\phi_{+}(S_{0})$. Let $\mathcal{A}_{W}$ be the full subcategory of $\mathcal{A}$ consisting of objects whose Mukai vectors are in $\mathcal{H}$. 

\begin{definition}[Exhaustive filtration]\label{exhaustive}
For any rigid object $E\in \mathcal{A}_{W}$, a finite filtration
  $$0 \subset E_{1} \subset F_{1} \cdots \subset E_{n} \subset F_{n} \subset \cdots \subset E$$
  of $E$ is called $S$-exaustive, if $\mbox{Hom}(S_{0}, Q_{i})=0$ for all $Q_{i}=E/E_{i}, i\geq 1$, and $\mbox{Hom}(T_{1}, R_{j})=0$ for all $R_{j}= E/F_{j}, j\geq 1$. Similarly, it is called $T$-exhaustive if $\mbox{Hom}(T_{1},Q_{i})=0$ for all $i\geq 1$ and $\mbox{Hom}(S_{0}, R_{j})=0$ for all $j\geq 1$.
\end{definition}

\begin{lemma}\label{exist}
Exhaustive filtrations exist.
\end{lemma}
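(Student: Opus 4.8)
The plan is to build the filtration one step at a time, alternating between "absorbing" the $S_0$-part and the $T_1$-part of the top quotient, and to show the process terminates because $E$ has only finitely many nonzero cohomology sheaves / bounded length in $\mathcal{A}_W$. Concretely, start with $E^{(0)} = 0$. Given a partial filtration ending at some subobject $E' \subset E$ with quotient $Q = E/E'$, we want to enlarge it so as to kill the maps from $S_0$ (at an $E_i$-stage) and then from $T_1$ (at an $F_j$-stage). The key observation is that if $\mbox{Hom}(S_0, Q) \neq 0$, every nonzero map $S_0 \to Q$ is injective (since $S_0$ is $\sigma_+$-stable of the smaller phase, so it cannot surject onto a subobject of $Q$, and its image, being a quotient of the stable object $S_0$ that maps into $Q$, must be $S_0$ itself once we check there is no phase obstruction — more precisely, take a map $S_0 \to Q$ with image $I$; then $I$ is a quotient of $S_0$, hence $\phi_+(I) \geq \phi_+(S_0)$ by stability, and $I \subset Q$). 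So we may let $S_0 \otimes \mbox{Hom}(S_0, Q) \to Q$ be the evaluation map; its image $\widetilde{S}$ is a subobject of $Q$ of the form $S_0^{\oplus k}$ (using $\mbox{Ext}^1(S_0,S_0)=0$, since $S_0$ is spherical, and the standard argument that a semistable object all of whose Jordan–Hölder factors are $S_0$ and which is rigid must be a direct sum). Pulling $\widetilde{S}$ back to $E$ gives the next term $E_i$, and by construction $\mbox{Hom}(S_0, E/E_i) = 0$ because we have quotiented out the entire image of all maps $S_0 \to Q$ — any residual map $S_0 \to E/E_i$ would lift to $S_0 \to Q$ not factoring through $\widetilde S$, contradicting that $\widetilde{S}$ is the image of the evaluation (here one uses $\mbox{Hom}(S_0, \widetilde S) \to \mbox{Hom}(S_0, Q)$ is an isomorphism, i.e. $\mbox{Hom}(S_0, Q/\widetilde S) $ receives no map from $S_0$, which follows from $\mbox{Ext}^1(S_0, S_0) = 0$ applied to $0 \to \widetilde S \to Q \to Q/\widetilde S \to 0$).

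Next, repeat the same construction with $T_1$ in place of $S_0$: with $Q' = E/E_i$, form the evaluation $T_1 \otimes \mbox{Hom}(T_1, Q') \to Q'$, let its image be $\widetilde T \cong T_1^{\oplus \ell}$ (again rigidity plus $\mbox{Ext}^1(T_1,T_1)=0$), pull back to get $F_j \supset E_i$, and conclude $\mbox{Hom}(T_1, E/F_j) = 0$. Iterate. At each full step $E_i \subset F_i$ we have strictly enlarged the subobject unless both $\mbox{Hom}(S_0, Q) = 0$ and $\mbox{Hom}(T_1, Q) = 0$ already, in which case — since $S_0, T_1$ generate all $\sigma_+$-stable objects in $\mathcal{H}$ (Proposition \ref{lattice}, Proposition \ref{construction}) and every object of $\mathcal{A}_W$ has a $\sigma_+$-Jordan–Hölder filtration with factors among the $S_j$ and $T_i$, each of which maps into $Q$ iff one of $S_0, T_1$ does (by the Jordan–Hölder descriptions in Corollary \ref{JH}: $\mbox{Hom}(T_i, Q)\neq 0$ or $\mbox{Hom}(S_j, Q)\neq 0$ forces a nonzero map from $S_0$ or $T_1$ into $Q$ via the sub $S_0\otimes(\cdots)\hookrightarrow S_j$, $S_0\otimes(\cdots)\hookrightarrow T_i$, or via the quotient $T_1\otimes(\cdots)$) — we get $Q = 0$, i.e. the filtration is complete. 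Termination follows because the lengths (with respect to $\sigma_+$) of the subobjects $E_1 \subset F_1 \subset E_2 \subset \cdots$ are strictly increasing integers bounded by the length of $E$ in $\mathcal{A}_W$, which is finite since $E \in \mathcal{D}^b(X)$ has bounded cohomology and $\mathcal{A}_W$ is Noetherian and Artinian on objects of fixed Mukai vector.

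This produces an $S$-exhaustive filtration; the $T$-exhaustive one is obtained symmetrically, starting by absorbing the $T_1$-part first. The main obstacle I anticipate is the bookkeeping needed to verify the vanishing $\mbox{Hom}(S_0, E/E_i) = 0$ is preserved under all subsequent enlargements $F_i, E_{i+1}, \dots$ — a priori, quotienting further could create new maps from $S_0$. One needs to check that once $\mbox{Hom}(S_0, E/E_i) = 0$, it stays zero: this is automatic because $E/E_{i+1}$, $E/F_{i+1}$, etc. are all quotients of $E/E_i$, and $\mbox{Hom}(S_0, -)$ is left exact, so $\mbox{Hom}(S_0, E/E_i) = 0 \Rightarrow \mbox{Hom}(S_0, (\text{quotient of } E/E_i)) = 0$ fails in general — wait, that implication is false for quotients. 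The correct fix: arrange the filtration so that $E_i \subset F_i \subset E_{i+1}$ with $F_i/E_i \cong S_0^{\oplus *}$ and $E_{i+1}/F_i \cong T_1^{\oplus *}$, and use that $\mbox{Hom}(S_0, T_1) = 0$ together with $\mbox{Ext}^1(S_0,S_0) = 0$ to propagate the vanishing through the $T_1$-layer, while the next $S_0$-layer $F_{i+1}/E_{i+1}$ again absorbs all newly appearing $S_0$-maps. So the real content is that alternating layers of types $S_0$ and $T_1$ interact trivially at the $\mbox{Hom}$ level, and this is exactly where $\mbox{Hom}(S_0, T_1) = \mbox{Hom}(T_1, S_0) = 0$ and the rigidity hypothesis do the work; I would present this propagation as a short separate claim before assembling the induction.
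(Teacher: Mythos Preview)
Your construction is exactly the paper's: alternately absorb the $S_0$-part and the $T_1$-part of the current quotient via the evaluation maps, and stop by finite length. The paper compresses each step into the sentence ``this is the first $\sigma_{\pm}$-Harder--Narasimhan factor of the current quotient,'' which gives injectivity of the evaluation and the required $\mbox{Hom}$-vanishing in one stroke; your more explicit verification using $\mbox{Ext}^1(S_0,S_0)=0$ achieves the same thing.

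Your final paragraph's worry is unfounded and should be dropped. The definition of $S$-exhaustive only demands $\mbox{Hom}(S_0, E/E_i)=0$ at each $E_i$-stage and $\mbox{Hom}(T_1, E/F_j)=0$ at each $F_j$-stage; there is no cross-condition like $\mbox{Hom}(S_0, E/F_j)=0$, so nothing needs to ``propagate'' through later layers. Each vanishing is secured at the moment that term is constructed, and that is all the definition asks. (Incidentally, in that paragraph you have the roles of $F_i/E_i$ and $E_{i+1}/F_i$ swapped relative to your own earlier setup.)
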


\begin{proof}
  We construct an $S$-exhaustive filtration, the other case is similar. Let $E_{1}=S_{0}\otimes \mbox{Hom}(S_{0}, E)$. This is the first $\sigma_{+}$-Harder-Narasimhan factor of $E$, and let $Q_{1}=E/E_{1}$. Then clearly $\mbox{Hom}(S_{0},Q_{1})=0$. Suppose we have constructed $Q_{i}$, to construct $R_{i}$, just note that $T_{1}\otimes \mbox{Hom}(T_{1},Q_{i})$ is the first $\sigma_{-}$-Harder-Narasimhan factor of $Q_{i}$ and we let $F_{i}/E_{i}=T_{1}\otimes \mbox{Hom}(T_{1},Q_{i})$. Similarly to construct $R_{i+1}$, we let $E_{i+1}/F_{i}=S_{0}\otimes \mbox{Hom}(S_{0},R_{i})$. Since the length of $E$ is finite, this process terminates and we get an $S$-exhaustive filtration.
  
\end{proof}

The next lemma shows that the exhaustive filtration of a rigid object is determined by its Jordan-H\"{o}lder factors.

\begin{lemma}\label{shape0}
  Let $E\in \mathcal{A}_{W}$ be rigid and
  $$0 \subset E_{1} \subset F_{1} \cdots \subset E_{n} \subset F_{n} \subset \cdots \subset E$$
  a rigid filtration of $E$. Then $Q_{i}, R_{i}$ are all rigid, and
  \begin{gather*}
    0 \longrightarrow F_{i}/E_{i} \longrightarrow Q_{i} \longrightarrow R_{i} \longrightarrow 0,\\
    0 \longrightarrow E_{i+1}/F_{i} \longrightarrow R_{i} \longrightarrow Q_{i+1} \longrightarrow 0
    \end{gather*}
  are general extensions.
\end{lemma}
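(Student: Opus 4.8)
The plan is to prove the two assertions separately — rigidity of all $Q_{i},R_{i}$ first, then the genericity of the two families of extensions — after reducing at the outset to an $S$-exhaustive filtration, the $T$-exhaustive case being symmetric. Recall that in the exhaustive filtration built in Lemma \ref{exist} the successive quotients are, by construction, $E_{1}=S_{0}\otimes\mbox{Hom}(S_{0},E)$ and, for $i\geq 1$, $F_{i}/E_{i}=T_{1}\otimes\mbox{Hom}(T_{1},Q_{i})$ and $E_{i+1}/F_{i}=S_{0}\otimes\mbox{Hom}(S_{0},R_{i})$; in particular each is a direct sum of copies of $S_{0}$ or of $T_{1}$, and I will use throughout that $\mbox{Hom}(S_{0},T_{1})=\mbox{Hom}(T_{1},S_{0})=0$.

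For rigidity I would induct along the filtration using Mukai's Lemma (Lemma \ref{Mukai's Lemma}), fed with the short exact sequences
$$0\to E_{1}\to E\to Q_{1}\to 0,\qquad 0\to F_{i}/E_{i}\to Q_{i}\to R_{i}\to 0,\qquad 0\to E_{i+1}/F_{i}\to R_{i}\to Q_{i+1}\to 0.$$
In the first, $\mbox{Hom}(E_{1},Q_{1})$ is a direct sum of copies of $\mbox{Hom}(S_{0},Q_{1})$, which vanishes by $S$-exhaustiveness, so Mukai's Lemma applied to the rigid object $E$ gives $Q_{1}$ rigid. In the $i$-th sequence of the second kind, $\mbox{Hom}(F_{i}/E_{i},R_{i})$ is a direct sum of copies of $\mbox{Hom}(T_{1},R_{i})=0$, so rigidity of $Q_{i}$ passes to $R_{i}$; in the sequence of the third kind, $\mbox{Hom}(E_{i+1}/F_{i},Q_{i+1})$ is a direct sum of copies of $\mbox{Hom}(S_{0},Q_{i+1})=0$, so rigidity of $R_{i}$ passes to $Q_{i+1}$. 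Since the filtration is finite, all $Q_{i}$ and $R_{i}$ are rigid (and, since $\mbox{Hom}(S_{0},T_{1})=\mbox{Hom}(T_{1},S_{0})=0$, so are all $E_{i}$ and $F_{i}$).

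For the claim that $0\to F_{i}/E_{i}\to Q_{i}\to R_{i}\to 0$ is a general extension — and symmetrically $0\to E_{i+1}/F_{i}\to R_{i}\to Q_{i+1}\to 0$ — write $A=F_{i}/E_{i}$, $B=R_{i}$; these are fixed rigid objects of $\mathcal{A}_{W}$ with $\mbox{Hom}(A,B)=0$, and $Q_{i}$ is the middle term of an extension of $B$ by $A$, of class $\xi_{0}\in\mbox{Ext}^{1}(B,A)$. Over the irreducible vector space $\mbox{Ext}^{1}(B,A)$ of all such classes, let $M_{\xi}$ be the corresponding middle term; the function $\xi\mapsto\mbox{ext}^{1}(M_{\xi},M_{\xi})$ is upper semicontinuous, so the locus $\{\xi:M_{\xi}\text{ is rigid}\}$ is Zariski open and, containing $\xi_{0}$, is nonempty — hence the generic $M_{\xi}$ is rigid. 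To conclude that $Q_{i}$ itself is the generic extension, I would invoke that a rigid object of $\mathcal{A}_{W}$ is determined up to isomorphism by its Mukai vector (equivalently, by its Jordan--H\"older factors): those factors, taken at a generic point of $W$, are by Mukai's Lemma among $S_{0}$ and $T_{1}$, and the analysis of the rank-$2$ lattice $\mathcal{H}$ in Section \ref{section4} (Propositions \ref{lattice}, \ref{construction}, Corollary \ref{JH}) produces a unique rigid object for each Mukai vector in $\mathcal{H}$ that admits one. Since $Q_{i}$ and the generic $M_{\xi}$ are both rigid with Mukai vector $v(A)+v(B)$, they are isomorphic.

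The rigidity induction is routine; the step I expect to demand the most care is the identification of $Q_{i}$ (and $R_{i}$) with the \emph{generic} extension. The delicate points are: pinning down the precise sense of ``general extension'' used in the paper and checking that here it is equivalent to rigidity of the middle term — concretely, that the cup-product map $\mbox{Hom}(F_{i}/E_{i},T_{1})\to\mbox{Ext}^{1}(R_{i},T_{1})$ attached to $\xi_{0}$ is an isomorphism, equivalently that the connecting maps obtained by applying $\mbox{Hom}(T_{1},-)$ along the filtration have maximal rank, which one can verify by a diagram chase in the spirit of Proposition \ref{description}; and extracting the uniqueness of the rigid object of $\mathcal{A}_{W}$ with a given Mukai vector cleanly from the structure theory of $\mathcal{H}$. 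Proving the maximal-rank statement directly for each step of the filtration would establish the genericity of all the extensions at once and would sidestep the uniqueness input.
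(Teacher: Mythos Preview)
Your rigidity argument is correct and is exactly what the paper does, only spelled out: the paper's proof cites Lemma \ref{rigid}, which is the same Mukai's Lemma induction you run, using that each step of the exhaustive filtration is the first Harder--Narasimhan factor (at $\sigma_{+}$ or $\sigma_{-}$) of the current quotient.

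For the genericity claim, the paper's proof is the single sentence ``Since $E$ is rigid, the extension is general.'' The implicit argument is direct: in the family of extensions over $\mbox{Ext}^{1}(B,A)$, the Kodaira--Spencer map at $\xi_{0}$ lands in $\mbox{Ext}^{1}(M_{\xi_{0}},M_{\xi_{0}})=0$, so the family is locally isomorphic to $M_{\xi_{0}}$; equivalently, the $\mbox{Aut}(A)\times\mbox{Aut}(B)$--orbit of any $\xi$ with rigid middle term is open, and an irreducible variety has at most one open orbit. Your route instead passes through the global statement ``a rigid object of $\mathcal{A}_{W}$ is determined by its Mukai vector.'' That statement is true, but the results you cite (Propositions \ref{lattice}, \ref{construction}, Corollary \ref{JH}) only classify the \emph{stable spherical} objects in $\mathcal{H}$, not arbitrary rigid objects such as $T_{i}^{m}\oplus T_{i+1}^{n}$; the classification you actually need is closer to Proposition \ref{type1}, which appears later in the paper, so invoking it here risks circularity. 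Your closing remark --- that one could instead prove the relevant connecting maps have maximal rank and bypass the uniqueness input --- is pointing at the right simplification; taken to its conclusion it becomes exactly the open--orbit argument above, and that is both shorter and what the paper has in mind.
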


\begin{proof}
  It suffices to prove the statement for $i=1$. Note that $E_{1}$ is the first $\sigma_{+}$ or $\sigma_{-}$ Harder-Narasimhan factor of $E$, hence by Lemma \ref{rigid}, $E_{1}$ and $Q_{1}$ are rigid. Since $E$ is rigid, the extension
  $$0 \longrightarrow E_{1} \longrightarrow E \longrightarrow Q_{1} \longrightarrow 0 $$
  is general. A similar argument proves the exactness of
  $$0 \longrightarrow F_{1}/E_{1} \longrightarrow E/E_{1} \longrightarrow R_{1} \longrightarrow 0.$$
\end{proof}

\subsection{Admissible Extensions}

\begin{notation}\label{notationexhaustive}
From now on we write
$$0\subset \cdots \subset E_{2}\subset F_{2} \subset E_{1}\subset F_{1} \subset E_{0}=E$$ for an exhaustive filtration, $Q_{i}=E/E_{i}, R_{i}=E/F_{i+1}$, $G_{i}=F_{i}/E_{i}$, $H_{i}= E_{i}/F_{i+1}$.
\\
Then $H_{0}=T\otimes B_{0}$ or $H_{0}=S\otimes A_{0}$, where $B_{0}, A_{0}$ are some finite dimensional vector spaces. For simplicity, we treat the case $H_{0}=T\otimes B_{0}$. The other case is similar. Then $G_{i}=S\otimes A_{i}$ and $H_{i}=T\otimes B_{i}$. Let $V=\mbox{Ext}^{1}(T_{1},S_{0})$.
\end{notation}

\begin{notation}\label{notationtensor}
  For the rest of the paper, for two linear spaces $V,W$, we sometimes write $VW$ for $V\otimes W$.
\end{notation}

  Consider $E_{i}/E_{i+1}\in B_{i}^{*}VA_{i+1}$ or $F_{i}/F_{i+1}\in A_{i}^{*}V^{*}B_{i}$, they are $V$ or $V^{*}$-Kronecker modules of certain fixed dimension vector. In this subsection we want to describe all possible Kronecker modules that arise from a fixed exhaustive filtration. Since we do not address stability of these Kronecker modules nor the geometry of these loci in the moduli spaces, we will work in the representation spaces $B_{i}^{*}VA_{i+1}$ and $A_{i}^{*}V^{*}B_{i}$. For more details about Kronecker modules, see \cite{Kin94}.

 Consider $\mbox{Ext}^{1}(R_{i},G_{i+1})$. We have the following commutative diagram. 
\[\begin{tikzcd}
    & & \mbox{Ext}^{2}(R_{i-1},G_{i+1})=0 \arrow[d]\\
    \mbox{Ext}^{1}(R_{i},G_{i+1}) \arrow[r] & \mbox{Ext}^{1}(H_{i},G_{i+1}) \arrow[r]\arrow[dr, "\phi"] & \mbox{Ext}^{2}(Q_{i}, G_{i+1})\arrow[d]\\
    & & \mbox{Ext}^{2}(G_{i},G_{i+1})\arrow[d]\\
    & & 0
  \end{tikzcd}\]
where the map $\phi$ is induced by the previous extension class $\delta_{i,i}\in \mbox{Ext}^{1}(G_{i},H_{i})=A_{i}^{*}V^{*}B_{i}$. Since $G_{i+1}=S\otimes A_{i+1}$, $\phi$ is the adjoint map
$$\delta_{i,i}\otimes \mbox{id}: B_{i}^{*}VA_{i+1} \longrightarrow A_{i}^{*}A_{i+1}.$$

\begin{definition}
  We say an extension $\delta_{i,i+1}\in \mbox{Ext}^{1}(H_{i},G_{i+1})=B_{i}^{*}VA_{i+1}$ is admissible, if it is the image of some $\widetilde{\delta_{i,i+1}}\in \mbox{Ext}^{1}(R_{i},G_{i+1})$ under the map above, where the middle term of $\widetilde{\delta_{i,i+1}}$ is isomorphic to $Q_{i+1}$.
  
  Similarly, we say $\delta_{i,i}\in \mbox{Ext}^{1}(G_{i},H_{i})=A_{i}^{*}V^{*}B_{i}$ is admissible, if it is the image of some $\widetilde{\delta_{i,i}}\in \mbox{Ext}^{1}(Q_{i}, H_{i})$, where the middle term of $\widetilde{\delta_{i,i}}$ is isomorphic to $R_{i}$.
  
  We denote the set of admissible extensions by $\mbox{Adm}_{i,i+1}$ or $\mbox{Adm}_{i,i}$.
\end{definition}

There are quasi-projective varieties in $B_{i}^{*}VA_{i+1}$ or $A_{i}^{*}V^{*}B_{i}$ that bound the set of admissible extensions.

\begin{lemma}\label{variety}
  There exists a closed subvariety $\mbox{ADM}_{i,i+1}\subset B_{i}^{*}VA_{i+1}$ and an open subvariety $\mbox{adm}_{i,i+1}\subset \mbox{ADM}_{i,i+1}$, such that
  $$\mbox{adm}_{i,i+1}\subset \mbox{Adm}_{i,i+1}\subset \mbox{ADM}_{i,i+1}.$$
  A similar conclusion holds for $\mbox{Adm}_{i,i}$.
\end{lemma}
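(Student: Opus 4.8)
The plan is to realize $\mbox{Adm}_{i,i+1}$ as the image of a constructible set under a linear map of affine spaces, and then to take $\mbox{ADM}_{i,i+1}$ to be its Zariski closure. Recall that the map $\mbox{Ext}^{1}(R_{i},G_{i+1})\to\mbox{Ext}^{1}(H_{i},G_{i+1})=B_{i}^{*}VA_{i+1}$ in the diagram above is the linear restriction map $\alpha$ along $H_{i}\hookrightarrow R_{i}$, and the middle term of the extension of $R_{i}$ by $G_{i+1}$ with class $\widetilde{\delta}$ is an object of $\mathcal{A}$ lying in the fixed numerical class $v(G_{i+1})+v(R_{i})=v(Q_{i+1})$. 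By the definition of admissibility, $\mbox{Adm}_{i,i+1}=\alpha(\mathcal{E})$, where $\mathcal{E}\subset\mbox{Ext}^{1}(R_{i},G_{i+1})$ is the set of classes whose corresponding middle term is isomorphic to $Q_{i+1}$. Thus the lemma reduces to showing that $\mathcal{E}$ is constructible, for then $\mbox{Adm}_{i,i+1}=\alpha(\mathcal{E})$ is constructible by Chevalley's theorem.

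To prove $\mathcal{E}$ constructible I would use a universal family. Writing $V_{0}=\mbox{Ext}^{1}(R_{i},G_{i+1})$ for the ambient affine space, there is a tautological class $\widetilde{e}\in\mbox{Hom}_{X\times V_{0}}(R_{i}\boxtimes\mathcal{O}_{V_{0}},G_{i+1}\boxtimes\mathcal{O}_{V_{0}}[1])$ restricting to $e$ over each point $e\in V_{0}$; its cone is a $V_{0}$-flat family $\mathcal{X}$ of objects of $\mathcal{A}$ with fibre the middle term $Y_{e}$, so that $\mathcal{E}=\{e\in V_{0}:Y_{e}\cong Q_{i+1}\}$. Now stratify $V_{0}$ into finitely many locally closed pieces on which the upper semicontinuous functions $e\mapsto\mbox{hom}(Y_{e},Q_{i+1})$ and $e\mapsto\mbox{hom}(Q_{i+1},Y_{e})$ are constant; on such a stratum $S$, the sheaves $\mathcal{H}om(\mathcal{X}|_{S},Q_{i+1}\boxtimes\mathcal{O}_{S})$ and $\mathcal{H}om(Q_{i+1}\boxtimes\mathcal{O}_{S},\mathcal{X}|_{S})$ are locally free and compatible with base change (here one uses that $\mbox{Ext}^{<0}$ vanishes between objects of a heart). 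Over the total space $\mathcal{V}_{S}\to S$ of the dual of their direct sum, which parametrizes pairs $(\phi,\psi)$ with $\phi\colon Q_{i+1}\to Y_{e}$ and $\psi\colon Y_{e}\to Q_{i+1}$, the locus where $\psi\circ\phi=\mbox{id}_{Q_{i+1}}$ is closed, being the zero locus of a section of the vector bundle $\mbox{Hom}(Q_{i+1},Q_{i+1})\otimes\mathcal{O}_{\mathcal{V}_{S}}$. If $\psi\circ\phi=\mbox{id}_{Q_{i+1}}$, then $Y_{e}\cong Q_{i+1}\oplus\ker\psi$ with $v(\ker\psi)=v(Y_{e})-v(Q_{i+1})=0$; since $\mathcal{A}$ is the heart of a Bridgeland stability condition whose central charge factors through the Mukai pairing, $v(M)=0$ forces $M=0$, so $\ker\psi=0$ and $Y_{e}\cong Q_{i+1}$. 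Hence $\mathcal{E}\cap S$ is the image in $S$ of a closed subvariety of $\mathcal{V}_{S}$, which is constructible; the union over the finitely many strata shows $\mathcal{E}$, and therefore $\mbox{Adm}_{i,i+1}$, is constructible in $B_{i}^{*}VA_{i+1}$.

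It remains to pass from a constructible set to the asserted pair. For any constructible subset $C$ of an affine space, the Zariski closure $\mbox{ADM}:=\overline{C}$ is a closed subvariety, and $\mbox{adm}:=\mbox{ADM}\setminus\overline{\mbox{ADM}\setminus C}$ is an open subvariety of $\mbox{ADM}$ with $\mbox{adm}\subseteq C\subseteq\mbox{ADM}$; it is in fact dense in $\mbox{ADM}$, because a constructible set contains a dense open subset of every irreducible component of its closure. Applying this to $C=\mbox{Adm}_{i,i+1}$ gives $\mbox{ADM}_{i,i+1}$ and $\mbox{adm}_{i,i+1}$. The case of $\mbox{Adm}_{i,i}$ is identical after replacing $(R_{i},G_{i+1},H_{i},Q_{i+1})$ by $(Q_{i},H_{i},G_{i},R_{i})$, the map $\alpha$ by the restriction $\mbox{Ext}^{1}(Q_{i},H_{i})\to\mbox{Ext}^{1}(G_{i},H_{i})=A_{i}^{*}V^{*}B_{i}$ along $G_{i}\hookrightarrow Q_{i}$, and $\mathcal{E}$ by the locus of classes whose middle term is isomorphic to $R_{i}$.

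The step I expect to be the main obstacle is making the universal family $\mathcal{X}$ over $V_{0}$, and the base-change statements for the $\mathcal{H}om$-sheaves, fully rigorous in the tilted heart $\mathcal{A}$ rather than in $\mbox{Coh}(X)$: this rests on the algebraicity of the moduli stack of objects of $\mathcal{A}$ and on cohomology-and-base-change for perfect complexes on $X\times V_{0}$, which are standard in this context (cf.\ \cite{BM14a, BM14b}) but must be invoked with some care. Everything else is formal.
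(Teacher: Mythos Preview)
Your proof is correct, but it takes a different route than the paper and misses a simplification that is available in this specific setting.

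The key difference is in how you show that $\mathcal{E}\subset\mbox{Ext}^{1}(R_{i},G_{i+1})$ is well-behaved. You argue that $\mathcal{E}$ is constructible by stratifying $V_{0}$ according to the values of $\mbox{hom}(Y_{e},Q_{i+1})$ and $\mbox{hom}(Q_{i+1},Y_{e})$, and then exhibiting $\mathcal{E}\cap S$ on each stratum as the image of a closed locus in a $\mbox{Hom}$-bundle. This is valid (and your use of $v(\ker\psi)=0\Rightarrow\ker\psi=0$ in the heart is correct), but it is more work than necessary. The paper instead observes that $Q_{i+1}$ is \emph{rigid} (Lemma~\ref{rigid}), and rigidity immediately implies that the locus $U_{i,i+1}$ of extensions whose middle term is isomorphic to $Q_{i+1}$ is \emph{open} in $\mbox{Ext}^{1}(R_{i},G_{i+1})$, not merely constructible. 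The openness comes for free from deformation theory: $\mbox{Ext}^{1}(Q_{i+1},Q_{i+1})=0$ forces the universal family to be locally trivial near any point of $\mathcal{E}$. This avoids your stratification and $\mbox{Hom}$-scheme argument entirely.

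The second difference is structural. The paper proves the lemma by induction on $i$: it takes the already-constructed $\mbox{adm}_{i,i}$ as base, builds the relative $\mbox{Ext}^{1}$-bundle over $\mbox{adm}_{i,i}\times X$, and defines $\mbox{ADM}_{i,i+1}$ as the closure of the projection of $f(U_{i,i+1}\times\mbox{adm}_{i,i})$, which it identifies with the closure of $\bigcup_{\delta_{i,i}\in\mbox{adm}_{i,i}}\ker(\delta_{i,i}\otimes\mbox{id})$. Your non-inductive approach, working with a fixed $R_{i}$ and a fixed restriction map $\alpha$, proves the bare statement of the lemma more directly. However, the paper's inductive framing is not incidental: it is exactly what is needed for the subsequent Proposition~\ref{admknown}, where the ideal of $\mbox{ADM}_{i,i+1}$ is computed from that of $\mbox{ADM}_{i,i}$ via the incidence correspondence $\{(\delta_{i,i},\delta_{i,i+1}):m(\delta_{i,i},\delta_{i,i+1})=0\}$. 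Your $\mbox{ADM}_{i,i+1}:=\overline{\mbox{Adm}_{i,i+1}}$ satisfies the lemma but does not obviously admit this recursive description, so you would have to redo part of the argument when you reach Proposition~\ref{admknown}.
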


\begin{proof}
  We do induction, clearly this is true for $\mbox{Adm}_{0,1}$. Consider $\mbox{adm}_{i,i}\times X$ with projections $p, q$, respectively. Then we have the relative version of the long exact sequence
  $$R^{1}p_{*}\underline{Hom}(q^{*}R_{i}, q^{*}G_{i+1})\overset{f}{\longrightarrow} R^{1}p_{*} \underline{Hom}(q^{*}H_{i}, q^{*}G_{i+1}) \overset{\delta_{i,i}\otimes \mbox{id}}{\longrightarrow} R^{2}\underline{Hom}(q^{*}G_{i}, q^{*}G_{i+1}) $$
  of sheaves on $\mbox{adm}_{i,i}$. Now $R^{1}p_{*}\underline{Hom}(q^{*}R_{i}, q^{*}G_{i+1})$ is a trivial vector bundle whose fibers are $\mbox{Ext}^{1}(R_{i},G_{i+1})$. By the rigidity of $Q_{i+1}$, there is an open subvariety $U_{i,i+1}\subset \mbox{Ext}^{1}(R_{i},G_{i+1})$, whose elements as extensions have middle term isomorphic to $Q_{i+1}$. Then $f(U_{i,i+1}\times \mbox{adm}_{i,i})\subset R^{1}p_{*} \underline{Hom}(q^{*}H_{i}, q^{*}G_{i+1})$ contains an open subvariety $U'_{i,i+1}$ of the image of $f$, denote by $\mathcal{F}$. Here, $f$ is viewed as a morphism on the total spaces of vector bundles. Since $R^{1}p_{*} \underline{Hom}(q^{*}H_{i}, q^{*}G_{i+1})\cong \mbox{adm}_{i,i}\times \mbox{Ext}^{1}(H_{i},G_{i+1})$ is a trivial vector bundle, denote the projection to $\mbox{Ext}^{1}(H_{i},G_{i+1})$ by $\pi$. Then we may take
  $$\mbox{ADM}_{i,i+1}=\overline{\pi(\mathcal{F})}\subset \mbox{Ext}^{1}(H_{i},G_{i+1}).$$
  This subvariety is equal to the closure of
  $$\bigcup_{\delta_{i,i}\in \mbox{Adm}_{i,i}}\mbox{ker}(\delta_{i,i}\otimes \mbox{id}),$$ which contains $\mbox{Adm}_{i,i+1}$. Now by construction, $\mbox{Adm}_{i,i+1}$ contains $\pi(U'_{i,i+1})$, which contains an open subvariety of $\mbox{ADM}_{i,i+1}$. 
  
\end{proof}

We will show how to compute the ideals of $ADM_{j,j}$ and $\mbox{ADM}_{i,i+1}$ inductively. For this, we need the following lemma.

\begin{lemma}\label{component}
  Let $f: X \longrightarrow Y$ be a surjective morphism between varieties, and $Y_{1}\subset Y$ an irreducible component. Suppose a general fiber over $Y_{1}$ is a linear space of fixed dimension. Then there is a unique irreducible component $X_{1}\subset X$ that dominates $Y_{1}$.
\end{lemma}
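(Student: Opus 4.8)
The plan is to split the statement into \emph{existence} and \emph{uniqueness} of a component of $X$ dominating $Y_1$, and to observe that only irreducibility of a general fibre of $f$ over $Y_1$ is actually used (being a linear space of fixed dimension is more than enough). Existence should be purely formal: since $f$ is surjective, $f^{-1}(Y_1)\neq\emptyset$, so writing $X=\bigcup_i X_i$ for the irreducible components of $X$ gives $Y_1=\overline{f(f^{-1}(Y_1))}=\bigcup_i\bigl(\overline{f(X_i)}\cap Y_1\bigr)$; irreducibility of $Y_1$ forces $\overline{f(X_i)}\supseteq Y_1$ for some $i$, and as $\overline{f(X_i)}$ is an irreducible closed subset of $Y$ containing the component $Y_1$, maximality of components gives $\overline{f(X_i)}=Y_1$, i.e. $X_i$ dominates $Y_1$.

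For uniqueness, let $X_1,\dots,X_k$ be the finitely many components of $X$ dominating $Y_1$, and suppose $k\geq 2$. The first step is to pick a dense open $U\subseteq Y_1$ small enough that: (i) $U$ is disjoint from $Y_1\cap Y'$ for every other component $Y'$ of $Y$; (ii) $U$ avoids $\overline{f(X_j)}\cap Y_1$ for every component $X_j$ of $X$ not dominating $Y_1$ (each such set is proper closed in $Y_1$, since $\overline{f(X_j)}$ is irreducible and does not contain $Y_1$); (iii) $U$ lies inside the dense open of $Y_1$ over which $f^{-1}(y)$ is irreducible, which is where the hypothesis enters; (iv) $U\subseteq f(X_i)$ for every $i=1,\dots,k$, using that each $f(X_i)$ is constructible and dense in $Y_1$ (Chevalley). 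Over such a $U$ one has, for every $y\in U$,
\[
f^{-1}(y)=\bigcup_{i=1}^{k}(f|_{X_i})^{-1}(y),
\]
a finite union of nonempty closed subsets of the irreducible set $f^{-1}(y)$; hence for each $y$ some $(f|_{X_i})^{-1}(y)$ is all of $f^{-1}(y)$. The locus $U_i=\{\,y\in U:(f|_{X_i})^{-1}(y)=f^{-1}(y)\,\}=U\setminus f\bigl(f^{-1}(U)\setminus X_i\bigr)$ is constructible, and $U=\bigcup_{i=1}^{k}U_i$; since $U$ is irreducible, some $U_i$, say $U_1$, is dense in $U$.

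To finish I would run a density argument. For $y\in U_1$ we have $(f|_{X_2})^{-1}(y)\subseteq(f|_{X_1})^{-1}(y)=f^{-1}(y)$, hence $(f|_{X_2})^{-1}(y)\subseteq X_1$. Letting $y$ range over $U_1\cap f(X_2)$, a dense constructible subset of $Y_1$, the set $(f|_{X_2})^{-1}\bigl(U_1\cap f(X_2)\bigr)$ is dense in $X_2$ (the preimage of a dense set under the dominant morphism $f|_{X_2}\colon X_2\to Y_1$) and is contained in the closed set $X_1$; therefore $X_2\subseteq X_1$, contradicting that they are distinct components. Hence $k=1$.

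The only genuinely delicate part is the bookkeeping in steps (i)--(iv): one must be certain that over a general point of $Y_1$ the fibre of $f$ receives no stray contribution from components of $X$ lying over other components of $Y$ or over proper subvarieties of $Y_1$, so that the irreducibility hypothesis applies to exactly the union $\bigcup_{i\le k}(f|_{X_i})^{-1}(y)$; everything else is standard (Chevalley, maximality of components, density of preimages under dominant morphisms). As a cross-check I would also record the more compact route via the generic point $\eta$ of $Y_1$: spreading out shows the generic fibre $X_\eta$ is geometrically irreducible, and the components of $X$ dominating $Y_1$ are in bijection with the irreducible components of $X_\eta$, so there is precisely one.
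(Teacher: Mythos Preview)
Your proof is correct, and in fact uses only the weaker hypothesis that a general fibre over $Y_1$ is irreducible. The paper takes a more direct, constructive route: it invokes the full hypothesis (linear spaces of fixed dimension) to ensure that over a dense open $U\subseteq Y_1$ the map $f$ is a topological fibration with irreducible fibres, then \emph{defines} $X_1:=\overline{f^{-1}(U)}$, observes this is irreducible (irreducible base, irreducible fibres), and checks it is a component of $X$ by showing a general point of $X_1$ has an open neighbourhood in $X$ already contained in $f^{-1}(U)\subseteq X_1$. Uniqueness is then immediate, since any component dominating $Y_1$ must meet $f^{-1}(U)$ in a dense subset and hence lie in $X_1$. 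Your approach replaces this quick construction with a pigeonhole existence argument and a constructible-set uniqueness argument, which is longer but buys genuine extra generality (no fibration structure or fixed fibre dimension needed); the generic-point remark you add at the end is essentially the cleanest formulation of the uniqueness half and matches the spirit of the paper's argument most closely.
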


\begin{proof}
By assumption, over an open subset $U\subset Y_{1}$, $f$ is a topological fibration with isomorphic irreducible fibers. Let $X_{1}$ be the closure of $f^{-1}(U)$, clearly it is irreducible. Let $x\in X_{1}$ be a general point, $y=f(x)$ be its image. Since $x$ is general, $y\in Y_{1}$ is also general. Take an open neighborhood $V$ of $x$, by shrinking if necessary we have $f(V)\subset U$. Then $V\subset f^{-1}(U)\subset X_{1}$, we see that any specialization to $x$ must come from $X_{1}$. Hence $X_{1}$ is an irreducible component.
\end{proof}

We describe how to compute $\mbox{ADM}_{i,i+1}$ assuming that the ideal of $\mbox{ADM}_{i,i}$ is known. The method for $\mbox{ADM}_{j,j}$ is similar. Consider the incidence correspondence
$$\Gamma=\{(\delta_{i,i}, \delta_{i,i+1}): \delta_{i,i}\in \mbox{ADM}_{i,i}, m(\delta_{i,i}, \delta_{i,i+1})=0\}\subset A_{i}^{*}V^{*}B_{i}\times B_{i}^{*}VA_{i+1},$$
where $m: A_{i}^{*}V^{*}B_{i}\otimes B_{i}^{*}VA_{i+1} \longrightarrow A_{i}^{*}\otimes A_{i+1}$ is the natural pairing. Suppose that we know the ideal of $ADM_{i,i}$. The ideal of $\Gamma$ can be computed, since it is the intersection of the pre-image of $ADM_{i,i}$ with the vanishing locus of the paring $m$, which can be write down explicitly in coordinates. By Lemma \ref{component}, $\Gamma$ has a distinguished component $\Gamma_{1}$ that dominates $ADM_{i,i}$. By doing primary decomposition, we may find the ideal of this component $\Gamma_{1}$. By the proof of Lemma \ref{variety}, the image of $\Gamma_{1}$ under the second projection is $\mbox{ADM}_{i,i+1}$ set-theoretically. Since we only care about the admissible locus set-theoretically, we may take the scheme-theoretic image of $\Gamma_{1}$ under the second projection. If the ideal of the image is not prime, then we take its radical. The resulting ideal is the ideal of $ADM_{i,i+1}$. Hence in principle, the ideal of $\mbox{ADM}_{i,i+1}$ can be computed, given the ideal of $\mbox{ADM}_{i,i}$. We summarize this procedure in the following figure:
$$\mbox{ADM}_{i,i} \xrightarrow{\begin{subarray} ~ \mbox{incidence}\\
    \mbox{correspondence}\end{subarray}} \Gamma \xrightarrow{ \begin{subarray}~\mbox{primary}\\
    \mbox{decomposition}
  \end{subarray}}\Gamma_{1} \xrightarrow{\begin{subarray}~\mbox{projection}\\
  + \mbox{radical} \end{subarray}} \mbox{ADM}_{i,i+1}.$$
Now $\mbox{ADM}_{0,1}=B_{0}^{*}VA_{1}$ is known. Hence to compute $\mbox{ADM}_{i,i+1}$, the only needed input is the dimensions of $A_{k}, B_{l}$ for $k\leq i+1, l\leq i$. We summarize this in the following porposition.

\begin{proposition}\label{admknown}
Using Notation \ref{notationexhaustive}, the ideal of $\mbox{ADM}_{i,i+1}\subset B_{i}VA_{i+1}$ can be computed in terms of the dimensions of $A_{k}, B_{l}$ for $k\leq i+1, l\leq i$. A similar conclusion holds for $\mbox{ADM}_{j,j}$.
\end{proposition}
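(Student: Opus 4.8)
The plan is to argue by induction on $i$, turning each stage of the recipe described just before the proposition into an effective ideal computation, and checking that the only numerical input is the list of dimensions $\dim A_k$ and $\dim B_l$ (together with $\dim V=g(\mathcal{H})$, which is fixed once the wall is fixed). For the base case, $\mbox{ADM}_{0,1}=B_0^{*}VA_1$ is the full ambient representation space, with zero defining ideal, and this depends only on $\dim A_1,\dim B_0,\dim V$. I would also record, by the proof of Lemma \ref{variety} and an easy induction, that each $\mbox{ADM}_{i,i}$ and $\mbox{ADM}_{i,i+1}$ is irreducible, being the closure of the image of an irreducible incidence variety; this lets us apply Lemma \ref{component} to it directly.

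For the inductive step, assume the ideal of $\mbox{ADM}_{i,i}$ has already been produced from the dimensions $\dim A_k,\dim B_l$ with $k,l\le i$. Form the incidence correspondence
$$\Gamma=\{(\delta_{i,i},\delta_{i,i+1}) : \delta_{i,i}\in\mbox{ADM}_{i,i},\ m(\delta_{i,i},\delta_{i,i+1})=0\}\subset A_i^{*}V^{*}B_i\times B_i^{*}VA_{i+1},$$
whose ideal is obtained by adjoining to the extension of the ideal of $\mbox{ADM}_{i,i}$ the bilinear equations cutting out the vanishing of the pairing $m\colon A_i^{*}V^{*}B_i\otimes B_i^{*}VA_{i+1}\to A_i^{*}\otimes A_{i+1}$, all written in coordinates using only $\dim A_i,\dim A_{i+1},\dim B_i,\dim V$. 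The first projection $\Gamma\to\mbox{ADM}_{i,i}$ is surjective, and its fibre over $\delta_{i,i}$ is $\mbox{ker}(\delta_{i,i}\otimes\mbox{id})\subset B_i^{*}VA_{i+1}$, a linear space whose dimension is constant on a dense open subset of $\mbox{ADM}_{i,i}$ by upper semicontinuity of the rank of $\delta_{i,i}\otimes\mbox{id}$; hence Lemma \ref{component} singles out a unique component $\Gamma_1\subset\Gamma$ dominating $\mbox{ADM}_{i,i}$, whose ideal is extracted from that of $\Gamma$ by primary decomposition. By the proof of Lemma \ref{variety}, $\mbox{ADM}_{i,i+1}$ is the reduced image of $\Gamma_1$ under the second projection, so its ideal is the radical of the ideal of the scheme-theoretic image of $\Gamma_1$. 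Every operation invoked — extension of ideals, intersection with an explicit bilinear locus, primary decomposition, scheme-theoretic image, radical — is effective and takes as input only ideals and coordinate presentations governed by the dimensions $\dim A_k,\dim B_l$ for $k\le i+1$ and $l\le i$. Running the same procedure with the roles of $S$ and $T$ (equivalently $V$ and $V^{*}$) interchanged computes the ideal of $\mbox{ADM}_{i+1,i+1}$ from that of $\mbox{ADM}_{i,i+1}$ and also yields the parallel statement for the $\mbox{ADM}_{j,j}$; this closes the induction.

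The step I expect to be the real obstacle is not the computability of the individual algebraic operations but the verification that the component $\Gamma_1$ produced by Lemma \ref{component} is the one that set-theoretically computes $\mbox{ADM}_{i,i+1}$, that is, matching this recipe with the intrinsic description of $\mbox{ADM}_{i,i+1}$ in Lemma \ref{variety} as the closure of $\bigcup_{\delta_{i,i}\in\mbox{Adm}_{i,i}}\mbox{ker}(\delta_{i,i}\otimes\mbox{id})$. This uses two inputs from Lemma \ref{variety}: that a general point of $\mbox{ADM}_{i,i}$ actually lies in $\mbox{Adm}_{i,i}$ (the chain $\mbox{adm}_{i,i}\subset\mbox{Adm}_{i,i}\subset\mbox{ADM}_{i,i}$ with $\mbox{adm}_{i,i}$ dense open), and that the rigidity locus $U_{i,i+1}\subset\mbox{Ext}^{1}(R_i,G_{i+1})$ of extension classes with middle term isomorphic to $Q_{i+1}$ is nonempty, which follows from the rigidity of $Q_{i+1}$ (Lemmas \ref{rigid} and \ref{shape0}). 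Granting these, the generic fibre of $\Gamma_1\to\mbox{ADM}_{i,i}$ is genuinely governed by admissible extensions, the second-projection image of $\Gamma_1$ is forced to agree with $\mbox{ADM}_{i,i+1}$, and the rest of the induction is bookkeeping.
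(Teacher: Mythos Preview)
Your proposal is correct and follows essentially the same approach as the paper: the paper's argument is precisely the inductive recipe you describe (incidence correspondence $\Gamma$, distinguished component $\Gamma_1$ via Lemma \ref{component} and primary decomposition, scheme-theoretic image under the second projection, radical), with base case $\mbox{ADM}_{0,1}=B_0^{*}VA_1$. You are in fact slightly more explicit than the paper about the irreducibility needed to invoke Lemma \ref{component} and about why the image of $\Gamma_1$ matches the description of $\mbox{ADM}_{i,i+1}$ from Lemma \ref{variety}.
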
 

By Lemma \ref{shape0}, an exhaustive filtration is determined by the dimensions of all $A_{k}, B_{l}$ under Notation \ref{notationexhaustive}. Hence we introduce the following definition.

\begin{definition}\label{shape}
  Using Notation \ref{notationexhaustive}, the \emph{shape} of an exhaustive filtration is the sequence of numbers
  $$(\cdots, \mbox{dim}A_{i+1}, \mbox{dim} B_{i}, \mbox{dim} A_{i}, \cdots, \mbox{dim} A_{1}, \mbox{dim}B_{0}).$$
\end{definition}

Hence from now on we assume all admissible loci are computable in terms of the shape of the exhaustive filtration.

The next observation describes the symmetry of $\mbox{ADM}$.

\begin{lemma}
The variety $\mbox{ADM}_{i,i+1}$ is stable under $\mbox{GL}_{B_{i}}\times \mbox{GL}_{A_{i+1}}$. Similarly, $\mbox{ADM}_{i,i}$ is stable under $\mbox{GL}_{A_{i}} \times \mbox{GL}_{B_{i}}$. 
\end{lemma}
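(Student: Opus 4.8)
The plan is to reduce the assertion to a $\mbox{GL}$-equivariance of the admissible loci themselves, invoking the set-theoretic description of $\mbox{ADM}$ obtained inside the proof of Lemma \ref{variety}. Recall from there that
$$\mbox{ADM}_{i,i+1}=\overline{\bigcup_{\delta_{i,i}\in \mbox{Adm}_{i,i}}\mbox{ker}\bigl(\delta_{i,i}\otimes \mbox{id}\colon B_{i}^{*}VA_{i+1}\to A_{i}^{*}A_{i+1}\bigr)},$$
and, by the symmetric procedure, that $\mbox{ADM}_{i,i}$ is the closure of $\bigcup_{\delta_{i-1,i}\in \mbox{Adm}_{i-1,i}}\mbox{ker}(\delta_{i-1,i}\otimes \mbox{id}\colon A_{i}^{*}V^{*}B_{i}\to B_{i-1}^{*}B_{i})$, with base case $\mbox{ADM}_{0,1}=B_{0}^{*}VA_{1}$. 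Since an algebraic group action commutes with taking unions and closures, it suffices to show the indicated unions of kernels are stable under the relevant product of general linear groups. I will use repeatedly that $S_{0}$ and $T_{1}$ are stable, hence simple, so Schur's lemma gives $\mbox{Aut}(S_{0}\otimes A)=\mbox{GL}(A)$ and $\mbox{Aut}(T_{1}\otimes B)=\mbox{GL}(B)$; via $G_{i}=S_{0}\otimes A_{i}$ and $H_{i}=T_{1}\otimes B_{i}$ this makes $\mbox{GL}(A_{i})$ and $\mbox{GL}(B_{i})$ act by functoriality on all the $\mbox{Ext}$-groups in the construction.

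For $\mbox{ADM}_{i,i+1}$ I would treat the two factors separately. The group $\mbox{GL}(A_{i+1})$ acts on $B_{i}^{*}VA_{i+1}$ and on $A_{i}^{*}A_{i+1}$ only through the common tensor factor $A_{i+1}$, so each contraction $\delta_{i,i}\otimes \mbox{id}$ is $\mbox{GL}(A_{i+1})$-equivariant and every kernel, hence the union and its closure, is $\mbox{GL}(A_{i+1})$-stable. For $\mbox{GL}(B_{i})$, the compatibility of the contraction pairing with change of basis shows that $g\in \mbox{GL}(B_{i})$ acting on the factor $B_{i}^{*}$ of $B_{i}^{*}VA_{i+1}$ carries $\mbox{ker}(\delta_{i,i}\otimes \mbox{id})$ onto $\mbox{ker}((g'\cdot\delta_{i,i})\otimes \mbox{id})$ for a suitable $g'\in \mbox{GL}(B_{i})$ (the contragredient of $g$) acting on the factor $B_{i}$ of $\delta_{i,i}$; hence the union over $\mbox{Adm}_{i,i}$ is $g$-stable as soon as $\mbox{Adm}_{i,i}$ is stable under $\mbox{GL}(B_{i})$ acting on its $B_{i}$-slot. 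This last stability is exactly where functoriality enters: the $\mbox{GL}(B_{i})$-action on the $B_{i}$-slot of $\mbox{Ext}^{1}(G_{i},H_{i})=A_{i}^{*}V^{*}B_{i}$ is the pushforward action of $\mbox{Aut}(H_{i})$, it commutes with the pushforward action on the witness space $\mbox{Ext}^{1}(Q_{i},H_{i})$ and with the restriction map $\mbox{Ext}^{1}(Q_{i},H_{i})\to \mbox{Ext}^{1}(G_{i},H_{i})$, and pushing a witness forward by an automorphism of $H_{i}$ preserves its defining property ``middle term $\cong R_{i}$'' because a pushout along an isomorphism does not change the middle term up to isomorphism. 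So $\mbox{Adm}_{i,i}$ is $\mbox{Aut}(H_{i})=\mbox{GL}(B_{i})$-stable, and $\mbox{ADM}_{i,i+1}$ is $\mbox{GL}(B_{i})\times \mbox{GL}(A_{i+1})$-stable.

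The statement for $\mbox{ADM}_{i,i}$ follows by the identical argument with the roles of $(S_{0},T_{1})$, of $(A_{i},B_{i})$, and of the two index types interchanged: $\mbox{GL}(B_{i})$-stability is free since $B_{i}$ is the passenger factor in the contraction $A_{i}^{*}V^{*}B_{i}\to B_{i-1}^{*}B_{i}$, while $\mbox{GL}(A_{i})$-stability is routed through the contragredient identity and reduces to $\mbox{GL}(A_{i})=\mbox{Aut}(G_{i})$-stability of $\mbox{Adm}_{i-1,i}$, which again follows from functoriality on $\mbox{Ext}^{1}(H_{i-1},G_{i})$ and the witness space $\mbox{Ext}^{1}(R_{i-1},G_{i})$. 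The point that needs genuine care — and the only real obstacle — is the asymmetry of this bookkeeping: at every stage only the covariant (pushforward) factor of the two general linear groups acts manifestly on the admissible locus, and the other factor must be converted into it through the contraction pairing; in particular one must not try to prove $\mbox{GL}(A_{i})$-stability of $\mbox{Adm}_{i,i}$ directly, as that would require extending an automorphism of $G_{i}$ to one of $Q_{i}$, which is generally impossible. Everything else is routine tracking of tensor factors.
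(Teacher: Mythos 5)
Your proof is correct and arrives at the same conclusion, but it reorganizes the paper's argument in a way worth noting. The paper sets up a formal induction: it shows $\mbox{Adm}_{i,i+1}$ is $\mbox{GL}_{A_{i+1}}$-stable directly (a change of basis on $A_{i+1}=\mbox{Aut}(G_{i+1})$ acts by pushforward and does not change the isomorphism class of the middle term), passes to the closure $\mbox{ADM}_{i,i+1}$ using irreducibility, and for $\mbox{GL}_{B_{i}}$ it shrinks $\mbox{adm}_{i,i}$ to a $\mbox{GL}_{B_{i}}$-stable open set using the induction hypothesis on $\mbox{ADM}_{i,i}$ and then moves the action across the contraction. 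Your version unrolls this induction into a self-contained computation: for the "passenger'' slot ($A_{i+1}$, resp.\ $B_{i}$) you observe that each kernel $\mbox{ker}(\delta\otimes\mbox{id})$ is itself stable because the contraction is equivariant, which is a slightly sharper (and purely linear-algebraic) statement than the paper's $\mbox{Adm}$-level claim; and for the slot that passes through the pairing you identify exactly what is needed — $\mbox{GL}(B_{i})=\mbox{Aut}(H_{i})$-stability of $\mbox{Adm}_{i,i}$ — and prove it directly by the middle-term-preservation argument (which is precisely the argument the paper applies one index earlier in its inductive step). Your remark that one cannot attack the contravariant slot directly, because an automorphism of $G_{i}$ need not lift to $Q_{i}$, is a real point that explains why both proofs must take the detour through the contraction; the paper leaves this implicit. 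One small thing to be aware of: when you say "$\mbox{ADM}_{i,i}$ is the closure of $\bigcup_{\delta\in\mbox{Adm}_{i-1,i}}\mbox{ker}(\cdot)$ by the symmetric procedure,'' that is indeed what the paper intends, but the paper never writes it out — it only treats the $(i,i{+}1)$ case and appeals to symmetry — so in a fully careful write-up you'd want to note which long exact sequence produces that contraction, namely the one coming from $0\to H_{i-1}\to R_{i-1}\to Q_{i-1}\to 0$ applied to $\mbox{Ext}^{\bullet}(-,G_{i})$. Otherwise the proposal is sound.
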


\begin{proof}
  We only prove the lemma for $\mbox{ADM}_{i,i+1}$, the other case is similar.

  First note that $\mbox{Adm}_{i,i+1}$ is clearly stable under $\mbox{GL}_{A_{i+1}}$, because a change of basis on $A_{i+1}$ does not change the middle term of the extension. Since $\mbox{ADM}_{i,i+1}$ is irreducible and $\mbox{Adm}_{i,i+1}$ contains an open subset of $\mbox{ADM}_{i,i+1}$, $\mbox{ADM}_{i,i+1}$ is stable under $\mbox{GL}_{A_{i+1}}$.
  \\
  For $\mbox{GL}_{B_{i}}$ part, take any element $\beta\in \mbox{GL}_{B_{i}}$. Under the notations above, consider
  $$\pi(\mathcal{F})=\bigcup_{\delta_{i,i}\in \mbox{adm}_{i,i}}\mbox{ker}(\delta_{i,i}\otimes \mbox{id}).$$
  By the induction hypothesis on $\mbox{ADM}_{i,i}$, we may shrink $\mbox{adm}_{i,i}$ if necessary so that it is preserved under $\mbox{GL}_{B_{i}}$. Now then clearly
  $$\beta(\pi(\mathcal{F}))= \pi(\beta(\mathcal{F}))=\pi(\mathcal{F}),$$
  since $\beta(\delta_{i,i}\otimes \mbox{id})=\beta(\delta_{i,i})\otimes \mbox{id}$ and $\beta(\delta_{i,i})\in \mbox{adm}_{i,i}$ by construction.
\end{proof}

\begin{remark}
One may be tempted to expect the varieties $\mbox{ADM}_{i,i}$ or $\mbox{ADM}_{i,i+1}$ are determinantal varieties, but in fact they can be more complicated. 
\end{remark}

\subsection{Relation to the Harder-Narasimhan Filtration}

In this subsection we relate exhaustive filtration to the $\sigma_{-}$-Harder-Narasimhan filtration. Suppose 
$$0=E_{n}\subset F_{n} \subset E_{n-1} \subset F_{n-1} \subset \cdots \subset E_{1} \subset F_{1} \subset E_{0}=E$$
is an $S$-exhaustive filtration of a rigid object $E\in \mathcal{A}_{W}$. We want to compute the $\sigma_{-}$-Harder-Narasimhan filtration of $E$. The first thing to note is that

\begin{lemma}
  The first $\sigma_{-}$-Harder-Narasimhan factor is
  $$\mbox{gr}_{1}^{-}(E)=F_{n}/E_{n}=G_{n}=S\otimes A_{n}.$$
\end{lemma}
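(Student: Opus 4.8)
The plan rests on three ingredients: the flip of phases across the wall $W$, the fact that $E$ (and every subobject of it) is an iterated extension of copies of the two $\sigma_{-}$-stable rigid objects $S_{0}$ and $T_{1}$, and the vanishing built into an $S$-exhaustive filtration. Throughout write $S=S_{0}$, $T=T_{1}$; recall $S$ is spherical and $\sigma_{0}$-stable, hence rigid, so $\mathrm{Ext}^{1}(S,S)=0$. Since $\sigma_{+}$ and $\sigma_{-}$ lie on opposite sides of $W$, on which $\phi_{0}(S)=\phi_{0}(T)$, the hypothesis $\phi_{+}(T)>\phi_{+}(S)$ forces $\phi_{-}(S)>\phi_{-}(T)$. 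By Notation \ref{notationexhaustive} every graded piece of the given filtration is a direct sum of copies of $S$ or of $T$, so $E$ is an iterated extension of copies of $S$ and $T$; intersecting an arbitrary subobject $E'\subseteq E$ with the filtration, the same holds for $E'$, once one knows that a subobject of $S^{\oplus m}$ (resp.\ $T^{\oplus m}$) is again a direct sum of copies of $S$ (resp.\ $T$). This last point follows because $S^{\oplus m}$ is $\sigma_{-}$-polystable of phase $\phi_{-}(S)$, so any subobject $U$ satisfies $\phi_{-}(S)\le\phi_{\min}(U)\le\phi_{\max}(U)\le\phi_{-}(S)$, is therefore $\sigma_{-}$-semistable with all Jordan--H\"{o}lder factors isomorphic to $S$, and then splits by $\mathrm{Ext}^{1}(S,S)=0$.

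From this, $\phi_{\max}(E')\le\phi_{-}(S)$ for every subobject $E'\subseteq E$. On the other hand $F_{n}=G_{n}=S\otimes A_{n}\cong S^{\oplus a_{n}}$ is a $\sigma_{-}$-semistable subobject of $E$ of phase exactly $\phi_{-}(S)$; hence the maximal $\sigma_{-}$-phase occurring in $E$ is $\phi_{-}(S)$, and therefore $F_{n}\subseteq\mathrm{gr}_{1}^{-}(E)$ (any $\sigma_{-}$-semistable subobject whose phase equals the maximal slope lies in the maximal destabilizing subobject). Conversely, $\mathrm{gr}_{1}^{-}(E)$ is $\sigma_{-}$-semistable of phase $\phi_{-}(S)$, and as a subobject of $E$ it is an iterated extension of $S$ and $T$ by the previous paragraph; since $\phi_{-}(T)<\phi_{-}(S)$, no copy of $T$ can appear among its Jordan--H\"{o}lder factors, so they are all isomorphic to $S$ and thus $\mathrm{gr}_{1}^{-}(E)\cong S^{\oplus k}$ for some $k$ (again by $\mathrm{Ext}^{1}(S,S)=0$).

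It remains to show $\mathrm{gr}_{1}^{-}(E)\subseteq F_{n}$. The $S$-exhaustive condition (Definition \ref{exhaustive}) gives $\mathrm{Hom}(S,E/F_{n})=0$, the quotient $E/F_{n}$ being one of the quotients $Q_{i}$ appearing in that definition. Applying $\mathrm{Hom}(S,-)$ to $0\to F_{n}\to E\to E/F_{n}\to 0$ then shows that every morphism $S\to E$ factors through $F_{n}$. Since $\mathrm{gr}_{1}^{-}(E)\cong S^{\oplus k}$ is the sum of the images of its $k$ coordinate inclusions $S\hookrightarrow E$, each of which now lands in $F_{n}$, we get $\mathrm{gr}_{1}^{-}(E)\subseteq F_{n}$; combined with the previous paragraph this yields $\mathrm{gr}_{1}^{-}(E)=F_{n}=G_{n}=S\otimes A_{n}$.

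The main technical point is the finite-length bookkeeping used twice above: one must be sure that every subobject of $E$, and in particular $\mathrm{gr}_{1}^{-}(E)$, stays within the category generated by $S$ and $T$ and has its Jordan--H\"{o}lder factors among $\{S,T\}$. I would handle this by working inside $\mathcal{A}_{W}$, where each object has finite length with respect to $\sigma_{-}$-stable subquotients whose Mukai vectors lie in $\mathcal{H}$ (Proposition \ref{lattice} together with the discussion in Section \ref{section4}), and by observing that an iterated extension of the $\sigma_{-}$-stable objects $S$ and $T$ has $S$ and $T$ as its only composition factors; the degenerate case $A_{n}=0$ does not occur for an $S$-exhaustive filtration and, in any event, is handled symmetrically by the $T$-exhaustive case.
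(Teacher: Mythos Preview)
Your argument is correct and uses the same two inputs as the paper's proof: the phase flip $\phi_{-}(S)>\phi_{-}(T)$ and the vanishing $\mathrm{Hom}(S,E/F_{n})=0$ coming from the $S$-exhaustive condition. The paper's route is simply more compressed: from $\mathrm{Hom}(S,R_{n-1})=0$ it deduces $\mathrm{Hom}(S,E)\cong\mathrm{Hom}(S,G_{n})=A_{n}$, so the inclusion $G_{n}\hookrightarrow E$ \emph{is} the evaluation map $S\otimes\mathrm{Hom}(S,E)\to E$, and then invokes the standard fact that for the $\sigma_{-}$-stable object $S$ of maximal phase this evaluation map realizes the first Harder--Narasimhan factor. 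Your proof unpacks exactly this last step into the two inclusions $F_{n}\subseteq\mathrm{gr}_{1}^{-}(E)$ and $\mathrm{gr}_{1}^{-}(E)\subseteq F_{n}$, which is more self-contained but longer. One small remark: your phase-sandwich argument for why a subobject of $S^{\oplus m}$ is again $S^{\oplus k}$ is slightly roundabout; it is cleaner to say that any subobject of a finite-length object with all Jordan--H\"older factors $S$ again has all Jordan--H\"older factors $S$, and then split using $\mathrm{Ext}^{1}(S,S)=0$.
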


\begin{proof}
  Apply $\mbox{Hom}(S,-)$ to the short exact sequence
  $$0 \longrightarrow G_{n} \longrightarrow E \longrightarrow R_{n-1} \longrightarrow 0 ,$$
  we have
  $$0 \longrightarrow \mbox{Hom}(S,G_{n}) \longrightarrow \mbox{Hom}(S, E) \longrightarrow \mbox{Hom}(S, R_{n-1}).$$
  By the definition of $S$-exhaustive filtration, $\mbox{Hom}(S,R_{n-1})=0$. Hence $\mbox{Hom}(S,E)\cong \mbox{Hom}(S,G_{n})=A_{n}$. Then $G_{n}\subset E$ is the evaluation map
  $$S\otimes \mbox{Hom}(S,E) \longrightarrow E.$$
  Note that $S=S_{0}$ has maximal $\sigma_{-}$-phase among all $\sigma_{-}$-stable spherical objects in $\mathcal{H}$, hence $S\otimes \mbox{Hom}(S, E)$ is the first Harder-Narasimhan factor.
\end{proof}

Hence our task is to compute the first $\sigma_{-}$-Harder-Narasimhan factor of $R_{n-1}=E/\mbox{gr}_{1}^{-}(E)$. For simplicity we write $O_{i}=S_{i}^{-}$ for $i\leq 0$ and $O_{i}=T_{i}^{-}$ for $i\geq 1$. Our next observation makes crucial use of the notion of exhaustive filtration.

\begin{proposition}\label{Rn-1}
  Under Notation \ref{notationexhaustive}, let $R_{n-1}$ be an object equipped with a $T$-exhaustive filtration. Then for any $i\in \mathbb{Z}$,
  $$\mbox{Hom}(O_{i},R_{n-1})=\mbox{Hom}(O_{i},F_{n-1}/F_{n}).$$
\end{proposition}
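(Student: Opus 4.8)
The plan is to show that every morphism $O_i\to R_{n-1}$ has image contained in the subobject $F_{n-1}/F_n\subset R_{n-1}$; since $\mbox{Hom}(O_i,-)$ is left exact, the inclusion $F_{n-1}/F_n\hookrightarrow R_{n-1}$ then automatically induces the asserted isomorphism. Here I take the $T$-exhaustive filtration of $R_{n-1}$ to be the one obtained by quotienting the $S$-exhaustive filtration of $E$ by its bottom term $F_n=\mbox{gr}_1^{-}(E)=S\otimes\mbox{Hom}(S,E)$ (by Lemma \ref{shape0} the exhaustive filtration is essentially determined by the Jordan--H\"older factors, so this is harmless); thus $F_{n-1}/F_n$ is its second term, sitting in a short exact sequence $0\to H_{n-1}\to F_{n-1}/F_n\to G_{n-1}\to 0$ with $H_{n-1}=E_{n-1}/F_n$ the bottom graded piece, $H_{n-1}=T\otimes B_{n-1}$, $G_{n-1}=S\otimes A_{n-1}$, and $R_{n-1}/(F_{n-1}/F_n)\cong E/F_{n-1}$. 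I would then split the verification into three cases according to $O_i$.

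For $O_i=S_0$ the statement is trivial: $\mbox{Hom}(S_0,R_{n-1})=0$ by the lemma preceding this proposition (because $R_{n-1}=E/(S_0\otimes\mbox{Hom}(S_0,E))$), and $\mbox{Hom}(S_0,F_{n-1}/F_n)$ embeds into it. For $O_i=T_1$, the construction of the exhaustive filtration gives $H_{n-1}=T_1\otimes\mbox{Hom}(T_1,R_{n-1})$, and the defining vanishing of the $S$-exhaustive filtration of $E$ yields $\mbox{Hom}(T_1,R_{n-1}/H_{n-1})=\mbox{Hom}(T_1,E/E_{n-1})=0$; hence $\mbox{Hom}(T_1,H_{n-1})\to\mbox{Hom}(T_1,R_{n-1})$ is an isomorphism, i.e.\ every map $T_1\to R_{n-1}$ already factors through $H_{n-1}\subset F_{n-1}/F_n$. (This is compatible with the claim because $\mbox{Hom}(T_1,(F_{n-1}/F_n)/H_{n-1})=\mbox{Hom}(T_1,S_0)\otimes A_{n-1}=0$.)

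For $O_i\notin\{S_0,T_1\}$, I would use the $\sigma_-$-analogue of Corollary \ref{JH}: $O_i$ sits in a short exact sequence $0\to T_1\otimes Q_i'\to O_i\to S_0\otimes P_i'\to 0$ in which the $T_1$-factors form the subobject, since on the $\sigma_-$ side $T_1$ is the $\sigma_0$-stable spherical object of minimal phase and so plays the role that $S_0$ plays in Corollary \ref{JH}. Given $f\colon O_i\to R_{n-1}$, its restriction to the subobject $T_1\otimes Q_i'$ factors through $F_{n-1}/F_n$ by the $T_1$-case, so $f$ descends to $\bar f\colon S_0\otimes P_i'\to R_{n-1}/(F_{n-1}/F_n)\cong E/F_{n-1}$. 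But $\mbox{Hom}(S_0,E/F_{n-1})=0$: this is one of the vanishing conditions built into the $S$-exhaustive filtration of $E$ (it is the statement $\mbox{Hom}(S_0,R_{n-2})=0$ in the notation of \ref{notationexhaustive}, of the same type as the vanishing used to prove the preceding lemma). Hence $\bar f=0$, so $\mbox{im}(f)\subseteq F_{n-1}/F_n$, which finishes the argument.

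The step I expect to require the most care is the last one: one must identify the correct ``escape route'' $C=E/F_{n-1}$ and recognize that $\mbox{Hom}(S_0,C)=0$ comes from the $S$-exhaustive structure of $E$ (not from the $T$-exhaustive structure of $R_{n-1}$), and, relatedly, pin down which of $S_0,T_1$ is the subobject in the two-step Jordan--H\"older filtration of $O_i$ at $\sigma_0$ on the $\sigma_-$ side. One should also keep straight the ascending-versus-descending indexing conventions for exhaustive filtrations and the precise meaning of ``$F_{n-1}/F_n$'' (a subquotient of $E$, viewed as a subobject of $R_{n-1}$), but these are routine once the structure above is in place.
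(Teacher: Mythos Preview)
Your proof is correct and uses the same ingredients as the paper's: the Jordan--H\"older sequence $0\to T_1\otimes N_i\to O_i\to S_0\otimes M_i\to 0$ of the $\sigma_-$-stable object $O_i$, together with the two vanishings $\mbox{Hom}(T_1,Q_{n-1})=0$ and $\mbox{Hom}(S_0,R_{n-2})=0$ coming from the exhaustive structure. The only difference is packaging: you split into the cases $O_i=S_0$, $O_i=T_1$, and general $O_i$, whereas the paper handles all $O_i$ at once via the commutative square obtained by applying $\mbox{Hom}(-,R_{n-1})\to\mbox{Hom}(-,R_{n-2})$ to that three-term sequence and then reading off that the middle vertical map vanishes because $\mbox{Hom}(S_0\otimes M_i,R_{n-2})=0$ and the map $\mbox{Hom}(T_1\otimes N_i,R_{n-1})\to\mbox{Hom}(T_1\otimes N_i,R_{n-2})$ factors through $\mbox{Hom}(T_1\otimes N_i,Q_{n-1})=0$. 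Your case analysis is exactly this diagram chase unpacked, so the two arguments are equivalent.
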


\begin{proof}
  Apply $\mbox{Hom}(O_{i},-)$ to the short exact sequence
  $$0 \longrightarrow F_n/F_{n-1} \longrightarrow R_{n-1} \longrightarrow R_{n-2} \longrightarrow 0 ,$$
  we get
  $$0 \longrightarrow \mbox{Hom}(O_{i},F_{n}/F_{n-1}) \longrightarrow \mbox{Hom}(O_{i},R_{n-1}) \overset{f}{\longrightarrow} \mbox{Hom}(O_{i},R_{n-2}).$$
  It suffices to show the map $f$ is zero.
\\
  We know $O_{i}$ fits into
  $$0 \longrightarrow T\otimes \mbox{Hom}(T,O_{i}) \longrightarrow O_{i} \longrightarrow S\otimes \mbox{Hom}(O_{i},S)^{*} \longrightarrow 0 .$$
  Let
  \begin{equation}\label{MiNi} 
    N_{i}=\mbox{Hom}(T,O_{i}), ~M_{i}=\mbox{Hom}(O_{i},S)^{*}.
  \end{equation}
  We have the following commutative diagram
  \[\begin{tikzcd}
      \mbox{Hom}(S\otimes M_{i},R_{n-1}) \arrow[r]\arrow[d] & \mbox{Hom}(O_{i},R_{n-1}) \arrow[r]\arrow[d, "f"] & \mbox{Hom}(T\otimes N_{i},R_{n-1}) \arrow[d, "g"]\\
      \mbox{Hom}(S\otimes M_{i},R_{n-2}) \arrow[r] & \mbox{Hom}(O_{i},R_{n-2}) \arrow[r] & \mbox{Hom}(T\otimes N_{i},R_{n-2}).
    \end{tikzcd}\]
  Since $R_{n-2}$ is a $T$-exhaustive filtration, we have $\mbox{Hom}(S,R_{n-2})=0$. Also note that the factorization of the map $R_{n-1} \longrightarrow R_{n-2}$ into the composition of $R_{n-1} \longrightarrow Q_{n-1}$ and $Q_{n-1} \longrightarrow R_{n-2}$ induces the factorization of the map $g: \mbox{Hom}(T\otimes N_{i},R_{n-1}) \longrightarrow \mbox{Hom}(T\otimes N_{i},R_{n-2})$ into $\mbox{Hom}(T\otimes N_{i}, R_{n-1}) \longrightarrow \mbox{Hom}(T\otimes N_{i}, Q_{i-1})$ and $\mbox{Hom}(T\otimes N_{i},Q_{i-1}) \longrightarrow \mbox{Hom}(T\otimes N_{i}, R_{n-2})$. By the definition of a $T$-exhaustive filtration, $\mbox{Hom}(T,Q_{i-1})=0$. Hence $g=0$, and the commutative diagram now reads
  \[\begin{tikzcd}
       & \mbox{Hom}(O_{i},R_{n-1}) \arrow[r]\arrow[d, "f"] & \mbox{Hom}(T\otimes N_{i},R_{n-1})\arrow[d, "0"]\\
      0 \arrow[r] & \mbox{Hom}(O_{i},R_{n-2}) \arrow[r] & \mbox{Hom}(T\otimes N_{i},R_{n-2}).
    \end{tikzcd}\]
Hence the map $f$ must be zero.
  
\end{proof}

Hence to get the first Harder-Narasimhan factor for $R_{n-1}$, it remains to find the index $i$ for $O_{i}$ such that $\mbox{Hom}(O_{i},R_{n-1})\neq 0$ and $\phi_{-}(O_{i})$ is maximal, and to compute the dimension of $\mbox{Hom}(O_{i},R_{n-1})$. For simplicity, we will sometimes drop the indices of $O_{i}$ and $R_{n-1}$ when there is no ambiguity.

Consider the following commutative diagram
\\
\begin{adjustbox}{scale=0.8, center}
\begin{tikzcd}
    & & & \mbox{Hom}(O,T\otimes B)=0 \arrow[d]\\
    & & 0 \arrow[d] & \mbox{Hom}(T\otimes N,T\otimes B) \arrow[d, "\delta_{0}\otimes \mbox{id}_{B}"] \\
    & 0=\mbox{Hom}(S\otimes M,\Gamma)\arrow[r]\arrow[d] & \mbox{Hom}(S\otimes M, S\otimes A) \arrow[r, "\mbox{id}_{M^{*}}\otimes \delta"]\arrow[d,"="] & \mbox{Ext}^{1}(S\otimes M,T\otimes B)\arrow[d]\\
    0=\mbox{Hom}(O,T\otimes B)\arrow[r] & \mbox{Hom}(O,\Gamma) \arrow[r] & \mbox{Hom}(O,S\otimes A)\arrow[r]\arrow[d] & \mbox{Ext}^{1}(O,T\otimes B)\arrow[d]\\
    & & 0=\mbox{Hom}(T\otimes N,S\otimes A) & \mbox{Ext}^{1}(T\otimes N,T\otimes B)=0,
  \end{tikzcd}
\end{adjustbox}
\\
where $\delta\in \mbox{ADM}\subset A^{*}V^{*}B$ is a general admissible extension, and
$$[0 \longrightarrow T\otimes N \longrightarrow O \longrightarrow S\otimes M \longrightarrow 0 ]=\delta_{0}\in M^{*}V^{*}N$$
is general, hence in particular the image of $\mbox{Hom}(T\otimes N, T\otimes B)$ under $\delta_{0}\otimes \mbox{id}_{B}$ is of the form $W\otimes B$, where $W\subset M^{*}V^{*}$ is general.

$J=\mbox{Hom}(O,\Gamma)$ can be identified with
$$\mbox{Hom}(O,\Gamma)=\mbox{Hom}(S\otimes M,S\otimes A)\cap \mbox{Hom}(T\otimes N, T\otimes B)=M^{*}\delta(A)\cap \delta_{0}(N^{*})B\subset M^{*}V^{*}B.$$
The dimension of $\delta_{0}(N^{*})B$ is known in terms of the dimensions of $M, N$, since $\delta_{0}\in M^{*}V^{*}N$ is general. The dimension of $M^{*}\delta(A)$ is also known in principle, since we know the ideal of $\mbox{ADM}$ by Proposition \ref{admknown}. We would like to compute the dimension of their intersection. Unfortunately, in general they do not intersect transversely.

Hence we need to know the locus of $\delta(A)\subset V^{*}B$ for a general $\delta\in \mbox{ADM}$. Let $a=\mbox{dim}\delta(A)$ for a general $\delta\in \mbox{ADM}$ and $A'\subset A$ be any subspace of dimension $a$, and $i: A' \longrightarrow A$ be the inclusion. Let $\mbox{ADM}'\subset A'^{*}V^{*}B$ be the image of $\mbox{ADM}$ under the projection $i^{*}:A^{*}V^{*}B \longrightarrow A'^{*}V^{*}B$. Since $\mbox{ADM}$ is stable under $\mbox{GL}_{A}\times \mbox{GL}_{B}$, $\mbox{ADM}'$ is independent of the choice of $A'$. Then for a general $\delta'\in \mbox{ADM}'$, the map $\delta': A' \longrightarrow V^{*}B$ is injective. 

Let $x_{1}, \cdots , x_{a}$ be a basis of $A'$. Then there is a rational map $l: \mbox{ADM}' \dashrightarrow G(a, V^{*}B)$, defined on the locus $\{\delta' \in \mbox{ADM}: \delta': A' \longrightarrow V^{*}B \mbox{ is injective} \}$, by
\begin{equation}
  l(\delta')=\delta'(x_{1})\wedge \delta'(x_{2})\wedge \cdots \wedge\delta'(x_{a})
\end{equation}
under the Pl\"{u}cker embedding. Denote the image by $l(\mbox{ADM})$. Let
$$\sigma(N,k)=\{\Lambda\in G(ma, M^{*}V^{*}B): \mbox{dim}(\Lambda\cap \delta_{0}(N)\otimes B)\geq k\}.$$
The $\sigma(N,k)$ are given by Proposition \ref{description} and Proposition \ref{description2}, hence in principle we may compute the maximal integer $k$ such that $M^{*}\otimes l(\mbox{ADM}')\subset \sigma(N,k)$. This number is determined by the index $i$ of $O_{i}$, which we shall denote by $k(i)$.

In order to find the first Harder-Narasimhan factor, we need to compare the $\phi_{-}$-phases of $O_{i}$. Recall that we defined $a_{i}$ to be the fundamental sequence of $\mathcal{H}$ in Definition \ref{fundamental sequence}. Now we define a function $\phi_{-}$ on $\mathbb{Z}$ by
$$\phi_{-}(i)=
\begin{cases}
  \frac{a_{i-2}}{a_{i-2}+a_{i-1}}, i\geq 1,\\
  \frac{a_{-i}}{a_{-i}+a_{-i-1}}, i\leq 0. 
\end{cases}
$$

Then $\phi_{-}(O_{i})>\phi_{-}(O_{j})$ if and only if $\phi_{-}(i)>\phi_{-}(j)$. Hence we have the following proposition.

\begin{proposition}\label{first factor}
Under the assuptions in Proposition \ref{Rn-1}, the first Harder-Narasimhan factor of $R_{n-1}$ is $O_{i}\otimes J$ such that $\phi_{-}(i)$ is maximal and $k(i)>0$. The vector space $J_{i}=\mbox{Hom}(O_{i}, R_{n-1})$ has dimension $k(i)$. 
\end{proposition}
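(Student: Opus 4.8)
\textbf{Proof proposal for Proposition \ref{first factor}.}

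The plan is to combine the vanishing statement of Proposition \ref{Rn-1} with the explicit linear-algebra setup developed in the two paragraphs preceding the proposition. First I would recall that by Proposition \ref{Rn-1}, for every $i\in\mathbb{Z}$ we have $\mbox{Hom}(O_{i},R_{n-1})=\mbox{Hom}(O_{i},F_{n-1}/F_{n})$. Writing $\Gamma:=F_{n-1}/F_{n}$, this $\Gamma$ is the extension of $S\otimes A$ by $T\otimes B$ classified by the general admissible $\delta\in\mbox{ADM}$ (here $A=A_{n-1}$, $B=B_{n-1}$ in Notation \ref{notationexhaustive}). The central claim to establish is that $\mbox{Hom}(O_{i},\Gamma)$ has dimension exactly $k(i)$, where $k(i)$ is the integer defined just before the proposition as the maximal $k$ with $M_{i}^{*}\otimes l(\mbox{ADM}')\subset\sigma(N_{i},k)$; once this is known, the identification of the first Harder-Narasimhan factor follows formally from the phase comparison below.

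The main computation is the dimension count for $J=\mbox{Hom}(O,\Gamma)$. I would run the big commutative diagram exhibited in the text: applying $\mbox{Hom}(-,\Gamma)$ to the defining sequence $0\to T\otimes N\to O\to S\otimes M\to 0$ and $\mbox{Hom}(O,-)$ to $0\to T\otimes B\to\Gamma\to S\otimes A\to 0$, and using $\mbox{Hom}(S,T)=\mbox{Hom}(T,S)=0$, $\mbox{Ext}^{1}(T,T)=0$, one gets that $\mbox{Hom}(O,\Gamma)$ is the intersection
$$J=M^{*}\delta(A)\cap\delta_{0}(N^{*})\otimes B\subset M^{*}V^{*}B,$$
where $\delta_{0}\in M^{*}V^{*}N$ is the general extension class of $O$. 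Since $\delta_{0}$ is general, $\delta_{0}(N^{*})\otimes B=W\otimes B$ for a general subspace $W\subset M^{*}V^{*}$ of dimension $\dim N$; and $\delta(A)$ is, for general $\delta\in\mbox{ADM}$, a point of the constructible set $l(\mbox{ADM}')\subset G(a,V^{*}B)$ after quotienting by the $\mbox{GL}_{A}$-action. So $\dim J$ is governed by how a general member of $M^{*}\otimes l(\mbox{ADM}')$ meets the fixed subspace $\delta_{0}(N)\otimes B$ inside $M^{*}V^{*}B$: this is precisely the Schubert-type incidence condition encoded by $\sigma(N,k)$, and the maximality statement "$M^{*}\otimes l(\mbox{ADM}')\subset\sigma(N,k)$ but not $\sigma(N,k+1)$" says exactly that the generic intersection dimension is $k=k(i)$. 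The inputs that make this effective — the ideal of $\mbox{ADM}$ (Proposition \ref{admknown}), and the description of the ambient Hom-spaces and the subspaces $\delta_{0}(N)\otimes B$ via Proposition \ref{description} and Proposition \ref{description2} — have all been assembled, so this step reduces to the (in principle computable) generic intersection.

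For the identification of $O_{i}\otimes J_{i}$ as the first $\sigma_{-}$-Harder-Narasimhan factor, I would argue as follows. The $\sigma_{-}$-stable spherical objects in $\mathcal{H}$ are exactly the $O_{i}$ (by Proposition \ref{construction} and Corollary \ref{JH}, with the roles of $\sigma_{+}$ and $\sigma_{-}$ interchanged across the wall). For any semistable object with first HN factor of class $c$, that factor is $O_{i}\otimes\mbox{Hom}(O_{i},-)$ for the $O_{i}$ of maximal $\sigma_{-}$-phase admitting a nonzero map in; and $\phi_{-}(O_{i})>\phi_{-}(O_{j})\iff\phi_{-}(i)>\phi_{-}(j)$ by the explicit formula for $\phi_{-}$ in terms of the fundamental sequence $a_{i}$ (this is a direct check from the central charges, using $\mbox{hom}$-dimensions from Theorem \ref{numbers}). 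A nonzero map $O_{i}\to R_{n-1}$ exists iff $\mbox{Hom}(O_{i},R_{n-1})\neq 0$, which by the count above is iff $k(i)>0$. Hence the first factor is $O_{i}\otimes J_{i}$ with $i$ maximizing $\phi_{-}(i)$ subject to $k(i)>0$, and $\dim J_{i}=k(i)$. Finally one must check that the evaluation map $O_{i}\otimes\mbox{Hom}(O_{i},R_{n-1})\to R_{n-1}$ is injective (so that $O_{i}\otimes J_{i}$ genuinely sits as a subobject, not merely admits a map): this follows because $O_{i}$ is $\sigma_{-}$-stable of maximal phase among factors and $R_{n-1}\in\mathcal{A}_{W}$, exactly as in the proof of the lemma identifying $\mbox{gr}_{1}^{-}(E)=S\otimes A_{n}$.

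\textbf{Main obstacle.} The delicate point is the dimension count $\dim J=k(i)$: the two subspaces $M^{*}\delta(A)$ and $\delta_{0}(N^{*})\otimes B$ need not meet transversely (as the text explicitly warns), so one cannot simply subtract codimensions. The argument must instead genuinely use the genericity of $\delta\in\mbox{ADM}$ and of $\delta_{0}$ to force the intersection into its expected (maximal-incidence) position, i.e. to show that the generic dimension of $M^{*}l(\delta)\cap(\delta_{0}(N)\otimes B)$ is the $k$ appearing in the definition of $\sigma(N,k)$ — upper semicontinuity of $\dim(\cdot\cap\cdot)$ gives one inequality for free, and the reverse requires exhibiting, for general $\delta$, that the intersection is no larger than $k(i)$, which is where the precise structure of $l(\mbox{ADM}')$ from Proposition \ref{admknown} is needed.
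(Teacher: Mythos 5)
The proposal is correct and takes essentially the same approach as the paper: the paper in fact gives no standalone proof of Proposition \ref{first factor} — it is stated as a summary of the preceding paragraphs (identifying $J=\mbox{Hom}(O,\Gamma)=M^{*}\delta(A)\cap\delta_{0}(N^{*})B$, introducing $\mbox{ADM}'$, $l$, and $\sigma(N,k)$, and defining $k(i)$) — and you reconstruct exactly that reasoning. You additionally make explicit two steps the paper leaves implicit (that $\phi_{-}(O_{i})>\phi_{-}(O_{j})\iff\phi_{-}(i)>\phi_{-}(j)$, and that the evaluation map $O_{i}\otimes J_{i}\to R_{n-1}$ is injective), which is a sound and useful completion rather than a deviation.
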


\begin{remark}\label{few}
  At first glance there are infinitely many $i$ such that $k(i)>0$, and it is not clear that if we can find the $i$ such that $\phi_{-}(i)$ is maximal.
  
  However, in practice there are only finitely many $i$ to check. Using the notation in Proposition \ref{first factor}, if $O_{i}$ is the first $\sigma_{-}$-Harder-Narasimhan factor, then $O_{i}\otimes J_{i} \longrightarrow R_{n-1}$ is injective. Hence we must have
  $$\mbox{dim}(M_{i})\leq \mbox{dim}(M_{i})\cdot k(i) \leq \mbox{dim}(A), ~ \mbox{dim}(N_{i})\leq \mbox{dim}(N_{i})\cdot k(i) \leq \mbox{dim}(B). $$
  Since the dimensions of $A$ and $B$ are data in the exhaustive filtration of $E$, the possible range of $i$ is computed by Theorem \ref{numbers}. There are only finitely many such $i$, since the fundamental sequence (Definition \ref{fundamental sequence}) $a_{i}$ tends to infinity.
\end{remark}

\begin{remark}
The first $\sigma_{-}$-Harder-Narasimhan factor is of the form $O_{i}^{\oplus n}$. Hence the information consists of two numbers $i$ and $n$. Proposition \ref{first factor} is where we find out the two numbers. Since the dimensions of $M_{k}, N_{l}$ are given by Theorem \ref{numbers}, the $i$ and $n$ are determined by the admissible locus $\mbox{ADM}\subset A^{*}V^{*}B$, which is eventually determined by the shape of the exhaustive filtration by Proposition \ref{admknown}.
\end{remark}

\subsection{Receptive Extensions}

Recall that our objective of the local reduction is to compute the $\sigma_{-}$-Harder-Narasimhan filtration of $E$. In the last subsection, we showed how to compute the first $\sigma_{-}$-Harder-Narasimhan factor $E_{1}$ of $E$ from the shape of an exhaustive filtration of $E$. In order to find the $\sigma_{-}$-Harder-Narasimhan filtration of $E$, we would like to know the shape of an exhaustive filtration of $E/E_{1}$, so that we may use Proposition \ref{first factor} again. In this subsection, we introduce a necessary notion called \emph{receptive extension}.

The original $S$-exhaustive filtration on $E$ is indeed a $T$-exhaustive filtration on $R_{n-1}$, hence we only need to consider $\mbox{Hom}(O_{i},F_{n-1}/F_{n})$. Let $\Gamma=F_{n-1}/F_{n}$. Then $\Gamma$ fits into an extension
$$[0 \longrightarrow T\otimes B_{n-1} \longrightarrow \Gamma \longrightarrow S\otimes A_{n-1} \longrightarrow 0 ]=\delta_{n-1,n-1}\in A_{n-1}^{*}V^{*}B_{n-1},$$
where $\delta_{n-1,n-1}\in \mbox{Adm}_{n-1,n-1}$. If $i$ is the integer so that $O_{i}\otimes \mbox{Hom}(O_{i},R_{n-1})$ is the first Harder-Narasimhan factor of $R_{n-1}$, then $O_{i}\otimes \mbox{Hom}(O_{i},R_{n-1}) \longrightarrow R_{n-1}$ is injective. Let $J=\mbox{Hom}(O_{i},R_{n-1})$. We have the following commutative diagram

\[\begin{tikzcd}
    &T\otimes K_{1}\arrow[d] & 0\arrow[d] & S\otimes K_{2} \arrow[d] & \\
    0 \arrow[r] & T\otimes N_{i}J \arrow[r]\arrow[d] & O_{i}J \arrow[r]\arrow[d] & S\otimes M_{i}J \arrow[r]\arrow[d] & 0 \\
    0 \arrow[r] & T\otimes B_{n-1} \arrow[r]\arrow[d] & \Gamma \arrow[r]\arrow[d] & S\otimes A_{n-1} \arrow[r]\arrow[d]& 0\\
    & T\otimes B''  & \Gamma'' & S\otimes A''&  ,
  \end{tikzcd}\]
where $B'', A''$ are some fixed finite dimensional vector spaces. By the Snake Lemma, $K_{1}=0$. Note that $\mbox{Hom}(S,T)=0$, hence $K_{2}=0$. Hence we get an extension
$$[0 \longrightarrow T\otimes B'' \longrightarrow \Gamma'' \longrightarrow S\otimes A'' \longrightarrow 0 ]=\delta''\in A''^{*}V^{*}B''.$$
Hence there is a rational map
\begin{gather}\label{rationalmap}
  \pi_{n-1,n-1}: \mbox{ADM}_{n-1,n-1} \dashrightarrow A''^{*}V^{*}B''.
\end{gather}
In this subsection we study the image of $\pi=\pi_{n-1,n-1}$. We call the closure of the image the receptive locus $\mbox{REC}_{n-1,n-1}$ and a general element in it receptive. Note that $\mbox{REC}$ is stable under $\mbox{GL}_{A''}\times \mbox{GL}_{B''}$ action.

We describe the rational map $\pi=\pi_{n-1,n-1}$ now. Recall that
$$J=\mbox{Hom}(O,\Gamma)=\mbox{Hom}(S\otimes M,S\otimes A)\cap \mbox{Hom}(T\otimes N, T\otimes B)=M^{*}\delta(A)\cap \delta_{0}(N^{*})B\subset M^{*}VB.$$
The difficulty here is that, with fixed vector spaces $A,B$, the embeddings $NJ\hookrightarrow B$ and $MJ\hookrightarrow A$ depends on $\delta$. However, this problem can be solved if we restrict to an open subset of $\mbox{ADM}=\mbox{ADM}_{n-1,n-1}$, because it does not affect the image of $\pi$.

The map factors as $\pi_{B}: \mbox{ADM} \dashrightarrow \mbox{ADM}'\subset A^{*}VB''$ and $\pi_{A}: \mbox{ADM}' \dashrightarrow A''^{*}V^{*}B''$. We first compute the image of $\mbox{ADM}$ under $\pi_{B}$. 
For a general $\delta\in \mbox{ADM}$, let $q_{\delta}: B \longrightarrow B''$ be the family of projections so that $B=NJ\oplus_{\delta} B''$. Then the map $\pi_{B}|_{U}$ is defined as
$$\pi_{B}(\delta)=(\mbox{id}\otimes q_{\delta}) (\delta)\in A^{*}V^{*}B''.$$

Fix a projection $q: B \longrightarrow B''$, then since $\mbox{ADM}'$ is stable under the $\mbox{GL}_{B''}$ action, for a general $\delta\in \mbox{ADM}$, $(\mbox{id}\otimes q)(\delta)\in \mbox{ADM}'$. Conversely, for a general $\delta$, there is some $g\in \mbox{GL}_{B}$ such that $(\mbox{id}\otimes q_{\delta})(\delta)=(\mbox{id}\otimes q)(g(\delta))$, and clearly $g(\delta)\in \mbox{ADM}$ since it is stable under $\mbox{GL}_{B}$. Hence $\mbox{ADM}'=q(\mbox{ADM})$. 

Similarly for every $\delta'\in \mbox{ADM}'\subset A^{*}V^{*}B''$, there exists a subspace $(A''_{\delta})^{*}\subset A''^{*}$ such that $\delta\in (A''_{\delta})^{*}V^{*}B''$. Fix a subspace $A''^{*} \subset A^{*}$ and a projection $p: A^{*} \longrightarrow A''^{*}$. Let $U\subset \mbox{ADM}'\subset A^{*}V^{*}B''$ be an open subset such that for $\delta\in U$, $\delta$ has maximal $(A,VB)$-rank and $(AV,B)$-rank, and $p|_{(A''_{\delta})^{*}}$ is an isomorphism. Then by a similar trick as above, we see the image of $\pi_{A}$ is $p(\mbox{ADM}')$. Hence in summary we have showed: 
 
\begin{proposition}\label{REC1}
  Let $\pi: \mbox{ADM} \dashrightarrow A''^{*}V^{*}B''$ be the rational map defined in (\ref{rationalmap}). Choose any projections $p: A^{*} \longrightarrow A''^{*}$ and $q: B \longrightarrow B''$. Then the image of $\pi$ is
  $$\mbox{REC}=(p\times \mbox{id} \times q)(\mbox{ADM}).$$
  Since $\mbox{ADM}$ is stable under $\mbox{GL}_{A}\times \mbox{GL}_{B}$, $\mbox{REC}$ is independent of the choice of $p,q$.
\end{proposition}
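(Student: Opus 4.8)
The plan is to repackage the construction preceding the statement into a clean argument; the essential point is that the $\delta$-dependent splittings used to define $\pi$ can be replaced by fixed ones at the cost of a $\mathrm{GL}$-action under which $\mathrm{ADM}$ and all of its images are stable.

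First I would recall the factorization $\pi = \pi_{A}\circ\pi_{B}$, where $\pi_{B}\colon \mathrm{ADM}\dashrightarrow \mathrm{ADM}'\subset A^{*}V^{*}B''$ passes to the quotient $B\twoheadrightarrow B''=B/N_{i}J$ on the $T$-side, $\pi_{A}\colon \mathrm{ADM}'\dashrightarrow A''^{*}V^{*}B''$ passes to the quotient $A\twoheadrightarrow A''=A/M_{i}J$ on the $S$-side, and $\mathrm{ADM}':=\overline{\mathrm{im}(\pi_{B})}$. Both are defined on a dense open set: by semicontinuity together with Proposition \ref{first factor}, the index $i$, the number $\dim J=k(i)$, and the ranks of $\delta$ appearing in the description $J=M^{*}\delta(A)\cap\delta_{0}(N^{*})B$ all take their generic values away from a proper closed subset, and since only the closure of the image is relevant, discarding that subset is harmless.

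The core step is the identity $\overline{\mathrm{im}(\pi_{B})}=\overline{q(\mathrm{ADM})}$ for an arbitrary fixed surjection $q\colon B\twoheadrightarrow B''$. For general $\delta$ one has $\pi_{B}(\delta)=(\mathrm{id}\otimes q_{\delta})(\delta)$, where $q_{\delta}$ is the projection along a complement of $N_{i}J$; choosing $g\in\mathrm{GL}_{B}$ and $h\in\mathrm{GL}_{B''}$ with $q\circ g=h\circ q_{\delta}$ gives $\pi_{B}(\delta)=(\mathrm{id}\otimes h^{-1})\bigl(q(g\cdot\delta)\bigr)$ and $(\mathrm{id}\otimes q)(g\cdot\delta)=(\mathrm{id}\otimes h)(\pi_{B}(\delta))$; combined with the $\mathrm{GL}_{B}$-stability of $\mathrm{ADM}$ proved above and the evident $\mathrm{GL}_{B''}$-stability of both $\mathrm{im}(\pi_{B})$ and $q(\mathrm{ADM})$, this forces the two closures to coincide. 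Running the identical argument for $\pi_{A}$ with a fixed surjection $p\colon A^{*}\twoheadrightarrow A''^{*}$ yields $\overline{\mathrm{im}(\pi_{A})}=\overline{p(\mathrm{ADM}')}$, and composing gives $\mathrm{REC}=\overline{\mathrm{im}(\pi)}=\overline{p\bigl(q(\mathrm{ADM})\bigr)}=(p\times\mathrm{id}\times q)(\mathrm{ADM})$ as in the statement. Finally, any two choices of $q$ (resp.\ $p$) differ by an element of $\mathrm{GL}_{B}$ (resp.\ $\mathrm{GL}_{A}$), so the $\mathrm{GL}_{A}\times\mathrm{GL}_{B}$-stability of $\mathrm{ADM}$ makes $\mathrm{REC}$ independent of these choices.

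I expect the only genuine difficulty to be the bookkeeping attached to genericity: one must verify that the loci where the rank of $\delta$ drops, where $i$ or $k(i)$ jumps, or where a fixed projection fails to restrict to an isomorphism on the $\delta$-dependent subspace $N_{i}J\subset B$ or $M_{i}J\subset A$, are all proper closed subsets, so that the splittings $B=N_{i}J\oplus_{\delta}B''$ and $A=M_{i}J\oplus_{\delta}A''$, and hence the rational maps $\pi_{B},\pi_{A},\pi$ themselves, are well defined on a dense open subset of $\mathrm{ADM}$; granting this, the $\mathrm{GL}$-equivariance does the rest.
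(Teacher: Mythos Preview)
Your proposal is correct and follows essentially the same route as the paper: factor $\pi=\pi_A\circ\pi_B$, and for each factor use the $\mathrm{GL}_B$- (resp.\ $\mathrm{GL}_A$-) stability of $\mathrm{ADM}$ together with the $\mathrm{GL}_{B''}$- (resp.\ $\mathrm{GL}_{A''}$-) stability of the image to replace the $\delta$-dependent projection $q_\delta$ by a fixed one. Your formulation with the auxiliary $h\in\mathrm{GL}_{B''}$ satisfying $q\circ g=h\circ q_\delta$ is a slightly more explicit packaging of the same maneuver the paper carries out in the paragraphs preceding the proposition.
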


By Proposition \ref{REC1}, the receptional locus $\mbox{REC}$ can be computed, assuming that we know the admissible locus $\mbox{ADM}$. By Proposition \ref{admknown}, $\mbox{ADM}$ can be computed from the shape of the exhaustive filtration. Hence at this stage we may assume the receptional loci $\mbox{REC}$, produced by taking quotient from $\Gamma$ by the first Harder-Narasimhan factor, are all known in terms of the shape of the exhaustive filtration.

\subsection{Concatenation}

In this subsection we determine the shape of an exhaustive filtration of an object, that is a general extension of a known exhaustive filtration by a receptive extension.

We know $O\otimes J\subset R_{n-1}$ is the first Harder-Narasimhan factor of $R_{n-1}$, by Mukai's Lemma (Lemma \ref{Mukai's Lemma}), $R_{n-1}/ (O\otimes J)$ is rigid. On the other hand, $O\otimes J \longrightarrow R_{n-1}$ factors through $\Gamma$, we have the following commutative diagram
\[\begin{tikzcd}
    & O\otimes J \arrow[d] & O\otimes J \arrow[d] & & \\
    0\arrow[r] & \Gamma \arrow[r]\arrow[d] & R_{n-1} \arrow[r]\arrow[d] & R_{n-2}\arrow[r]\arrow[d, "="] & 0\\
    0\arrow[r] & \Lambda \arrow[r] & R_{n-1}/ (O\otimes J) \arrow[r] & R_{n-2} \arrow[r] & 0,
  \end{tikzcd}\]
where $\Lambda=\Gamma/ (O\otimes J)$ can fit into a general receptive extension
$$[0 \longrightarrow T\otimes B'' \longrightarrow \Lambda \longrightarrow S\otimes A'' \longrightarrow 0 ]=\delta''\in \mbox{REC}_{n-1.n-1} \subset A''^{*}V B''.$$
By rigidity of $Q_{1}^{-}=R_{n-1}/(O\otimes J)$, $Q_{1}^{-}$ fits into a general extension in $\mbox{Ext}^{1}(R_{n-2},\Lambda)$. 

Our next task is to compute the $T$-exhaustive filtration of $Q_{1}^{-}$. We slightly generalize the situation to the following questions:
\\
Question T \label{question T}:
\\
Let $P$ be a rigid object that fits into an exact sequence
$$0 \longrightarrow \Lambda \longrightarrow P \longrightarrow R_{m} \longrightarrow 0 ,$$
where $R_{m}$ is a rigid object equipped with a known $T$-exhaustive filtration labeled as in Definition \ref{exhaustive}, and $\Lambda$ fits into a general receptive extension
$$[0 \longrightarrow T\otimes B \longrightarrow \Lambda \longrightarrow S\otimes A \longrightarrow 0 ]=\delta \in \mbox{REC}\subset \mbox{Ext}^{1}(S\otimes A,T\otimes B)=A^{*}V^{*}B.$$
Compute the $T$-exhaustive filtration of $P$.

Similarly, we can formulate
\\
Question S \label{question S}:
\\
Let $P$ be a rigid object that fits into an exact sequence
$$0 \longrightarrow \Lambda \longrightarrow P \longrightarrow Q_{m} \longrightarrow 0 ,$$
where $Q_{m}$ is a rigid object equipped with a known $S$-exhaustive filtration labeled as in Definition \ref{exhaustive}, and $\Lambda$ fits into a general receptive extension
$$[0 \longrightarrow S\otimes A \longrightarrow \Lambda \longrightarrow T\otimes B \longrightarrow 0 ]=\delta \in \mbox{REC}\subset \mbox{Ext}^{1}(T\otimes B,S\otimes A)=B^{*}VA. $$
Compute the $S$-exhaustive filtration of $P$.
\\
We now solve Question T. First we consider the following exact sequence
$$0 \longrightarrow \mbox{Hom}(T,\Lambda) \longrightarrow \mbox{Hom}(T,P) \longrightarrow \mbox{Hom}(T,R_{m}) \overset{f}{\longrightarrow} \mbox{Ext}^{1}(R_{m},\Lambda),$$
let $B'=\mbox{ker}(f)$ and $B''=\mbox{Hom}(T,R_{m})/B'$. Let $R_{m}'=R_{m}/(T\otimes B')$. Then we have the following commutative diagram
\[\begin{tikzcd}
    & 0\arrow[d] & 0\arrow[d] & 0\arrow[d] & \\
0\arrow[r] & T\otimes B \arrow[r]\arrow[d] & T\otimes \mbox{Hom}(T,P)\arrow[r]\arrow[d] & T\otimes B'\arrow[r]\arrow[d] & 0 \\
0 \arrow[r] & \Lambda \arrow[r]\arrow[d] & P \arrow[r]\arrow[d] & R_{m}\arrow[r]\arrow[d] & 0\\
 & S\otimes A  & P'  & R_{m}' & 
\end{tikzcd}. \]
Now $\mbox{Hom}(T,P')=0$, hence if we can find the $S$-exhaustive filtration for $P'$, then the $T$-exhaustive filtration for $P$ is also known. Since $P$ is rigid and $\mbox{Hom}(T\otimes \mbox{Hom}(T,P), P')=0$, by Mukai's Lemma (Lemma \ref{Mukai's Lemma}), $P'$ is also rigid. Now consider the following pullback diagram
\[\begin{tikzcd}
    & & 0\arrow[d] & 0\arrow[d]& \\
    0 \arrow[r] & S\otimes A \arrow[r]\arrow[d, "="] & \Lambda' \arrow[r]\arrow[d] & T\otimes B'' \arrow[r]\arrow[d] & 0\\
    0 \arrow[r] & S\otimes A \arrow[r] & P' \arrow[r]\arrow[d] & R_{m}' \arrow[r]\arrow[d] & 0\\
    & & Q_{m} \arrow[r, "="]\arrow[d] & Q_{m}\arrow[d] & \\
    & & 0 & 0 & 
  \end{tikzcd}.\]
$P'$ fits into an exact sequence
\begin{align}\label{P'}
  0 \longrightarrow \Lambda' \longrightarrow P' \longrightarrow Q_{m} \longrightarrow 0,
\end{align}
where $Q_{m}$ is a rigid object equipped with a known $S$-exhaustive filtration. We want to formulate Question S for $P'$, now the only missing input is the receptional locus that $\Lambda'$ belongs to. We formalize this as follows.

There is a vector bundle $\mathcal{E}$ defined over an open subset of $\mbox{REC}\subset \mbox{Ext}^{1}(S\otimes A,T\otimes B)$, whose fiber over an extension
$$[0 \longrightarrow T\otimes B \longrightarrow \Lambda \longrightarrow S\otimes A \longrightarrow 0 ]=\delta \in \mbox{Ext}^{1}(S\otimes A, T\otimes B)$$ is naturally identified with $\mbox{Ext}^{1}(R_{m},\Lambda)$. On that fiber, a general extension
$$[0 \longrightarrow \Lambda \longrightarrow P \longrightarrow R_{m} \longrightarrow 0 ]=(\Delta, \delta) \in \mbox{Ext}^{1}(R_{m},\Lambda)$$
determines an extension class
$$[0 \longrightarrow S\otimes A \longrightarrow \Lambda' \longrightarrow T\otimes B'' \longrightarrow 0 ]=\delta'\in \mbox{Ext}^{1}(T\otimes B'',S\otimes A).$$
Hence there is a rational map $\pi: \mathcal{E} \dashrightarrow \mbox{Ext}^{1}(T\otimes B'', S\otimes A)$, where $\mathcal{E}$ is understood as the total space of the vector bundle. We define the receptive locus $\mbox{REC}'\subset \mbox{Ext}^{1}(T\otimes B'',S\otimes A)$ for $\Lambda$ to be the closure of $\pi(\mathcal{E})$.

Let $N=\mbox{Hom}(T,R_{m})$ and $M=\mbox{Hom}(S,Q_{m})$. The rational map $\pi$ factors through $\mbox{Ext}^{1}(T\otimes N, S\otimes A)$ by cutting down $T\otimes B$ and $Q_{m}$ from $P$, denote the image by $\overline{\mbox{REC}}\subset N^{*}VA$. Note that the map $\mbox{Ext}^{1}(R_{m},\Lambda) \longrightarrow \mbox{Ext}^{1}(T\otimes N,S\otimes A)$ is actually linear and independent of the entension class $\Delta\in \mbox{Ext}^{1}(R_{m},\Lambda)$. We can describe its image by the following commutative diagram
\[\begin{tikzcd} & \mbox{Ext}^{1}(Q_{m},S\otimes A) \arrow[d] & \mbox{Ext}^{2}(Q_{m},T\otimes B)=0 \arrow[d]\\
  \mbox{Ext}^{1}(R_{m},\Lambda) \arrow[r] & \mbox{Ext}^{1}(R_{m},S\otimes A) \arrow[r]\arrow[d] & \mbox{Ext}^{2}(R_{m},T\otimes B)\arrow[d]\\
  & \mbox{Ext}^{1}(T\otimes N, S\otimes A)\arrow[r]\arrow[d] & \mbox{Ext}^{2}(T\otimes N, T\otimes B) \arrow[d]\\
  & \mbox{Ext}^{2}(Q_{m},S\otimes A) & 0
\end{tikzcd}.\]
The image of $\mbox{Ext}^{1}(R_{m},\Lambda)$ in $N^{*}VA$ is the intersection
$$\mbox{ker}(\mbox{id}\otimes \delta_{1}: N^{*}VA \rightarrow N^{*}B)\cap \mbox{ker}(\delta_{2}\otimes \mbox{id}: N^{*}VA \rightarrow M^{*}A),$$
where $\delta_{2}\in \mbox{ADM}_{m,m}\subset M^{*}V^{*}N$ and $\delta_{1}\in \mbox{REC}\subset A^{*}V^{*}B$ are general. Consider the incidence correspondence in $M^{*}V^{*}N\times N^{*}VA \times A^{*}V^{*}B$:
$$\Gamma = \{(\delta_{1}, \delta, \delta_{2})\in \mbox{REC}\times N^{*}VA \times \mbox{ADM}: m_{1}(\delta_{1}, \delta)=0, m_{2}(\delta, \delta_{2})=0\},$$
where $m_{1}: M^{*}V^{*}N \times N^{*}VA \longrightarrow M^{*}\otimes A$ and $m_{2}: N^{*}VA \times A^{*}V^{*}B \longrightarrow N^{*}\otimes B$ are natural pairings. Let $p_{1}, p, p_{2}$ be projections to $M^{*}V^{*}N, N^{*}VA, A^{*}V^{*}B$ respectively. Since $\{(\delta_{1}, 0, \delta_{2}): \delta_{1}\in \mbox{REC}, \delta_{2}\in \mbox{ADM}\}$ is contained in $\Gamma$, the projection $p_{1}\times \mbox{id} \times p_{2}$ is dominant. By Lemma \ref{component}, there is a distinguished component of $\Gamma$, this component is clearly $\overline{\mbox{REC}}$. We have shown

\begin{proposition} \label{barREC}
Under the notations as above, $\overline{\mbox{REC}}$ is the distinguished component of $\Gamma$ over $\mbox{REC}\times \mbox{ADM}$ under the projection $p_{1}\times p_{2}$. 
\end{proposition}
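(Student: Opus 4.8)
The plan is to run the same argument already used in this section to compute $\mbox{ADM}_{i,i+1}$ from $\mbox{ADM}_{i,i}$, now applied to the incidence correspondence $\Gamma$: namely, to exhibit $\overline{\mbox{REC}}$ as the image under the middle projection $p$ (to $N^{*}VA$) of the distinguished irreducible component $\Gamma_{1}\subset\Gamma$ dominating $\mbox{ADM}\times\mbox{REC}$ under $p_{1}\times p_{2}$. So the content to be checked is: (a) such a component $\Gamma_{1}$ exists and is unique; (b) $p(\Gamma_{1})=\overline{\mbox{REC}}$.

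First I would verify that $\mbox{ADM}$ and $\mbox{REC}$ are irreducible. By Proposition \ref{admknown} (and the inductive construction in the proof of Lemma \ref{variety}), starting from the affine space $\mbox{ADM}_{0,1}=B_{0}^{*}VA_{1}$, each $\mbox{ADM}_{k,l}$ is the closure of the image of a morphism whose source is irreducible, hence irreducible; in particular $\mbox{ADM}$ is irreducible. By Proposition \ref{REC1}, $\mbox{REC}$ is a linear image of an $\mbox{ADM}$, hence also irreducible, so $\mbox{ADM}\times\mbox{REC}$ is irreducible. Next I would compute the fibers of $p_{1}\times p_{2}\colon\Gamma\longrightarrow\mbox{ADM}\times\mbox{REC}$: by the defining equations of $\Gamma$, the fiber over $(\delta_{2},\delta_{1})$ is the linear subspace $\mbox{ker}(\delta_{2}\otimes\mbox{id})\cap\mbox{ker}(\mbox{id}\otimes\delta_{1})\subset N^{*}VA$, and its dimension — the dimension of the common kernel of two linear maps depending linearly on $(\delta_{1},\delta_{2})$ — is upper semicontinuous, hence constant and equal to its generic minimum on a dense open $U\subset\mbox{ADM}\times\mbox{REC}$. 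Since $\Gamma$ contains the zero section over $\mbox{ADM}\times\mbox{REC}$ (take $\delta=0$), $p_{1}\times p_{2}$ is surjective, so Lemma \ref{component} applies and produces the unique component $\Gamma_{1}$ dominating $\mbox{ADM}\times\mbox{REC}$; moreover, by the proof of that lemma, $\Gamma_{1}=\overline{(p_{1}\times p_{2})^{-1}(U)}$, which over $U$ is the total space of the vector subbundle with fiber $\mbox{ker}(\delta_{2}\otimes\mbox{id})\cap\mbox{ker}(\mbox{id}\otimes\delta_{1})$.

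For (b) I would invoke the commutative diagram displayed just before the proposition, which shows that for general $\delta_{1}\in\mbox{REC}$ (the receptive extension defining $\Lambda$) and general $\delta_{2}\in\mbox{ADM}$ (the admissible extension of the rigid object $R_{m}$, which is general since $R_{m}$ is rigid, by Lemma \ref{shape0}), the map $\mbox{Ext}^{1}(R_{m},\Lambda)\to\mbox{Ext}^{1}(T\otimes N,S\otimes A)=N^{*}VA$ obtained by cutting $T\otimes B$ and $Q_{m}$ out of $P$ is linear, independent of the chosen class in $\mbox{Ext}^{1}(R_{m},\Lambda)$, and has image exactly $\mbox{ker}(\delta_{2}\otimes\mbox{id})\cap\mbox{ker}(\mbox{id}\otimes\delta_{1})$. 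Since $\pi$ sends $(\delta_{1},\Delta)\in\mathcal{E}$ to the $\delta'$ whose cut-down is the image of $\Delta$ under this map, and $\Delta$ ranges over all of $\mbox{Ext}^{1}(R_{m},\Lambda)$, the cut-down of the image of $\pi$ over $U$ equals $\bigcup_{(\delta_{2},\delta_{1})\in U}\big(\mbox{ker}(\delta_{2}\otimes\mbox{id})\cap\mbox{ker}(\mbox{id}\otimes\delta_{1})\big)=p\big((p_{1}\times p_{2})^{-1}(U)\big)$; taking closures gives $\overline{\mbox{REC}}=p(\Gamma_{1})$. Concretely this yields the same computational recipe used above for $\mbox{ADM}_{i,i+1}$: form the ideal of $\Gamma$ from those of $\mbox{REC}$ and $\mbox{ADM}$ together with the pairing equations $m_{1}=0$, $m_{2}=0$, extract $\Gamma_{1}$ by primary decomposition, and take the radical of the scheme-theoretic image of $\Gamma_{1}$ under $p$.

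The step I expect to be the real obstacle is not internal to this proposition but the input it relies on: the diagram chase in the displayed diagram showing that the image of $\mbox{Ext}^{1}(R_{m},\Lambda)$ in $N^{*}VA$ is \emph{precisely} the intersection of the two kernels, together with the independence of that map from $\Delta$. Granting that, the genuinely new work here is routine — irreducibility of $\mbox{ADM}$ and $\mbox{REC}$, generic constancy of the fiber dimension so that Lemma \ref{component} is applicable, and careful bookkeeping of which $V$ and $V^{*}$ factors are contracted by $m_{1}$ and $m_{2}$ so that the incidence conditions really do cut out the two kernels appearing in the diagram.
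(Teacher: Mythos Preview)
Your proposal is correct and follows essentially the same approach as the paper: the paper's entire argument is the two sentences preceding the proposition, observing that the zero section $\{(\delta_{1},0,\delta_{2})\}\subset\Gamma$ makes $p_{1}\times p_{2}$ surjective, invoking Lemma \ref{component}, and declaring the identification with $\overline{\mbox{REC}}$ ``clear.'' Your write-up supplies the details the paper omits --- irreducibility of $\mbox{ADM}\times\mbox{REC}$, constancy of the generic fiber dimension, and the use of the displayed diagram to match the fiber over $(\delta_{2},\delta_{1})$ with the image of $\mbox{Ext}^{1}(R_{m},\Lambda)$ --- and your remark that the substantive input is the diagram chase (already carried out in the text) is exactly right.
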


Finally, we use the same trick as before: shrink an open set in $\overline{\mbox{REC}}$ if necessary, we may choose any decomposition $N=B'\oplus B''$ and let $q: N^{*} \longrightarrow B''^{*}$ be the projection. Then $REC'$ is the closure of the image of $\overline{\mbox{REC}}$ under the projection $q\otimes \mbox{id}$. We have 

\begin{proposition}\label{REC'}
  Let $\pi: \mathcal{E} \longrightarrow \mbox{Ext}^{1}(T\otimes B'', S\otimes A=B''^{*}VA)$ as above. Choose any projection $q: N^{*} \longrightarrow B''^{*}$. Then the image of $\pi$ is
  $$\mbox{REC}'= (q\times \mbox{id})(\overline{\mbox{REC}})\subset B''^{*}VA,$$
  where $\overline{\mbox{REC}}$ is computed in Proposition \ref{barREC}. 
\end{proposition}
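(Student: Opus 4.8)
The plan is to trace the extension class $\delta'=[\Lambda']\in\mbox{Ext}^{1}(T\otimes B'',S\otimes A)=B''^{*}VA$ backwards through the two nested pullback diagrams that produce $P'$ and $\Lambda'$, to express it explicitly in terms of the extension class $[P]\in\mbox{Ext}^{1}(R_{m},\Lambda)$, and then to feed this into Proposition \ref{barREC} together with the invariance properties of $\overline{\mbox{REC}}$ in order to identify the image of $\pi$. Recall from the construction that $P'=P/(T\otimes\mbox{Hom}(T,P))$ sits in $0\to S\otimes A\to P'\to R_{m}'\to 0$ with $R_{m}'=R_{m}/(T\otimes B')$, that $T\otimes B''\hookrightarrow R_{m}'$ is the first $T$-exhaustive subobject of $R_{m}'$ with quotient $Q_{m}$, and that $\Lambda'$ is the pullback of $P'$ along this inclusion. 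Consequently $\delta'$ should be the image of $[P]$ under the composite
$$\mbox{Ext}^{1}(R_{m},\Lambda)\longrightarrow\mbox{Ext}^{1}(R_{m},S\otimes A)\longrightarrow\mbox{Ext}^{1}(T\otimes N,S\otimes A)\longrightarrow\mbox{Ext}^{1}(T\otimes B'',S\otimes A),$$
where the first arrow is pushforward along $\Lambda\to S\otimes A$, the second is restriction along the first $T$-exhaustive factor $T\otimes N\hookrightarrow R_{m}$, and the third is restriction along a chosen splitting $N=B'\oplus B''$, which is precisely the projection $q\otimes\mbox{id}\colon N^{*}VA\to B''^{*}VA$ appearing in the statement.

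Carrying this out, first I would make the identification of $\delta'$ precise: apply $\mbox{Hom}(T,-)$ and $\mbox{Hom}(-,S\otimes A)$ to the exact rows of the two pullback diagrams and run a routine but careful diagram chase, using that $T\otimes N$ really is the first $T$-exhaustive factor of $R_{m}$ (so $\mbox{Hom}(T,R_{m})=N$ and the restriction map is the honest one) and that $P$ and $P'$ are rigid by Mukai's Lemma (Lemma \ref{Mukai's Lemma}). Second, I would identify the image in $N^{*}VA$ of the first two arrows above: for general $\delta_{1}=[\Lambda]\in\mbox{REC}$ and general $\delta_{2}\in\mbox{ADM}_{m,m}$ it is exactly the fibre over $(\delta_{1},\delta_{2})$ of the distinguished component $\overline{\mbox{REC}}$ of $\Gamma$, by the cohomology computation recorded in the commutative diagram preceding Proposition \ref{barREC}; letting $(\delta_{1},\delta_{2})$ vary, these fibres sweep out a dense subset of $\overline{\mbox{REC}}$. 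Third, since $[P]$ is taken general in $\mbox{Ext}^{1}(R_{m},\Lambda)$ and the map $\mbox{Ext}^{1}(R_{m},\Lambda)\to N^{*}VA$ is linear and onto that fibre, the image of $[P]$ is a general point of $\overline{\mbox{REC}}$, so $\delta'$ ranges over a dense subset of $(q\otimes\mbox{id})(\overline{\mbox{REC}})$; taking closures gives $\mbox{REC}'=(q\times\mbox{id})(\overline{\mbox{REC}})$ as asserted. Finally, independence of the choice of $q$ follows as in Proposition \ref{REC1}: $\overline{\mbox{REC}}$ is stable under $\mbox{GL}_{N}\times\mbox{GL}_{A}$, being the distinguished component sitting over the loci $\mbox{REC}$ and $\mbox{ADM}$ (which are stable under their general linear groups) under the equivariant pairings $m_{1},m_{2}$, so any two projections $N^{*}\to B''^{*}$ are related by an element of $\mbox{GL}_{N}$ preserving $\overline{\mbox{REC}}$.

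The main obstacle is the first step. The splitting $N=B'\oplus B''$ is not canonical: the subspace $B'=\ker\bigl(\mbox{Hom}(T,R_{m})\to\mbox{Ext}^{1}(R_{m},\Lambda)\bigr)$ moves as $(\delta_{2},[P])$ varies, so one must first restrict to a suitable open subset of $\overline{\mbox{REC}}$ invariant under $\mbox{GL}_{N}$ on which a \emph{fixed} decomposition $N=B'\oplus B''$ may be used without altering the image of $\pi$ --- exactly the ``same trick as before'' used for $\pi_{B}$ and $\pi_{A}$ in Proposition \ref{REC1}. This is also the only point where genericity of $[P]$, $\delta_{1}$ and $\delta_{2}$ is genuinely invoked; everything else is formal diagram chasing together with the structure of $\Gamma$ already established in Proposition \ref{barREC}.
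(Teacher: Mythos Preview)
Your proposal is correct and follows essentially the same approach as the paper. The paper's own argument is terse: it simply says to ``use the same trick as before'' (referring to Proposition~\ref{REC1}), shrinking to an open subset of $\overline{\mbox{REC}}$ where a fixed decomposition $N=B'\oplus B''$ may be used, and then identifying $\mbox{REC}'$ with the image under $q\otimes\mbox{id}$; your write-up fills in precisely the details of this trick---the diagram chase identifying $\delta'$ as the image of $[P]$ under the composite through $N^{*}VA$, and the $\mbox{GL}_{N}$-invariance justifying independence of $q$---that the paper leaves implicit.
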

Hence we reduced solving Question T for $P$ to solving Question S for $P'$, which is defined in (\ref{P'}). For a $\sigma_{0}$-semistable object $E$, let $l(E)$ be the number of $\sigma_{0}$-Jordan-H\"{o}lder factors of $E$. Note that $l(P')<l(P)$, hence in finite steps we have $l(P)=0$. Question S and Question T are solved. We summarize this as follows:

\begin{algorithm}[Concatenation]\label{concatenation}
  Let $R_{m}\in \mathcal{A}_{\mathcal{H}}$ be a rigid object with a known $T$-exhaustive filtration labeled as in Definition \ref{exhaustive}, let $N=\mbox{Hom}(T,R_{m})$ and $M=\mbox{Hom}(S,Q_{m})$. Let $\Lambda \in \mbox{REC}\subset \mbox{Ext}^{1}(S\otimes A, T\otimes B)$ be a general receptive extension in the known receptive locus $\mbox{REC}$. Let $P\in \mbox{Ext}^{1}(R_{m},\Lambda)$ be a general extension. The objective is to compute the $T$-exhaustive filtration of $P$.
  \begin{enumerate}
  \item Compute $\overline{\mbox{REC}}\subset N^{*}VA$ by Proposition \ref{barREC}.
  \item Compute $\mbox{REC}'\subset B''^{*}VA$ by Proposition \ref{REC'}. 
  \item Let $\Lambda'\in \mbox{REC}'\subset B''^{*}VA$ be a general receptive extension. Under Notation \ref{notationexhaustive}, $Q_{m}$ is a rigid object with a known $S$-exhaustive filtration. Then $Q=P/(T\otimes \mbox{Hom}(T,P))$ is a general extension in $\mbox{Ext}^{1}(Q_{m},\Lambda')$. An $S$-exhaustive filtration of $Q$ lifts to a $T$-exhaustive filtration of $P$. 
  \item If $Q=0$, the algorithm terminates. Otherwise, do (1)-(4) for $Q$. 
  \end{enumerate}
Since by doing (1)-(4) each time, $l(Q)<l(P)$, the algorithm terminates in finitely many steps.
\end{algorithm}

\subsection{Local Reduction}

In this subsection we state the local reduction that is used in the global reduction (Algorithm \ref{global reduction}). 

First we need to know the $S$-exhaustive filtration of $E$, provided its $\sigma_{+}$-Harder-Narasimhan filtration.

\begin{algorithm}[Initial filtration]\label{initial filtration}
Let
  $$0=E_{0}\subset E_{1}\subset \cdots E_{n}=E$$
  be the $\sigma_{+}$-Harder-Narasimhan filtration of $E$. Let $Q_{i}=E/E_{i}$, $G_{i}=E_{i}/E_{i-1}$. The objective is to find the $S$-exhaustive filtration of $E=Q_{0}$.

  Clearly $Q_{n}=0$ is an $S$-exhaustive filtration.
  \begin{enumerate}
  \item Suppose we know the $S$-exhaustive filtration of $Q_{i}$. Let $\Lambda_{i}=G_{i}$, $A_{i}=\mbox{Hom}(S,\Lambda_{i})$, $B_{i}=\mbox{Hom}(\Lambda_{i}, T)^{*}$.
  \item Then $\Lambda_{i}\in \mbox{REC}=\mbox{Ext}^{1}(T\otimes B_{i}, S\otimes A_{i})$ is a general extension, $Q_{i-1}\in \mbox{Ext}^{1}(Q_{i},G_{i})$ is a general extension, we do the concatenation (Algorithm \ref{concatenation}) on $Q_{i-1}$ to get the $S$-exhaustive filtration of $Q_{i-1}$.
  \item If $i-1=0$, the algorithm terminates. Otherwise, do (1)-(3) again.
  \end{enumerate}
\end{algorithm}

Now we can state local reduction formally.

\begin{algorithm}[Local reduction]\label{local reduction}
  Let $E\in \mathcal{A}_{\mathcal{H}}$ be a rigid object whose $\sigma_{+}$-Harder-Narasimhan filtration is known. The objective is to find its $\sigma_{-}$-Harder-Narasimhan filtration.
  \begin{enumerate}
  \item Initially we find the $S$-exhaustive filtration of $E$ by using Algorithm \ref{initial filtration}.
  \item The first $\sigma_{-}$-Harder-Narasimhan factor $E_{1}^{-}$ is the first factor of the $S$-exhaustive filtration. Let $Q=Q_{1}^{-}=E/E_{1}^{-}$. It has a known $T$-exhaustive filtration
    $$0 = F_{n}\subset E_{n-1}\subset F_{n-1}\subset \cdots \subset E_{1}\subset F_{1} \subset E_{0}=Q.$$
  \item We find the first $\sigma_{-}$-Harder-Narasimhan factor $O$ of $Q$ by Proposition \ref{first factor}, and note that $O\otimes \mbox{Hom}(O,Q)\subset \Gamma = F_{n-1}/F_{n}$. Let $\Lambda = \Gamma / O\otimes \mbox{Hom}(O,Q)$, then
    $$\Lambda \in \mbox{REC}\subset \mbox{Ext}^{1}(S\otimes A'', T\otimes B'')$$
    is a general receptive extension, where $A'', B''$ are vector spaces with known dimension by Proposition \ref{first factor}. The receptive locus $\mbox{REC}$ is given by Proposition \ref{REC1}.
  \item Let $R_{n-1}=Q/F_{n-1}$ and $Q' =Q/ (O\otimes \mbox{Hom}(O,Q))$. Then $Q'\in \mbox{Ext}^{1}(R_{n-1},\Lambda)$ is a general extension, where $\Lambda\in \mbox{REC}\subset A''^{*}V^{*}B''$ is a general receptive extension whose receptive locus is computed in step (3), and $R_{n-1}$ has the induced $T$-exhaustive filtration that is computed in step (2). We do the concatenation (Algorithm \ref{concatenation}) to get the $T$-exhaustive filtration of $Q'$.
  \item If $Q'=0$, the algorithm terminates. Otherwise, let $Q=Q'$ and do (3)-(5) again. 
  \end{enumerate}
\end{algorithm}

We briefly summarize local reduction (Algorithm \ref{local reduction}) in words. We start with a rigid object $E=Q_{0}^{-}$ with a known $\sigma_{+}$-Harder-Narasimhan filtration. First we solve Question S repeatedly to get an $S$-exhaustive filtration of $E$. Quotient out the first factor, we get $Q_{1}^{-}$, which has a known $T$-exhaustive filtration, which is the original $S$-exhaustive filtration with the first factor deleted. For any $i\geq 1$, once we computed the first $\sigma_{-}$-Harder-Narasimhan factor of $Q_{i}^{-}$ and take quotient $Q_{i+1}^{-}$, we fall into Question T. This can be solved, and we get a $T$-exhaustive filtration for $Q_{i+1}^{-}$, and the process is repeated. Since the length of the rigid object reduces at every step, finally this terminates and we get the $\sigma_{-}$-Harder-Narasimhan filtration of $E$.

Next we show an example of carrying out local reduction.

\begin{example}\label{hard}
  Let $(X,H)$ be a K3 surface with $\mbox{Pic}(X)=\mathbb{Z}H$ and $H^{2}=2$. Let $S\in M_{H}((5,2H,1)), T=\mathcal{O}_{X}[1]$ and $W=W(T,S)$ be the numerical wall defined by $T$ and $S$, and $\sigma_{+}$ (resp. $\sigma_{-}$) a stability condition on $\mathbf{b}$ (see Section \ref{section3}) that is right above (resp. below) $W$. Let $V=\mbox{Ext}^{1}(T,S)\cong \mathbb{C}^{6}$, $B_{0}=\mathbb{C}^{35}$, $A_{1}=\mathbb{C}^{6}$, $B_{1}=\mathbb{C}^{155}$. Let $Q_{1}\in M_{\sigma_{+}}((-5,12H,-29))$ and $E_{1}=T\otimes B_{1}$.

  Let $E$ be a general extension of $Q_{1}$ by $E_{1}$. One may check that $E$ is rigid, by noticing that $E$ is the quotient of $E'\in M_{H}((305, 477H, 746))$ by its first $\sigma_{+}$-Harder-Narasimhan factor. The shape (Definition \ref{shape}) of $\sigma_{+}$-Harder-Narasimhan filtration of $E$ is
  $$(155T, Q_{1}).$$
  
  We run local reduction (Algorithm \ref{local reduction}) to compute the $\sigma_{-}$-Harder-Narasimhan filtration of $E$. The $\sigma_{-}$-Harder-Narasimhan filtration of $Q_{1}$ has shape $(6S, 35T)$, hence there is a filtration of $E$ (not necessarily exhaustive at this moment) whose shape is
  $$E^{\bullet}=(155T, 6S, 35T).$$
  We first show that $E^{\bullet}$ is exhaustive. By the construction of exhaustive filtration (Lemma \ref{exist}), it suffices to show $\mbox{Hom}(T, E)=155$. Then the claim follows from Corollary \ref{JH} that the $\sigma_{-}$-Harder-Narasimhan filtration of $Q_{1}$ is exhaustive. We have
  $$0 \longrightarrow \mbox{Hom}(T,T\otimes B_{1}) \longrightarrow \mbox{Hom}(T,E) \longrightarrow \mbox{Hom}(T,Q_{1})=0. $$
  Hence $E^{\bullet}$ is exhaustive. This finishes step (1) in the local reduction (Algorithm \ref{local reduction}): the initial filtration (Algorithm \ref{initial filtration}) is $E^{\bullet}$.

  The $Q_{1}^{-}$ in step (2) of local reduction (Algorithm \ref{local reduction}) is $T\otimes B_{0}$.

 We now run step (3). Let $\delta_{0,1}\in \mbox{ADM}_{0,1}\subset B_{0}^{*}VA_{1}$ be a general element. Then the $\mbox{ker}(\delta_{0,1})$ is a general subspace of $A_{1}^{*}V^{*}$ of dimension 1, where $\delta_{0,1}$ is viewed as the adjoint map $\delta_{0,1}: A_{1}^{*}V^{*} \overset{\delta_{0,1}}{\longrightarrow} B_{0}^{*}$. By the definition of admissible extensions, $\mbox{ADM}_{1,1} \subset A_{1}^{*}V^{*}B_{1}$ is the closure of the union of $\mbox{ker}(\delta_{0,1})\otimes B_{1}$ for general $\delta_{0,1}$. Since $\mbox{ker}(\delta_{0,1})$ is a general 1 dimensional subspace, we have
 $$\mbox{ADM}_{1,1}=\{\delta_{1,1}\in A_{1}^{*}V^{*}B_{1}: \mbox{rk}_{(AV, B)}(\delta_{1,1})\leq 1\},$$
 where $\mbox{rk}_{(AV,B)}(\delta_{1,1})$ is the rank of $\delta_{1,1}$ viewed as a 2-tensor in $(A_{1}^{*}V^{*})\otimes B_{1}$.

  Next we compute the $k(i)$ in Proposition \ref{first factor}. Note that $\mbox{rk}_{(AV,B)}(\delta_{1,1})=1$ for a general $\delta_{1,1}\in \mbox{ADM}_{1,1}$, the map $\delta_{1,1}: A_{1} \longrightarrow V^{*}B_{1}$ is the composition of a general map
  $\delta_{1,1}': A_{1} \longrightarrow V^{*}B_{1}'$ and the inclusioin $i\otimes \mbox{id}: V^{*}B_{1}' \longrightarrow V^{*}B_{1}$, where $i: B_{1}' \longrightarrow B_{1}$ is a one dimensional subspace. Using the notation of Proposition \ref{first factor}, we have
  $$J_{i}=(\delta(N_{i})\otimes B_{1})\cap (M_{i}V^{*}B_{1}')\cap (M_{i}\otimes \delta_{1,1}'(A_{1}))=(\delta(N_{i})\otimes B_{1}')\cap (M_{i}\otimes \delta_{1,1}'(A_{1})).$$
  Since $\mbox{dim}B_{1}'=1$, $\delta_{1,1}'(A_{1})$ can be viewed as a general subspace of $V$. Since $\mbox{dim}A_{1}=\mbox{dim}V=6$, $\delta_{1,1}'(A_{1})=V\otimes B_{1}'$. We omit writing $B_{1}'$ since its dimension is 1. Then $J_{i}=\delta(N_{i})\cap M_{i}V=\delta(N_{i})$. Since $E^{\bullet}$ is exhaustive, $\mbox{Hom}(O_{0}, E)=0$. Hence the $i$ with maximal $\phi_{-}(i)$ is $i=-1$, and $O_{-1}\in M_{\sigma_{-}}((29, 12H, 5))$.  By Theorem \ref{numbers}, we have
  $$J_{-1}=\delta(N_{-1})=\mathbb{C},$$
  hence $k(-1)=1$. Using the notation in step (3), $\Lambda=T^{\oplus 154}$ and there is no receptive locus. We completed step (3) in local reduction (Algorithm \ref{local reduction}).

  Then in step (4), $Q'$ is a general extension of $R=T\otimes B_{0}$ by $\Lambda= T^{\oplus 154}$. Hence $Q'=T^{\oplus 189}$. We run the local reduction again for $Q'$, but there is nothing to do. Hence the algorithm terminates. The $\sigma_{-}$-Harder-Narasimhan filtration of $E$ has shape
  $$(S_{-1}^{-}, 189 T),$$
  where $S_{-1}^{-}\in M_{\sigma_{-}}((29, 12H, 5))$.
\end{example}

\begin{remark}
  The local reduction (Algorithm \ref{local reduction}) gives a theoretical way to compute the $\sigma_{-}$-Harder-Narasimhan filtration of a rigid object from its $\sigma_{+}$-Harder-Narasimhan filtration in principle. However as one may have noticed in Example \ref{hard}, in practice it could be hard to carry out by hand. The main computational difficulties are writing down the ideal of the admissible locus (Proposition \ref{admknown}) and computing the functions $k(i)$ (Proposition \ref{first factor}).

  In many cases, we may overcome these computational difficulties by using a simplified version (Theorem \ref{local simplification}) of local reduction, which is discussed in the next section. We will revist Example \ref{hard} in Example \ref{easy} by using the simplified local reduction.
\end{remark}

\section{Simplifications}\label{section6}

In many good cases, the algorithm can be significantly simplified. This includes two aspects. Locally, if at each wall the $\sigma_{+}$-Harder-Narasimhan filtration has only two factors, then the local reduction (Algorithm \ref{local reduction}) is significantly simplified (Theorem \ref{local simplification}) and can be computed by hand. The local complexity of a spherical object is dominated by the global complexity, which is measured by a notion called \emph{height} (Definition \ref{ht}). For a spherical object of height $\leq 2$, we show a straightforward inductive formula (Theorem \ref{global simplification}).

In practical terms, among all examples that the author did, Theorem \ref{local simplification} can be applied to most of them. However, there exist Mukai vectors that need to fully invoke the global reduction (Algorithm \ref{global reduction}), see Example \ref{3step}.

\subsection{Local Simplification}

The goal of this subsection is to find a simple way to compute the $\sigma_{-}$-Harder-Narasimhan filtration of those rigid objects $E$ that fits into the following exact sequence
$$0 \longrightarrow F^{q} \longrightarrow E \longrightarrow G^{p} \longrightarrow 0, $$
  where $F, G\in \mathcal{H}$ are $\sigma_{+}$-stable spherical objects with $\phi_{+}(F)>\phi_{+}(G)$. This is Theorem \ref{local simplification}.

Let $W$ be a wall whose corresponding rank 2 lattice is $\mathcal{H}$. Let $S_{0}, T_{0}$ be the two $\sigma_{0}$-stable objects in $\mathcal{H}$ where $\sigma_{0}\in \mathcal{H}$. Let $\sigma_{+}, \sigma_{-}$ be two
stability conditions on two sides of $W$ and near $W$, with $\phi_{+}(T_{1})>\phi_{+}(S_{0})$. Recall that the \emph{fundamental sequence} of $\mathcal{H}$ is defined in Definition \ref{fundamental sequence} as
$$a_{0}=1,~ a_{1}=g=\mbox{ext}^{1}(T_{0},S_{0}),~ a_{n}=g\cdot a_{n-1}-a_{n-2} \mbox{ for } n\geq 2.$$
For the rest of the paper, we sometimes write $S=S_{0}$ and $T=T_{0}$.

 We first consider rigid objects $E$ that can fit into an exact sequence
$$0 \longrightarrow S^{p} \longrightarrow E \longrightarrow T^{q} \longrightarrow 0. $$
Since $E$ is rigid, we may assume
$$[0 \longrightarrow S^{p} \longrightarrow E \longrightarrow T^{q} \longrightarrow 0]=\delta \in \mbox{Ext}^{1}(T^{q},S^{p})$$
is a general extension. We call such rigid objects of \emph{type I}.

\begin{lemma}\label{unbalance}
  Let $E$ be a rigid object that fits into an exact sequence
  $$0 \longrightarrow S^{p} \longrightarrow E \longrightarrow T^{q} \longrightarrow 0. $$
Then 
  $$pq\cdot g < q^{2}+p^{2}.$$
\end{lemma}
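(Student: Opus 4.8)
The plan is to exploit rigidity together with Mukai's Lemma, in the spirit of the earlier arguments. Since $E$ fits into
$$0 \longrightarrow S^{p} \longrightarrow E \longrightarrow T^{q} \longrightarrow 0,$$
and $S=S_0$, $T=T_0$ are the two $\sigma_+$-stable spherical objects with $\phi_+(T)>\phi_+(S)$, we have $\mathrm{Hom}(S^p,T^q)=0$ automatically: any nonzero map $S\to T$ would violate stability of $S$ and $T$ since $\phi_+(S)<\phi_+(T)$ and both are stable. Hence Mukai's Lemma (Lemma \ref{Mukai's Lemma}) applies to this extension and gives
$$0=\mathrm{ext}^1(E,E)\ \geq\ \mathrm{ext}^1(S^p,S^p)+\mathrm{ext}^1(T^q,T^q)=p^2\,\mathrm{ext}^1(S,S)+q^2\,\mathrm{ext}^1(T,T)=0,$$
which is vacuous. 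So the rigidity condition is not expressed by the first $\mathrm{ext}^1$ alone; instead I would compute $\mathrm{ext}^1(E,E)$ directly via the long exact sequences and Euler characteristics.

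First I would compute $\chi(E,E)$. Writing $v=v(E)=p\,s_0+q\,t_0$ and using that $s_0,t_0$ are spherical with $s_0\cdot t_0 = g$ (recall $g=\mathrm{ext}^1(T_0,S_0)=-\chi(T_0,S_0)$, and $\mathrm{Hom}(S_0,T_0)=\mathrm{Hom}(T_0,S_0)=0$ so $\chi(T_0,S_0)=-\mathrm{ext}^1(T_0,S_0)=-g$), one gets
$$v^2=(ps_0+qt_0)^2=p^2 s_0^2+q^2 t_0^2+2pq\,(s_0\cdot t_0)=-2p^2-2q^2+2pq\,g,$$
hence $\chi(E,E)=-v^2=2p^2+2q^2-2pq\,g$. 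On the other hand $\chi(E,E)=\mathrm{hom}(E,E)-\mathrm{ext}^1(E,E)+\mathrm{ext}^2(E,E)=2\,\mathrm{hom}(E,E)-\mathrm{ext}^1(E,E)$ by Serre duality on the K3. Since $E$ is rigid, $\mathrm{ext}^1(E,E)=0$, so $\chi(E,E)=2\,\mathrm{hom}(E,E)\geq 2$ (as $\mathrm{id}_E\neq 0$). Therefore $2p^2+2q^2-2pq\,g\geq 2$, i.e. $p^2+q^2-pq\,g\geq 1>0$, which is exactly $pq\,g<p^2+q^2$.

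The only genuine point to be careful about is the computation $s_0\cdot t_0=g$, which requires knowing $\mathrm{Hom}(S_0,T_0)=\mathrm{Ext}^2(S_0,T_0)=0$; the first vanishes by stability plus the phase inequality $\phi_+(S_0)<\phi_+(T_0)$ (two distinct $\sigma_+$-stable objects), and $\mathrm{Ext}^2(S_0,T_0)\cong\mathrm{Hom}(T_0,S_0)^*=0$ for the same reason. After that everything is the Riemann–Roch identity $\chi(E,E)=-v(E)^2$ and Serre duality, so there is no real obstacle; the argument is short. I would present it in roughly this order: (1) record $\mathrm{Hom}(S_0,T_0)=\mathrm{Hom}(T_0,S_0)=0$ and deduce $s_0\cdot t_0=g$; (2) compute $v(E)^2=2pqg-2p^2-2q^2$; (3) use $\chi(E,E)=-v(E)^2=2\,\mathrm{hom}(E,E)-\mathrm{ext}^1(E,E)$ together with rigidity and $\mathrm{hom}(E,E)\geq 1$ to conclude $p^2+q^2-pqg\geq 1$, hence the strict inequality $pqg<p^2+q^2$.
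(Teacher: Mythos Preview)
Your argument is correct and in fact slightly cleaner than the paper's own proof. You directly compute $v(E)^2 = -2p^2-2q^2+2pqg$ and combine $\chi(E,E)=-v(E)^2$ with Serre duality and rigidity to get $2\,\mathrm{hom}(E,E)=2p^2+2q^2-2pqg\geq 2$, hence $pqg<p^2+q^2$. The paper instead argues geometrically: since $E$ is rigid, any two general extensions in $\mathrm{Ext}^1(T^q,S^p)$ have isomorphic middle term, and using $\mathrm{Hom}(S,T)=0$ one shows that the $\mathrm{GL}_p\times\mathrm{GL}_q$-orbit through a general extension is open in $\mathbb{P}\,\mathrm{Ext}^1(T^q,S^p)$, whence $pqg-1=\mathrm{ext}^1(T^q,S^p)-1\leq (p^2-1)+(q^2-1)$. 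Your route is shorter and uses only the numerical identity $\chi(E,E)=-v(E)^2$ plus $\mathrm{hom}(E,E)\geq 1$; the paper's route makes the role of rigidity more visibly geometric (an open orbit) and could be adapted to situations without an Euler-form identity. Both yield the same strict inequality.
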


\begin{proof}
  Let
  $$\eta_{i}=[0 \longrightarrow S^{p} \overset{f_{i}}{\longrightarrow} E_{i} \overset{g_{i}}{\longrightarrow} T^{q} \longrightarrow 0]\in \mbox{Ext}^{1}(T^{q},S^{p}), i=1,2$$
  be two general extensions. Since $E$ is rigid and $\delta$ is general, $E_{1}\cong E_{2}$. Consider the following diagram
  \[\begin{tikzcd}
      0\arrow[r] & S^{p} \arrow[r,"f_{1}"] \arrow[d,dashed, "\phi"] &E_{1} \arrow[r, "g_{1}"] \arrow[d, equal] & T^{q} \arrow[r] \arrow[d, dashed, "\psi"] & 0\\
      0\arrow[r] & S^{p} \arrow[r, "f_{2}"] & E_{2} \arrow[r, "g_{2}"] & T^{q} \arrow[r] & 0.
    \end{tikzcd}\]
  Since $\mbox{Hom}(S, T)=0$, the composition map
  $$S^{p} \overset{f_{1}}{\longrightarrow} E_{1} \overset{=}{\longrightarrow} E_{2} \overset{g_{2}}{\longrightarrow} T^{q}$$
  is zero. Since $S^{p} \overset{f_{2}}{\longrightarrow} E_{2}$ is the kernel of $E_{2}\overset{g_{2}}{\longrightarrow} T^{q}$, there exists a unique map
  $\phi: S^{p} \longrightarrow S^{p}$ such that the diagram commutes. Swap the two rows in the diagram, we see that $\phi$ is an isomorphism. Since $S$ is simple, $\phi$ is an element in $\mbox{GL}_{p}$. Similarly, there exists a unique $\psi$ fitting into the diagram above, which is an element of $\mbox{GL}_{q}$.

  Modulo scalars, we see that an orbit of $\mathbb{P}\mbox{GL}_{p}\times \mathbb{P}\mbox{GL}_{q}$ action contains an open subset of $\mathbb{P}\mbox{Ext}^{1}(T^{q},S^{p})$. By comparing dimensions, we have
  $$\mbox{ext}^{1}(T^{q},S^{p})-1=\mbox{dim}\mathbb{P}\mbox{Ext}^{1}(T^{q},S^{p})\leq \mbox{dim}\mathbb{P}\mbox{GL}_{p}\times \mathbb{P}GL_{q}=p^{2}-1+q^{2}-1,$$
  namely
  $$\mbox{ext}^{1}(T^{q},S^{p})<q^{2}+p^{2}.$$
  
\end{proof}

We note a simple property of the fundamental sequence in the following lemma.

\begin{lemma}\label{decreasing}
  Assume $g(\mathcal{H})\geq 2$. Then
  $$0<\frac{a_{n+1}}{a_{n}}<\frac{a_{n}}{a_{n-1}}. $$
\end{lemma}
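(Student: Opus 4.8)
The plan is to reduce both inequalities to elementary facts about the recursion $a_n = g\,a_{n-1} - a_{n-2}$ with $g = g(\mathcal{H}) \geq 2$, starting from $a_0 = 1$ and $a_1 = g$.

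First I would establish by induction that every $a_n$ is a positive integer and, more precisely, that $a_n \geq a_{n-1} > 0$ for all $n \geq 1$. The base case is $a_0 = 1$, $a_1 = g \geq 2$. For the inductive step, if $a_n \geq a_{n-1} > 0$, then
$$a_{n+1} = g\,a_n - a_{n-1} \geq 2a_n - a_{n-1} = a_n + (a_n - a_{n-1}) \geq a_n > 0.$$
In particular every $a_n$ is positive, so $a_{n+1}/a_n > 0$, which gives the left inequality.

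For the right inequality it suffices to prove $a_{n+1}a_{n-1} < a_n^2$ for all $n \geq 1$, since dividing by $a_n a_{n-1} > 0$ then yields $a_{n+1}/a_n < a_n/a_{n-1}$. The key observation is that the quantity $D_n := a_n^2 - a_{n+1}a_{n-1}$ is independent of $n$. Indeed, substituting $a_{n+1} = g\,a_n - a_{n-1}$ gives $D_n = a_n^2 - g\,a_n a_{n-1} + a_{n-1}^2$, while substituting $a_{n-2} = g\,a_{n-1} - a_n$ into $D_{n-1} = a_{n-1}^2 - a_n a_{n-2}$ produces the same symmetric expression; hence $D_n = D_{n-1}$ for all $n \geq 2$. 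Since $D_1 = a_1^2 - a_2 a_0 = g^2 - (g^2 - 1) = 1$, we conclude $D_n = 1$ for every $n \geq 1$, so $a_{n+1}a_{n-1} = a_n^2 - 1 < a_n^2$, as required.

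The argument is entirely elementary and I do not expect a genuine obstacle; the only mild care needed is that the telescoping identity $D_n = D_{n-1}$ is valid only for $n \geq 2$ (so that $a_{n-2}$ is defined), which is why positivity must be proved by a separate direct induction and the base value $D_1$ checked by hand, rather than everything being read off from the invariant. One could alternatively observe that $a_n$ is, up to normalization, a Chebyshev polynomial of the second kind evaluated at $g/2 \geq 1$, but the constancy of $D_n$ gives the cleanest self-contained proof.
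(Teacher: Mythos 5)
Your proof is correct, and it takes a genuinely different route from the paper's. The paper works with the ratios $b_n=a_n/a_{n-1}$, derives the recursion $b_{n+1}=g-1/b_n$, and runs an induction on the difference $b_{n+1}-b_n=1/b_{n-1}-1/b_n$ starting from $b_2<b_1$. Your argument instead proves a Cassini-type identity $a_n^2-a_{n+1}a_{n-1}=1$ by showing the expression is symmetric in the shift $n\mapsto n-1$, and gets both positivity and strict decrease of the ratios from it. Two remarks on the comparison. First, your version is arguably more careful: the paper asserts "clearly $b_n>0$" before the induction, but positivity of $a_n$ under $a_{n+1}=g a_n - a_{n-1}$ is not immediate and you supply the needed induction $a_{n+1}\geq a_n>0$. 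Second, the identity $a_{n-1}a_{n+1}-a_n^2=-1$ that you prove is \emph{exactly} the telescoping determinant computation the paper already carries out inside the proof of Proposition \ref{type1} (where it is used to show $(a_{i-1},a_i),(a_i,a_{i+1})$ is an integral basis of $\mathbb{Z}^2$); so your approach has the side benefit of unifying the two arguments under one invariant. The paper's ratio recursion, on the other hand, feeds more directly into Remark \ref{lim}, where the limit $\rho$ is read off as the fixed point of $x\mapsto g-1/x$.
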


\begin{proof}
  For $n\in \mathbb{Z}_{\geq 1}$, let $b_{n}=\frac{a_{n}}{a_{n-1}}$. Clearly $b_{n}>0$. From the definition
  $$a_{n+1}=g\cdot a_{n}-a_{n-1},$$
  we have
  $$b_{n+1}=g-\frac{1}{b_{n}}.$$
  Consider
  $$b_{n+1}-b_{n}=-\frac{1}{b_{n}}+\frac{1}{b_{n-1}},$$
  if $b_{n}<b_{n-1}$, then $b_{n+1}<b_{n}$. Now
  $b_{2}=g-\frac{1}{g}< g=b_{1}$, the lemma is proved by induction.
\end{proof}

\begin{remark}\label{lim}
An immediate consequence of the lemma is that $\{\frac{a_{n+1}}{a_{n}} \}$ monotonically approaches its limit from above. The limit is the smaller root of the equation $x=g-\frac{1}{x}$, which we denote by $\rho(\mathcal{H})=\rho$.
\end{remark}

\begin{proposition}\label{type1}
  Assume $g(\mathcal{H})\geq 2$. Let $E$ be a rigid object that fits into an exact sequence
  $$0 \longrightarrow T_{1}^{q} \longrightarrow E \longrightarrow S_{0}^{p} \longrightarrow 0. $$
  
  If $q\leq p$, then the $\sigma_{-}$-Harder-Narasimhan filtration of $E$ is given by
  $$E= (S_{-i}^{-})^{m} \oplus (S_{-i-1}^{-})^{n},$$
  where $i$ is the unique integer such that $\frac{a_{i+1}}{a_{i}}\leq \frac{p}{q} < \frac{a_{i}}{a_{i-1}}$, and $m,n$ are the unique integers such that
  $$m\cdot v(S_{-i}^{-})+n\cdot v(S_{-i-1}^{-})=v(E).$$
  
  Symmetrically if $q\geq p$, then the $\sigma_{-}$-Harder-Narasimhan filtration of $E$ is given by
  $$E= (T_{i+1}^{-})^{m} \oplus (T_{i+2}^{-})^{n} ,$$
  where $i$ is the unique integer such that $\frac{a_{i+1}}{a_{i}}\leq \frac{q}{p} < \frac{a_{i}}{a_{i-1}}$, and $m,n$ are the unique integers such that
  $$m\cdot v(T_{i+1}^{-}) + n\cdot v(T_{i+2}^{-})=v(E).$$
\end{proposition}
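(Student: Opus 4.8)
I will treat the case $q\le p$; the case $q\ge p$ then follows by the mirror argument in which $S$ and $T$ (equivalently, the two sides of $W$) are interchanged, under which the labels are matched by $S_{-i}^{-}\leftrightarrow T_{i+1}^{-}$ and $p\leftrightarrow q$. By rigidity I may take $E$ to be a general extension $0\to T_{1}\otimes B\to E\to S_{0}\otimes A\to 0$ with $\dim A=p$, $\dim B=q$. Conceptually this is an instance of the local reduction (Algorithm~\ref{local reduction}) applied to a type I object, where all the combinatorial data is explicit; but it is cleaner to argue directly using the fundamental sequence.

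The first step is lattice bookkeeping. By induction from the recursion in Proposition~\ref{construction} (equivalently Proposition~\ref{lattice}) one gets, with the convention $a_{-1}=0$,
$$v(S_{-j}^{-})=s_{-j}=a_{j-1}t_{1}+a_{j}s_{0},$$
together with the Pl\"ucker identities $a_{j-1}a_{j+1}-a_{j}^{2}=-1$ and $a_{j-1}^{2}+a_{j}^{2}-g\,a_{j-1}a_{j}=1$, so in particular $s_{-j}s_{-(j-1)}=-g$ and $\{s_{-i},s_{-i-1}\}$ is a $\mathbb{Z}$-basis of $\mathcal{H}$. Solving $m\,s_{-i}+n\,s_{-i-1}=q\,t_{1}+p\,s_{0}$ gives $m=p a_{i}-q a_{i+1}$ and $n=q a_{i}-p a_{i-1}$, so $m\ge 0\iff p/q\ge a_{i+1}/a_{i}$ and $n\ge 0\iff p/q\le a_{i}/a_{i-1}$ (automatic for $i=0$). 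By Lemma~\ref{decreasing} and Remark~\ref{lim} the ratios $b_{j}:=a_{j}/a_{j-1}$ strictly decrease from $b_{1}=g$ to $\rho$, so the intervals $[b_{i+1},b_{i})$ for $i\ge 1$ together with $[b_{1},+\infty)$ for $i=0$ partition $(\rho,+\infty)$; since $p/q\ge 1\ge\rho$ and $p/q=\rho$ is impossible by Lemma~\ref{unbalance} (which gives $g<p/q+q/p$, forcing $p/q>\rho$), there is a unique $i\ge 0$ with $\tfrac{a_{i+1}}{a_{i}}\le\tfrac{p}{q}<\tfrac{a_{i}}{a_{i-1}}$, and for this $i$ the integers $m,n\ge 0$ are as claimed.

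The second step identifies the $\sigma_{-}$-Harder--Narasimhan filtration as a two-step extension. Since $E$ is rigid, by Lemma~\ref{rigid} every $\sigma_{-}$-Harder--Narasimhan factor $g_{k}$ of $E$ is rigid, and by Mukai's Lemma (Lemma~\ref{Mukai's Lemma}) each $g_{k}$ has all of its $\sigma_{-}$-Jordan--H\"older factors stable and spherical of phase $\phi_{-}(g_{k})$; as $\sigma_{-}$ is generic there is a unique $\sigma_{-}$-stable spherical object of that phase, so $g_{k}\cong O_{k}^{\oplus n_{k}}$ and $v(g_{k})$ is a positive multiple of a spherical class. Now $\sum_{k}v(g_{k})=v(E)=m\,s_{-i}+n\,s_{-i-1}$, and a direct computation as in the first step shows that the only spherical classes lying in the closed cone $\mathbb{R}_{\ge 0}s_{-i}+\mathbb{R}_{\ge 0}s_{-i-1}$ are $s_{-i}$ and $s_{-i-1}$ themselves (for $g\ge 2$ the equation $x^{2}+gxy+y^{2}=1$ has no solution with $x,y\ge 1$); since distinct spherical classes in $\mathcal{H}$ are linearly independent, there are exactly two factors, with the higher-phase one (as $\phi_{-}(S_{-i}^{-})>\phi_{-}(S_{-i-1}^{-})$) equal to $(S_{-i}^{-})^{\oplus m}$ and quotient $(S_{-i-1}^{-})^{\oplus n}$; degenerate cases $m=0$ or $n=0$ are immediate.

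The third step is to show this extension splits, i.e.\ $\mathrm{Ext}^{1}(S_{-i-1}^{-},S_{-i}^{-})=0$. From $\chi(S_{-i}^{-},S_{-i-1}^{-})=-s_{-i}s_{-i-1}=g$, the vanishing $\mathrm{Hom}(S_{-i}^{-},S_{-i-1}^{-})=0$ forced by the phase inequality, and Serre duality, one gets $\mathrm{ext}^{1}(S_{-i}^{-},S_{-i-1}^{-})=\mathrm{hom}(S_{-i-1}^{-},S_{-i}^{-})-g$, and I would finish by showing $\mathrm{hom}(S_{-i-1}^{-},S_{-i}^{-})=g$; this is the $\sigma_{-}$-mirror of the relation $\mathrm{ext}^{1}(S_{0},S_{-1})=a_{-1}=0$ underlying Theorem~\ref{numbers}, reflecting that $S_{-i}^{-}$ is built from $S_{-i-1}^{-}$ by a universal extension (a spherical reflection), which kills the $\mathrm{Ext}^{1}$ against the reflected object. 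Then $E\cong(S_{-i}^{-})^{\oplus m}\oplus(S_{-i-1}^{-})^{\oplus n}$, and since both summands are $\sigma_{-}$-stable with $\phi_{-}(S_{-i}^{-})>\phi_{-}(S_{-i-1}^{-})$ this is precisely the $\sigma_{-}$-Harder--Narasimhan filtration. I expect the main obstacle to be exactly this last step: making rigorous the $\sigma_{-}$-side analogues of the constructions of Section~\ref{section4} (Proposition~\ref{construction}, Theorem~\ref{numbers}, Theorem~\ref{decomposition}) for consecutive $\sigma_{-}$-stable spherical objects, and verifying that genericity is preserved through the relevant long exact sequences so that $\mathrm{hom}(S_{-i-1}^{-},S_{-i}^{-})=g$.
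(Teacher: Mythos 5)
Your approach is genuinely different from the paper's. The paper does not analyze the $\sigma_{-}$-Harder--Narasimhan filtration of $E$ directly at all: it builds the candidate object $(T_{i+1}^{-})^{m}\oplus(T_{i+2}^{-})^{n}$, checks it is rigid (using $\mbox{Ext}^{1}(T_{i+1}^{-},T_{i+2}^{-})=0$), observes via the $\sigma_{-}$-mirror of Corollary~\ref{JH} that it also fits into an exact sequence $0\to T_{1}^{q}\to\cdot\to S_{0}^{p}\to 0$ with the same $(p,q)$, and then uses the fact (established in the proof of Lemma~\ref{unbalance}) that the $\mbox{GL}_{p}\times\mbox{GL}_{q}$-orbit of any extension class with rigid middle term is open, hence dense, so any two rigid objects fitting such an extension are isomorphic. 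Your step 1 (the lattice bookkeeping, the existence and uniqueness of $i$, the non-negativity of $m,n$) is correct and essentially coincides with the paper's.

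Your step 2 contains a gap. The identity $\sum_{k}v(g_{k})=m\,s_{-i}+n\,s_{-i-1}$, together with the observation that $s_{-i},s_{-i-1}$ are the only spherical classes in the closed cone they span, does not imply that there are exactly two HN factors: the $v(g_{k})$ need not lie in that cone. In the basis $\{s_{-i},s_{-i-1}\}$, for instance, the neighboring class $s_{-i+1}$ has coordinates $(g,-1)$ and $s_{-i-2}$ has $(-1,g)$, and a sum of positive multiples of several classes outside the cone can still land at $(m,n)$; you also do not exclude $T$-type factors. Ruling these out would require a separate argument, say via rigidity of the partial sub-quotients $E_{l}/E_{k-1}$ (Lemma~\ref{rigid}) and Mukai's Lemma. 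Your step 3 you flag yourself, and rightly so: establishing $\mbox{hom}(S_{-i-1}^{-},S_{-i}^{-})=g$ and hence $\mbox{Ext}^{1}(S_{-i-1}^{-},S_{-i}^{-})=0$ needs the $\sigma_{-}$-side analogues of Proposition~\ref{construction} and Theorem~\ref{numbers}, which you do not develop. The paper needs the very same mirror vanishing, in the form $\mbox{Ext}^{1}(T_{i+1}^{-},T_{i+2}^{-})=0$, but only to show the candidate direct sum is rigid; once that is in hand, the openness-of-orbits argument finishes the proof in one stroke and sidesteps both of your gaps.
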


\begin{proof}
  We only prove the case $q\geq p$, the other case is similar. By Lemma \ref{unbalance}, we have $pq\cdot g< p^{2}+q^{2}$, which can be rewritten as
  $$g<\frac{p}{q}+\frac{q}{p}. $$
  By Remark \ref{lim}, $\rho$ is the smaller root of the equation $x+\frac{1}{x}=g$, and by assumption $q\geq p$, hence we have $\frac{q}{p}>\rho$. Now since
  $\frac{a_{n+1}}{a_{n}} $ approaches $\rho$ monotonically from above, we may divide the interval $(\rho, +\infty)$ into disjoint subintervals
  $$(\rho,+\infty)=\coprod_{i\geq 0}[\frac{a_{i+1}}{a_{i}}, \frac{a_{i}}{a_{i-1}}), $$
  here we set $\frac{a_{0}}{a_{-1}}=+\infty$. Since $\frac{q}{p}>\rho $, there exists a unique $i$ such that $\frac{a_{i+1}}{a_{i}}\leq \frac{q}{p} < \frac{a_{i}}{a_{i-1}} $. In other words, on
  $\mathbb{Z}^{2}$ the point $(p,q)$ is in between the positive rays spanned by $(a_{i-1}, a_{i})$ and $(a_{i}, a_{i+1})$.

  Next we observe that $(a_{i-1}, a_{i}), (a_{i}, a_{i+1})$ form a basis of $\mathbb{Z}^{2}$. To see this, we compute
  \begin{eqnarray*}
   & & \mbox{det} \begin{pmatrix}
                 a_{i-1} & a_{i} \\
                 a_{i} & a_{i+1}
               \end{pmatrix}\\
                         & = & a_{i-1}a_{i+1}-a_{i}^{2}\\
                         &= &(ga_{i}-a_{i-1})a_{i-1}-(ga_{i-1}-a_{i-2})a_{i}\\
    &=&a_{i-2}a_{i}-a_{i-1}^{2}\\
    &=&\mbox{det}
               \begin{pmatrix}
                 a_{i-2} & a_{i-1} \\
                 a_{i-1} & a_{i}
               \end{pmatrix}\\
    & \vdots & \\
    &=& \mbox{det}
        \begin{pmatrix}
          a_{0} & a_{1} \\
          a_{1} & a_{2}
        \end{pmatrix}\\
    &=& 1\cdot (g^{2}-1)-g^{2}=-1.
  \end{eqnarray*}
  Hence there exists a unique pair of non-negative integers $m, n$ such that $(p,q)=m(a_{i-1}, a_{i})+n(a_{i}, a_{i+1})$.

  Next we consider the object $(T_{i+1}^{-})^{m}\oplus (T_{i+2}^{-})^{n}$. By Corollary \ref{JH}, for any $j$, $T_{j}^{-}$ fits into a general extension
  $$0 \longrightarrow T_{1}^{a_{j-1}} \longrightarrow T_{j}^{-} \longrightarrow S_{0}^{a_{j-2}} \longrightarrow 0. $$
  Hence we have the following exact sequence
  $$0 \longrightarrow T_{1}^{ma_{i}+na_{i+1}} \longrightarrow (T_{i+1}^{-})^{m}\oplus (T_{i+2}^{-})^{n} \longrightarrow S_{0}^{ma_{i-1}+na_{i}} \longrightarrow 0 .$$
  Since $(p,q)=m(a_{i-1}, a_{i})+n(a_{i}, a_{i+1})$, the sequence above also reads
  $$0 \longrightarrow T_{1}^{q} \longrightarrow (T_{i+1}^{-})^{m}\oplus (T_{i+2}^{-})^{n} \longrightarrow S_{0}^{p} \longrightarrow 0 .$$
Note that $\mbox{Ext}^{1}(T_{i+1}^{-}, T_{i+2}^{-})=0$, hence
  $$\mbox{Ext}^{1}((T_{i+1}^{-})^{m}\oplus (T_{i+2}^{-})^{n},(T_{i+1}^{-})^{m}\oplus (T_{i+2}^{-})^{n})=0,$$
  namely $(T_{i+1}^{-})^{m}\oplus (T_{i+2}^{-})^{n}$ is rigid, hence it fits into a general extension
  $$[0 \longrightarrow T_{1}^{q} \longrightarrow (T_{i+1}^{-})^{m}\oplus (T_{i+2}^{-})^{n} \longrightarrow S_{0}^{p} \longrightarrow 0] \in \mbox{Ext}^{1}(S_{0}^{p},T_{1}^{q})$$
  On the other hand, since $E$ is rigid, it also fits into a general extension
  $$[0 \longrightarrow T_{1}^{q} \longrightarrow E \longrightarrow S_{0}^{p} \longrightarrow 0]\in \mbox{Ext}^{1}(S_{0}^{p}, T_{1}^{q}). $$
  Hence $E$ and $(T_{i+1}^{-})^{m}\oplus (T_{i+2}^{-})^{n}$ are isomorphic.

\end{proof}

When one Harder-Narasimhan factor of $E$ is not $S$ or $T$, we have the following proposition. 

\begin{proposition}\label{type2}
  Let $E$ be a rigid object.
  
  If $E$ fits into the following exact sequence
  $$0 \longrightarrow T_{1}^{q} \longrightarrow E \longrightarrow G^{p} \longrightarrow 0, $$
  where $G$ is some $\sigma_{+}$-stable spherical object in $\mathcal{H}$. Then the $\sigma_{-}$-Harder-Narasimhan filtration of $E$ is of the form
  $$0=E_{0}\subset E_{1} \subset E_{2} \subset E_{3}=E,$$
  where $E_{3}/E_{2}=T_{1}^{\mbox{hom}(G^{p},T_{1})+\varepsilon}$ and $\varepsilon = \mbox{max}\{q-\mbox{ext}^{1}(G^{p},T_{1}), 0\}$, and $E_{2}$ is a rigid object of type I: $E_{2}$ fits into
  $$0 \longrightarrow T_{1}^{q-\varepsilon} \longrightarrow E_{2} \longrightarrow S_{0}\otimes \mbox{Hom}(S_{0}, G^{p}) \longrightarrow 0 .$$

  Similarly, if $E$ fits into the following exact sequence
  $$0 \longrightarrow F^{q} \longrightarrow E \longrightarrow S_{0}^{p} \longrightarrow 0,$$
  where $F$ is some $\sigma_{+}$-stable spherical object in $\mathcal{H}$. Then the $\sigma_{-}$-Harder-Narasimhan filtration of $E$ is of the form
  $$0=E_{0}\subset E_{1}\subset E_{2}\subset E_{3}=E,$$
  where $E_{1}=S_{0}^{\mbox{hom}(S_{0}, F^{q})+\varepsilon}$ and $\varepsilon = \mbox{max}\{p-\mbox{ext}^{1}(S_{0},F^{q}), 0 \}$, and $E_{3}/E_{1}$ is a rigid object of type I: $E_{3}/E_{1}$ fits into
  $$0 \longrightarrow T_{1}\otimes \mbox{Hom}(F^{q}, T_{1})^{*} \longrightarrow E_{3}/E_{1} \longrightarrow S_{0}^{p-\varepsilon} \longrightarrow 0 $$
\end{proposition}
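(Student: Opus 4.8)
I will treat the first case (quotient an arbitrary stable spherical object, sub the ``extreme'' object $T_1$); the second is its mirror, obtained by exchanging the roles of $S_0$ and $T_1$ --- which at $\sigma_-$ swaps the object of maximal phase $S_0$ with the object of minimal phase $T_1$ --- or, if one prefers, by dualizing $E$ and applying the first case. Since Proposition \ref{type1} is invoked below I assume $g(\mathcal{H})\geq 2$; when $g=1$ the lattice $\mathcal{H}$ is negative definite and the statement is read off directly from Proposition \ref{lattice}(2) together with a $\mathbb{P}\mbox{GL}$-orbit count as in the proof of Lemma \ref{unbalance}.

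The plan is in three steps. First I would pin down the bottom $T_1$'s. By Corollary \ref{JH} (and the evaluation identification in the remark following it) the object $G$ sits in $0\to S_0\otimes\mbox{Hom}(S_0,G)\to G\to T_1\otimes\mbox{Hom}(G,T_1)^{*}\to 0$, and this realizes $G\twoheadrightarrow T_1^{\,\mbox{hom}(G,T_1)}$ as the maximal quotient of $G$ that is a sum of copies of $T_1$, with kernel $S_0\otimes\mbox{Hom}(S_0,G)$; tensoring by $\mathbb{C}^{p}$ gives $0\to S_0^{m}\to G^{p}\to T_1^{n}\to 0$ with $m=\dim\mbox{Hom}(S_0,G^{p})$ and $n=\mbox{hom}(G^{p},T_1)$. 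Because $E$ is rigid it is a general extension of $G^{p}$ by $T_1^{q}$, so the connecting map $\mbox{Hom}(T_1^{q},T_1)\to\mbox{Ext}^{1}(G^{p},T_1)$ coming from applying $\mbox{Hom}(-,T_1)$ to $0\to T_1^{q}\to E\to G^{p}\to 0$ --- which is literally the extension class of $E$, read as a linear map on the $\mathbb{C}^{q}$-factor --- has maximal rank $\min\{q,\mbox{ext}^{1}(G^{p},T_1)\}$; hence $\mbox{hom}(E,T_1)=n+\varepsilon$ with $\varepsilon=\max\{q-\mbox{ext}^{1}(G^{p},T_1),0\}$.

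Second I would identify $T_1^{n+\varepsilon}$ as the last $\sigma_-$-Harder--Narasimhan factor. The hypothesis $\phi_+(T_1)>\phi_+(S_0)$ and the formula for $\phi_-$ in Section \ref{section5} show that $T_1=O_1$ has strictly smallest $\sigma_-$-phase among the $\sigma_-$-stable spherical objects in $\mathcal{H}$; since $E\in\mathcal{A}_W$ has all its $\sigma_-$-Harder--Narasimhan factors among these $O_i$, the last one is $T_1^{h}$, and, as the preceding term $E_2$ of the filtration then has all its factors of phase $>\phi_-(T_1)$, we get $\mbox{Hom}(E_2,T_1)=0$, so $h=\mbox{hom}(E,T_1)=n+\varepsilon$; this gives $E_3/E_2=T_1^{\,\mbox{hom}(G^{p},T_1)+\varepsilon}$, and $E_2$ is rigid by Mukai's Lemma (Lemma \ref{Mukai's Lemma}). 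Third I would show $E_2$ is of type I of the claimed shape: inside $E$ the composite $\psi\colon T_1^{q}\hookrightarrow E\twoheadrightarrow E/E_2=T_1^{n+\varepsilon}$ has $\mbox{rank}\,\psi=\varepsilon$, because $\mbox{Hom}(E_2,T_1)=0$ forces the restriction $\mbox{Hom}(E,T_1)\to\mbox{Hom}(T_1^{q},T_1)$ to be $\psi^{*}$, whose image is the $\varepsilon$-dimensional kernel found in Step 1; consequently $E_2\cap T_1^{q}=\ker\psi\cong T_1^{q-\varepsilon}$ and $\mbox{coker}\,\psi\cong T_1^{n}$, so that $(E_2+T_1^{q})/T_1^{q}$, being the kernel of $G^{p}=E/T_1^{q}\twoheadrightarrow T_1^{n}$, is identified with $S_0^{m}$ via the sequence of Step 1, yielding $0\to T_1^{q-\varepsilon}\to E_2\to S_0\otimes\mbox{Hom}(S_0,G^{p})\to 0$. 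Finally, Proposition \ref{type1} applied to the type-I object $E_2$ gives that its $\sigma_-$-Harder--Narasimhan filtration has at most two steps $0\subset E_1\subset E_2$, whose factors automatically have phase $>\phi_-(T_1)$, so $0=E_0\subset E_1\subset E_2\subset E_3=E$ is the $\sigma_-$-Harder--Narasimhan filtration of $E$ in the asserted form. For the second case the mirror argument replaces ``last factor $T_1^{\cdots}$'' by ``first factor $E_1=S_0\otimes\mbox{Hom}(S_0,E)$'', where now $\mbox{hom}(S_0,E)=\mbox{hom}(S_0,F^{q})+\varepsilon$ with $\varepsilon=\max\{p-\mbox{ext}^{1}(S_0,F^{q}),0\}$ from applying $\mbox{Hom}(S_0,-)$ to $0\to F^{q}\to E\to S_0^{p}\to 0$, and the dual diagram chase shows $E_3/E_1$ is type I of the stated shape.

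I expect the main friction to be the genericity bookkeeping rather than the diagram chases. One must know that a rigid $E$ of the stated form realizes a \emph{general} extension, and then that the induced maps on $\mbox{Ext}^{1}$'s attain the rank predicted by a naive dimension count; since $\mbox{Ext}^{1}(G^{p},T_1^{q})$ is a tensor product, ``general $\delta$'' is not the same as ``general matrix'', and making the count rigorous uses surjectivity of the relevant Yoneda pairings, visible from the Jordan--Hölder sequence of $G$ (for instance $\mbox{Ext}^{1}(T_1,S_0)\otimes\mbox{Hom}(S_0,G)\twoheadrightarrow\mbox{Ext}^{1}(T_1,G)$). An alternative that sidesteps this is to feed $E$ through the first two stages of the local reduction --- the initial filtration (Algorithm \ref{initial filtration}) followed by Proposition \ref{first factor} --- which produces the same filtration with essentially the same amount of work.
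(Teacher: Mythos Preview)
Your proof is correct and follows essentially the same approach as the paper's: identify the last $\sigma_-$-HN factor as $T_1\otimes\mbox{Hom}(E,T_1)^*$ via the minimal-phase argument, compute $\mbox{hom}(E,T_1)$ from the long exact sequence using that the extension is general, and then recover the type-I structure of $E_2$ by a diagram chase. The paper packages Step~3 as a $3\times 3$ diagram (snake lemma) rather than your explicit analysis of $\psi$, but the content is identical.

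One remark on your friction paragraph: the worry that ``general $\delta$'' might not be ``general matrix'' is unwarranted \emph{here}. Because $T_1$ is simple, $\mbox{Hom}(T_1^q,T_1)=(\mathbb{C}^q)^*$ and hence $\mbox{Ext}^1(G^p,T_1^q)\cong \mathbb{C}^q\otimes\mbox{Ext}^1(G^p,T_1)$ canonically --- the map $\delta\mapsto\delta^*$ is literally this isomorphism, so a general $\delta$ \emph{is} a general linear map and attains maximal rank for free. (The subtlety you anticipate does arise in Proposition~\ref{injectivity}, where neither factor is $S_0$ or $T_1$ and one needs Lemma~\ref{full}; but not in the type-II situation.)
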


We shall refer to the rigid objects $E$ in Proposition \ref{type2} as \emph{type II}. 

\begin{proof}
  We only prove the first case, the other case is similar. We may assume $G\neq S_{0}$, otherwise $E$ is of type I and the Harder-Narasimhan filtration is already known. In particular, $\mbox{Hom}(E, T_{1})\neq 0$.

  By Mukai's Lemma (Lemma \ref{Mukai's Lemma}), $E_{3}/E_{2}$ is a direct sum of a $\sigma_{-}$-stable spherical object in $\mathcal{H}$, hence $\phi_{-}(E_{3}/E_{2})\geq \phi_{-}(T_{1}) $. On the other hand, $\phi_{-}(E_{i}/E_{i-1})>\phi_{-}(E_{3}/E_{2})\geq \phi_{-}(T_{1})$ for $i<3$, hence $\mbox{Hom}(E_{2},T_{1})=0$. Since $\mbox{Hom}(E,T_{1})\neq 0$, we know $\mbox{Hom}(E_{3}/E_{2}, T_{1})\neq 0$, $\phi_{-}(E_{3}/E_{2})\leq \phi_{-}(T_{1})$, hence $E_{3}/ E_{2}$ is naturally isomorphic to $T_{1}\otimes \mbox{Hom}(E,T_{1})^{*}$. Applying $\mbox{Hom}(-,T_{1})$ to
  $$0 \longrightarrow T_{1}^{q} \longrightarrow E \longrightarrow G^{p} \longrightarrow 0 $$
  and note that we may assume this extension $\delta \in \mbox{Ext}^{1}( G^{p}, T_{1}^{q})$ is general since $E$ is rigid, we get
  $$0 \longrightarrow \mbox{Hom}(G^{p},T_{1}) \longrightarrow \mbox{Hom}(E, T_{1}) \longrightarrow \mbox{Hom}(T_{1}^{q}, T_{1}) \overset{\delta^{*}}{\longrightarrow}\mbox{Ext}^{1}(G^{p}, T_{1}).$$
  Note that we have the natural identification
  $$\mbox{Ext}^{1}(G^{p}, T_{1}^{q})\cong \mbox{Hom}(\mbox{Hom}(T_{1}^{q}, T_{1}),\mbox{Ext}^{1}(G^{p},T_{1}))$$
  sending $\delta$ to $\delta^{*}$. Since $\delta^{*}$ is general, the kernel has dimension $\varepsilon=\mbox{max}\{q-\mbox{ext}^{1}(G^{p},T_{1}), 0\}$, and $\mbox{Hom}(E,T_{1})\cong \mathbb{C}^{p\cdot\mbox{hom}(G, T_{1})+\varepsilon}$.
  Now we have the following commutative diagram
  \[\begin{tikzcd}
      & T_{1}^{q-\varepsilon} \arrow[d] & E_{2} \arrow[d] & S_{0}\otimes \mbox{Hom}(S_{0}, G^{p}) \arrow[d] & \\
      0 \arrow[r] & T_{1}^{q} \arrow[r]\arrow[d] & E \arrow[r]\arrow[d] & G^{p} \arrow[r]\arrow[d] & 0\\
      0 \arrow[r] & T_{1}^{\varepsilon} \arrow[r]\arrow[d] & T_{1}\otimes \mbox{Hom}(E, T_{1})^{*} \arrow[r]\arrow[d] & T_{1}\otimes \mbox{Hom}(G^{p}, T_{1})^{*} \arrow[r]\arrow[d] & 0\\
      & 0 & 0 & 0 &  ,
    \end{tikzcd}\]
  where the rows and columns are all exact. Hence $E_{2}$ fits into
  $$0 \longrightarrow T_{1}^{q-\varepsilon} \longrightarrow E_{2} \longrightarrow S_{0}\otimes \mbox{Hom}(S_{0},G^{p}) \longrightarrow 0.$$
  By Mukai's Lemma (Lemma \ref{Mukai's Lemma}), $E_{2}$ is rigid, hence a rigid object of type I. 
\end{proof}

When neither Harder-Narasimhan factors are $S$ or $T$, the key technical proposition is the following

\begin{proposition}\label{injectivity}
  Let $E$ be a rigid object that fits into the following exact sequence
  $$0 \longrightarrow F^{q} \longrightarrow E \longrightarrow G^{p} \longrightarrow 0 ,$$
  where $F, G\in \mathcal{H}$ are $\sigma_{+}$-stable spherical objects with $\phi_{+}(F)>\phi_{+}(G)$ and $\mbox{Ext}^{1}(G,F)\neq 0$. Then either
  $$\mbox{Hom}(S_{0},G^{p}) \longrightarrow \mbox{Ext}^{1}(S_{0},F^{q})$$
  is injective, or
  $$\mbox{Hom}(F^{q}, T_{1}) \longrightarrow \mbox{Ext}^{1}(G^{p}, T_{1})$$
  is injective, where the two maps are the connecting homomorphisms in the long exact sequences.
\end{proposition}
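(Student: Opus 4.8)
The plan is to argue by contradiction: suppose both connecting homomorphisms fail to be injective, and derive a contradiction with the rigidity of $E$ via Mukai's Lemma (Lemma \ref{Mukai's Lemma}) together with the dimension-count idea already used in Lemma \ref{unbalance}. First I would set up the two long exact sequences. Applying $\mbox{Hom}(S_0,-)$ to $0 \to F^q \to E \to G^p \to 0$ gives the connecting map $\alpha\colon \mbox{Hom}(S_0,G^p)\to \mbox{Ext}^1(S_0,F^q)$, whose non-injectivity means there is a nonzero $S_0 \to G^p$ lifting to $S_0 \to E$; equivalently $\mbox{Hom}(S_0,E)\neq \mbox{Hom}(S_0,F^q)$. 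Dually, applying $\mbox{Hom}(-,T_1)$ gives $\beta\colon \mbox{Hom}(F^q,T_1)\to \mbox{Ext}^1(G^p,T_1)$, and its non-injectivity means some nonzero $F^q \to T_1$ extends to $E \to T_1$. Since $E$ is rigid, the governing extension class $\delta \in \mbox{Ext}^1(G^p, F^q) = \mbox{Hom}(\mbox{Hom}(F^q,F),\ \mbox{Hom}_{\text{bi}}\ldots)$ — more precisely $\mbox{Ext}^1(G,F)\otimes (\text{matrices})$ — may be taken general in its space; so I would phrase everything in terms of a general $\delta$ and a Kronecker-type rank condition.

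The key step is to translate both non-injectivity statements into statements about the general $\delta$ failing to have maximal rank in two "complementary" senses, and then show these cannot both happen. Note $v(E) = q\,v(F) + p\,v(G)$ is rigid, so by Theorem \ref{numbers} and $\mbox{Ext}^1(G,F)\neq 0$ the lattice $\mathcal H$ is degenerate or hyperbolic and we are in the regime of Proposition \ref{construction}; thus $F$ and $G$ sit among the $T_i$'s (or $S_j$'s), and $\mbox{Hom}(S_0,-)$, $\mbox{Hom}(-,T_1)$ are governed by the maps $coev$, $ev$ of Lemma \ref{commutativity}. Concretely, $\mbox{Hom}(S_0,G)$, $\mbox{Hom}(S_0,F)$, $\mbox{Hom}(F,T_1)$, $\mbox{Hom}(G,T_1)$ all have dimensions given by the fundamental sequence, and the connecting maps $\alpha,\beta$ are, after tensoring with the multiplicity spaces $\mathbb C^p,\mathbb C^q$, linear maps built from the single Kronecker module $\delta \in \mbox{Hom}(\mathbb C^p,\mathbb C^q)\otimes \mbox{Ext}^1(G,F)$ composed with fixed contraction maps. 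I would compute $\chi(F,T_1)$, $\chi(S_0,G)$, etc.\ via Riemann–Roch to pin down exactly when a general $\delta$ forces injectivity on one side, and observe that the only obstruction to injectivity on \emph{both} sides simultaneously forces $\mbox{ext}^1(G^p,F^q) \geq p^2 + q^2 - (\text{something positive})$, contradicting the bound $\mbox{ext}^1(G^p,F^q) < p^2+q^2$ coming from rigidity (the same orbit–dimension argument as in Lemma \ref{unbalance}, applied with $F,G$ in place of $T,S$ — the proof there only used $\mbox{Hom}(F,G)=0$, which holds since $\phi_+(F)>\phi_+(G)$).

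The main obstacle I expect is the precise bookkeeping in the middle step: showing that "both $\alpha$ and $\beta$ non-injective" is genuinely equivalent to a numerical deficiency of the general $\delta$ that violates the rigidity bound, rather than merely being compatible with it. This requires carefully identifying $\ker\alpha$ and $\ker\beta$ with the kernel/cokernel of the \emph{same} tensor $\delta$ viewed two ways (as a map $\mathbb C^p \otimes \mbox{Hom}(S_0,G) \to \mbox{Ext}^1(S_0,F)\otimes\mathbb C^q$ and as $\mbox{Hom}(F,T_1)\otimes\mathbb C^q \to \mbox{Ext}^1(G,T_1)\otimes\mathbb C^p$) and checking that genericity of $\delta$ means at least one of these two has full rank — essentially a statement that a general element of a tensor space cannot be simultaneously deficient "from the left" and "from the right" beyond what dimensions allow. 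I would handle this by reducing, via Proposition \ref{construction} and Lemma \ref{conn}, to the base cases $F=T_i$, $G=S_0$ (or $F=T_1$, $G=S_j$), where the relevant Hom-spaces are one-dimensional or small, and then the rank statement becomes the elementary fact that a general matrix of given size is injective or surjective according to its aspect ratio; the general case follows by the recursive structure of the $T_i, S_j$. If a clean uniform argument proves elusive, a fallback is to split into the two subcases $p\le q$ and $p\ge q$ and invoke Proposition \ref{type2} to reduce $E$ to a type I object, for which the relevant injectivity is visible from the explicit Harder–Narasimhan description in Proposition \ref{type1}.
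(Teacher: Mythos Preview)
Your overall framework is sound: take the extension class $\delta \in \mbox{Ext}^1(G^p,F^q)$ to be general (by rigidity), identify the connecting homomorphism $\alpha$ with the image of $\delta$ under $\mbox{Ext}^1(G^p,F^q) \to \mbox{Hom}(\mbox{Hom}(S_0,G^p),\mbox{Ext}^1(S_0,F^q))$, and argue that a general such map is injective once the dimensions allow it. But two genuine gaps remain.

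First, the numerical step. You want to extract the dimension inequality from the rigidity bound $\mbox{ext}^1(G^p,F^q) < p^2+q^2$ of Lemma~\ref{unbalance}, but that inequality does not by itself yield $\mbox{hom}(S_0,G^p) \le \mbox{ext}^1(S_0,F^q)$ or its dual. The paper instead proves this as a separate Lemma~\ref{numerical}, using the explicit values $\mbox{hom}(S_0,G)=a_k$, $\mbox{ext}^1(S_0,F)=a_l$ from Theorem~\ref{numbers} and the monotonicity of the ratios $a_n/a_{n-1}$ (Lemma~\ref{decreasing}). Rigidity of $E$ does not enter that argument; only the hypothesis $\mbox{Ext}^1(G,F)\neq 0$ is used, to rule out the adjacent indices.

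Second, and more seriously, the genericity step. The map $\mbox{Ext}^1(G,F) \to \mbox{Ext}^1(S_0,F)\otimes \mbox{Hom}(S_0,G)^*$ is in general \emph{not} surjective: its image is a proper subspace $K$. So a general $\delta$ gives only a general point of $K^{\oplus pq}$, not a general linear map, and the elementary fact ``a general matrix of the right shape is injective'' does not apply directly. The paper isolates exactly the property of $K$ that suffices: $\overline{K} = \mbox{Ext}^1(S_0,F)\otimes \mbox{Hom}(S_0,G)^*$, where $\overline{K}$ is the smallest sub-product containing $K$ (Lemma~\ref{full}, proved by the case split TS/SS/TT and the tensor description of Propositions~\ref{description}--\ref{description2}). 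The accompanying linear-algebra fact, Lemma~\ref{general}, shows that any subspace $K$ with $\overline{K}$ equal to the full tensor product still has the property that its general element is injective. Your sketch gestures at a reduction ``to base cases where Hom-spaces are one-dimensional,'' but without formulating the $\overline{K}$ condition you have no criterion that survives the recursion. Your fallback via Proposition~\ref{type2} is circular: that proposition handles the case where one of $F,G$ is already $S_0$ or $T_1$, and the present proposition is precisely what is needed to reduce the general two-step case to that situation.
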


We refer to the rigid objects in Proposition \ref{injectivity} as \emph{two step} rigid objects. Before proving Proposition \ref{injectivity}, we first state a lemma confirming that the dimensions of the spaces allow the map to be injective. It is also needed for the proof of Proposition \ref{injectivity}.

\begin{lemma}\label{numerical}
  If $\mbox{Ext}^{1}(G,F) \neq 0$, then either
  $$\mbox{hom}(S_{0}, G^{p})\leq \mbox{ext}^{1}(S_{0}, F^{q}),$$
  or
  $$\mbox{hom}(F^{q}, T_{1})\leq \mbox{ext}^{1}(G^{p},T_{1}).$$
\end{lemma}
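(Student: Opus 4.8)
\textbf{Proof proposal for Lemma \ref{numerical}.}

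The plan is to argue by contradiction, assuming both inequalities fail, and to derive a contradiction with the assumption $\mbox{Ext}^{1}(G,F)\neq 0$ by way of a dimension count. So suppose $\mbox{hom}(S_{0},G^{p})>\mbox{ext}^{1}(S_{0},F^{q})$ and $\mbox{hom}(F^{q},T_{1})>\mbox{ext}^{1}(G^{p},T_{1})$. Since $G$ and $F$ are $\sigma_{+}$-stable spherical objects whose Mukai vectors lie in the rank 2 lattice $\mathcal{H}$, by the construction of Section \ref{section4} (Proposition \ref{construction}, Corollary \ref{JH} and Theorem \ref{numbers}) we may write $G=S_{-k}$ or $G=T_{k}$ and $F=S_{-l}$ or $F=T_{l}$, and all of $\mbox{hom}(S_{0},G)$, $\mbox{hom}(S_{0},F)$, $\mbox{hom}(F,T_{1})$, $\mbox{hom}(G,T_{1})$, as well as the relevant $\mbox{ext}^{1}$'s, are entries of the fundamental sequence $a_{n}=a_{n}(\mathcal{H})$. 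The point is that $\mbox{ext}^{1}(G,F)\neq 0$ forces $F$ to lie strictly "above" $G$ in the chain, which translates into inequalities between the indices, hence (using monotonicity of ratios of the $a_{n}$, Lemma \ref{decreasing} together with the relations in Theorem \ref{numbers}) into numerical relations among $p,q$ and the $a_{n}$.

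The key computational input is Riemann--Roch / the Mukai pairing. Writing $v=v(E)=q\,v(F)+p\,v(G)$ and using $v(E)^{2}\geq -2$ (in fact we are in the rigid case, so one expects $v(E)^{2}=-2$ is \emph{not} assumed, only $\mbox{ext}^{1}(E,E)$ small, but the inequality $\chi(E,E)=2-\mbox{ext}^{1}(E,E)\leq 2$ combined with $\mbox{hom}(E,E)\geq 1$ suffices), expand
$$\chi(E,E)=q^{2}\chi(F,F)+p^{2}\chi(G,G)+pq\big(\chi(F,G)+\chi(G,F)\big)=-2q^{2}-2p^{2}-2pq\,(v(F)\cdot v(G)).$$
On the other hand $\chi(F,G)=\mbox{hom}(F,G)-\mbox{ext}^{1}(F,G)+\mbox{hom}(G,F)$ and by $\phi_{+}(F)>\phi_{+}(G)$ together with stability, $\mbox{hom}(F,G)=\mbox{hom}(G,F)=0$ at $\sigma_{+}$ unless $F\cong G$; so $v(F)\cdot v(G)=-\chi(F,G)=\mbox{ext}^{1}(F,G)=:g'>0$. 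Thus $\chi(E,E)=-2p^{2}-2q^{2}+2pq\,g'$, and rigidity of $E$ gives $-2p^{2}-2q^{2}+2pq\,g'\geq \chi(E,E)=2-\mbox{ext}^{1}(E,E)$; since $\mbox{ext}^{1}(E,E)$ is small (zero, by hypothesis), in any case $pq\,g'<p^{2}+q^{2}$ — this is exactly the mechanism of Lemma \ref{unbalance}. Meanwhile the two negated inequalities, rewritten via Theorem \ref{numbers} in terms of the $a_{n}$ indexing $F$ and $G$, will say that $p/q$ (or $q/p$) is squeezed strictly between two consecutive ratios $a_{i+1}/a_{i}$ and $a_{i}/a_{i-1}$ \emph{on the wrong side}, contradicting $pq\,g'<p^{2}+q^{2}$, i.e. contradicting $p/q+q/p>g'$ and the fact that $a_{i+1}/a_{i}\to\rho$ with $\rho+1/\rho=g'$. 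So the plan is: (i) translate the two hypothetical inequalities into index inequalities among the $a_{n}$ using Theorem \ref{numbers}; (ii) deduce a two-sided bound on $p/q$; (iii) combine with $pq\,g'<p^{2}+q^{2}$ (Lemma \ref{unbalance}, whose hypotheses are met because $E$ is rigid and fits into the given sequence) and the limit behaviour of Lemma \ref{decreasing}/Remark \ref{lim} to reach a contradiction.

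The main obstacle I anticipate is bookkeeping: there are several cases according to whether each of $F,G$ is of type $S_{-k}$ or $T_{k}$, and in the degenerate ($\chi=2$) versus hyperbolic ($\chi\geq 3$) lattice cases the fundamental sequence grows differently, so one must check the monotonicity argument is uniform. A secondary subtlety is the boundary case $G\cong S_{0}$ or $F\cong T_{1}$, where one of the four quantities is $\mbox{hom}$ of an object with itself (equal to $1$) or an $\mbox{ext}^{1}$ that vanishes; these should be handled first as they make one of the two desired inequalities trivially true (indeed, if $G=S_{0}$ then $\mbox{hom}(S_{0},G^{p})=p$ while $\mbox{ext}^{1}(S_{0},F^{q})=q\,\mbox{ext}^{1}(S_{0},F)$, and $\mbox{Ext}^{1}(G,F)=\mbox{Ext}^{1}(S_{0},F)\neq 0$ already pins down $F$). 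Modulo that case analysis, the heart of the argument is the single inequality $pq\,g'<p^{2}+q^{2}$ from Mukai's Lemma, which is the reason the squeezed ratio cannot exist.
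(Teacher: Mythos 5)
Your overall strategy (argue by contradiction, translate the four $\mathrm{hom}/\mathrm{ext}^1$ dimensions into the fundamental sequence $a_n$ via Theorem \ref{numbers}, and show the resulting two-sided bound on $p/q$ is untenable) is the right skeleton, but the mechanism you invoke to close the argument does not go through, and it is also a substantially heavier route than what is actually needed.

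The paper's proof never touches Lemma \ref{unbalance}, Mukai's Lemma, or the rigidity of $E$. After rewriting the two negated inequalities with Theorem \ref{numbers}, one finds (e.g.\ in the case $F=T_i$, $G=S_j$) that $p/q$ would have to lie strictly between $\frac{a_i}{a_{-j}}$ and $\frac{a_{i-1}}{a_{-j+1}}$; but this interval is \emph{empty}, because $\frac{a_i}{a_{i-1}}>1>\frac{a_{-j}}{a_{-j+1}}$ forces the alleged lower bound to exceed the alleged upper bound. So the lemma is a pure statement about the fundamental sequence, valid for all $p,q>0$, with no need for the rigidity of $E$ or for Lemma \ref{unbalance} at all. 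Your proposal instead tries to get a non-trivial two-sided constraint on $p/q$ and then contradict it with $pq\,g'<p^2+q^2$; this adds hypotheses (rigidity) the lemma doesn't need and obscures that the constraint is already vacuous.

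More importantly, the final step as you describe it has a genuine gap: you write that the sandwich squeezes $p/q$ between \emph{consecutive} ratios $a_{i+1}/a_i$ and $a_i/a_{i-1}$, and that this contradicts $p/q+q/p>g'$ via $\rho+1/\rho=g'$. Neither half is correct. The sandwich bounds are \emph{not} consecutive ratios of the $a_n$ — they are $\frac{a_i}{a_{-j}}$ and $\frac{a_{i-1}}{a_{-j+1}}$ (Case TS) or $\frac{a_i}{a_{j-2}}$ and $\frac{a_{i-1}}{a_{j-3}}$ (Case TT), with mixed/shifted indices coming from $F$ and $G$. And the limit $\rho(\mathcal{H})$ in Remark \ref{lim} satisfies $\rho+1/\rho=g(\mathcal{H})=\mathrm{ext}^1(T_1,S_0)$, \emph{not} $g'=\mathrm{ext}^1(G,F)$; these agree only when $F=S_0$ and $G=T_1$. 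Since your contradiction rests on matching $g'$ to the recursion constant $g(\mathcal{H})$ of the fundamental sequence, it cannot be closed as written. (There is also a sign slip in your expansion of $\chi(E,E)$ — one should get $\chi(E,E)=2p^2+2q^2-2pq\,v(F)\cdot v(G)$ — but that does not affect the outcome of Lemma \ref{unbalance}; the $g'$ versus $g(\mathcal{H})$ confusion does.)
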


\begin{proof}
  Based on the condition that $\phi_{+}(F)> \phi_{+}(G)$, we separate into three cases. Case TS: $F=T_{i}$ and $G=S_{j}$, where $i\geq 1, j\leq 0$. Case SS: $F=S_{i}$ and $G=S_{j}$, where $i+2 \leq j\leq 0$. Case TT: $F=T_{i}$ and $G=T_{j}$, where $1\leq i \leq j-2$. (In case SS and case TT, $j\neq i+1$ because by assumption $\mbox{Ext}^{1}(G,F)\neq 0$.)

  We first analyze the the case TS. Recall that we have
  $$\mbox{hom}(S_{0}, S_{j})=a_{-j},~ \mbox{ext}^{1}(S_{0}, T_{i})=a_{i},~ \mbox{hom}(T_{i}, T_{1})=a_{i-1},~ \mbox{ext}^{1}(S_{j}, T_{1})=a_{-j+1}, $$
  where $\{a_{n}\}$ is the fundamental sequence of $\mathcal{H}$. Now suppose the lemma is not true, then we must have
  \begin{eqnarray*}p\cdot a_{-j}&> &q\cdot a_{i}, \\
    q\cdot a_{i-1} &>& p\cdot a_{-j+1}.\end{eqnarray*}
We may rewrite this as
$$\frac{a_{i}}{a_{-j}}<\frac{p}{q}<\frac{a_{i-1}}{a_{-j+1}}. $$
This is clearly impossible, since
$\frac{a_{i}}{a_{i-1}}>1>\frac{a_{-j}}{a_{-j+1}}$.
The lemma is proven for the case TS.

The case SS and the case TT are similar, we only prove for the case TT. We have
$$\mbox{hom}(S_{0}, T_{j})=a_{j-2}, ~\mbox{ext}^{1}(S_{0},T_{i})=a_{i},~ \mbox{hom}(T_{i}, T_{1})=a_{i-1},~ \mbox{ext}^{1}(T_{j},T_{1})=a_{j-3}.$$
If the lemma is not true, then we must have
\begin{eqnarray*}
  p\cdot a_{j-2} & > & q \cdot a_{i} \\
  q\cdot a_{i-1} & > & p \cdot a_{j-3},
\end{eqnarray*}
which can be rewritten as
$$\frac{a_{i}}{a_{j-2}}<\frac{p}{q}<\frac{a_{i-1}}{a_{j-3}}. $$
Hence $\frac{a_{i}}{a_{i-1}}<\frac{a_{j-2}}{a_{j-3}} $. By Lemma \ref{decreasing}, this is equivalent to $j-2<i$, which contradicts the assumption that $i\leq j-2$.
\end{proof}

In the following we will always assume $\mbox{hom}(S_{0}, G^{p})\leq \mbox{ext}^{1}(S_{0},F^{q})$, the other case can be treated in the same manner.

For any finite dimensional linear spaces $K\subset A\otimes B$, we let
$$\overline{K}=\bigcap_{K\subset A'\otimes B'\subset A\otimes B} A'\otimes B'.$$
Then $\overline{K}$ is also of the form $A_{0}\otimes B_{0}$ for some subspaces $A_{0}, B_{0}$.

In general, if $V, W$ are two linear spaces with $\mbox{dim}V \leq \mbox{dim}W$, and $f\in W\otimes V^{*}= \mbox{Hom}(V,W)$ is general, then $f: V \rightarrow W$ is injective. The following lemma shows that we may slightly weaken the condition.

\begin{lemma}\label{general}
Let $V,W$ be two linear spaces with $\mbox{dim}V \leq \mbox{dim}W$. Let $K\subset W\otimes V^{*}$ be a subspace such that $\overline{K}= W\otimes V^{*}$. Then for a general $f\in K$, $f: V \rightarrow W$ is injective.
\end{lemma}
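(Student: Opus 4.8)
The plan is to recast the statement as a claim about the generic rank of the elements of $K$. Viewing $f\in K$ as an element of $\mathrm{Hom}(V,W)$, set $\rho:=\max_{f\in K}\mathrm{rk}(f)$. Since $\{f\in K:\mathrm{rk}(f)=\rho\}$ is open and dense in the irreducible variety $K$, a general $f\in K$ has rank exactly $\rho$, and such an $f$ is injective precisely when $\rho=\dim V$; so it suffices to prove $\rho=\dim V$. First I would unwind the hypothesis $\overline{K}=W\otimes V^{*}$: if $W_{0}:=\sum_{f\in K}\mathrm{im}(f)\subseteq W$ and $U_{0}:=\sum_{f\in K}\mathrm{im}(f^{*})\subseteq V^{*}$, then any product subspace $A'\otimes B'\supseteq K$ must contain $W_{0}$ in its first factor and $U_{0}$ in its second, while conversely each $f\in K$ lies in $W_{0}\otimes U_{0}$ (write $f$ in a rank decomposition). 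Hence $\overline{K}=W_{0}\otimes U_{0}$, and the hypothesis says exactly that $\sum_{f\in K}\mathrm{im}(f)=W$ and $\bigcap_{f\in K}\ker(f)=0$.

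Next I would argue by contradiction: suppose $\rho<\dim V$, fix $f_{0}\in K$ of rank $\rho$, and put $V_{1}:=\ker f_{0}\ne 0$ and $W_{1}:=\mathrm{im}(f_{0})$, a proper subspace of $W$ since $\rho<\dim V\le\dim W$. The key point is that $g(V_{1})\subseteq W_{1}$ for every $g\in K$. To see this, choose decompositions $V=V'\oplus V_{1}$ and $W=W_{1}\oplus W''$ adapted to $f_{0}$, so that $f_{0}+tg=\begin{pmatrix}\Phi+tA_{g}&tB_{g}\\ tC_{g}&tD_{g}\end{pmatrix}$ with $\Phi:V'\xrightarrow{\ \sim\ }W_{1}$; for small $t$ the top-left block is invertible, so $\mathrm{rk}(f_{0}+tg)=\rho+\mathrm{rk}\bigl(tD_{g}-t^{2}C_{g}(\Phi+tA_{g})^{-1}B_{g}\bigr)$, and since this must be $\le\rho$ identically in $t$, comparing the coefficient of $t$ forces $D_{g}=0$, i.e.\ $g(V_{1})\subseteq W_{1}$. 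Thus every $g\in K$ has the block form $\begin{pmatrix}A_{g}&B_{g}\\ C_{g}&0\end{pmatrix}$; on the nonempty (hence dense) locus where $A_{g}$ is invertible the rank bound $\mathrm{rk}(g)\le\rho$ now gives $C_{g}A_{g}^{-1}B_{g}=0$, and clearing denominators this becomes the identity $C_{g}\,\mathrm{adj}(A_{g})\,B_{g}=0$ on all of $K$. Moreover the two conditions from the previous paragraph descend to: the $C_{g}$ jointly surject onto $W/W_{1}$ (full column span, projected to $W''$) and the $B_{g}$ have trivial common kernel in $V_{1}$ (triviality of $\bigcap_g\ker g$, restricted to $V_1$).

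It then remains to extract a contradiction from this configuration, and this last step is the one I expect to be the main obstacle. When $\rho=1$ it is immediate: the identity $C_{g}A_{g}^{-1}B_{g}=0$ becomes a bilinear relation $B_{g}C_{g}=0$ on the linear space $\{(B_{g},C_{g})\}$, forcing it into a union of two linear subspaces, hence into one of them — giving either all $B_{g}=0$ (contradicting triviality of the common kernel) or all $C_{g}=0$ (contradicting full column span). In general I would try to run the same dichotomy inductively on $\dim V$, applied to the Schur‑complement family $g\mapsto C_{g}A_{g}^{-1}B_{g}$ after a normalization making it a linear space of maps; it is identically zero, and the challenge is to check it inherits enough of the $\overline{K}=W\otimes V^{*}$ non‑degeneracy to re‑enter the induction. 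Ruling out the intermediate range $1<\rho<\dim V$ — and genuinely using the full strength of the hypothesis rather than a weaker joint‑non‑degeneracy condition — is precisely the delicate part, and is where I would spend the most effort.
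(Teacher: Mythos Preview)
Your proposal is careful and honest about where the difficulty lies, but that difficulty is not merely delicate: it is insurmountable, because the lemma as stated is \emph{false}. Take $V=W=\mathbb{C}^{3}$ and let $K\subset W\otimes V^{*}\cong M_{3}(\mathbb{C})$ be the space of skew-symmetric matrices. Then $\sum_{f\in K}\mathrm{im}(f)=\mathbb{C}^{3}$ and $\bigcap_{f\in K}\ker(f)=0$, so by your own (correct) unwinding of the hypothesis one has $\overline{K}=W\otimes V^{*}$. Yet every $3\times 3$ skew-symmetric matrix has even rank, hence rank at most $2<3=\dim V$; no element of $K$ is injective. Your framework detects this cleanly: with $\rho=2$ and $f_{0}$ a rank-$2$ element, the block $D_{g}$ vanishes for every $g\in K$, the identity $C_{g}\,\mathrm{adj}(A_{g})\,B_{g}=0$ holds, the $C_{g}$ jointly surject onto $W/W_{1}$, and the $B_{g}$ have trivial common kernel in $V_{1}$. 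All of your derived conditions are satisfied with no contradiction available, which is exactly why your inductive plan for the ``delicate part'' cannot be made to work.

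The paper's own argument breaks at the same point. It asserts that if $f$ is diagonalized with rank $n$ and some $g\in K$ has a nonzero entry below row $n$, then $f+\varepsilon g$ has strictly larger rank for small $\varepsilon$. In the skew-symmetric example this fails: after diagonalizing $f_{0}$, the lower-right block of every $g\in K$ vanishes (your $D_{g}=0$), so the nonzero sub-row entries sit only in the $C$-block, and the Schur complement of $f_{0}+\varepsilon g$ stays zero to first order. Your Schur-complement analysis is precisely the right diagnostic and exposes why the paper's step is unjustified. What the intended application actually needs is some further structural property of the specific $K$ arising there (it comes from an $\mathrm{Ext}$ group and has a tensor-product shape $K^{\oplus pq}$), not the bare condition $\overline{K}=W\otimes V^{*}$.
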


\begin{proof}
For any $f\in K$, we may choose a base for $V$ and $W$ such that $f$ is a diagonal matrix. If $f$ is not full rank, then below the, say $n$-th row, all entries of the matrix are zero. Since $\overline{K}=W\otimes V^{*}$, there exists $g\in K$, such that in matrix form some entries below the $n$-th row is not zero. Then for sufficiently small $\varepsilon>0$, $f+\varepsilon\cdot g \in K$ has rank strictly bigger than $f$. We may then iterate this process to get an element in $K$ of full rank. Since $\mbox{dim}V\leq \mbox{dim}W$, a full rank map is injective.
\end{proof}

Applying $\mbox{Hom}(G,-)$ to
  $$0 \longrightarrow S_{0}\otimes \mbox{Hom}(S_{0},F) \longrightarrow F \longrightarrow T_{1}\otimes \mbox{Hom}(F,T_{1})^{*} \longrightarrow 0, $$
  we get long exact sequence
  $$ \mbox{Ext}^{1}(G,F) \rightarrow \mbox{Ext}^{1}(G,T_{1})\otimes \mbox{Hom}(F,T_{1})^{*} \longrightarrow \mbox{Hom}(S_{0},G)^{*}\otimes \mbox{Hom}(S_{0},F),$$
  let $K$ be the image of $\mbox{Ext}^{1}(G,F) \longrightarrow \mbox{Hom}(G,T_{1})\otimes \mbox{Hom}(F,T_{1})^{*}$.

Let $V=\mbox{Ext}^{1}(T_{1},S_{0})$. We write $V^{(n)}$ to be $V\otimes V^{*} \otimes \cdots \otimes V$ or $V\otimes V^{*} \otimes \cdots \otimes V^{*}$, where the total number of $V$ and $V^{*}$ is $n$. Let $K_{n}$ be the intersection of all kernels of parings on consecutive factors.

\begin{lemma}\label{full}
  Under the notations above, we have
  $$\overline{K}=\mbox{Ext}^{1}(S_{0},F)\otimes \mbox{Hom}(S_{0},G)^{*}. $$
\end{lemma}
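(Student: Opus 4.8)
The plan is to identify $\overline{K}$ by understanding the representation-theoretic meaning of $K$, namely the image of $\mathrm{Ext}^1(G,F)$ inside $\mathrm{Hom}(G,T_1)\otimes \mathrm{Hom}(F,T_1)^*$. First I would fix bases: write $F=T_i$ or $S_i$ and $G=S_j$ or $T_j$ according to the three cases of Lemma~\ref{numerical}, and recall from Theorem~\ref{numbers} and Propositions~\ref{description}, \ref{description2} that every $\mathrm{Hom}$- and $\mathrm{Ext}^1$-space between the standard objects embeds into an explicit tensor power $V^{(n)}$ of $V=\mathrm{Ext}^1(T_1,S_0)$, cut out by the vanishing of the pairings $m_j$ (resp. the comultiplications $\delta_j$) on consecutive factors. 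Under these identifications $\mathrm{Ext}^1(S_0,F)\otimes \mathrm{Hom}(S_0,G)^*$ sits inside a big tensor product $V^{(N)}$, and $K$ is the subspace obtained from $\mathrm{Ext}^1(G,F)$ by the connecting map in the long exact sequence attached to the Jordan--H\"older sequence $0\to S_0\otimes\mathrm{Hom}(S_0,F)\to F\to T_1\otimes\mathrm{Hom}(F,T_1)^*\to 0$.

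The key step is to trace through exactly which pairings the image $K$ is required to satisfy. Applying $\mathrm{Hom}(G,-)$ to the Jordan--H\"older sequence of $F$ gives
$$\mathrm{Ext}^1(G,F)\longrightarrow \mathrm{Ext}^1(G,T_1)\otimes \mathrm{Hom}(F,T_1)^* \longrightarrow \mathrm{Hom}(S_0,G)^*\otimes \mathrm{Hom}(S_0,F),$$
and $K$ is the image of the first arrow, hence the kernel of the second. So I must show that the ``extra'' constraint coming from the second arrow, when intersected with the defining constraints of $\mathrm{Ext}^1(G,T_1)\otimes\mathrm{Hom}(F,T_1)^*\subset V^{(N)}$, still has closure equal to the full rectangular block $\mathrm{Ext}^1(S_0,F)\otimes\mathrm{Hom}(S_0,G)^*$. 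Concretely, I would argue that $\overline{K}$ is of the form $A_0\otimes B_0$ with $A_0\subset \mathrm{Ext}^1(S_0,F)$ and $B_0\subset \mathrm{Hom}(S_0,G)^*$ (since $\overline{\phantom{K}}$ always produces a rectangular block), and then show that neither factor can be proper: a proper $A_0$ or $B_0$ would force all of $\mathrm{Ext}^1(G,F)$ to have image contained in a strictly smaller coordinate block, and this can be contradicted by exhibiting, in each of cases TS/SS/TT, an explicit extension class in $\mathrm{Ext}^1(G,F)$ whose image hits every coordinate line. The cleanest way is a downward/upward induction on the ``distance'' between $F$ and $G$ in the chain $\dots, S_0, T_1, T_2, \dots$, peeling off one coevaluation/evaluation sequence at a time from $G$ (or $F$) and using Proposition~\ref{description2} to identify the relevant composition maps, exactly as in the proof of Theorem~\ref{decomposition}.

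The induction base is the case where $G$ is adjacent to $F$ in the chain in the sense that $\mathrm{Hom}$ and $\mathrm{Ext}^1$ are one-dimensional, where the statement $\overline{K}=\mathrm{Ext}^1(S_0,F)\otimes\mathrm{Hom}(S_0,G)^*$ reduces to a rank-one nonvanishing that follows directly from the isomorphisms $\epsilon_i$ and $\delta_i$ of Lemma~\ref{identification}. For the inductive step I would resolve $G$ by the short exact sequence from Proposition~\ref{construction} (or Corollary~\ref{JH}), apply $\mathrm{Ext}^\bullet(-,F)$, and compare the resulting $K$ for $G$ with the $K$ for the smaller object; the commutativity lemmas (Lemma~\ref{commutativity}, Lemma~\ref{conn}) guarantee that the embedding of $K$ into the bigger tensor product is compatible with the one for the smaller object, so taking $\overline{\phantom{K}}$ commutes with the induction.

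The main obstacle I expect is bookkeeping: keeping precise track of which tensor factors are $V$ versus $V^*$, and which pairing $m_j$ versus comultiplication $\delta_j$ cuts out which subspace, across the three cases TS, SS, TT simultaneously. The genuinely mathematical difficulty — as opposed to indexing — is ensuring that the ``new'' kernel condition imposed by the connecting map does not accidentally collapse one of the two rectangular factors; this is where I will need the hypothesis $\mathrm{Ext}^1(G,F)\neq 0$ (which, via Lemma~\ref{numerical}, guarantees the dimensions are in the right range) together with a genericity/non-degeneracy observation analogous to Lemma~\ref{general}, to rule out that $\overline K$ is a proper block. Once $\overline K$ is identified, Lemma~\ref{general} will immediately give the injectivity claimed in Proposition~\ref{injectivity}.
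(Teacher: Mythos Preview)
Your overall plan---embed all the relevant $\mathrm{Hom}$ and $\mathrm{Ext}^1$ spaces into tensor powers $V^{(n)}$ of $V=\mathrm{Ext}^1(T_1,S_0)$ via Propositions~\ref{description}, \ref{description2}, split into the three cases TS/SS/TT, and then argue that $K$ is full on both tensor factors---is exactly the paper's approach. Where you diverge is in the final step: the paper does \emph{not} run an induction on the ``distance'' between $F$ and $G$. In case TS (and analogously in the other two cases) the paper directly identifies $K$ with the space $K_{-j+i}\subset V^{(-j+i)}$, identifies the ambient rectangular block with $K_{-j+1}\otimes K_{i-1}$, and then simply asserts that the inclusion $K_{-j+i}\subset K_{-j+1}\otimes K_{i-1}$ is full on both factors as an elementary linear-algebra check. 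Your inductive scheme (peel off one coevaluation/evaluation at a time using Proposition~\ref{construction} and the compatibility Lemmas~\ref{commutativity}, \ref{conn}) would also prove this fullness, but it is more machinery than the paper uses: once $K$ is recognized as the ``all consecutive pairings vanish'' subspace $K_n$, the fullness is a one-line observation about these spaces, not something requiring a recursive unwinding.

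Two small issues with your write-up. First, your proposed base case (``$\mathrm{Hom}$ and $\mathrm{Ext}^1$ one-dimensional'') is not quite right: when $F$ and $G$ are literally adjacent in the chain one has $\mathrm{Ext}^1(G,F)=0$, which is excluded by hypothesis; the smallest genuine case is already at distance two, and there the relevant spaces are not one-dimensional. Second, you do not need any genericity or Lemma~\ref{general}-type argument \emph{inside} this lemma---Lemma~\ref{general} is invoked only afterwards, in the proof of Proposition~\ref{injectivity}, once $\overline{K}$ has been identified. Keeping those two things separate will make your argument cleaner.
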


\begin{proof}
  we separate into three cases. Case TS: $F=T_{i}$ and $G=S_{j}$, where $i\geq 1, j\leq 0$. Case SS: $F=S_{i}$ and $G=S_{j}$, where $i+2 \leq j\leq 0$. Case TT: $F=T_{i}$ and $G=T_{j}$, where $1\leq i \leq j-2$.

  In the case TS, $K$ is the kernel of the map
  $$\mbox{Ext}^{1}(S_{j},T_{1})\otimes \mbox{Hom}(T_{i},T_{1})^{*} \longrightarrow \mbox{Hom}(S_{0}, S_{j})^{*}\otimes \mbox{Hom}(S_{0},T_{i}).$$
  The map is the composition of
  $$ev^{*}\otimes \mbox{id}: \mbox{Ext}^{1}(S_{j},T_{1})\otimes \mbox{Hom}(T_{i},T_{1}) \longrightarrow \mbox{Hom}(S_{0},S_{j})^{*}\otimes \mbox{Ext}^{1}(S_{0}, T_{1})\otimes \mbox{Hom}(T_{i},T_{1})^{*}$$
  and
  $$\mbox{id}\otimes \delta: \mbox{Hom}(S_{0},S_{j})^{*}\otimes \mbox{Ext}^{1}(S_{0}, T_{1})\otimes \mbox{Hom}(T_{i},T_{1})^{*} \longrightarrow \mbox{Hom}(S_{0},S_{j})^{*}\mbox{Hom}(S_{0},T_{i}).$$
  By similar arguments as in Proposition \ref{description} and \ref{description2}, $ev^{*}: \mbox{Ext}^{1}(S_{j},T_{1}) \longrightarrow \mbox{Hom}(S_{0},S_{j})^{*}\otimes \mbox{Ext}^{1}(S_{0},T_{1})$ is injective and $\mbox{Ext}^{1}(S_{j},T_{1})=K_{-j+1}$ under the inclusion into $V^{(-j+1)}$.

  The map $\delta: \mbox{Ext}^{1}(S_{0},T_{1}) \otimes \mbox{Hom}(T_{i},T_{1})^{*} \longrightarrow \mbox{Hom}(S_{0}, T_{i})$ is represented by the class
  $$[0 \longrightarrow S_{0}\otimes \mbox{Hom}(S_{0},T_{i}) \longrightarrow T_{i} \longrightarrow T_{1}\otimes \mbox{Hom}(T_{i},T_{1})^{*} \longrightarrow 0 ]=\delta$$
  inside $ \mbox{Hom}(T_{i},T_{1})\otimes \mbox{Ext}^{1}(T_{1},S_{0})\otimes \mbox{Hom}(S_{0},T_{i})$.
  Moreover, we may choose a perfect pairing on $\mbox{Ext}^{1}(S_{0},T_{1})\otimes \mbox{Hom}(T_{2},T_{1})^{*}$, so that $\mbox{ker}(\delta)=K_{i}\subset V^{(i)}$. Hence inside $\mbox{Hom}(S_{0},S_{j})^{*}\otimes \mbox{Ext}^{1}(S_{0},T_{1})\otimes \mbox{Hom}(T_{i},T_{1})^{*}$, $K$ is the intersection
  $$K=(\mbox{Ext}^{1}(S_{j},T_{1})\otimes \mbox{Hom}(T_{i},T_{1})^{*}) \cap (\mbox{Hom}(S_{0}, S_{j})^{*}\otimes\mbox{ker}(\delta)).$$
  Under the inclusion
  $$\mbox{Hom}(S_{0},S_{j})^{*}\otimes \mbox{Ext}^{1}(S_{0},T_{1})\otimes \mbox{Hom}(T_{i}, T_{1})^{*}\subset V^{(-j+i)}, $$
  we see $K=K_{-j+i}$, $\mbox{Ext}^{1}(S_{j},T_{1})\otimes \mbox{Hom}(T_{i},T_{1})=K_{-j+1}\otimes K_{i-1}$. By linear algebra, one may check explicitly that $K_{-j+i}\subset K_{-j+1}\otimes K_{-i-1}$  is full on both factors.

 The case TT and the case SS can be treated in similar manners.
\end{proof}

\begin{remark}\label{full1}
  An immediate consequence of Lemma \ref{full} is that, for any positive integers $p, q$, we have
  $$\overline{K^{\oplus pq}}=\mbox{Ext}^{1}(S_{0},F^{q})\otimes \mbox{Hom}(S_{0},G^{p})^{*}.$$
\end{remark}

Now we are ready to prove Proposition \ref{injectivity}.

\begin{proof}
  By Lemma \ref{numerical}, we may assume $\mbox{hom}(S_{0}, G^{p})\leq \mbox{ext}^{1}(S_{0},F^{q})$, the other case is similar. Since $E$ is rigid, we may assume the extension
  $$[0 \longrightarrow F^{q} \longrightarrow E \longrightarrow G^{p} \longrightarrow 0 ]=\delta\in \mbox{Ext}^{1}(F^{q},G^{p})$$
  is general. Under the map
  $$\mbox{Ext}^{1}(G^{p},F^{q}) \longrightarrow \mbox{Ext}^{1}(S_{0}, F^{q})\otimes \mbox{Hom}(S_{0},G^{p})^{*} \cong \mbox{Hom}(\mbox{Hom}(S_{0},G^{p}),\mbox{Ext}^{1}(S_{0},F^{q})),$$
  $\delta\in \mbox{Ext}^{1}(G^{p},F^{q})$ is sent to the connecting homomorphism $\delta_{*}: \mbox{Hom}(S_{0},G^{p}) \longrightarrow \mbox{Ext}^{1}(S_{0},F^{q})$. Since $\delta_{*}$ is general and by Remark \ref{full1}
  $$\overline{K^{\oplus pq}}=\mbox{Ext}^{1}(S_{0},F^{q})\otimes \mbox{Hom}(S_{0},G^{p}),$$
  we may apply Lemma \ref{general}, the proposition is proved.
\end{proof}

Now we are ready to state and prove the main result of this subsection.

\begin{theorem}[Local simplification]\label{local simplification}
  Let $E$ be a rigid object that fits into the following exact sequence
  $$0 \longrightarrow F^{q} \longrightarrow E \longrightarrow G^{p} \longrightarrow 0 ,$$
  where $F, G\in \mathcal{H}$ are $\sigma_{+}$-stable spherical objects with $\phi_{+}(F)>\phi_{+}(G)$ and $\mbox{Ext}^{1}(G,F)\neq 0$. Then the $\sigma_{-}$-Harder-Narasimhan filtration of $E$ is of the form
  $$0=E_{0} \subset E_{1} \subset E_{2} \subset E_{3} \subset E_{4}=E,$$
  where either: (a) $E_{1}=S_{0}\otimes \mbox{Hom}(S_{0},F^{q})$ and $E_{4}/E_{1}$ is rigid of type II, or (b) $E_{4}/E_{3}=T_{1}\otimes \mbox{Hom}(G^{p},T_{1})^{*}$ and $E_{3}$ is rigid of type II.
\end{theorem}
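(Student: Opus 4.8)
The plan is to derive the statement by feeding the dichotomy of Proposition \ref{injectivity} into the description of type II objects given in Proposition \ref{type2}. Recall first that, by Corollary \ref{JH} together with the Remark identifying the connecting maps with evaluation and coevaluation, every $\sigma_{+}$-stable spherical object $P$ with $v(P)\in \mathcal{H}$ sits in a $\sigma_{0}$-Jordan-H\"{o}lder sequence
$$0 \longrightarrow S_{0}\otimes \mbox{Hom}(S_{0},P) \longrightarrow P \longrightarrow T_{1}\otimes \mbox{Hom}(P,T_{1})^{*} \longrightarrow 0,$$
in which the first arrow is the evaluation map and the second is the coevaluation map; in particular $F$ and $G$ both have sub and quotient of this shape. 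By Proposition \ref{injectivity}, at least one of the following holds: (a) the connecting map $\mbox{Hom}(S_{0},G^{p}) \longrightarrow \mbox{Ext}^{1}(S_{0},F^{q})$ is injective; or (b) the connecting map $\mbox{Hom}(F^{q},T_{1}) \longrightarrow \mbox{Ext}^{1}(G^{p},T_{1})$ is injective. I will treat the two cases separately; if both hold, either may be used.

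In case (a), applying $\mbox{Hom}(S_{0},-)$ to $0 \to F^{q} \to E \to G^{p} \to 0$ and using the injectivity yields $\mbox{Hom}(S_{0},E)=\mbox{Hom}(S_{0},F^{q})$. Since $S_{0}$ is the unique $\sigma_{-}$-stable spherical object of $\mathcal{H}$ of maximal $\sigma_{-}$-phase, the first $\sigma_{-}$-Harder-Narasimhan factor of $E$ equals $E_{1}:=S_{0}\otimes \mbox{Hom}(S_{0},E)=S_{0}\otimes \mbox{Hom}(S_{0},F^{q})$, and $E_{1}\subset F^{q}\subset E$ because its evaluation map factors through $F^{q}$. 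Using the Jordan-H\"{o}lder sequence above for $F$ gives $F^{q}/E_{1}\cong T_{1}\otimes \mbox{Hom}(F^{q},T_{1})^{*}$, a power of $T_{1}$, so $E/E_{1}$ fits into
$$0 \longrightarrow T_{1}\otimes \mbox{Hom}(F^{q},T_{1})^{*} \longrightarrow E/E_{1} \longrightarrow G^{p} \longrightarrow 0,$$
which is exactly the first shape of a type II object. As $E_{1}$ is the first Harder-Narasimhan factor of the rigid object $E$ we have $\mbox{Hom}(E_{1},E/E_{1})=0$, so $E/E_{1}$ is rigid by Mukai's Lemma (Lemma \ref{Mukai's Lemma}); hence it is rigid of type II. Its $\sigma_{-}$-Harder-Narasimhan filtration is the three-step filtration produced by Proposition \ref{type2}, and concatenating it with $E_{1}$ — which has strictly larger $\sigma_{-}$-phase than every Harder-Narasimhan factor of $E/E_{1}$ — produces the four-step $\sigma_{-}$-Harder-Narasimhan filtration of $E$ of the asserted form (a).

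Case (b) is dual. Applying $\mbox{Hom}(-,T_{1})$ to $0 \to F^{q} \to E \to G^{p} \to 0$ and using injectivity of $\mbox{Hom}(F^{q},T_{1})\to \mbox{Ext}^{1}(G^{p},T_{1})$ gives $\mbox{Hom}(E,T_{1})=\mbox{Hom}(G^{p},T_{1})$, with $F^{q}$ contained in the kernel of every map $E\to T_{1}$. Since $T_{1}$ is the unique $\sigma_{-}$-stable spherical object of $\mathcal{H}$ of minimal $\sigma_{-}$-phase, the last $\sigma_{-}$-Harder-Narasimhan factor of $E$ is $E_{4}/E_{3}:=T_{1}\otimes \mbox{Hom}(E,T_{1})^{*}=T_{1}\otimes \mbox{Hom}(G^{p},T_{1})^{*}$; setting $E_{3}:=\mbox{ker}(E\twoheadrightarrow E_{4}/E_{3})$, the surjection factors through $G^{p}$, so $E_{3}\supset F^{q}$ and $E_{3}/F^{q}\cong \mbox{ker}(G^{p}\twoheadrightarrow T_{1}\otimes \mbox{Hom}(G^{p},T_{1})^{*})\cong S_{0}\otimes \mbox{Hom}(S_{0},G^{p})$ by the Jordan-H\"{o}lder sequence for $G$. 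Hence $E_{3}$ fits into
$$0 \longrightarrow F^{q} \longrightarrow E_{3} \longrightarrow S_{0}\otimes \mbox{Hom}(S_{0},G^{p}) \longrightarrow 0,$$
the second shape of a type II object, and it is rigid by Mukai's Lemma (Lemma \ref{Mukai's Lemma}) since $\mbox{Hom}(E_{3},E_{4}/E_{3})=0$. Concatenating the three-step $\sigma_{-}$-Harder-Narasimhan filtration of $E_{3}$ from Proposition \ref{type2} with the bottom factor $E_{4}/E_{3}$ — which has strictly smaller $\sigma_{-}$-phase than every factor of $E_{3}$ — yields a filtration of the asserted form (b).

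The substantive input, namely that one of the two connecting maps is injective for the general such rigid $E$, is precisely Proposition \ref{injectivity} and is already available; what is left above is bookkeeping. The two places that require genuine care are (i) the identifications of the sub $E_{1}\subset F^{q}$ in case (a) and of the quotient $E/E_{3}$ (via $E_{3}\supset F^{q}$) in case (b) with the appropriate $S_{0}$- and $T_{1}$-isotypic pieces, which rely on Corollary \ref{JH} and the evaluation/coevaluation description; and (ii) checking that the two concatenations are genuine Harder-Narasimhan filtrations, which uses only that $S_{0}$ (resp. $T_{1}$) is the unique $\sigma_{-}$-stable spherical object of $\mathcal{H}$ of maximal (resp. minimal) $\sigma_{-}$-phase. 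The degenerate possibilities $F=T_{1}$ (forcing $E_{1}=0$, so $E$ is itself of type II) and $G=S_{0}$ (forcing $E_{4}=E_{3}$) are consistent with the statement, since consecutive terms of the filtration are allowed to coincide; and when $\mathcal{H}$ is negative definite the conclusion follows directly from the explicit list in Proposition \ref{lattice}. I expect step (i) to be the main obstacle, as it requires tracking how the $S_{0}$- and $T_{1}$-isotypic parts of $F$ and of $G$ interact with the given extension class.
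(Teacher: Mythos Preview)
Your proof is correct and follows essentially the same route as the paper: invoke Proposition \ref{injectivity} for the dichotomy, identify the first (resp.\ last) $\sigma_{-}$-Harder-Narasimhan factor as $S_{0}\otimes\mbox{Hom}(S_{0},F^{q})$ (resp.\ $T_{1}\otimes\mbox{Hom}(G^{p},T_{1})^{*}$), use Corollary \ref{JH} to see the remaining piece is type II, and conclude via Proposition \ref{type2} and Mukai's Lemma. The paper's own proof is a terser version of exactly this argument (treating only case (a) explicitly), so your additional bookkeeping on concatenation and degenerate cases is fine but not strictly needed.
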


\begin{proof}
  By Proposition \ref{injectivity}, we assume $\mbox{Hom}(S_{0},G^{p}) \longrightarrow \mbox{Ext}^{1}(S_{0}, F^{q})$ is injective, the other case is similar. Hence we have an isomorphism $\mbox{Hom}(S_{0},F^{q})\cong \mbox{Hom}(S_{0},E)$. At $\sigma_{-}$, the first Harder-Narasimhan factor is $E_{1}=S_{0}\otimes \mbox{Hom}(S_{0},F^{q})$. By Corollary \ref{JH}, $F^{q}$ has $\sigma_{-}$-Harder-Narasimhan filtration
  $$0 \longrightarrow S_{0}\otimes \mbox{Hom}(S_{0}, F^{q}) \longrightarrow F^{q} \longrightarrow T_{1}\otimes \mbox{Hom}(F^{q},T_{1})^{*} \longrightarrow 0.$$
  Hence the quotient $E/E_{1}$, which is rigid, fits into
  $$0 \longrightarrow T_{1}\otimes \mbox{Hom}(F^{q},T_{1})^{*} \longrightarrow Q_{1} \longrightarrow G^{p} \longrightarrow 0.$$
  By Proposition \ref{type2}, $E/E_{1}$ is of type II, the proof is completed.
\end{proof}

\begin{example}[Example \ref{hard}, revisit]\label{easy}
  We revisit Example \ref{hard} by using the simplified local reduction (Theorem \ref{local simplification}). Keeping the notations in Example \ref{hard}, the rigid object $E$ is two step, in fact of type II. Then by Proposition \ref{type2}, the connecting homomorphism $\mbox{Hom}(T\otimes B_{1}, T) \longrightarrow \mbox{Ext}^{1}(Q_{1}, T)$ has maximal rank. By Theorem \ref{numbers}, $\mbox{Ext}^{1}(Q_{1}, T)=\mathbb{C}$, and $\mbox{Hom}(T\otimes B, T)=\mathbb{C}^{155}$. Hence $\mbox{hom}(E,T)=35+154=189$, and the $\sigma_{-}$-Harder-Narasimhan filtration has shape
  $$(E_{2}, 189 T),$$
  where $E_{2}$ is a rigid object of type I that fits into the exact sequence
  $$0 \longrightarrow T \longrightarrow E_{2} \longrightarrow S^{\oplus 6} \longrightarrow 0. $$
  By Proposition \ref{type1}, the $\sigma_{-}$-Harder-Narasimhan filtration of $E_{2}$ consists of a single factor $S_{-1}^{-}\in M_{\sigma_{-}}((29, 12H, 5))$. Hence the $\sigma_{-}$-Harder-Narasimhan filtration of $E$ has shape
  $$(S_{-1}^{-}, 189 T).$$
\end{example}

\subsection{Global Simplification}

We introduce a notion called the \emph{height} (Definition \ref{ht}) to measure the complexity of a stable spherical object with respect to a generic stability conditon. In this subsection we give a simple method to compute the cohomology of a height 2 object.

Recall that a stable spherical object is represented by a pair $(v, \sigma)$, where $v\in  \mbox{H}^{*}_{alg}(X)$ is spherical and $\sigma\in \mathbf{b}$, defined in Section \ref{section3}. We define the height as follows.

\begin{definition}[Height]\label{ht}
  Let $(v, \sigma)$ be a pair such that $\sigma$ is generic, and $v=(r,D,a)$ satisfies $r>0$, $D\cdot H>0$. Let $W$ be the wall of $v$ that is right below $\sigma$. Let $s,t$ be Mukai vectors of the two stable spherical objects on $W$ and let $\sigma_{-}$ be a stability condition right below $W$. Then we say $(v,\sigma)$ \emph{splits} into $(s, \sigma_{-})$ and $(t, \sigma_{-})$ and $(s, \sigma_{-})$, $(t, \sigma_{-})$ are \emph{factors} of $(v, \sigma)$.

  We say $(v, \sigma)$ is of \emph{height zero}, if $\sigma$ is right below the numerical wall $W(v, \mathcal{O}_{X}[1])$ or $v=v(\mathcal{O}_{X}[1])=(-1,0,-1)$. We define the \emph{height} of $(v, \sigma)$ to be the length of the longest chain of splitting so that every factor has height zero. 
\end{definition}

By definition, a $\sigma_{++}$-stable spherical object $E\in M_{\sigma_{++}}(v)$ has height 1, if the next wall below $\sigma_{++}$ is the Brill-Noether wall. In this case, the whole algorithm has only one step, the result is given by Corollary \ref{JH}.

Let $E\in M_{\sigma}(v)$ be a height 2 object. Let $W'$ be the wall below $\sigma$ and $\mathcal{H}'$ be the rank 2 lattice. Then the two factors $S', T'$ have height 1 or 0. By Corollary \ref{JH}, we have
$$0 \longrightarrow S'\otimes \mbox{Hom}(S',E) \longrightarrow E \longrightarrow T'\otimes \mbox{Hom}(E,T')^{*} \longrightarrow 0.$$

Let $\sigma_{+}$ be a stability condition below but near $W'$. First note that if $(S', \sigma_{+})$ has height 0, $S'$ cannot be $\mathcal{O}_{X}[1]$, because $v(E)=(r,D,a)$ satisfies $DH>0$, hence $\mbox{Hom}(\mathcal{O}_{X}[1], E)=0$. Hence $S'$ has $ \mbox{H}^{1}(S')=0$, we have
$$0 \longrightarrow  \mbox{H}^{0}(S')\otimes \mbox{Hom}(S',E) \longrightarrow  \mbox{H}^{0}(E) \longrightarrow  \mbox{H}^{0}(T')\otimes \mbox{Hom}(E,T')^{*} \longrightarrow 0. $$
Now $(T', \sigma_{+})$ has height at most 1, hence its $\mbox{h}^{0}$ is computed by Corollary \ref{JH}.

Hence we may assume $(S', \sigma_{+})$ has height 1. The next claim is that the connecting homomorphism $\delta:  \mbox{H}^{0}(T')\otimes \mbox{Hom}(E,T')^{*} \longrightarrow  \mbox{H}^{1}(S')\otimes \mbox{Hom}(S',E)$ always has maximal possible rank. From this we may compute $ \mbox{H}^{0}(E)$ inductively from $ \mbox{H}^{0}(S')$ and $ \mbox{H}^{0}(T')$.

Let $W_{S'}$, $W_{T'}$ be the Brill-Noether walls for $S', T'$, respectively. First we note that $W_{S'}>W_{T'}$ under the order on $\mathcal{W}(\mathbb{B})$ defined in Section \ref{section3}. Suppose not, then $W_{T'}>W_{S'}$. Let $\sigma$ be a stability condition that is below $W_{T'}$ but above $W_{S'}$. Then we have
$$\phi_{\sigma}(\mathcal{O}_{X}[1])< \phi_{\sigma}(T')< \phi_{\sigma}(S')< \phi_{\sigma}(\mathcal{O}_{X}[1]),$$
a contradiction. Hence $W_{S'}>W_{T'}$. Let $\sigma_{-}$ be a stability conditon below $W_{S'}$ but above $W_{T'}$. The $\sigma_{-}$-Harder-Narasimhan filtration of $S'$ is
$$0 \longrightarrow S_{-}\otimes \mbox{Hom}(S_{-},S') \longrightarrow S' \longrightarrow \mathcal{O}_{X}[1]\otimes  \mbox{H}^{1}(S') \longrightarrow 0. $$
At $\sigma_{-}$, we have
$$\phi_{-}(S_{-})>\phi_{-}(\mathcal{O}_{X}[1])>\phi_{-}(T'),$$
hence the first $\sigma_{-}$-Harder-Narasimhan factor of $E$ is $S_{-}\otimes \mbox{Hom}(S_{-},S')$, and the quotient $Q_{1}=E/ (S_{-}\otimes \mbox{Hom}(S_{-},S'))$ is a rigid object of type II (Proposition \ref{type2}), it fits into an exact sequence
$$0 \longrightarrow \mathcal{O}_{X}[1]\otimes  \mbox{H}^{1}(S') \otimes \mbox{Hom}(S',E) \longrightarrow Q_{1} \longrightarrow T'\otimes \mbox{Hom}(E,T')^{*} \longrightarrow 0 ,$$
where $\mathcal{O}_{X}[1], T'\in \mathcal{H}_{T'}$, the rank 2 lattice of $W_{T'}$. Since $ \mbox{H}^{1}(-)=\mbox{Hom}(-,\mathcal{O}_{X}[1])^{*}$, by Proposition \ref{type2} the connecting homomorphism
$$ \mbox{H}^{1}(T')\otimes \mbox{Hom}(E,T')^{*} \longrightarrow  \mbox{H}^{1}(\mathcal{O}_{X}[1])\otimes  \mbox{H}^{1}(S') \otimes \mbox{Hom}(S',E)$$
is a general linear map, hence has maximal possible rank. Note that $ \mbox{H}^{1}(\mathcal{O}_{X}[1])\otimes  \mbox{H}^{1}(S')$ is naturally identified with $ \mbox{H}^{1}(S')$. We have proved:

\begin{theorem}[Global simplification]\label{global simplification}
  Let $E\in M_{\sigma}(v)$ be a height 2 spherical object and $S, T$ be its factors. Then in the long exact sequence induced by $ \mbox{H}^{0}(-)$ on
  $$0 \longrightarrow S\otimes \mbox{Hom}(S,E) \longrightarrow E \longrightarrow T\otimes \mbox{Hom}(E,T)^{*} \longrightarrow 0, $$
  the connecting homomorphism $ \mbox{H}^{0}(T)\otimes \mbox{Hom}(E,T)^{*} \longrightarrow  \mbox{H}^{1}(S)\otimes \mbox{Hom}(S,E)$ has maximal rank.
\end{theorem}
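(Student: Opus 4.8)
The plan is to reduce Theorem \ref{global simplification} to the type II statement of Proposition \ref{type2}, exactly as sketched in the paragraphs preceding the theorem statement, and to fill in the few verifications that are only alluded to there. First I would dispose of the easy case: if the factor $S$ has height $0$, then since $v(E)=(r,D,a)$ has $DH>0$ we have $\mbox{Hom}(\mathcal O_X[1],E)=0$, so $S\neq \mathcal O_X[1]$ and $S$ has $\mbox{H}^1(S)=0$; then the connecting homomorphism in question has target zero and the statement is vacuous. So I would assume from now on that $(S,\sigma_+)$ has height $1$, i.e. the next wall below $\sigma_+$ for $S$ is its Brill--Noether wall $W_S$.

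Next I would establish the wall-ordering claim: $W_S > W_T$ in the order on $\mathcal W(\mathbb B)$ from Section \ref{section3}. This is the short phase-inequality argument: if instead $W_T>W_S$, pick $\sigma$ strictly between them and observe $\phi_\sigma(\mathcal O_X[1])<\phi_\sigma(T)<\phi_\sigma(S)<\phi_\sigma(\mathcal O_X[1])$, a contradiction (using $\phi_+(T)>\phi_+(S)$ on the original wall $W'$ and the definitions of the Brill--Noether walls). Having $W_S>W_T$, I would choose a stability condition $\sigma_-$ with $W_T < \sigma_- < W_S$ and run local reduction starting from the exact sequence $0\to S\otimes\mbox{Hom}(S,E)\to E\to T\otimes\mbox{Hom}(E,T)^*\to 0$. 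At $\sigma_-$ the object $S$ has Harder--Narasimhan filtration $0\to S_-\otimes\mbox{Hom}(S_-,S)\to S\to \mathcal O_X[1]\otimes\mbox{H}^1(S)\to 0$ (using $\mbox{H}^1(-)=\mbox{Hom}(-,\mathcal O_X[1])^*$ and Corollary \ref{JH}), and since $\phi_-(S_-)>\phi_-(\mathcal O_X[1])>\phi_-(T)$ the first $\sigma_-$-Harder--Narasimhan factor of $E$ is $S_-\otimes\mbox{Hom}(S_-,S)$. Quotienting it out, by Lemma \ref{rigid} and Mukai's Lemma the quotient $Q_1$ is rigid and fits into
$$0\longrightarrow \mathcal O_X[1]\otimes\mbox{H}^1(S)\otimes\mbox{Hom}(S,E)\longrightarrow Q_1\longrightarrow T\otimes\mbox{Hom}(E,T)^*\longrightarrow 0,$$
with both factors in the rank $2$ lattice $\mathcal H_{T}$ of $W_T$. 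This is precisely the shape of a type II rigid object (with $F=\mathcal O_X[1]$, $G=T$), so Proposition \ref{type2} applies and the connecting homomorphism $\mbox{Hom}(T\otimes\mbox{Hom}(E,T)^*,\mathcal O_X[1])\to \mbox{Ext}^1(\mathcal O_X[1]^{\oplus(\cdot)},\mathcal O_X[1]\otimes \mbox{H}^1(S)\otimes\mbox{Hom}(S,E))$ — equivalently, after dualizing, $\mbox{H}^1(T)\otimes\mbox{Hom}(E,T)^*\to \mbox{H}^1(S)\otimes\mbox{Hom}(S,E)$ — has maximal rank.

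Finally I would identify that last connecting map with the one in the statement: applying $\mbox{H}^0(-)$ to the defining sequence of $E$ gives a long exact sequence whose connecting map is $\mbox{H}^0(T)\otimes\mbox{Hom}(E,T)^*\to \mbox{H}^1(S)\otimes\mbox{Hom}(S,E)$, and one checks by naturality of the HN/wall-crossing construction that this is the same map (up to the canonical identifications $\mbox{H}^1(\mathcal O_X[1])\cong\mathbb C$ and $\mbox{H}^0(T)\cong\mbox{H}^1(T)^*$-pairing bookkeeping appearing in Proposition \ref{type2}) as the one produced by the local reduction; then maximal rank transfers. The main obstacle I expect is precisely this last compatibility: making rigorous that the connecting homomorphism extracted from the type II analysis of $Q_1$ literally coincides with the cohomological connecting map of $E$, rather than merely having the same source and target dimensions. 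The cleanest way around it is to observe that $Q_1$ is obtained from $E$ by removing a direct sum of $\sigma_-$-stable objects $S_-$ with $\mbox{Hom}(S_-,\mathcal O_X[1])=\mbox{Ext}^1(S_-,\mathcal O_X[1])=0$ and $\mbox{Hom}(S_-,T)=0$, so that $\mbox{H}^1(-)=\mbox{Hom}(-,\mathcal O_X[1])^*$ and $\mbox{Hom}(-,T)$ are both insensitive to the passage $E\rightsquigarrow Q_1$; hence the relevant squares in the long exact sequences commute and the two connecting maps are identified on the nose, and maximal rank of one gives maximal rank of the other.
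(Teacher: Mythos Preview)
Your approach is essentially the paper's own: dispose of the height-$0$ case, order the Brill--Noether walls via the phase contradiction, peel off the $S_-$-summand to land in a type~II situation, and invoke Proposition~\ref{type2}. The paper is in fact sketchier than you are about the final compatibility between the connecting map for $E$ and the one for $Q_1$; it simply records the identification $\mbox{H}^1(\mathcal O_X[1])\otimes\mbox{H}^1(S')\cong\mbox{H}^1(S')$ and declares the theorem proved.

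One genuine slip in your last paragraph: the claim $\mbox{Ext}^1(S_-,\mathcal O_X[1])=0$ is false. By Serre duality this is $\mbox{H}^0(S_-)^*$, and since $S_-$ has $\mbox{H}^1(S_-)=0$ and $\chi(S_-)=r(S_-)+a(S_-)>0$, we have $\mbox{H}^0(S_-)\neq 0$. Fortunately you do not need that vanishing. The clean way to carry out the identification is via the morphism of short exact sequences
\[
\begin{tikzcd}
0\arrow[r] & S'\otimes A \arrow[r]\arrow[d] & E \arrow[r]\arrow[d] & T'\otimes B \arrow[r]\arrow[d,"="] & 0\\
0\arrow[r] & \mathcal O_X[1]\otimes\mbox{H}^1(S')\otimes A \arrow[r] & Q_1 \arrow[r] & T'\otimes B \arrow[r] & 0,
\end{tikzcd}
\]
whose left vertical arrow comes from the $\sigma_-$-Harder--Narasimhan filtration of $S'$. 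Applying $\mbox{H}^*$ gives a commuting square between $\partial_E$ and $\partial_{Q_1}$; the induced map $\mbox{H}^1(S')\to\mbox{H}^1(\mathcal O_X[1])\otimes\mbox{H}^1(S')$ is an isomorphism precisely because $\mbox{H}^1(S_-)=0$ (which you correctly note) and $\mbox{H}^2(S_-)=0$. The latter is automatic: $\mbox{H}^2(S_-)^*\cong\mbox{Hom}(S_-,\mathcal O_X)=\mbox{Ext}^{-1}(S_-,\mathcal O_X[1])=0$ since $S_-$ and $\mathcal O_X[1]$ both lie in the heart $\mathcal A$. With this isomorphism in hand, maximal rank of $\partial_{Q_1}$ (from Proposition~\ref{type2}, after the Serre-duality bookkeeping) transfers to $\partial_E$, and you are done. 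Note also that the map you want is $\mbox{H}^0(T')\otimes\mbox{Hom}(E,T')^*\to\mbox{H}^1(S')\otimes\mbox{Hom}(S',E)$, not $\mbox{H}^1(T')$; the paper's text has the same slip.
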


We show the use of Theorem \ref{global simplification} explicitly in Example \ref{height two}. 

\begin{remark}
  The height zero and height one cases are clear. If $E$ is a height zero stable spherical vector bundle with $c_{1}(E)\cdot H>0$, we have $\mbox{h}^{1}(E)=0$ by Corollary \ref{output}. If $E$ has height one, the cohomology of $E$ is computed by Theorem \ref{numbers} (see Example \ref{height one}).
\end{remark}

\section{Weak Brill-Noether in Picard Rank One}

In this section we show a numerical condition that is equivalent to weak Brill-Noether for a spherical vector bundle on a K3 surface with Picard rank one.

For the rest of this section, we assume the polarized K3 surface $(X,H)$ has $\mbox{Pic}(X)=\mathbb{Z}H$ and $H^{2}=2n$. Let $\mathbb{H}=\{(sH,tH): s\in \mathbb{R}, t\in \mathbb{R}_{>0}\}$, and let $\mathcal{A}_{0}=\mathcal{A}_{0,tH}, \forall t>0$. Recall that we defined the line $\mathbf{b}\subset \mathbb{H}$ in Section \ref{section3}. When the Picard rank is one, $\mathbf{b}=\{(\epsilon H, tH)\}$ for a sufficiently small $\epsilon>0$. Let $\mathcal{A}_{\epsilon}$ be the heart of any stability condition in $\mathbf{b}$. The main theorem of this section is the following.

\begin{theorem}\label{weak BN}
  (Weak Brill-Noether) Let $(X,H)$ be a polarized K3 surface of Picard rank one. Let $v=(r,dH,a)$ be a Mukai vector with $v^{2}=-2$ and $r, d>0$, $E\in M_{H}(v)$.
Let $y$ be the largest possible value of $\frac{a_{1}d-ad_{1}}{r_{1}d-rd_{1}} $ where $v_{1}=(r_{1}, d_{1}H, a_{1})\neq v$ satisfies 
$$v_{1}^{2}=-2, vv_{1}<0, \frac{a_{1}d-ad_{1}}{r_{1}d-rd_{1}}>0, 0<d_{1}\leq d.$$
Then $\mbox{H}^{1}(E)=0$ if and only if $y<1$.
\end{theorem}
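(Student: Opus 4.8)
\emph{Setting up the translation.} The plan is first to rewrite the number $y$ in terms of walls. Working on the slice $\mathbb{H}=\{(sH,tH)\}$, one computes the central charge $Z_{sH,tH}$ on the classes $v$ and $v_{1}=(r_{1},d_{1}H,a_{1})$ and finds that the numerical wall $W(v,v_{1})$ (when $r_{1}d-rd_{1}\neq 0$) is the circle $(rd_{1}-r_{1}d)\tfrac{H^{2}}{2}(s^{2}+t^{2})+(ar_{1}-a_{1}r)s+(a_{1}d-ad_{1})=0$, which is centered on the $s$-axis; hence it meets the line $\{s=0\}$ at $(0,f_{0})$ with $\tfrac{H^{2}}{2}f_{0}^{2}=\frac{a_{1}d-ad_{1}}{r_{1}d-rd_{1}}$. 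Since the Brill--Noether wall $W(v,\mathcal{O}_{X})$ passes through $\sigma_{0}=(0,\sqrt{2/H^{2}})$, this yields $f_{0}(W(v,v_{1}))<\sqrt{2/H^{2}}\Leftrightarrow \frac{a_{1}d-ad_{1}}{r_{1}d-rd_{1}}<1$, and $f_{0}(W(v,v_{1}))>0\Leftrightarrow \frac{a_{1}d-ad_{1}}{r_{1}d-rd_{1}}>0$. Moreover the four conditions in the statement single out exactly the Mukai vectors that can destabilize $v$ at a totally semistable wall of $v$ met on $\mathbf{b}$ weakly above $W(v,\mathcal{O}_{X})$: that $v_{1}$ be spherical with $vv_{1}<0$ and $v_{1},v-v_{1}$ effective is Proposition \ref{criterion}; that $0<d_{1}\leq d$ is forced because between the Gieseker chamber and $\mathcal{C}$ every Harder--Narasimhan factor of $E$ which is not a sum of $\mathcal{O}_{X}[1]$ has positive degree by Lemma \ref{di>0}, and its complementary factor has nonnegative degree for the same reason; and $\frac{a_{1}d-ad_{1}}{r_{1}d-rd_{1}}>0$ is the requirement $f_{0}(W(v,v_{1}))>0$ that the wall reach the region in which the reduction operates. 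Thus $y<1$ is equivalent to: there is no totally semistable wall of $v$ strictly between the Gieseker chamber and $\mathcal{C}$, and $W(v,\mathcal{O}_{X})$ itself is not a totally semistable wall of $v$.

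\emph{Reformulating $H^{1}(E)=0$.} By Serre duality $\mbox{h}^{1}(E)=\mbox{hom}(E,\mathcal{O}_{X}[1])$, and $\mathcal{O}_{X}[1]$ lies in the heart $\mathcal{A}$ associated with $\mathbf{b}$ (as $\mathcal{O}_{X}$ has slope $0<\beta\cdot\omega$). By Corollary \ref{output}, for $\sigma\in\mathcal{C}$ the last $\sigma$-Harder--Narasimhan factor of $E$ is $\mathcal{O}_{X}[1]^{\oplus h}$ and $\mbox{h}^{1}(E)=h$. Hence $H^{1}(E)=0$ if and only if $\mathcal{O}_{X}[1]$ does not occur in the $\sigma$-Harder--Narasimhan filtration of $E$ for $\sigma\in\mathcal{C}$.

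\emph{The direction $y<1\Rightarrow H^{1}(E)=0$.} By the first paragraph, if $y<1$ then $E$ stays $\sigma$-stable for every $\sigma$ on $\mathbf{b}$ from the Gieseker chamber down into $\mathcal{C}$; in particular $W(v,\mathcal{O}_{X})$ is not a wall of $v$ (a destabilizer there would have degree $0$, forcing its class to be $\pm v(\mathcal{O}_{X})$ with no stable representative in $\mathcal{A}$ because $\mbox{Im}\,Z_{\sigma}(\mathcal{O}_{X})<0$ on $\mathbf{b}$, or positive degree $\leq d$, which would contribute the value $1$ to $y$). Now take $\sigma_{-}\in\mathcal{C}$ just below $W(v,\mathcal{O}_{X})$ and compare $\phi_{\sigma}(v)$ with $\phi_{\sigma}(v(\mathcal{O}_{X}[1]))$: these are equal on $W(v,\mathcal{O}_{X})$, satisfy $\phi_{\sigma}(v)<\phi_{\sigma}(v(\mathcal{O}_{X}[1]))$ as $t\to\infty$, and the wall is the only crossing and is crossed transversally, so $\phi_{\sigma_{-}}(E)>\phi_{\sigma_{-}}(\mathcal{O}_{X}[1])$. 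Since $E$ and $\mathcal{O}_{X}[1]$ are both $\sigma_{-}$-stable, this gives $\mbox{Hom}(E,\mathcal{O}_{X}[1])=0$, i.e.\ $H^{1}(E)=0$.

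\emph{The direction $y\geq 1\Rightarrow H^{1}(E)\neq 0$, and the main obstacle.} If $y\geq 1$ there is an admissible $v_{1}$ with $\frac{a_{1}d-ad_{1}}{r_{1}d-rd_{1}}\geq 1$, so $W(v,v_{1})$ is an actual (hence totally semistable) wall of $v$ with $f_{0}\geq\sqrt{2/H^{2}}$; running the global reduction from the Gieseker chamber, $E$ is destabilized at or above $W(v,\mathcal{O}_{X})$, and because $v$ is spherical $E$ stays destabilized throughout $\mathcal{C}$. So for $\sigma\in\mathcal{C}$ the $\sigma$-Harder--Narasimhan filtration of $E$ is nontrivial and, by Proposition \ref{preoutput}, its last factor is $\mathcal{O}_{X}[1]^{\oplus h}$; the remaining point is to show $h\geq 1$. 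I expect this to be the hard step. The idea is that since $\mathcal{C}$ lies below the Brill--Noether walls of all the Harder--Narasimhan factors, the final wall-crossing of the reduction occurs at $W(v,\mathcal{O}_{X})$, whose rank two lattice contains $\mathcal{O}_{X}[1]$; applying Corollary \ref{JH} and Proposition \ref{type2} there reduces the claim to the nonvanishing of a connecting homomorphism whose source is nonzero exactly because $E$ fails to be $\sigma$-semistable at $\mathcal{C}$. Equivalently, $\mbox{Hom}(E,\mathcal{O}_{X}[1])=0$ would force (using $H^{2}(E)=0$, Riemann--Roch, and Mukai's Lemma \ref{Mukai's Lemma}) all Harder--Narasimhan factors of $E$ at $\mathcal{C}$ to have no nonzero map to $\mathcal{O}_{X}[1]$ and positive degree, which is incompatible with the phase inequalities produced by destabilization above the Brill--Noether wall. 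Granting this, $h\geq 1$, so $H^{1}(E)\neq 0$, and the equivalence is complete.
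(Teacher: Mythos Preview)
Your treatment of the direction $y<1\Rightarrow H^{1}(E)=0$ is correct and essentially matches the paper: both amount to observing that $y$ encodes $f_{0}$ of the highest actual wall via Proposition~\ref{criterion}, so $y<1$ means $E$ stays stable down into the chamber below the Brill--Noether wall, where the phase comparison with $\mathcal{O}_{X}[1]$ (packaged in the paper as Corollary~\ref{output} with trivial filtration) gives $\mbox{Hom}(E,\mathcal{O}_{X}[1])=0$.

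The genuine gap is in the converse, and your instinct that ``this is the hard step'' is right---but your proposed attack is both more complicated than necessary and not actually completed. You try to run the full global reduction down to $\mathcal{C}$ and then argue that the last factor $\mathcal{O}_{X}[1]^{\oplus h}$ has $h\geq 1$. Neither of your sketches for this works as stated: the first (via Corollary~\ref{JH} and Proposition~\ref{type2} at the Brill--Noether wall) presumes a two-step filtration at that wall, which you have no reason to have after several wall-crossings; the second (phase inequalities forcing a contradiction) is asserted rather than argued, and it is not clear what the contradiction is.

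The paper avoids descending to $\mathcal{C}$ altogether. It splits $y\geq 1$ into $y=1$ and $y>1$. When $y=1$ the first wall \emph{is} the Brill--Noether wall, so one of the two stable spherical objects is literally $\mathcal{O}_{X}[1]$, and Corollary~\ref{JH} gives $h^{1}(E)>0$ immediately. When $y>1$ the paper works at the \emph{first} wall $W$ below the Gieseker chamber (still above the Brill--Noether wall). The Harder--Narasimhan filtration there is just $0\to S^{p}\to E\to T^{q}\to 0$ with $S,T$ the two stable spherical objects on $W$. The key input you are missing is a short lattice computation (Lemma~\ref{negative rank}): in Picard rank one, if the rank~$2$ lattice of a wall does not contain $(-1,0,-1)$, then $r(S)$ and $r(T)$ have opposite signs. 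Since $S$ is a sheaf one gets $r(S)>0$, hence $r(T)<0$, hence $\chi(T)<0$, hence $H^{1}(T)\neq 0$. The long exact sequence in cohomology then gives $H^{1}(E)\twoheadrightarrow H^{1}(T^{q})$ (using $H^{2}(S)=0$, which holds since $S$ and $\mathcal{O}_{X}[1]$ both lie in the heart), so $H^{1}(E)\neq 0$. This replaces your entire ``hard step'' with a three-line discriminant calculation.
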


We show the use of Theorem \ref{weak BN} in Example \ref{wBNfail} and Example \ref{wBNhold}.

The crucial observation is the following lemma.

\begin{lemma}\label{negative rank}
Let $\mathcal{H}$ be the rank 2 lattice of a wall, whose two stable spherical objects have Mukai vectors $v_{1}, v_{2}$. Assume $v_{0}=(-1,0,-1)\notin \mathcal{H}$. Then $r(v_{1})\cdot r(v_{2})<0$. 
\end{lemma}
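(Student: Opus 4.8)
The plan is to work on the slice $\mathbf{b}$ in the Picard rank one case, where a wall is a semicircle in the $(s,t)$-upper half plane centered on the $s$-axis, and to use the geometry of the central charge $Z_{\epsilon H, tH}$ restricted to $\mathbf{b}$. Let $W$ be the wall, let $\sigma_0 \in W \cap \mathbf{b}$, and let $v_1 = (r_1, d_1 H, a_1)$, $v_2 = (r_2, d_2 H, a_2)$ be the Mukai vectors of the two $\sigma_0$-stable spherical objects. Since $W$ is an actual wall, Proposition \ref{criterion} gives that (some positive multiple of) $v_1 + v_2$, or more precisely the Mukai vector $v$ defining the wall, lies in $\mathcal{H}$ and both $v_1$ and $v-v_1$ are effective; in particular both $v_1$ and $v_2$ are positive Mukai vectors in the sense recalled in Section \ref{section2}, so each has either positive rank, or rank zero with effective first Chern class. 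The first step is therefore to rule out that \emph{both} $r(v_1)$ and $r(v_2)$ are $\geq 0$, and separately that they cannot both be $\leq 0$.

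First I would handle signs via the imaginary part of the central charge. On $\mathbf{b} = \{(\epsilon H, tH)\}$ we have $\mathrm{Im}\,Z_{\epsilon H, tH}(r,dH,a) = tH^2(d - r\epsilon)$, and since all objects involved lie in the heart $\mathcal{A}_\epsilon$ their imaginary parts are $\geq 0$, i.e. $d_i \geq r_i \epsilon$ for $i = 1,2$. Because $\sigma_0$ lies on $W$, the two Mukai vectors $v_1, v_2$ have equal phase at $\sigma_0$; since $\epsilon$ is taken sufficiently small the relevant rank $2$ lattice $\mathcal{H}$ is the one attached to $W$, and $v = v_1 + \text{(positive combination)}$ forces $v_1, v_2$ to be linearly independent. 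Now the key point: the hypothesis $v_0 = (-1,0,-1) = v(\mathcal{O}_X[1]) \notin \mathcal{H}$ means $\mathcal{O}_X[1]$ is not among the Jordan–Hölder factors on $W$, so $W$ is \emph{not} the Brill–Noether wall; combined with Lemma \ref{di>0} (and the running assumption $D\cdot H > 0$ on the ambient object, propagated through the global reduction), every semistable object with Mukai vector in $\mathcal{H}$ that is not a sum of $\mathcal{O}_X[1]$ has strictly positive $H$-degree, so $d_1, d_2 > 0$ — in particular $d_1 d_2 > 0$.

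With $d_1, d_2 > 0$ fixed, the assertion $r_1 r_2 < 0$ is now a statement about the slope function. Since $v_1$ and $v_2$ are distinct spherical classes in the rank $2$ lattice $\mathcal{H}$ lying on the same wall, $v_1 - v_2$ (rescaled to be primitive) is an isotropic or negative class; I would compute $(v_1 - v_2)^2 = -4 - 2 v_1 v_2$ and use that on an actual totally semistable wall $v_1 v_2 \geq -1$ (they are distinct stable objects of the same phase, so $\mathrm{hom}(v_1, v_2) = \mathrm{hom}(v_2,v_1) = 0$ and $\chi(v_1,v_2) = -v_1 v_2 = \mathrm{ext}^1 \geq 0$, with equality only in the degenerate lattice case). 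In either the $\chi = 1$ or $\chi = 2$ case analyzed in Proposition \ref{lattice}, one gets an explicit basis and can read off the ranks; alternatively, and more cleanly, I would argue that if $r_1$ and $r_2$ had the same sign then one of them would have larger $Z_{\sigma_0}$-phase than $v = v_1 + v_2$ — contradicting that $v$ is the class whose wall this is — unless the configuration degenerates to $\mathcal{O}_X[1]$ appearing, which is exactly what $v_0 \notin \mathcal{H}$ forbids. Concretely: if $r_1, r_2 \geq 0$ then both $v_i$ are positive with nonnegative rank, and a short computation with $v_1^2 = v_2^2 = -2$, $d_i > 0$ shows the slopes $d_i H / r_i$ (or the limit slopes at $\epsilon \to 0$) cannot coincide the way the wall requires, while if $r_1, r_2 \leq 0$ one dualizes (or shifts) to reduce to the previous case, the only excluded borderline being $r_i = 0$ with $d_i = 0$, i.e. $\mathcal{O}_X[1]$, again ruled out by hypothesis.

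The main obstacle I anticipate is making the dichotomy argument airtight in the degenerate-lattice case ($\chi = 2$), where $\mathcal{H}$ contains an isotropic class and there are infinitely many spherical classes; there the "one factor would have larger phase" heuristic needs the explicit recursion $t_i, s_j$ from Proposition \ref{lattice} to confirm that consecutive spherical classes on the wall genuinely have ranks of opposite sign, which amounts to checking the sign pattern of the entries of $\begin{pmatrix} a_{i-1} & a_i \\ a_i & a_{i+1}\end{pmatrix}$-type matrices built from the fundamental sequence and verifying it is preserved under the reflection $w \mapsto -w - (w\cdot w') w'$. Once that sign-alternation is established for the basis vectors of $\mathcal{H}$, linearity of the rank functional and the constraint $d_1, d_2 > 0$ force $r_1 r_2 < 0$ for the two \emph{particular} classes realized on the wall. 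I would therefore structure the proof as: (1) reduce to $d_1, d_2 > 0$ using $v_0 \notin \mathcal{H}$ and Lemma \ref{di>0}; (2) invoke Proposition \ref{lattice} to get an explicit description of $\mathcal{H}$ and its spherical classes; (3) in each case exhibit the sign of the rank on the two stable classes directly and conclude $r_1 r_2 < 0$.
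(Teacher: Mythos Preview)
Your phase argument has a fundamental flaw: on the wall $W$, by definition of a numerical wall, \emph{all} classes in $\mathcal{H}$ have the same phase at $\sigma_0$. So the claim that ``if $r_1$ and $r_2$ had the same sign then one of them would have larger $Z_{\sigma_0}$-phase than $v$'' cannot produce a contradiction --- the phases of $v_1$, $v_2$, and $v$ at $\sigma_0$ are all equal regardless of the signs of the ranks. Likewise the appeal to ``the slopes $d_iH/r_i$ cannot coincide the way the wall requires'' is not an argument: walls in the $(s,t)$-plane exist for many pairs of classes with positive rank. Your fallback plan of tracing sign patterns through the recursion of Proposition~\ref{lattice} might eventually work, but as you yourself note, it is not fleshed out and the degenerate-lattice case is unclear.

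The paper's proof is completely different and much shorter. The key idea you are missing is a \emph{signature argument on a rank-$3$ sublattice}. One first checks $r_i\neq 0$ (since $r_i=0$ gives $-2=d_i^2H^2\geq 0$), and then observes that $r_i>0\iff a_i>0\iff \chi_i:=v_0v_i=r_i+a_i>0$, because $2r_ia_i=d_i^2H^2+2>0$. Now the hypothesis $v_0\notin\mathcal{H}$ (together with primitivity of $\mathcal{H}$) means $N=\langle v_0,v_1,v_2\rangle$ has full rank $3$ in $\mathrm{H}^*_{\mathrm{alg}}(X)\cong U\oplus\mathbb{Z}H$, whose signature is $(2,1)$. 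Hence $\mathrm{disc}(N)$ has the same sign as $\mathrm{disc}(U\oplus\mathbb{Z}H)=-2n<0$. But the Gram determinant of $(v_0,v_1,v_2)$ computes to $2(\chi^2-4)+2\chi_1^2+2\chi_2^2+2\chi\chi_1\chi_2$ with $\chi=v_1v_2\geq 2$ (the Picard-rank-one lattice contains no negative-definite plane, so Proposition~\ref{lattice}(2) is excluded). If $\chi_1,\chi_2>0$ this is strictly positive, a contradiction; the case $\chi_1,\chi_2<0$ is symmetric. This single determinant calculation replaces all of your proposed casework.
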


\begin{proof}
  Write $v_{i}=(r_{i}, d_{i}H, a_{i}), i=1,2$. First note that $r_{i}\neq 0$. If $r_{i}=0$, then
  $$-2=v_{i}^{2}=d_{i}^{2}H^{2}-2r_{i}a_{i}=2nd_{i}^{2}\geq 0,$$
  a contradiction. Hence it suffices to show $r_{1}, r_{2}$ cannot both be positive or negative. We prove by contradiction that they cannot both be positive, the other case is similar. Let $\chi_{i}=v_{0}v_{i}=r_{i}+a_{i}$. Note that $r_{i}>0$ is equivalent to $a_{i}>0$, which is also equivalent to $\chi_{i}>0$, because $2r_{i}a_{i}=2nd_{i}^{2}+2>0$.

  Let $\chi=v_{1}v_{2}$. Since $v_{1}, v_{2}$ are the Mukai vectors of the two stable spherical objects in $\mathcal{H}$, we have $\chi>0$. Note that $\mathcal{H}$ cannot be negative definite, since the lattice $ \mbox{H}^{*}_{alg}(X)\cong U\oplus \mathbb{Z}H$ where $U$ is a hyperbolic plane. The signature of $U\oplus \mathbb{Z}H$ is $(2,1)$, it cannot contain a negative definite plane. Hence by Proposition \ref{lattice}, $\chi\geq 2$.

  Next we consider the sublattice $N=\langle v_{0}, v_{1}, v_{2} \rangle \subset  \mbox{H}^{*}_{alg}(X) \cong U\oplus \mathbb{Z}H$. By assuption, $v_{0}\notin \mathcal{H}$. And since $\mathcal{H}$ is the rank 2 lattice of a wall, it is primitive, hence $v_{0}\notin \mathcal{H}_{\mathbb{Q}}$, $N$ is a sublattice of full rank. In particular, its discriminant has the same sign as $\mbox{disc}(U\oplus \mathbb{Z}H)=-2n$, which is negative. Now we compute the discriminant of $N$:
  \begin{eqnarray*}
  \mbox{disc}(N) &= & \mbox{det} \begin{pmatrix}
                                   v_{0}^{2} & v_{0}v_{1} & v_{0}v_{2} \\
                                   v_{1}v_{0} & v_{1}^{2} & v_{1}v_{2} \\
                                   v_{2}v_{0} & v_{2}v_{1} & v_{2}^{2}
               \end{pmatrix}\\                       
    &=&\mbox{det}
               \begin{pmatrix}
                 -2 & \chi_{1} & \chi_{2} \\
                 \chi_{1} & -2 & \chi \\
                 \chi_{2} & \chi & -2
               \end{pmatrix}\\
    &=& 2(\chi^{2}-4)+\chi_{1}(2\chi_{1}+\chi\chi_{2})+\chi_{2}(\chi_{1}\chi+2\chi_{2}).
  \end{eqnarray*}
Since $\chi\geq 2$, $\chi^{2}-4\geq 0$. If $\chi_{1}, \chi_{2}>0$, then $\mbox{disc}(N)>0$, a contradiction.  
\end{proof}

\begin{proof}[Proof of Theorem \ref{weak BN}]
  Let $\sigma=\sigma_{(\epsilon H, tH)}\in \mathbf{b}$ with $t\gg 0$. By Theorem \ref{large volume limit}, $E\in M_{\sigma}(v)$. Using the notation in Proposition \ref{preoutput}, let $sh(\sigma)=(E^{\bullet}, \partial W)$. Since $E\in M_{\sigma}(v)$, the filtration $E^{\bullet}=E$ is trivial. By Proposition \ref{criterion}, $y=\frac{f_{0}(W)}{\sqrt{2/H^{2}}} $. We separate into three cases.
\\
  \emph{Case I}: $y<1$. Then $\sigma$ satisfies the condition in Proposition \ref{preoutput}. Since $E^{\bullet}=E$ is trivial, under the notations in Proposition \ref{preoutput}, $h=0$. Hence by Corollary \ref{output}, $\mbox{h}^{1}(E)=0$.
\\
  \emph{Case II}: $y=1$. Let $W'$ be the numerical wall defined by $v$ and $v(\mathcal{O}_{X}[1])=(-1,0,-1)$. Then $\frac{f_{0}(W')}{\sqrt{2/H^{2}}}=1 $. By \cite{Mac14}, numerical walls of $v$ are nested, hence $W=W'$. Let $S, T$ be the two stable spherical objects in $\mathcal{H}$, the rank 2 lattice of $W$. Since $\mathcal{O}_{X}[1]$ is $\sigma$-stable for all $\sigma\in \mathbf{b}$, we have $T=\mathcal{O}_{X}[1]$. Let $\sigma_{-}\in \mathbf{b}$ be a stability condition right below $W$. Since $\mbox{Hom}(\mathcal{O}_{X}[1],E)=0$, by Corollary \ref{JH}, the $\sigma_{-}$-Harder-Narasimhan filtration of $E$ is
  $$0 \longrightarrow S^{p} \longrightarrow E \longrightarrow \mathcal{O}_{X}[1]^{q} \longrightarrow 0 $$
  for some $p,q>0$. By Corollary \ref{output}, $\mbox{h}^{1}(E)=q>0$.
\\
  \emph{Case III}: $y>1$. Same notations as in Case II. By Corollary \ref{JH}, the $\sigma_{-}$-Harder-Narasimhan filtration of $E$ is
\begin{equation}\label{(1)}
  0 \longrightarrow S^{p} \longrightarrow E \longrightarrow T^{q} \longrightarrow 0
\end{equation}
for some $p,q>0$. Note that $S\in \mathcal{A}_{\epsilon}$ is a sheaf (complex concentrated in degree 0): by taking the long exact sequence of cohomology associated to (\ref{(1)}), we have
$$0 \longrightarrow \mathcal{H}^{-1}(S^{p}) \longrightarrow \mathcal{H}^{-1}(E) \longrightarrow \cdots.$$
Since $E$ is a sheaf, $\mathcal{H}^{-1}(E)=0$, hence $\mathcal{H}^{-1}(S)=0$.
Therefore $r(S)>0$. Since $y\neq 1$, $v(\mathcal{O}_{X}[1])=(-1,0,-1)\notin \mathcal{H}$. By Lemma \ref{negative rank}, $r(T)<0$, equivalently $\chi(T)<0$. Hence $\mbox{H}^{1}(T)\neq 0$. Consider the long exact sequence of cohomology on (\ref{(1)}):
$$\cdots \longrightarrow  \mbox{H}^{1}(E) \longrightarrow  \mbox{H}^{1}(T^{q}) \longrightarrow  \mbox{H}^{2}(S^{p})\longrightarrow \cdots .$$
Since $y>1$, $\sigma_{-}$ is above the Brill-Noether wall of $S$, hence $ \mbox{H}^{2}(S)=0$. Since $ \mbox{H}^{1}(T)\neq 0$, we have $ \mbox{H}^{1}(E)\neq 0$. 
\end{proof}

\section{Asymptotic Results}

In this section we prove an asymptotic estimation (Theorem \ref{asymptotic}) of the cohomology of a spherical vector bundle, in the sense that the error become relatively small when the Mukai vector is obtained by many spherical reflections from the two stable sperical Mukai vectors. 

Let $\mathcal{H}$ be the rank 2 lattice of a wall. Recall from Proposition \ref{lattice} that every effective spherical Mukai vector in $\mathcal{H}$ can be written as $t_{i}$ or $s_{j}$. In this section we give an asymptotic result for the cohomology of $E\in M_{H}(t_{i})$ or $M_{H}(s_{j})$ for $i\gg 0$ or $j\ll 0$.

Using the notation in Proposition \ref{construction}, let
\begin{align*}
  ev_{i-1}:  \mbox{H}^{0}(T_{i-1})\otimes \mbox{Hom}(T_{i-1},T_{i-2}) \longrightarrow  \mbox{H}^{0}(T_{i-2}),\\
  coev_{j}:  \mbox{H}^{0}(S_{j+1}) \longrightarrow  \mbox{H}^{0}(S_{j})\otimes \mbox{Hom}(S_{j+1},S_{j})^{*}
\end{align*}
be the induced maps on the evaluation map or coevaluation map. If we know the ranks of $ev_{i}, coev_{j}$ for all $i\geq 1$ and $j\leq 0$, then the cohomology of $T_{i}, S_{j}$ can be computed inductively from the cohomology of $T_{1}, S_{0}$. The main theorem of this section shows that the ranks of most of the maps are known.

\begin{theorem}[Asymptotic Result]\label{asymptotic}
  Let $X$ be any K3 surface. Then $ev_{i}$ is surjective for $i\geq 4$ and $coev_{j}$ is injective for $j\leq -3$. If $\mbox{Pic}(X)=\mathbb{Z}H$ and $H^{2}\neq 2$, then $ev_{i}$ is surjective for $i\geq 2$ and $coev_{j}$ is injective for $j\leq -1$.
\end{theorem}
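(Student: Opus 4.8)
The plan is to exploit the two fundamental short exact sequences of Proposition~\ref{construction} and the numerical bookkeeping of Theorem~\ref{numbers}, reducing everything to whether a general connecting homomorphism has maximal rank. Concretely, apply $\mathrm{H}^{0}(-)$ to the defining sequences $0 \to T_{m+1} \to T_{m}\otimes\mathrm{Hom}(T_{m},T_{m-1}) \to T_{m-1}\to 0$ and $0 \to S_{m+1}\to S_{m}\otimes\mathrm{Hom}(S_{m+1},S_{m})^{*}\to S_{m-1}\to 0$. Surjectivity of $ev_{i}$ is equivalent to injectivity of the next connecting map $\mathrm{H}^{1}(T_{i+1})\hookrightarrow \mathrm{H}^{0}(T_{i-1})\to\cdots$, and dually for $coev_{j}$; so the whole statement is a claim that certain linear maps between explicit vector spaces (whose dimensions are the fundamental sequence $a_{n}$ evaluated at shifted indices, via Theorem~\ref{numbers}) are injective/surjective. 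The cleanest route: show $T_{i+1}$ (resp.\ $S_{j}$) is a rigid object that is \emph{two step} of type II in the sense of Proposition~\ref{type2} once $i$ (resp.\ $-j$) is large enough, and then invoke the maximal-rank statement already proved there. Indeed, by Corollary~\ref{JH} the $\sigma_{-}$-Harder--Narasimhan factors of $T_{i+1}$ are built from $S_{0}$ and $T_{1}$, and the relevant maps $\mathrm{Hom}(T_{1}\otimes B, T_{1})\to\mathrm{Ext}^{1}(S_{0}\otimes A, T_{1})$ are exactly the connecting maps governed by Proposition~\ref{type2}, hence of maximal rank.

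The first concrete step is to reduce to a purely numerical inequality. Using Theorem~\ref{numbers}, the map $ev_{i}$ on $\mathrm{H}^{0}$ fits into $\mathrm{H}^{0}(T_{i})\otimes\mathrm{Hom}(T_{i},T_{i-1})\to\mathrm{H}^{0}(T_{i-1})\to\mathrm{H}^{1}(T_{i+1})\to\mathrm{H}^{1}(T_{i})\otimes\mathrm{Hom}(T_{i},T_{i-1})$, and because each $T_{m}$ has $\mathrm{H}^{2}(T_{m})=0$ for $m$ past its Brill--Noether wall, one gets an inductive recursion for $\mathrm{h}^{0}(T_{m})$ and $\mathrm{h}^{1}(T_{m})$ contingent only on the surjectivity in question. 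So the argument is a simultaneous induction: assuming $ev_{m}$ surjective for $m$ in a suitable range, compute the dimensions, and then verify that the dimension count \emph{permits} $ev_{i}$ to be surjective, i.e.\ $\mathrm{h}^{0}(T_{i})\cdot\mathrm{hom}(T_{i},T_{i-1}) \ge \mathrm{h}^{0}(T_{i-1})$ — this is the analogue of Lemma~\ref{numerical}, and it should follow from $a_{n+1}/a_{n}$ being decreasing (Lemma~\ref{decreasing}) together with the Brill--Noether wall positions. Once the numerics are in place, maximal rank of the \emph{general} connecting map is automatic from rigidity: the extension class defining $T_{i+1}$ is general in its $\mathrm{Ext}^{1}$, so Proposition~\ref{injectivity} (or directly Lemma~\ref{general} via Lemma~\ref{full}, Remark~\ref{full1}) gives injectivity/surjectivity. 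For the Picard rank one, $H^{2}\ne 2$ case the improved bound $i\ge 2$, $j\le -1$ should come from Proposition~\ref{h0} ($\mathrm{h}^{0}(E)<2(r+a)$), which is exactly the extra slack needed to run the induction starting one step earlier; the degenerate behavior at $H^{2}=2$ is what forces the weaker bound in general (cf.\ Example~\ref{hard}).

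I would organize the write-up as: (1) set up the two long exact sequences and identify $ev_{i}$-surjectivity with a connecting-map injectivity; (2) prove the numerical inequality permitting maximal rank, by induction, invoking Lemma~\ref{decreasing} and the location of Brill--Noether walls — this is where I expect the main obstacle, since one must track when $\mathrm{H}^{2}$ vanishing kicks in and handle the first few indices $i=2,3$ as base cases by hand (and in the general, arbitrary-Picard-rank case, the walls of the $T_{m}$ need not be nested, so the comparison of phases must be done carefully via $\phi_{-}$ of Section~\ref{section5}); (3) upgrade the numerical possibility to an actual maximal-rank statement using rigidity and generality of the extension class, citing Proposition~\ref{injectivity}/Lemma~\ref{general}; (4) in the $\mathrm{Pic}=\mathbb{Z}H$, $H^{2}\ne2$ case, feed in Proposition~\ref{h0} to sharpen the starting index. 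The dual statement for $coev_{j}$ is obtained by the symmetric argument (replace $T_{i}$ by $S_{j}$, evaluation by coevaluation, and use Serre duality $\mathrm{H}^{1}(-)=\mathrm{Hom}(-,\mathcal{O}_{X}[1])^{*}$ where the Brill--Noether wall enters). The genuinely delicate point throughout is the base cases: small $i$ (or $|j|$) can fail precisely because the Brill--Noether wall of $T_{i}$ may lie above the wall $W$ we started from, so $\mathrm{H}^{2}(T_{i})$ need not vanish and the recursion breaks — ruling this out for $i\ge 4$ (resp.\ $i\ge 2$ under the Picard hypothesis) is the crux.
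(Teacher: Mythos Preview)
Your proposal has a genuine gap: you are trying to extract information about $H^{0}(-)=\mathrm{Hom}(\mathcal{O}_{X},-)$ from Proposition~\ref{type2} and Proposition~\ref{injectivity}, but those results only control the connecting maps $\mathrm{Hom}(S_{0},-)\to\mathrm{Ext}^{1}(S_{0},-)$ and $\mathrm{Hom}(-,T_{1})\to\mathrm{Ext}^{1}(-,T_{1})$ for the two $\sigma_{0}$-stable objects $S_{0},T_{1}$ on the \emph{given} wall $W$. The map $ev_{i}$ lives at the Brill--Noether wall, where the stable object is $\mathcal{O}_{X}[1]$, not $T_{1}$; in general $\mathcal{O}_{X}[1]\notin\mathcal{H}$, so there is no way to recast the sequence $0\to T_{i+1}\to T_{i}\otimes\mathrm{Hom}(T_{i},T_{i-1})\to T_{i-1}\to 0$ as a ``two-step'' object at a single wall in a manner that makes Proposition~\ref{injectivity} speak about $H^{0}$. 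Relatedly, your claim that ``the extension class defining $T_{i+1}$ is general in its $\mathrm{Ext}^{1}$, so Lemma~\ref{general} applies'' skips the crucial hypothesis of that lemma: one must verify that the image $K$ of the extension class in the relevant Hom-space satisfies $\overline{K}=W\otimes V^{*}$. Lemma~\ref{full} establishes this only for the $S_{0},T_{1}$-functors, not for $H^{0}$, and there is no reason to expect it to hold in the $H^{0}$ setting without new work.

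What the paper actually does is different in kind. It identifies the image of $H^{0}(T_{n})\to H^{0}(T_{1})\otimes\mathrm{Hom}(T_{n},T_{1})^{*}$ explicitly as a subspace $K_{n}\subset\bigotimes_{j=1}^{n}V_{j}$ cut out by a chain of bilinear forms $m_{j-1,j}$ (using the tensor descriptions of Propositions~\ref{description} and~\ref{description2}), where $V_{1}=H^{0}(T_{1})$ and $V_{n}=\mathrm{Hom}(T_{n},T_{n-1})^{*}$. The pairings $m_{n-1,n}$ are \emph{perfect} for $n\ge 3$ (coming from $\delta_{n-1}^{*}$), while $m_{1,2}$ is arbitrary. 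The surjectivity of $ev_{i}$ then reduces, via a Five-Lemma diagram, to the purely linear-algebraic surjectivity of $K_{n+1}\otimes V_{n+2}\to K_{n}$ for $n\ge 3$. This last step is a separate technical lemma whose proof hinges on the fact that a hyperplane section of a Segre variety is non-degenerate (so $\ker(m_{n-1,n})$ is spanned by pure tensors once $\dim V_{n-1}+\dim V_{n}\ge 3$), allowing an explicit lift of any $\eta\in K_{n}$. Your plan correctly anticipates that Proposition~\ref{h0} (via its Corollary) is what sharpens the bound to $i\ge 2$ in the Picard rank one case---that part is right---but the general-$X$ argument for $i\ge 4$ requires the chain-of-kernels linear algebra, which your proposal does not supply and which the Section~\ref{section6} machinery cannot replace.
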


\subsection{A Bound for $\mbox{h}^{0}$}
In this subsection we find a bound for $\mbox{h}^{0}$ of a spherical vector bundle on a K3 surface with Picard rank one and degree at least 4. The bound itself is interesting, it is also needed for the proof of Theorem \ref{asymptotic}.

\begin{proposition}[Bound of $\mbox{h}^{0}$]\label{h0}
  Let $X$ be a K3 surface with $\mbox{Pic}(X)=\mathbb{Z}H$ and $H^{2}\neq 2$.
  Then for any $v=(r,dH,a)\in  \mbox{H}^{*}_{alg}(X)$ with $v^{2}=-2$ and $\sigma\in \mbox{Stab}(X)$ above the Brill-Noether wall, if $r>0$ we have
  $$\mbox{h}^{0}(S)<2\chi(S)$$
  for $S\in M_{\sigma}(v)$. Symmetrically, if $r<0$, then
  $$\mbox{h}^{1}(T)<-2\chi(T)$$
  for $T\in M_{\sigma}(v)$.
\end{proposition}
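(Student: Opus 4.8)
\textbf{Proof proposal for Proposition \ref{h0}.}

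The plan is to reduce the whole statement to the case where $S$ is itself stable and spherical and $\sigma$ is generic, and then run an induction along the wall-and-chamber structure toward the Brill--Noether wall. First I would observe that the two halves of the statement are interchanged by the derived dual: if $v=(r,dH,a)$ with $r<0$, then the dual object has Mukai vector $(-r,-dH,-a)$ (up to the appropriate shift), positive rank, and $\mbox{h}^{1}(T)=\mbox{h}^{1}(T^\vee[\ast])$ matches $\mbox{h}^{0}$ of the dual by Serre duality as in Section \ref{section3}; also $\chi(T)=\chi(T^\vee)$ after the shift. So it suffices to prove $\mbox{h}^{0}(S)<2\chi(S)$ for $r>0$. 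Note $\chi(S)=r+a>0$ here because $S$ lies above the Brill--Noether wall, so the inequality is not vacuous; this positivity should be recorded first, using the same discriminant/Hodge-index computation that appears in Lemma \ref{negative rank} (with $H^2\ge 4$ the class $2nd^2+2=2ra$ forces $a>0$ when $r>0$).

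The core is an induction on the height (Definition \ref{ht}) of the pair $(v,\sigma)$, or equivalently on the position of $\sigma$ relative to the finitely many walls above the Brill--Noether wall. \emph{Base case:} if $(v,\sigma)$ has height zero, then by Corollary \ref{output} we have $\mbox{h}^{1}(S)=0$, so $\mbox{h}^{0}(S)=\chi(S)<2\chi(S)$. \emph{Inductive step:} let $W$ be the wall right below $\sigma$, with rank-2 lattice $\mathcal{H}$ and the two $\sigma_0$-stable spherical factors $S', T'$, $\phi_+(T')>\phi_+(S')$, fitting into
$$0\longrightarrow S'\otimes \mbox{Hom}(S',S)\longrightarrow S \longrightarrow T'\otimes \mbox{Hom}(S,T')^{*}\longrightarrow 0$$
by Corollary \ref{JH}. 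Taking $\mbox{H}^{0}$ gives $\mbox{h}^{0}(S)\le \mbox{h}^{0}(S')\cdot\mbox{hom}(S',S)+\mbox{h}^{0}(T')\cdot\mbox{ext}^{1}(S,T')$... but $T'$ may have negative rank, in which case $\mbox{h}^{0}(T')=0$ by $\mu$-stability and the term drops; if $T'$ has positive rank I apply the dual form of the inductive hypothesis to it. Combining with the additivity $\chi(S)=\chi(S')\cdot\mbox{hom}(S',S)+\chi(T')\cdot\mbox{ext}^{1}(S,T')$ (Riemann--Roch applied to the sequence) and the inductive bounds $\mbox{h}^{0}(S')<2\chi(S')$ and $\mbox{h}^{0}(T')\le$ (either $0$ or $<2\chi(T')$), one gets $\mbox{h}^{0}(S)<2\chi(S)$ provided the cross terms cooperate. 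The delicate point is the case $\chi(T')<0$: then the identity $\chi(S)=\chi(S')\cdot\mbox{hom}(S',S)+\chi(T')\cdot\mbox{ext}^{1}(S,T')$ shows $\chi(S)<\chi(S')\cdot\mbox{hom}(S',S)$, and since $\mbox{h}^{0}(T')=0$ we directly get $\mbox{h}^{0}(S)\le \mbox{h}^{0}(S')\cdot\mbox{hom}(S',S)<2\chi(S')\cdot\mbox{hom}(S',S)$, which is \emph{not} immediately $\le 2\chi(S)$; here I will need the finer information that $\mbox{h}^{1}(S')=0$ at that stage (i.e.\ that $S'$ is below its own Brill--Noether wall once we are between $W$ and the Brill--Noether wall of $v$), which replaces $\mbox{h}^{0}(S')<2\chi(S')$ by the equality $\mbox{h}^{0}(S')=\chi(S')$ and closes the estimate.

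I therefore expect the main obstacle to be \textbf{bookkeeping the Brill--Noether walls of the factors}: one must show that once $\sigma$ is below $W$ and above the Brill--Noether wall of $v$, the factor $S'$ with positive rank is already below its own Brill--Noether wall, so $\mbox{h}^{1}(S')=0$ and the recursion is governed by the exact value $\chi(S')$ rather than the weaker bound. This is where the hypothesis $H^2\ne 2$ is genuinely used --- it is exactly the hypothesis under which Lemma \ref{negative rank} applies, forcing $r(S')r(T')<0$, so that at every wall above the Brill--Noether wall exactly one factor has positive rank and the other contributes no sections; for $H^2=2$ this dichotomy fails and the bound $2\chi$ is not expected to hold. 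Once the wall-ordering lemma (an analogue of the nesting argument "$W_{S'}>W_{T'}$" used in the proof of Theorem \ref{global simplification}, together with Lemma \ref{negative rank}) is in place, the induction is a short computation. A final remark: the strict inequality must be tracked carefully through the induction --- it is strict in the base case (since $\chi(S)\ge 1$ so $\chi(S)<2\chi(S)$), and strictness is preserved because at least one genuine factor always appears with multiplicity $\ge 1$.
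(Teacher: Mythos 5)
There is a genuine gap in the inductive step. You correctly set up the base case (height zero $\Rightarrow$ weak Brill--Noether $\Rightarrow$ $\mbox{h}^1(S)=0$, and $H^2\ge 4$ gives $\chi(S)=r+a\ge 4$), and you correctly identify that at each wall the two factors $S',T'$ split into one positive-rank and one negative-rank piece. But your attempt to close the delicate case rests on the claim that the positive-rank factor $S'$ is ``already below its own Brill--Noether wall'' so $\mbox{h}^1(S')=0$. This is false: $S'$ is a stable object whose cohomology is an invariant of $S'$ alone (not of the stability condition $\sigma$), and $S'$ may well have positive height and $\mbox{h}^1(S')\ne 0$. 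For instance in the setting of Example~\ref{height three} the first factor $S_0\in M_H((58,75H,97))$ has height $2$, and height-$2$ bundles generically have nonzero $\mbox{h}^1$ (cf.~Example~\ref{height two}, where $\mbox{h}^1=189$). Even granting $\mbox{h}^1(S')=0$, your estimate $\mbox{h}^0(S)\le \chi(S')\cdot\mbox{hom}(S',S)$ only gives $\mbox{h}^0(S)\le \chi(S)+|\chi(T')|\cdot\mbox{ext}^1(S,T')$, and you still have to show $|\chi(T')|\cdot\mbox{ext}^1(S,T')<\chi(S)$, which is the entire content of the proposition restated and not provided.

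The paper's proof circumvents this precisely by strengthening the inductive invariant: instead of $\mbox{h}^0(S)<2\chi(S)$, one proves $\frac{\mbox{h}^0(S)}{\mbox{h}^1(S)+2}\ge 2$ and, crucially, a monotonicity statement (Lemma~\ref{ratio}) that this ratio can only increase when passing from the factor $S_0$ to $S_{-m}$ across a wall:
$$\frac{\mbox{h}^0(S_{-m})}{\mbox{h}^1(S_{-m})+2}\ge \frac{\mbox{h}^0(S_0)}{\mbox{h}^1(S_0)+2}.$$
This auxiliary lemma is proved by an intertwined induction on the degree $d$ together with the proposition itself (one proves $L_d\Rightarrow P_d$ and $P_{d-1}\Rightarrow L_d$), using the explicit filtration of Corollary~\ref{JH}, the fundamental sequence $a_n$, the negative-rank bound $\mbox{h}^1(T_1)<-2\chi(T_1)$ applied to $T_1$ (which has smaller degree), and the elementary estimate $a_m>g\, a_{m-2}$. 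That ratio argument is the missing idea in your proposal.

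One smaller correction: you attribute the hypothesis $H^2\ne 2$ to Lemma~\ref{negative rank}, but that lemma holds for all $H^2$. The hypothesis $H^2\ge 4$ is used purely numerically, in two places: in the base case to ensure $r+a\ge 2\sqrt{d^2H^2/2+1}>3$, hence $\ge 4$; and in the proof of Lemma~\ref{ratio} to push through the estimate $g>-(r_1+a_1)>\mbox{h}^1(T_1)/2$. Also, the assertion that $\mbox{h}^0(T')=0$ for negative-rank $T'$ by $\mu$-stability is incorrect --- writing $T'=F[1]$ one has $\mbox{h}^0(T')=\mbox{h}^1(F)$, which need not vanish; the paper simply drops this nonnegative term rather than claiming it is zero.
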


Given the proposition, we have an immediate corollary

\begin{corollary}\label{coarse}
  Let $S, T$ be two spherical objects as in the proposition. Then
  $$-\chi(S,T)>\mbox{h}^{1}(S)+2,$$
  $$ -\chi(S,T)> \mbox{h}^{0}(T)+2.$$
\end{corollary}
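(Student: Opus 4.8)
The plan is to reduce the statement to an elementary inequality on Mukai vectors, with the estimate of Proposition~\ref{h0} supplying the only nontrivial input. Write $v(S)=(r,dH,a)$ and $v(T)=(r',d'H,a')$. By the Riemann--Roch formula recorded in Section~\ref{section2}, $-\chi(S,T)=v(S)\cdot v(T)=dd'H^{2}-ra'-r'a$, so the task is to control the signs of the integers appearing here. First I would record: $r>0$ and $r'<0$ (the latter by Lemma~\ref{negative rank}, applied with $v(\mathcal{O}_{X}[1])\notin\mathcal{H}$, which is the case in which the coarse bound is needed); since $v(S)^{2}=v(T)^{2}=-2$ forces $2ra=d^{2}H^{2}+2>0$ and $2r'a'=d'^{2}H^{2}+2>0$, we also get $a>0$ and $a'<0$; and $d,d'\ge 1$, since $S$ and $T$ both lie in the heart $\mathcal{A}_{\epsilon}$ along $\mathbf{b}$ (so have nonnegative imaginary central charge) and neither is a summand of $\mathcal{O}_{X}[1]$ (again because $v(\mathcal{O}_{X}[1])\notin\mathcal{H}$), which excludes $d\le 0$ exactly as in Lemma~\ref{di>0}.

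Granting this, $-\chi(S,T)=dd'H^{2}+r(-a')+(-r')a$ is a sum of three positive integers; using $dd'\ge1$, $-a'\ge1$, $-r'\ge1$ together with $\chi(S)=r+a$ and $\chi(T)=r'+a'$, I would get
\[
-\chi(S,T)\ \ge\ H^{2}+r+a\ =\ H^{2}+\chi(S)\qquad\text{and}\qquad -\chi(S,T)\ \ge\ H^{2}-\chi(T).
\]
Since $H$ is ample on a K3 surface with $H^{2}\ne2$, we have $H^{2}\ge4$, hence $-\chi(S,T)\ge\chi(S)+4$ and $-\chi(S,T)\ge-\chi(T)+4$.

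It then remains to turn Proposition~\ref{h0} into bounds on $\mbox{h}^{1}(S)$ and $\mbox{h}^{0}(T)$. For $S$: it is a $\sigma$-stable bundle with $r>0$ and $c_{1}(S)\cdot H=dH^{2}>0$, so $\mbox{h}^{2}(S)=\mbox{h}^{0}(S^{*})=0$ because $\mu(S^{*})=-\mu(S)<0=\mu(\mathcal{O}_{X})$; hence $\mbox{h}^{1}(S)=\mbox{h}^{0}(S)-\chi(S)<2\chi(S)-\chi(S)=\chi(S)$ by Proposition~\ref{h0}, i.e. $\mbox{h}^{1}(S)\le\chi(S)-1$, so $-\chi(S,T)\ge\chi(S)+4>\mbox{h}^{1}(S)+2$. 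For $T$: $\mbox{Hom}(T,\mathcal{O}_{X})\cong\mbox{Ext}^{-1}_{\mathcal{D}^{b}(X)}(T,\mathcal{O}_{X}[1])=0$ since $T$ and $\mathcal{O}_{X}[1]$ both lie in the heart $\mathcal{A}_{\epsilon}$, so $\mbox{h}^{2}(T)=\mbox{hom}(T,\mathcal{O}_{X})=0$, whence $\mbox{h}^{0}(T)=\chi(T)+\mbox{h}^{1}(T)<\chi(T)-2\chi(T)=-\chi(T)$ by Proposition~\ref{h0}, i.e. $\mbox{h}^{0}(T)\le-\chi(T)-1$, so $-\chi(S,T)\ge-\chi(T)+4>\mbox{h}^{0}(T)+2$.

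The arithmetic is short once one notices the decomposition of $-\chi(S,T)$ into positive summands, so I do not expect a serious obstacle; the step I would be most careful about is the input $d,d'\ge1$, i.e. $c_{1}(S)\cdot H,\ c_{1}(T)\cdot H>0$. This is where one must use that $\mathcal{H}$ is the rank-two lattice of a genuine wall with $v(\mathcal{O}_{X}[1])\notin\mathcal{H}$ to rule out the degenerate possibilities and thereby ensure that $S$ behaves like an honest sheaf and $T$ like an honest shifted sheaf of positive twist; after that everything is formal.
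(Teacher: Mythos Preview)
Your argument is correct and matches the paper's proof essentially line for line: bound $-\chi(S,T)$ from below by $H^{2}+\chi(S)$ (resp.\ $H^{2}-\chi(T)$) via the sign constraints on the Mukai-vector entries, then invoke Proposition~\ref{h0} to convert this into the desired inequalities on $\mathrm{h}^{1}(S)$ and $\mathrm{h}^{0}(T)$. One minor simplification: the signs $r(S)>0$ and $r(T)<0$ are already part of the hypotheses of Proposition~\ref{h0} (``as in the proposition''), so the detour through Lemma~\ref{negative rank} and the condition $v(\mathcal{O}_{X}[1])\notin\mathcal{H}$ is unnecessary.
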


\begin{proof}
  Write $v(S)=(r_{0}, d_{0}H, a_{0})$ and $v(T)=(r_{1}, d_{1}H, a_{1})$. By assumption $d_{0}, d_{1}$ are positive integers. By Proposition \ref{h0}, $\mbox{h}^{1}(S)=\mbox{h}^{0}(S)-\chi(S)<2\chi(S)-\chi(S)=\chi(S)$. On the other hand
  $$-\chi(S,T)=d_{0}d_{1}H^{2}-r_{0}a_{1}-r_{1}a_{0}\geq H^{2}+r_{0}+a_{0}\geq 2+r_{0}+a_{0}=2+\chi(S) > 2+\mbox{h}^{1}(S).$$
  The proof for $T$ is similar.
\end{proof}

We only prove the proposition for $r>0$, the other case is similar. 

\begin{lemma}\label{ratio}
Let $X$ be a K3 surface with $\mbox{Pic}(X)=\mathbb{Z}H$. Let $v=(r,dH,a)\in  \mbox{H}^{*}_{alg}(X) $ with $v^{2}=-2$, $r>0$, $W$ an actual wall for $v$ that is above the Brill-Noether wall, and $S_{0}, T_{1}$ the two stable spherical objects at the wall. Let $\sigma_{+}, \sigma_{-} \in  \mbox{Stab}(X)$ be above/below but near $W$ respectively. Then
  $$\frac{\mbox{h}^{0}(S_{-m})}{\mbox{h}^{1}(S_{-m})+2}\geq \frac{\mbox{h}^{0}(S_{0})}{\mbox{h}^{1}(S_{0})+2},$$
  where $S_{-m}\in M_{+}(v)$ is constructed as in Proposition \ref{construction}.
\end{lemma}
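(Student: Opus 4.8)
The plan is to prove the inequality by induction on $m$, exploiting the structure of the spherical objects $S_{-m}$ given by Proposition \ref{construction} and the fundamental sequence bounds from Theorem \ref{numbers}. The base case $m=0$ is trivial. For the inductive step, recall from Proposition \ref{construction} that $S_{-m}$ fits into the short exact sequence
$$0 \longrightarrow S_{-m+1} \longrightarrow S_{-m+1}\otimes \mbox{Hom}(S_{-m}, S_{-m+1})^{*} \longrightarrow S_{-m-1}\cdots$$
— more precisely, I would use the Jordan-Hölder description from Corollary \ref{JH},
$$0 \longrightarrow S_{0}\otimes \mbox{Hom}(S_{0}, S_{-m}) \longrightarrow S_{-m} \longrightarrow T_{1}\otimes \mbox{Hom}(S_{-1}, S_{-m}) \longrightarrow 0,$$
together with the recursive construction relating $S_{-m}$ to $S_{-m+1}$ and $S_{-m-1}$. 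Applying $\mbox{H}^{*}(-)$ and using that $S_{0}$ is above the Brill-Noether wall (so $\mbox{H}^{2}(S_{0})=0$, and $\mbox{H}^{0}(T_{1})$, $\mbox{H}^{1}(T_{1})$ are controlled), I would extract linear relations among $\mbox{h}^{0}(S_{-m})$, $\mbox{h}^{1}(S_{-m})$ in terms of those of $S_{-m+1}$ and the dimensions $\mbox{hom}(S_{0}, S_{-m}) = a_{m}$, $\mbox{hom}(S_{-1}, S_{-m}) = a_{m-1}$ from Theorem \ref{numbers}.

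The key mechanism is that passing from $S_{-m+1}$ to $S_{-m}$ multiplies the relevant cohomological data roughly by the ratio $a_{m}/a_{m-1}$, which by Lemma \ref{decreasing} and Remark \ref{lim} is at least $\rho > 1$ (or at least the limiting ratio). Concretely, I would show that if $x_{m} = \mbox{h}^{0}(S_{-m})$ and $y_{m} = \mbox{h}^{1}(S_{-m})+2$, then the fraction $x_{m}/y_{m}$ is non-decreasing because the ``numerator growth'' dominates: the coevaluation/evaluation maps appearing are as generic as the geometry allows (this is where Theorem \ref{global simplification}-type maximal-rank statements, or the rigidity arguments in Section \ref{section5}, feed in), and the $+2$ shift in the denominator only helps the inequality since $2$ is a bounded additive term while $x_{m}, y_{m}$ grow without bound. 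A clean way to package this: prove the two-term estimate $x_{m} \geq \frac{a_{m}}{a_{m-1}} x_{m-1} - (\text{correction})$ and $y_{m} \leq \frac{a_{m}}{a_{m-1}} y_{m-1}$, where the correction term is non-negative, so that $x_{m}/y_{m} \geq x_{m-1}/y_{m-1}$.

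The main obstacle I anticipate is controlling the ranks of the connecting homomorphisms precisely enough: the inductive inequality $x_{m} \geq (a_m/a_{m-1}) x_{m-1} - \text{corr}$ requires that $H^0$ does not ``leak'' too much into $H^1$ when crossing the wall, i.e. that the relevant evaluation map $\mbox{H}^{0}(S_{0})\otimes \mbox{Hom}(S_{0}, S_{-m}) \to \mbox{H}^{0}(S_{-m})$ is surjective or nearly so. This is exactly the content flagged in Theorem \ref{asymptotic} (injectivity of $coev_j$ for $j \leq -3$, or $j\leq -1$ in the Picard rank one case), so the proof of this lemma and of Theorem \ref{asymptotic} are intertwined; I would therefore structure the argument to extract just the one-sided bound needed here (a \emph{lower} bound on $\mbox{h}^0$, equivalently an \emph{upper} bound on $\mbox{h}^1$ relative to $\chi$) directly from Proposition \ref{type2} applied to the type II rigid object arising when one quotients $S_{-m}$ by its first $\sigma_{-}$-Harder-Narasimhan factor, without needing the full sharp rank computation. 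The degree hypothesis $H^2 \neq 2$ enters through Corollary \ref{coarse}/Proposition \ref{h0} to guarantee the denominators behave, and through Lemma \ref{negative rank} to ensure $\mathcal{O}_X[1] \notin \mathcal{H}$ so that the wall structure is the expected one.
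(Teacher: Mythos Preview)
Your approach has a genuine gap, and it stems from two related issues.

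First, the circularity. You propose to control the ranks of the connecting homomorphisms via Proposition~\ref{type2}, Theorem~\ref{asymptotic}, or Corollary~\ref{coarse}. But Corollary~\ref{coarse} is a consequence of Proposition~\ref{h0}, and Theorem~\ref{asymptotic} is proved using Corollary~\ref{coarse}. In the paper, Proposition~\ref{h0} and Lemma~\ref{ratio} are proved \emph{simultaneously} by induction on the degree $d$: one shows $P_{d-1}\Rightarrow L_d$ and $L_d\Rightarrow P_d$. So you cannot invoke Proposition~\ref{h0} (or anything downstream of it) at the same degree to prove Lemma~\ref{ratio}; only the strictly-smaller-degree instance $P_{d-1}$ is available. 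Your induction on $m$ does not interact with this degree induction, so the appeal to Corollary~\ref{coarse} is circular as written.

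Second, and more importantly, you are working too hard. The paper does \emph{not} compute or bound the rank $t$ of the connecting map at all. Instead it uses the Jordan--H\"older filtration from Corollary~\ref{JH} directly,
\[
0\longrightarrow S_0^{\,a_m}\longrightarrow S_{-m}\longrightarrow T_1^{\,a_{m-2}}\longrightarrow 0,
\]
and the elementary monotonicity observation: since $\chi(S_{-m})=\mathrm{h}^0(S_{-m})-\mathrm{h}^1(S_{-m})>2$ (this is where $H^2\geq 4$ enters), the function $x\mapsto \frac{\mathrm{h}^0(S_{-m})+x}{\mathrm{h}^1(S_{-m})+2+x}$ is non-increasing, so one may simply replace $t$ by $0$ and obtain
\[
\frac{\mathrm{h}^0(S_{-m})}{\mathrm{h}^1(S_{-m})+2}\;\geq\;\frac{a_m\,\mathrm{h}^0(S_0)+a_{m-2}\,\mathrm{h}^0(T_1)}{a_m\,\mathrm{h}^1(S_0)+a_{m-2}\,\mathrm{h}^1(T_1)+2}.
\]
Now $T_1$ has strictly smaller degree than $S_{-m}$, so $P_{d-1}$ applies to $T_1$ and gives $\mathrm{h}^1(T_1)<2g$. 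Dropping the non-negative term $a_{m-2}\,\mathrm{h}^0(T_1)$ from the numerator and using this bound in the denominator reduces the claim to the purely numerical inequality $a_m\geq g\,a_{m-2}+1$, which follows from Lemma~\ref{decreasing}. No induction on $m$, no rank estimates, no type~II analysis. Your proposed two-term estimate $x_m\geq (a_m/a_{m-1})x_{m-1}-\text{corr}$ would require precisely the rank control that the monotonicity trick renders unnecessary.
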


For any positive integer $n$, denote by $P_{n}$ Proposition \ref{h0} for $d\leq n$, denote by $L_{n}$ Lemma \ref{ratio} for $d\leq n$. We will prove Proposition \ref{h0} and Lemma \ref{ratio} simultaneously by showing that $L_{d}$ implies $P_{d}$ and $P_{d-1}$ implies $L_{d}$. 

\begin{proof}[Proof of $L_{d}$ implies $P_{d}$]
 Let $S$ be as in Proposition \ref{h0}. Since we know $\mbox{h}^{0}(S)>\mbox{h}^{1}(S)$, $P_{d}$ is equivalent to $\frac{\mbox{h}^{0}(S)}{\mbox{h}^{1}(S)}>2$. Assuming $L_{d}$ is true. we prove a stronger claim that $\frac{\mbox{h}^{0}(S)}{\mbox{h}^{1}(S)+2}\geq 2$ by induction on the height (Definition \ref{ht}) of $S$.

 If $\mbox{height}(S)=0$, then $S$ satisfies weak Brill-Noether. Write $v(S)=(r,dH, a)$, we have
 $$\frac{\mbox{h}^{0}(S)}{\mbox{h}^{1}(S)+2}=\frac{\chi(S)}{2}=\frac{r+a}{2}.$$
 It suffices to prove $r+a\geq 4$. Since $H^{2}\geq 4$, we have
 $$r+a \geq 2\sqrt{ra}=2\sqrt{d^{2}H^{2}/2+1}\geq 2\sqrt{3}>3.$$
 hence $r+a\geq 4$.

 If $\mbox{height}(S)\geq 1$, let $W$ be the first wall of $S$ that is below $\sigma$. Let $S_{0}, T_{1}$ be the two stable spherical objects at $W$, and label $S=S_{-m}$ as in Proposition \ref{construction}. By definition, $\mbox{height}(S_{0})<\mbox{height}(S)$.
 Since $\mbox{height}(S)\geq 1$, we have $c_{1}(S_{0})<c_{1}(S_{-m})$. Then by $L_{d}$, we have
 $$\frac{\mbox{h}^{0}(S)}{\mbox{h}^{1}(S)+2}\geq \frac{\mbox{h}^{0}(S_{0})}{\mbox{h}^{1}(S_{0})+2}\geq 2.$$
\end{proof}

\begin{proof}[Proof of $P_{d-1}$ implies $L_{d}$]

  First note that
  $$\mbox{h}^{0}(S_{-m})-\mbox{h}^{1}(S_{-m})=\chi(S_{-m})=r+a \geq 2\sqrt{ra}=2\sqrt{d^{2}H^{2}/2+1}\geq 2,$$
  by the same argument in the proof of $P_{d}$ from $L_{d}$, equality cannot hold, hence $\mbox{h}^{0}(S_{-m})\geq \mbox{h}^{1}(S_{-m})+2$. The function
  $$\frac{\mbox{h}^{0}(S_{-m})+x}{\mbox{h}^{1}(S_{-m})+2+x}$$
  is non-increasing for positive $x$.
Now $S_{-m}$ fits into
$$0 \longrightarrow S_{0}\otimes \mbox{Hom}(S_{0},S_{-m}) \longrightarrow S_{-m} \longrightarrow T_{1}\otimes \mbox{Hom}(S_{-m},T_{1})^{*} \longrightarrow 0 .$$
By Theorem \ref{numbers}, we know $\mbox{hom}(S_{0}, S_{-m})=a_{m}$, $\mbox{hom}(S_{-m}, T_{1})=a_{m-2}$, where $a_{j}$ is the fundamental sequence of $W$ (Definition \ref{fundamental sequence}) given by
$$a_{-1}=0, a_{0}=1, a_{1}=-\chi(S_{0}, T_{1})=:g, a_{j}=ga_{j-1}-a_{j-2}.$$
Taking the long exact sequence, let $t$ be the rank of the connecting homomorphism
$$ \mbox{H}^{0}(T_{1})\otimes \mbox{Hom}(S_{-m},T_{1})^{*} \longrightarrow  \mbox{H}^{1}(S_{0})\otimes \mbox{Hom}(S_{0},S_{-m}). $$ Then we have
  $$\frac{\mbox{h}^{0}(S_{-m})}{\mbox{h}^{1}(S_{-m})+2}=
  \frac{a_{m}\mbox{h}^{0}(S_{0})+a_{m-2}\mbox{h}^{0}(T_{1})-t}{a_{m}\mbox{h}^{1}(S_{0})+a_{m-2}\mbox{h}^{1}(T_{1})+2-t}\geq
  \frac{a_{m}\mbox{h}^{0}(S_{0})+a_{m-2}\mbox{h}^{0}(T_{1})}{a_{m}\mbox{h}^{1}(S_{0})+a_{m-2}\mbox{h}^{1}(T_{1})+2}.$$
  Since $T_{1}$ has smaller degree than $S_{-m}$, $P_{d-1}$ is true for $T_{1}$. Write $v(S_{0})=(r_{0}, d_{0}H, a_{0})$ and $v(T_{1})=(r_{1}, d_{1}H, a_{1})$, we have $\mbox{h}^{1}(T_{1})<-2(r_{1}+a_{1})$ by $P_{d-1}$. On the other hand
  $$g=-\chi(S_{0}, T_{1})=d_{0}d_{1}H^{2}-r_{0}a_{1}-r_{1}a_{0}> -(r_{1}+a_{1})>\frac{\mbox{h}^{1}(T_{1})}{2}$$
  hence
  $$\frac{\mbox{h}^{0}(S_{-m})}{\mbox{h}^{1}(S_{-m})+2}\geq \frac{a_{m}\mbox{h}^{0}(S_{0})+a_{m-2}\mbox{h}^{0}(T_{1})}{a_{m}\mbox{h}^{1}(S_{0})+a_{m-2}\mbox{h}^{1}(T_{1})+2} >\frac{a_{m}\mbox{h}^{0}(S_{0})}{a_{m}\mbox{h}^{1}(S_{0})+2ga_{m-2}+2},$$
  it suffices to show $2ga_{m-2}+2\leq 2a_{m}$. Since $a_{i}$ are integers, it suffices to show
  $$a_{m}-ga_{m-2}=ga_{m-1}-a_{m-2}-ga_{m-2}= ga_{m-1}-(g+1)a_{m-2}>0,$$
namely $\frac{a_{m-1}}{a_{m-2}}>\frac{g+1}{g}$.
If $m=1$, $2ga_{-1}+2=2< 2g=2a_{1}$. If $m\geq 2$, note that $g\geq 3$, by Lemma \ref{decreasing}, we have
$$\frac{a_{m-1}}{a_{m-2}}> \rho(W)>2>\frac{g+1}{g},$$
where $\rho(W)$ is the bigger solution of the equation $x+\frac{1}{x}=g$ (Remark \ref{lim}).
\end{proof}

\subsection{Asymptotic Result}

In this subsection we prove Theorem \ref{asymptotic}. First, we need the following simple observation.

\begin{lemma} \label{nondeg}
Let $X=\mathbb{P}^{m}\times \mathbb{P}^{n}\subset \mathbb{P}^{N}$ be the Segre embedding, $H \subset \mathbb{P}^{N}$ a hyperplane. Then $H\cap X$ is non-degenerate in $H$ set theoretically.
\end{lemma}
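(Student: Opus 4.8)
The plan is to unwind the statement into elementary projective geometry and reduce it to a claim about bilinear forms. Recall that the Segre embedding $X = \mathbb{P}^m \times \mathbb{P}^n \hookrightarrow \mathbb{P}^N$ with $N = (m+1)(n+1)-1$ identifies $\mathbb{P}^N$ with $\mathbb{P}(\mathrm{Mat}_{(m+1)\times(n+1)})$, and a hyperplane $H$ corresponds (up to scalar) to a nonzero matrix $A \in \mathrm{Mat}_{(m+1)\times(n+1)}$ via the pairing $\langle A, M\rangle = \mathrm{tr}(A^t M)$; the point $([x],[y])$ with $x \in \mathbb{C}^{m+1}, y\in \mathbb{C}^{n+1}$ maps to the rank-one matrix $x y^t$, so it lies on $H$ precisely when $x^t A y = 0$. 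So $H \cap X$ is the variety of pairs $([x],[y])$ with $x^t A y = 0$, and I must show that the linear span of $H\cap X$ inside $H \cong \mathbb{P}^{N-1}$ is all of $H$.

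First I would set up the span computation. The linear span of a set of points in $\mathbb{P}^N$, represented by vectors in $\mathbb{C}^{N+1} = \mathrm{Mat}_{(m+1)\times(n+1)}$, is the projectivization of the subspace they generate. So the statement ``$H\cap X$ is non-degenerate in $H$'' is equivalent to: the rank-one matrices $x y^t$ with $x^t A y = 0$ span the whole hyperplane $A^\perp = \{M : \mathrm{tr}(A^t M) = 0\}$ in $\mathrm{Mat}_{(m+1)\times(n+1)}$. Equivalently, any matrix $B$ orthogonal to all such $xy^t$ must be a scalar multiple of $A$. The hypothesis $\mathrm{tr}(B^t x y^t) = y^t B^t x = x^t B y = 0$ for all $x, y$ with $x^t A y = 0$ says that the bilinear form $B$ vanishes on the common zero locus of the bilinear form $A$.

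The key step is then the linear-algebra lemma: if two bilinear forms $A, B$ on $\mathbb{C}^{m+1}\times\mathbb{C}^{n+1}$ satisfy ``$x^t A y = 0 \Rightarrow x^t B y = 0$'', then $B$ is a scalar multiple of $A$. I would prove this by fixing $x$ and letting $y$ vary: for fixed $x \ne 0$, the hypothesis says the linear functional $y \mapsto x^t B y$ vanishes on the kernel of the linear functional $y \mapsto x^t A y$; if $A^t x \ne 0$ this forces $B^t x = \lambda(x) A^t x$ for some scalar $\lambda(x)$, and if $A^t x = 0$ then (since $A \ne 0$, choosing $x$ generically) a short continuity/density argument shows $B^t x = 0$ there too. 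Then one checks $\lambda(x)$ is constant: for $x, x'$ with $A^t x, A^t x'$ linearly independent, apply the relation to $x + x'$ to get $\lambda(x) = \lambda(x') = \lambda(x+x')$; the locus where $A^t x, A^t x'$ are independent is nonempty (again because $A\ne 0$) and connected, so $\lambda$ is globally constant, giving $B = \lambda A$.

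I expect the main obstacle to be the edge cases in the linear-algebra step — handling vectors $x$ in the left kernel of $A$ and pairs of vectors where $A^t x, A^t x'$ fail to be independent — and making the ``generic $x$'' arguments airtight over $\mathbb{C}$. These are routine but need care; alternatively one can avoid them entirely by working first over the Zariski-dense open set where $A$ restricted to suitable coordinates is invertible, proving $B = \lambda A$ there, and then concluding by continuity. Once the lemma is in hand, the proposition follows immediately by the translation in the previous paragraph, so I would present the lemma as the heart of the proof and keep the Segre dictionary brief.
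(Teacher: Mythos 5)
Your approach is correct and genuinely different from the paper's. The paper's proof proceeds in two steps: first, a cohomological argument (valid for any non-degenerate projective variety $X\subset\mathbb{P}^N$) that the map $\mathrm{H}^0(\mathcal{O}_{\mathbb{P}^N}(1))\to\mathrm{H}^0(\mathcal{O}_{\bar X}(1))$ has one-dimensional kernel spanned by the equation of $H$, giving \emph{scheme-theoretic} non-degeneracy of $\bar X=H\cap X$ in $H$; and second, a special argument showing $\bar X$ is reduced, using that $\bar X$ has class $(1,1)$ in $\mathrm{Pic}(\mathbb{P}^m\times\mathbb{P}^n)$ to rule out generic non-reducedness, and then analyzing the tangent and non-tangent cases. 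You bypass all of this and prove set-theoretic non-degeneracy directly by unwinding the Segre dictionary: a hyperplane $H$ is $\{x^tAy=0\}$ for a nonzero matrix $A$, and the span of the rank-one matrices $xy^t$ lying on $H$ equals $A^\perp$ if and only if any bilinear form $B$ vanishing on the common zeros of $A$ is a scalar multiple of $A$. That linear-algebra statement is correct, and your $\lambda(x)$-argument proves it. What your route buys is a completely elementary proof avoiding the cohomology exact sequence and the ``scheme versus set'' bookkeeping; what it loses is the generality of the paper's first step, which applies to any non-degenerate variety.

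One small gap you should fix: your argument that $\lambda$ is constant assumes the existence of $x,x'$ with $A^tx,A^tx'$ linearly independent, and you attribute nonemptiness of that locus to $A\neq 0$. That is false when $\operatorname{rank}(A)=1$: there $A^tx$ always lies on a fixed line, so no such pair exists. The rank-one case needs its own (easy) argument. For instance, if $A=uv^t$, then for any $x$ with $u^tx=0$ one has $x^tAy=0$ for all $y$, hence $x^tBy=0$ for all $y$, so $B^tx=0$; this forces $B^t$ to factor through $u$, i.e.\ $B=uw^t$, and the symmetric argument in $y$ gives $B=u'v^t$, whence $B$ is proportional to $uv^t=A$. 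With that patch, the proof is complete.
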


\begin{proof}
  We first prove that for any non-degenerate subvariety $X \subset \mathbb{P}^{N}$, $\bar{X}=H\cap X\subset H $ is non-degenerate in $H$ scheme theoretically.
  The non-degeneracy of $X$ in $\mathbb{P}^{N}$ is equivalent to injectivity of the map $ \mbox{H}^{0}(\mathcal{O}_{\mathbb{P}^{N}}(H)) \longrightarrow  \mbox{H}^{0}(\mathcal{O}_{X}(H))$. Denote $\bar{X}=H \cap X$. Then we have
  $$0 \longrightarrow  \mbox{H}^{0}(\mathcal{O}_{X}) \longrightarrow  \mbox{H}^{0}(\mathcal{O}_{X}(H)) \longrightarrow  \mbox{H}^{0}(\mathcal{O}_{\bar{X}}(H)) .$$
  Since $X$ is closed projective, $ \mbox{H}^{0}(\mathcal{O}_{X})=\mathbb{C}$. Together with the injectivity of $ \mbox{H}^{0}(\mathcal{O}_{\mathbb{P}^{N}}(H)) \longrightarrow  \mbox{H}^{0}(\mathcal{O}_{X}(H))$ shows that the composition map
  $$ \mbox{H}^{0}(\mathcal{O}_{\mathbb{P}^{N}}(H))\longrightarrow  \mbox{H}^{0}(\mathcal{O}_{\bar{X}}(H))$$
  has one dimensional kernel, namely $H$. Hence $\bar{X}$ is non-degenerate in $H$.

  Now let $X=\mathbb{P}^{m}\times \mathbb{P}^{n}$ be the Segre embedding. First observe that $\bar{X}$ is generically reduced: suppose not, note that $\bar{X}\subset \mathbb{P}^{m}\times \mathbb{P}^{n}$ has class $(1,1)$ in $\mbox{Pic}(\mathbb{P}^{m}\times \mathbb{P}^{n})$, if $\bar{X}$ is not generically reduced, then $\bar{X}_{red}$ has class $(0,1)$ or $(1,0)$, say $(0,1)$. Then $\bar{X}$ has class at least $(0,2)$, which is impossible. Now if $H$ is not tangent to $X$, then $\bar{X}$ is reduced. If $H$ is tangent to $X$, by generic reducedness of $\bar{X}_{red}$, $\bar{X}$ has to be $\mathbb{P}^{m}\times \mathbb{P}^{n-1}\cup \mathbb{P}^{m-1}\times \mathbb{P}^{n}$, which is also reduced. Hence $\bar{X}_{red}=\bar{X}$, the lemma is proved.  
\end{proof}

As corollaries, we get the following linear algebra lemmas

\begin{lemma}\label{pure tensor}
Let $V_{1}, V_{2}$ be linear spaces, and $m: V_{1}\otimes V_{2} \longrightarrow \mathbb{C}$ be a bilnear form. If $\mbox{dim}V_{1}+\mbox{dim}V_{2}\geq 3$, then every element in $\mbox{ker}(m)$ can be written as a sum of pure tensors in $\mbox{ker}(m)$. 
\end{lemma}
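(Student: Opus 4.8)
## Proof proposal for Lemma \ref{pure tensor}

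The plan is to reduce the statement to the geometric fact in Lemma \ref{nondeg}. Write $n_1 = \dim V_1$, $n_2 = \dim V_2$. A nonzero bilinear form $m\colon V_1\otimes V_2 \to \mathbb{C}$ corresponds to a nonzero element of $V_1^*\otimes V_2^*$; the pure tensors in $V_1\otimes V_2$ are exactly the points of the affine cone over the Segre variety $\mathbb{P}(V_1)\times \mathbb{P}(V_2) \subset \mathbb{P}(V_1\otimes V_2)$, and $\ker(m)$ is the affine hyperplane (through the origin) cut out by $m$ viewed as a linear functional on $V_1\otimes V_2$. So the claim ``every element of $\ker(m)$ is a sum of pure tensors lying in $\ker(m)$'' is precisely the statement that the hyperplane section $H\cap X$ of the Segre variety $X = \mathbb{P}(V_1)\times\mathbb{P}(V_2)$ linearly spans $H$, i.e. is non-degenerate in $H$.

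First I would dispose of the trivial and degenerate cases: if $m=0$ the statement is vacuous since $\ker(m) = V_1\otimes V_2$ is spanned by pure tensors, and if one of the $V_i$ is one-dimensional then every element of $V_1\otimes V_2$ is already a pure tensor, so again there is nothing to prove. Hence we may assume $m\neq 0$ and $n_1, n_2 \geq 1$ with $n_1 + n_2 \geq 3$, so that at least one factor has dimension $\geq 2$; after renaming, say $n_2 \geq 2$. Then $\mathbb{P}(V_1)\times \mathbb{P}(V_2)$ under the Segre embedding is a non-degenerate projective variety in $\mathbb{P}(V_1\otimes V_2)$, and $\ker(m)$ corresponds to a genuine hyperplane $H \subset \mathbb{P}(V_1\otimes V_2)$.

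Next I would apply Lemma \ref{nondeg}: the set $H\cap X$ is non-degenerate in $H$, meaning it is not contained in any proper linear subspace of $H$. In affine terms, the cone over $H\cap X$ — which is exactly the set of pure tensors contained in $\ker(m)$ — spans $\ker(m)$ as a vector space. Therefore every element of $\ker(m)$ is a finite sum of pure tensors each lying in $\ker(m)$, which is the desired conclusion. The only mild point to be careful about is the translation between the scheme-theoretic/set-theoretic non-degeneracy statement of Lemma \ref{nondeg} and the linear-algebra span statement: non-degeneracy as a subscheme (or even just as a set of closed points, which is what Lemma \ref{nondeg} provides) already forces the span to be all of $H$, since a proper linear subspace of $H$ containing $H\cap X$ set-theoretically would witness degeneracy. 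I do not anticipate a serious obstacle here; the content is entirely carried by Lemma \ref{nondeg}, and the remaining work is the bookkeeping of small-dimension cases and the dictionary between the projective and affine pictures.
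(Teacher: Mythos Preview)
Your proposal is correct and follows essentially the same approach as the paper: projectivize, identify pure tensors with the Segre variety, and invoke Lemma \ref{nondeg} to conclude that the hyperplane section spans $H$. You are in fact a bit more careful than the paper about the degenerate cases ($m=0$ or $\dim V_i = 1$), which the paper handles only implicitly.
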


\begin{proof}
The $H=\mathbb{P}\mbox{ker}(m)\subset \mathbb{P}(V_{1}\otimes V_{2})$ is a hyperplane in $\mathbb{P}(V_{1}\otimes V_{2})$. The space of pure tensors is naturally identified with the Segre embedding $X=\mathbb{P}V_{1} \times \mathbb{P}V_{2}\subset \mathbb{P}(V_{1}\otimes V_{2})$, whose dimension is $\mbox{dim}V_{1}+\mbox{dim}V_{2}-2$. If $\mbox{dim}V_{1}+\mbox{dim}V_{2}-2 \geq 1$, then $H\cap X$ is non-empty. By Lemma \ref{nondeg}, $H\cap X$ is non-degnerate in $H$ set theoretically, namely every element in $\mbox{ker}(m)$ is a sum of pure tensors in $\mbox{ker}(m)$.
\end{proof}

\begin{lemma}\label{surjectivity2}
  Let $V_{1}, V_{2}, \cdots , V_{n}, \cdots $ be a sequence of linear spaces over a field of characteristic 0, such that for $n\leq m$ there are arbitrary bilinear forms $m_{n-1,n}: V_{n-1}\otimes V_{n} \longrightarrow W_{n-1,n}$ for some non-zero linear spaces $W_{n-1,n}$, and for $n\geq m+1$ there are perfect pairings $m_{n-1,n}: V_{n-1}\otimes V_{n} \longrightarrow \mathbb{C}$.
   Let
  $$K_{n}:= \bigcap_{j=2}^{n}\mbox{ker}(m_{j-1, j})\subset \bigotimes_{j=1}^{n}V_{j}$$
  where by abuse of notation $m_{j-1,j}$ means $\mbox{id}\otimes \cdots \otimes \mbox{id} \otimes m_{j-1, j} \otimes \mbox{id} \otimes \cdots \otimes \mbox{id}$. If $\mbox{dim}V_{n}\geq 2$ for $n\geq m$, then for $n\geq m+1$ there is a commutative diagram
  \[
\begin{tikzcd}
  K_{n+1}\otimes V_{n+2} \arrow[r, hookrightarrow] \arrow[d, twoheadrightarrow, "m_{n+1, n+2}"] & \bigotimes_{j=1}^{n+2}V_{j}\arrow[d, twoheadrightarrow, "m_{n+1, n+2}"]\\
  K_{n} \arrow[r, hookrightarrow] & (\bigotimes_{j=1}^{n}V_{j})
\end{tikzcd}
\]
where the vertical maps are surjective.
\end{lemma}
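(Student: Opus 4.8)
The plan is to induct on $n$, with the base case $n = m+1$ handled directly, and to build the commutative square by exhibiting the vertical map $m_{n+1,n+2}$ on $K_{n+1}\otimes V_{n+2}$ as the restriction of the contraction $m_{n+1,n+2}$ on $\bigotimes_{j=1}^{n+2}V_j$. The only two things to check are: (i) the restriction actually lands in $K_n$ (i.e. the square commutes), and (ii) the restricted map is \emph{surjective} onto $K_n$. Commutativity is formal: contracting the $(n+1,n+2)$ slots commutes with applying any $\mathrm{id}\otimes\cdots\otimes m_{j-1,j}\otimes\cdots\otimes\mathrm{id}$ for $j\le n$ since these act on disjoint tensor factors, so an element of $K_{n+1}\otimes V_{n+2}$ — which by definition lies in $\bigcap_{j=2}^{n+1}\ker(m_{j-1,j})$ tensored with $V_{n+2}$ — maps into $\bigcap_{j=2}^{n}\ker(m_{j-1,j}) = K_n$. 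So the whole content is surjectivity.

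For surjectivity I would argue as follows. Take $\xi \in K_n \subset \bigotimes_{j=1}^n V_j$. First lift $\xi$ to an element $\tilde\xi \in \bigotimes_{j=1}^{n+2} V_j$ with $m_{n+1,n+2}(\tilde\xi) = \xi$: since $m_{n+1,n+2}: V_{n+1}\otimes V_{n+2}\to\mathbb{C}$ is a perfect pairing, pick any $\eta \in V_{n+1}\otimes V_{n+2}$ with $m_{n+1,n+2}(\eta)=1$ and set $\tilde\xi = \xi\otimes\eta$. The problem is that $\tilde\xi$ need not lie in $K_{n+1}\otimes V_{n+2}$: it automatically lies in $\bigcap_{j=2}^{n}\ker(m_{j-1,j})$ (the conditions involving only the first $n$ slots), but the condition $\mathrm{ker}(m_{n,n+1})$ involving slots $n$ and $n+1$ may fail. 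Here I would use the freedom in choosing $\eta$ together with Lemma~\ref{pure tensor}: because $\dim V_{n+1}, \dim V_{n+2} \ge 2$, the hyperplane $\ker(m_{n,n+1})$ — viewed appropriately — meets the Segre locus and the kernel conditions can be solved factor-by-factor. Concretely, I expect one reduces to the case where $\xi$ itself is a pure tensor $\xi = x_1\otimes\cdots\otimes x_n$ (by linearity, since $K_n$ is spanned by such once we know the relevant Segre intersections are nondegenerate via Lemma~\ref{nondeg}), and then one must choose $\eta = \sum y_k \otimes z_k \in V_{n+1}\otimes V_{n+2}$ so that simultaneously $m_{n+1,n+2}(\eta)=1$ and $m_{n,n+1}(x_n\otimes -)$ applied to the $V_{n+1}$-part of $\eta$ vanishes in the correct combined sense. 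Since $m_{n,n+1}$ is a perfect pairing and $\dim V_{n+1}\ge 2$, the linear functional $m_{n,n+1}(x_n\otimes -)$ on $V_{n+1}$ has a kernel of dimension $\ge 1$ (in fact this is where $\dim V_{n+1}\ge 2$ is essential), so one can choose the $y_k$ inside that kernel while keeping $m_{n+1,n+2}(\eta)\ne 0$; rescale. This produces $\tilde\xi\in K_{n+1}\otimes V_{n+2}$ mapping to $\xi$.

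For the induction to close, I need to know $K_{n+1}\otimes V_{n+2}$ really is $\bigcap_{j=2}^{n+1}\ker(m_{j-1,j})$ inside $\bigotimes_{j=1}^{n+2}V_j$ — i.e. that the last kernel condition $\ker(m_{n+1,n+2})$ is automatically absent from the left column — which is just unwinding the definition of $K_{n+1}$ (it only imposes conditions on slots $1,\dots,n+1$). I would also want to note that the perfect-pairing hypothesis for $j \ge m+1$ is exactly what lets me produce sections $\eta$ of $m_{j-1,j}$, and the $\dim V_n \ge 2$ hypothesis for $n\ge m$ is what makes the simultaneous choice above possible and is what invokes Lemmas~\ref{nondeg} and~\ref{pure tensor}.

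\textbf{Main obstacle.} The genuinely delicate point is the simultaneous choice of the lift $\eta$: ensuring $m_{n+1,n+2}(\eta)\ne 0$ while forcing enough vanishing to stay in $K_{n+1}\otimes V_{n+2}$. This is where the hypotheses $\dim V_n\ge 2$ and the perfect-pairing structure are all used at once, and it is the step where a careless argument would silently assume transversality of the various kernel conditions. I expect the cleanest route is to first reduce to $\xi$ a pure tensor via Lemma~\ref{pure tensor} (so that the condition $m_{n,n+1}(x_n\otimes-)=0$ becomes a single linear condition on $V_{n+1}$), and then the choice of $\eta$ is elementary linear algebra over a field of characteristic $0$.
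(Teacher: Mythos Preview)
Your outline is right that commutativity is formal and the content is surjectivity; your proposed lift $\xi\mapsto\xi\otimes\eta$ is also the right shape. The gap is in the reduction step. You claim that $K_n$ is spanned by pure tensors $x_1\otimes\cdots\otimes x_n$ lying in $K_n$, appealing to Lemma~\ref{nondeg}/\ref{pure tensor}. But those lemmas only say that a \emph{single} kernel $\ker(m_{j-1,j})\subset V_{j-1}\otimes V_j$ of a scalar pairing is spanned by rank-one tensors; they say nothing about the simultaneous intersection $K_n=\bigcap_j\ker(m_{j-1,j})$. In fact this fails in general here: for $j\le m$ the forms $m_{j-1,j}:V_{j-1}\otimes V_j\to W_{j-1,j}$ are arbitrary linear maps to arbitrary targets, and one can choose (say) $m_{1,2}$ so that $\ker(m_{1,2})$ is a line spanned by a rank-two tensor. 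Then $K_n$ contains no nonzero pure tensors at all, for any $n$, while $K_n$ itself can be nonzero. So the pure-tensor reduction is not available, and without it your lift collapses: if the last slot of $\xi$ spans all of $V_n$, then forcing $m_{n,n+1}(x_n\otimes y)=0$ for every $x_n$ occurring forces $y=0$ by perfectness.

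The paper's argument avoids this by only straightening the \emph{last two} slots. Since $n\ge m+1$, the form $m_{n-1,n}$ is a scalar perfect pairing, so Lemma~\ref{pure tensor} gives a basis $\{u^{n-1}_{i_{n-1}}\otimes u^n_{i_n}\}$ of $\ker(m_{n-1,n})$ consisting of pure tensors; rewrite $\xi=\sum b_I\, v^1_{i_1}\otimes\cdots\otimes v^{n-2}_{i_{n-2}}\otimes u^{n-1}_{i_{n-1}}\otimes u^n_{i_n}$. The earlier kernel conditions $m_{j-1,j}(\xi)=0$ for $j\le n-1$ are basis-independent, so they persist. Now pick a fixed-point-free permutation $\tau$ of the index set for $V_n$ (this is where $\dim V_n\ge 2$ is used) and append to each term the dual vector $(u^n_{\tau(i_n)})^*\in V_{n+1}$: the $m_{n-1,n}$ condition holds term-by-term by construction, the $m_{n,n+1}$ condition holds because $\tau(i_n)\ne i_n$, and the conditions for $j\le n-2$ are untouched. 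Finally tensor with a suitable element of $V_{n+2}$ and contract. The point is that you never need $\xi$ globally pure --- only pure in the two slots where you are actively manipulating, and that much is guaranteed by Lemma~\ref{pure tensor} precisely because those slots sit in the perfect-pairing range $n\ge m+1$.
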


\begin{proof}
  Commutativity is clear. For $n\geq m$, the image of the composition map
  $$K_{n+1}\otimes V_{n+2} \hookrightarrow \bigotimes_{j=1}^{n+2}V_{j} \twoheadrightarrow \bigotimes_{j=1}^{n}V_{j}$$
  is contained in $K_{n}$. Hence it suffices to check that for $n\geq m$, the left vertical map is surjective. For any $1\leq j\leq n$, let $\{v^{j}_{i_{j}}\}_{1\leq i_{j}\leq \mbox{dim}V_{j}}$ be a basis of $V_{j}$. Let
  $$\eta = \sum_{i_{1}, \cdots , i_{n}} a_{i_{1}, \cdots , i_{n}}v^{1}_{i_{1}}\otimes \cdots \otimes v^{n}_{i_{n}} \in K_{n}\subset \bigotimes_{j=1}^{n}V_{j}$$
  be any element in $K_{n}$. We need to find an element in $\tilde{\eta}\in K_{n+1}\otimes V_{n+2}$ such that $m_{n+1, n+2}(\tilde{\eta})=\eta$.
  \\
  The condition that $\eta\in K_{n}$ spells out
  $$m_{j,j+1}(\eta)=\sum_{\substack{i_{k} \\k\neq j, j+1 \\ 1\leq k \leq n}}\sum_{i_{j}, i_{j+1}}a_{i_{1}, \cdots i_{n}}m_{j, j+1}(v^{j}_{i_{j}}\otimes v^{j+1}_{i_{j+1}}) v^{1}_{i_{1}}\otimes \cdots v^{j-1}_{i_{j-1}}\otimes v^{j+2}_{i_{j+2}}\otimes \cdots v^{n}_{i_{n}}=0.$$
  Since $v^{1}_{i_{1}}\otimes \cdots v^{j-1}_{i_{j-1}}\otimes v^{j+2}_{i_{j+2}}\otimes \cdots v^{n}_{i_{n}}$ form a basis for $V_{1}\otimes \cdots \otimes V_{j-1}\otimes V_{j+2}\otimes \cdots \otimes V_{n}$, we know that
  $$\sum_{i_{j}, i_{j+1}}a_{i_{1}, \cdots i_{n}}m_{j, j+1}(v^{j}_{i_{j}}\otimes v^{j+1}_{i_{j+1}})=0$$
  for any $i_{1}, i_{2}, \cdots , i_{j-1}, i_{j+2}, \cdots , i_{n}$. We take $j=n-1$. Since $n\geq m+1$, by assumption $m_{n-1,n}:V_{n-1}\otimes V_{n} \longrightarrow \mathbb{C}$ is a perfect pairing. By Lemma \ref{pure tensor}, there is a basis of $\mbox{ker}(m_{n-1,n})$ that consists of pure tensors, say $\{u^{n-1}_{i_{n-1}}\otimes u^{n}_{i_{n}}\}$. Since
$$\sum_{i_{j}, i_{j+1}}a_{i_{1}, \cdots i_{n}}v^{j}_{i_{j}}\otimes v^{j+1}_{i_{j+1}}\in \mbox{ker}(m_{n-1,n}),$$
it can be written as a linear combination of this new basis. By extending $\{u^{n-1}_{i_{n-1}}\otimes u^{n}_{i_{n}}\}$ to a basis $\{u^{n-1}_{i_{n-1}}\otimes u^{n}_{i_{n}}\}_{1\leq i_{j}\leq \mbox{dim}V_{j}}$, we may rewrite
  $$\eta = \sum_{i_{1}, \cdots , i_{n}} b_{i_{1}, \cdots , i_{n}}v^{1}_{i_{1}}\otimes \cdots \otimes v^{n-2}_{i_{n-2}}\otimes u^{n-1}_{i_{n-1}}\otimes u^{n}_{i_{n}},$$
  where $u^{n-1}_{i_{n-1}}\otimes u^{n}_{i_{n}}\in \mbox{ker}(m_{n-1,n})$. By the same argument as above, we have
  \begin{equation}\label{2}
    \sum_{i_{j}, i_{j+1}}b_{i_{1}, \cdots i_{n}}m_{j, j+1}(v^{j}_{i_{j}}\otimes v^{j+1}_{i_{j+1}})=0
  \end{equation}
  for all $1\leq j\leq n-2$.
  \\
  Since $\mbox{dim}V_{n}\geq 2$, let $\tau$ be any permutation on $\{1, 2, \cdots , \mbox{dim}V_{n} \}$ without fixed point.
  Let
  $$\widetilde{\eta}=\frac{1}{\mbox{dim}V_{n}} \sum_{i_{1}, \cdots , i_{n}} b_{i_{1}, \cdots , i_{n}} v^{1}_{i_{1}}\otimes \cdots \otimes v^{n-2}_{i_{n-2}}\otimes u^{n-1}_{i_{n-1}}\otimes u^{n}_{i_{n}}\otimes (u^{n}_{\tau(i_{n})})^{*},$$
  where $(u^{n}_{\tau(i_{n})})^{*}\in V_{n+1}$ are the dual elements of $u^{n}_{\tau(i_{n})}$ the the identification $V_{n+1}=V_{n}^{*}$ via the perfect pairing $m_{n,n+1}$.
  By (\ref{2}), $m_{j, j+1}(\widetilde{\eta})=0$ for $1\leq j \leq n-2$. since $u^{n-1}_{i_{n-1}}\otimes u^{n}_{i_{n}}\in \mbox{ker}(m_{n-1,n})$, we also have $m_{n-1,n}(\widetilde{\eta})=0$. Since $\tau$ has no fixed point, we have
  $$m_{n,n+1}(u^{n}_{i_{n}}\otimes (u^{n}_{\tau(i_{n})})^{*}=0$$
    for any $i_{n}$, we have $\widetilde{\eta}\in \mbox{ker}(m_{n,n+1})$. Hence we showed $\widetilde{\eta}\in K_{n+1}$. Finally, we have
    $$m_{n+1, n+2}(\widetilde{\eta} \otimes (\sum_{i_{n}}(u^{n}_{i_{n}})^{**}))= \eta, $$
   where $ (u^{n}_{i_{n}})^{**} \in V_{n+2}$ are dual elements of $(u^{n}_{i_{n}})^{*}\in V_{n+1}$ under the identification $V_{n+2}\cong V_{n+1}^{*}$ via the perfect pairing $m_{n+1, n+2}$. 

\end{proof}

\begin{proof}[Proof of Theorem \ref{asymptotic}]
  We prove the claims for $ev_{i}$, $coev_{j}$ are similar.
  
  Let $V_{1}= \mbox{H}^{0}(T_{1})$, $V_{n}=\mbox{Hom}(T_{n},T_{n-1})^{*}$ for $n\geq 2$. We define $m_{1,2}$ as the connecting homomorphism
  $$m_{1,2}:  \mbox{H}^{0}(T_{1})\otimes \mbox{Hom}(T_{2}, T_{1})^{*}\cong  \mbox{H}^{0}(T_{1})\otimes \mbox{Ext}^{1}(T_{1}, S_{0}) \longrightarrow  \mbox{H}^{1}(S_{0}).$$
  For $n\geq 3$, let
  $$m_{n-1,n}: V_{n-1}\otimes V_{n} \longrightarrow \mathbb{C}$$
  be the perfect pairing induced by the dual of $\delta_{n,n-1}: \mathbb{C} \rightarrow \mbox{Hom}(T_{n},T_{n-1})\otimes \mbox{Hom}(T_{n-1}, T_{n-2})$ which was introduced in Lemma \ref{identification}.
  Let $W_{1,2}= \mbox{H}^{1}(S_{0})$. By Proposition \ref{description2}, we have the natural identification
  $$\mbox{Hom}(T_{n},T_{1})^{*}=\bigcap_{j=3}^{n}\mbox{ker}(m_{n-1,n}).$$
  Since we have the exact sequence
  $$ \mbox{H}^{0}(T_{n})\longrightarrow  \mbox{H}^{0}(T_{1})\otimes \mbox{Hom}(T_{n},T_{1})^{*} \longrightarrow  \mbox{H}^{1}(S_{0})\otimes \mbox{Hom}(S_{0}, T_{n}),$$
the image of $ \mbox{H}^{0}(T_{n}) \longrightarrow  \mbox{H}^{0}(T_{1})\otimes \mbox{Hom}(T_{n},T_{1})^{*}$ is hence identified with $K_{n}$ using the notation in Lemma \ref{surjectivity2}. By setting $m=2$ in Lemma \ref{surjectivity2}, we have
  $$K_{n+1}\otimes V_{n+2} \longrightarrow K_{n}$$
  is surjective for $n\geq 3$. Hence in the following commutative diagram
  \\
  \begin{adjustbox}{scale=0.8, center}
  \begin{tikzcd}
    0 \arrow[r] &  \mbox{H}^{0}(S_{0})\otimes \mbox{Hom}(S_{0},T_{n+1})\otimes \mbox{Hom}(T_{n+1}, T_{n}) \arrow[r]\arrow[d] &  \mbox{H}^{0}(T_{n+1})\otimes \mbox{Hom}(T_{n+1}, T_{n}) \arrow[r]\arrow[d] &  K_{n+1}\otimes V_{n+2} \arrow[r]\arrow[d] & 0\\
    0 \arrow[r] &  \mbox{H}^{0}(S_{0})\otimes \mbox{Hom}(S_{0}, T_{n}) \arrow[r]&  \mbox{H}^{0}(T_{n}) \arrow[r] & K_{n} \arrow[r] & 0
  \end{tikzcd}
  \end{adjustbox}
the third vertical map is surjective for $n\geq 3$. The first vertical map is also surjective, hence by the Five Lemma the middle vertical map is also surjective. We proved that
$$ev_{i}:  \mbox{H}^{0}(T_{i})\otimes \mbox{Hom}(T_{i},T_{i-1}) \longrightarrow  \mbox{H}^{0}(T_{i-1})$$
is surjective for $i\geq 4$, which is the first part of Theorem \ref{asymptotic}.

  Now we prove the second part. Assume $\mbox{Pic}(X)=\mathbb{Z}H$. First we consider $ev_{2}$.
  Consider the exact sequence
  $$ \mbox{H}^{0}(T_{2}) \longrightarrow  \mbox{H}^{0}(T_{1})\otimes \mbox{Ext}^{1}(T_{1}, S_{0}) \longrightarrow  \mbox{H}^{1}(S_{0}). $$
  For any $f\in  \mbox{H}^{0}(T_{1})$, consider the map
  $$ev_{1}(f\otimes -): \mbox{Ext}^{1}(T_{1},S_{0}) \longrightarrow  \mbox{H}^{1}(S_{0}).$$
By Corollary \ref{coarse}, $\mbox{ext}^{1}(T_{1}, S_{0})=q>\mbox{h}^{1}(S_{0})+2>\mbox{h}^{1}(S_{0})$, hence the map has a non-trivial kernel that contains some non-zero $g\in \mbox{Ext}^{1}(T_{1},S_{0})$. Hence there exists an
  $h\in  \mbox{H}^{0}(T_{2})$ that maps to $f\otimes g$ under the coevaluation map. Take $h\otimes g^{*}\in  \mbox{H}^{0}(T_{2})\otimes \mbox{Hom}(T_{2},T_{1})$, we have
  $$ev_{2}(h\otimes g^{*})=f \cdot \langle g, g^{*}\rangle=f.$$
  We proved the surjectivity of $ev_{2}$.

  Finally we consider $ev_{3}$. It suffices to show that 
  $$K_{3}\otimes V_{4} \overset{m_{3,4}}{\longrightarrow} K_{2}$$
   is surjective. Take any element
   $$\eta=\sum_{i_{1}, i_{2}} a_{i_{1}, i_{2}}v^{1}_{i_{1}}\otimes v^{2}_{i_{2}}\in K_{2},$$
   where $\{v^{j}_{i_{j}}\}_{1\leq i_{j}\leq \mbox{dim}V_{j}}$ is a basis of $V_{j}$ for $j=1,2$. 
  Consider
  $$\widetilde{\eta}=\eta\otimes \gamma\in K_{2}\otimes V_{3} $$
  for some non-zero $\gamma \in \mbox{Hom}(T_{3},T_{2})$. 
  Then $\widetilde{\eta}\in K_{3}$ if the following equations are satified for all $1\leq i_{1} \leq \mbox{h}^{1}(T_{1})$:
  $$\sum_{i_{2}}a_{i_{1},i_{2}}m_{2,3}(v^{2}_{i_{2}}\otimes \gamma)=0.$$
  The number of equations is $\mbox{h}^{0}(T_{1})$, but $\mbox{hom}(T_{3}, T_{2})=q> \mbox{h}^{0}(T_{1})$ by Corollary \ref{coarse}, hence there exists a non-zero solution $\gamma \in \mbox{Hom}(T_{3},T_{2})$. Consider
  $$\widetilde{\eta}\otimes \gamma^{*}\in K_{3}\otimes V_{4},$$
  where $\gamma^{*}\in \mbox{Hom}(T_{4}, T_{3})$ is the dual element of $\gamma$ under the identification $\mbox{Hom}(T_{4}, T_{3})=\mbox{Hom}(T_{3}, T_{2})^{*}$ via the perfect pairing $m_{3,4}$. Then we have $m_{3,4}(\tilde{\eta}\otimes \gamma^{*})=\eta$. The theorem is proved.

\end{proof}

\begin{remark}\label{badev1}
In Theorem \ref{asymptotic}, we said nothing about $ev_{1}$ (resp. $coev_{0}$). They are in general not surjective (resp. injective). We will exhibit one such in Example \ref{badev1ex}. 
\end{remark}

\section{Examples}\label{section9} 

In this section we show many explicit examples. For simplicity and without affecting the gist of the idea, in the following we let $(X,H)$ be a polarized K3 surface with $\mbox{Pic}(X)=\mathbb{Z}H$ and $H^{2}=2$, unless otherwise stated.

First, we consider the following example where there are no walls between the Gieseker chamber and the Brill-Noether wall.

\begin{example}[Fibonacci bundles, \cite{CNY21} Example 6.3]\label{height one}
  Let $s=(1,H,2)$, $t=(-1,0,-1)$. Let $S_{0}\in M_{H}(s)$, $T_{1}=\mathcal{O}_{X}[1]$. Let $T_{i}, S_{j}$ be constructed as in Proposition \ref{construction}. By Proposition \ref{criterion}, $S_{j}$ are $H$-Gieseker stable vector bundles. By Theorem \ref{numbers}, we have
  \begin{gather*}
    \mbox{h}^{0}(S_{j})= f_{-2j+3},~ \mbox{h}^{1}(S_{j})=f_{-2j-1}.
  \end{gather*}
  Here $f_{n}$ are the Fibonacci numbers:
  $$f_{0}=f_{1}=1,~ f_{n+1}=f_{n}+f_{n-1} \mbox{ for }n\geq 1.$$
  Explicitly, $v(S_{j})=(f_{-2j}, f_{-2j+1}H, f_{-2j+2}), j\leq 0$. The vector bundles $S_{j}$ are called the \emph{Fibonacci bundles}. 
\end{example}

Next, we consider a slightly complicated example: a height 2 (Definition \ref{ht}) spherical vector bundle.

\begin{example}[A height 2 bundle]\label{height two}
  Let $v=(305, 477H, 746)$ and $E\in M_{H}(v)$. Then the next wall (see Section \ref{section3}) $W$ for $E$ has stable spherical objects
  $$S_{0}\in M_{H}((2, 3H, 5)), ~T_{1}\in M_{H}((-5,12H,-29)). $$
  Using the notation of Proposition \ref{construction}, $E=S_{-1}$. By Definition \ref{ht}, $S_{0}$ and $T_{1}$ are of height one, hence $E$ has height two.

  We have $g(W)=-\chi(S_{0}, T_{1})=155$ (Definition \ref{fundamental sequence}). By Corollary \ref{JH}, the Harder-Narasimhan filtration of $E$ under $W$ is
  $$0 \longrightarrow S_{0}^{155} \longrightarrow E \longrightarrow T_{1} \longrightarrow 0. $$
  By Theorem \ref{global simplification}, the connecting homomorphism $ \mbox{H}^{0}(T_{1}) \longrightarrow  \mbox{H}^{1}(S_{0}^{155})$ has maximal possible rank. By Example \ref{height one}, $\mbox{h}^{0}(S_{0})=8, \mbox{h}^{1}(S_{0})=1$. By a similar argument as Example \ref{height one}, $ \mbox{h}^{0}(T_{1})=1$. Hence the connecting homomorphism is injective, we have
  $$\mbox{h}^{0}(E)=155\cdot \mbox{h}^{0}(S_{0})=1240, ~\mbox{h}^{1}(E)=\mbox{h}^{0}(E)-\chi(E)= 189. $$
\end{example}

The spherical vector bundle in the next example has height greater than two, so we cannot use Theorem \ref{global simplification}. We recall the notion of \emph{shape} (Definition \ref{shape}): If
  $$0=E_{0}\subset E_{1}\subset \cdots \subset E_{m}=E$$
  is some filtration of $E$ such that $G_{i}=E_{i}/E_{i-1}\cong F_{i}^{\oplus n_{i}}$, then the shape of the filtration is
  $$(n_{1}F_{1}, n_{2}F_{2}, \cdots, n_{m}F_{m}). $$

\begin{example}[A height 3 bundle]\label{height three}
  Let $v=(1340641, 1733695H, 2241986)$ and $E\in M_{H}(v)$. One may check that $E$ has height 3, hence in the following we invoke the global reduction (Algorithm \ref{global reduction}).

  The next wall $W$ for $E$ has stable spherical objects
  $$S_{0}\in M_{H}((58, 75H, 97)), ~T_{1}\in M_{H}((-29, 70H, -169)), $$
  and $g(W)=-\chi(S_{0}, T_{1})=23115$. Using the notation of Proposition \ref{construction}, $E=S_{-1}$. The Harder-Narasimhan filtration of $E$ under $W$ is
  $$E^{\bullet}_{W}=(g(W)\cdot S_{0}, T_{1}). $$

  The next wall for $E^{\bullet}_{W}$ is an actual wall of $S_{0}$, we denote it by $W^{S}$. It has two stable spherical objects
  $$S_{0}^{S}\in M_{H}((1,H,2)), ~ T_{1}^{S}\in M_{H}((-5, 12H, -29)),$$
  and $g(W^{S})=-\chi(S_{0}^{S}, T_{1}^{S})=63$. Using the notation of Proposition \ref{construction}, $S_{0}=S_{-1}^{S}$. The Harder-Narasimhan filtration of $E$ under $W^{S}$ has shape
  $$E^{\bullet}_{W^{S}}=(g(W)g(W^{S})\cdot S_{0}^{S}, g(W)\cdot T_{1}^{S}, T_{1}). $$
 
  The next wall for $E^{\bullet}_{W^{S}}$ is an actual wall for $T_{1}$, but it is also the numerical wall defined by $T_{1}$ and $T_{1}^{S}$. We denote this wall by $W^{T}$. Then the two stable spherical objects of $W^{T}$ are
  $$S_{0}^{T}\in M_{H}((5, 2H, 1)), ~ T_{1}^{T}=\mathcal{O}_{X}[1]\in M_{H}((-1, 0, -1)), $$
  and $g(W^{T})=-\chi(S_{0}^{T}, T_{1}^{T})=6$. Under the notations in Proposition \ref{construction}, $T_{1}^{S}=T_{3}^{T}$ and $T_{1}=T_{4}^{T}$. We need to find the Harder-Narasimhan filtration $E^{\bullet}_{W^{T}}$ under $W^{T}$. Let $\sigma_{+}, \sigma_{-}$ be stability conditions above or below $W^{T}$ respectively, and let
  $$F^{\bullet}_{W^{T}}=(g(W)T_{1}^{S}, T_{1})= (g(W)T_{3}^{T}, T_{4}^{T}).$$
  The question is to find the $\sigma_{-}$-Harder-Narasimhan filtration of $F$. By Theorem \ref{local simplification}, we need to know which inequality in Lemma \ref{numerical} holds. By Theorem \ref{numbers}, we have
\begin{gather*}
  \mbox{hom}(S_{0}^{T}, T_{4}^{T})=a_{2}(W^{T})=35, \mbox{ext}^{1}(S_{0}^{T}, (T_{3}^{T})^{\oplus g(W)})=g(W)\cdot a_{3}(W^{T})=4715460>35.
\end{gather*}
Hence by Proposition \ref{injectivity}, $\mbox{Hom}(S_{0}^{T},T_{4}) \longrightarrow \mbox{Ext}^{1}(S_{0}^{T}, (T_{3}^{T})^{\oplus g(W)})$ is injective, the first $\sigma_{-}$-Harder-Narasimhan factor for $F$ has shape
$$F_{1}=g(W)\cdot \mbox{hom}(S_{0}^{T}, T_{3}^{T})\cdot S_{0}^{T}=g(W)a_{1}(W^{T})\cdot S_{0}^{T}. $$
By Theorem \ref{local simplification}, $Q_{1}=F/F_{1}$ is of type II (Proposition \ref{type2}). The $\sigma_{+}$-Harder-Narasimhan filtration of $Q_{1}$ is
$$(Q_{1})_{+}^{\bullet}=(g(W)\mbox{hom}(T_{3}^{T}, T_{1}^{T})\cdot T_{1}^{T}, T_{4}^{T})=(g(W)a_{2}(W^{T})\cdot T_{1}^{T}, T_{4}^{T}).$$
Applying $\mbox{Hom}(-,T_{1}^{T})$, we have
$$0 \longrightarrow \mbox{Hom}(T_{4}^{T}, T_{1}^{T}) \longrightarrow \mbox{Hom}(Q_{1},T_{1}^{T}) \longrightarrow \mbox{Hom}((T_{1}^{T})^{\oplus g(W)a_{2}(W^{T})},T_{1}^{T}) \overset{\delta}{\longrightarrow} \mbox{Ext}^{1}(T_{4}^{T},T_{1}^{T}). $$
By Proposition \ref{type2}, the connecting homomorphism $\delta$ has maximal possible rank. By Theorem \ref{numbers}, we have
\begin{gather*}
  \mbox{hom}(T_{4}^{T}, T_{1}^{T})=a_{3}(W^{T})=204, \mbox{ext}^{1}(T_{4}^{T},T_{1}^{T})=a_{1}(W^{T})=6, \\
  \mbox{Hom}((T_{1}^{T})^{\oplus g(W)a_{2}(W)},T_{1}^{T})=g(W)a_{2}(W^{T})=809025.
\end{gather*}
Hence the last $\sigma_{-}$-Harder-Narasimhan factor of $Q_{1}$ is
$$F/F_{3}=(a_{3}(W^{T})+g(W)a_{2}(W^{T})-a_{1}(W^{T}))\cdot T_{1}^{T}= 809223 \cdot T_{1}^{T}. $$
Therefore the $\sigma_{+}$-Harder-Narasimhan filtration of $F_{3}/F_{1}$ has shape
$$(F_{3}/F_{1})_{+}^{\bullet}=(6T_{1}^{T}, \mbox{hom}(S_{0}^{T},T_{4}^{T})\cdot S_{0}^{T})=(6T_{1}^{T}, 35S_{0}^{T}). $$
By Proposition \ref{type1}, the $\sigma_{-}$-Harder-Narasimhan filtration of $F_{3}/F_{1}$ consists of only one factor $(S_{-2}^{T})_{-}\in M_{\sigma_{-}}(s_{-2}^{T})$, where $s_{-2}^{T}=(169, 70H, 29)$. Hence the $\sigma_{-}$-Harder-Narasimhan filtration of $F$ has shape
$$F_{-}^{\bullet}=(g(W)a_{1}(W^{T})\cdot S_{0}, (S_{-2}^{T})_{-}, 809223\cdot T_{1}^{T}).$$

The $\sigma_{-}$-Harder-Narasimhan filtration of $E$ has shape
$$E_{-}^{\bullet}=(g(W)g(W^{S})\cdot S_{0}^{S}, g(W)a_{1}(W^{T})\cdot S_{0}^{T}, (S_{-2}^{T})_{-}, 809223\cdot \mathcal{O}_{X}[1]).$$
The next wall for $E_{-}^{\bullet}$ is below the Brill-Noether wall of any factor, hence the global reduction (Algorithm \ref{global reduction}) terminates. By Corollary \ref{output}, we have
$$\mbox{h}^{1}(E)=809223, \mbox{h}^{0}(E)=\chi(E)+\mbox{h}^{1}(E)=4391850.$$

\end{example}

As mentioned in Section \ref{section6}, there are indeed spherical vector bundles whose cohomology cannot be computed by Theorem \ref{local simplification}. In the next example we exhibit one such.

\begin{example}[Theorem \ref{local simplification} fails]\label{3step}
  Let $v=(42687466,66760513H,104409245)$ and $E\in M_{H}(v)$. We run the global reduction (Algorithm \ref{global reduction}).

  The next wall $W$ for $E$ has stable spherical objects
  $$S_{0}\in M_{H}((305, 477H, 746)), ~T_{1}\in M_{H}((-29, 70H, -169)), $$
  and $g(W)=-\chi(S_{0}, T_{1})=139959$. Using the notation of Proposition \ref{construction}, $E=S_{-1}$. The Harder-Narasimhan filtration of $E$ under $W$ has shape
  $$E^{\bullet}_{W}=(g(W)\cdot S_{0}, T_{1}). $$

  The next wall for $E^{\bullet}_{W}$ is an actual wall of $S_{0}$, we denote it by $W^{S}$. It has two stable spherical objects
  $$S_{0}^{S}\in M_{H}((2,3H,5)), ~ T_{1}^{S}\in M_{H}((-5, 12H, -29)),$$
  and $g(W^{S})=-\chi(S_{0}^{S}, T_{1}^{S})=155$. Using the notation of Proposition \ref{construction}, $S_{0}=S_{-1}^{S}$. The Harder-Narasimhan filtration of $E$ under $W^{S}$ has shape
  $$E^{\bullet}_{W^{S}}=(g(W)g(W^{S})\cdot S_{0}^{S}, g(W)\cdot T_{1}^{S}, T_{1}). $$

 The next wall for $E^{\bullet}_{W_{S}}$ is the Brill-Noether wall of $S_{0}^{S}$, we denote it by $W^{SS}$. It has two stable spherical objects
 $$S_{0}^{SS}\in M_{H}((1,H,2)), ~ T_{1}^{SS}=\mathcal{O}_{X}[1]\in M_{H}((-1,0,-1)), $$
 and $g(W^{SS})=-\chi(S_{0}^{SS}, T_{1}^{SS})=3$. Using the notation of Proposition \ref{construction}, $S_{0}^{S}=S_{-1}^{SS}$. By Example \ref{height one}, the Harder-Narasimhan filtration of $E$ under $W^{SS}$ has shape
$$E^{\bullet}_{W^{SS}}=(g(W)g(W^{S})g(W^{SS})\cdot S_{0}^{SS}, g(W)g(W^{S})\cdot T_{1}^{SS}, g(W)\cdot T_{1}^{S}, T_{1}).$$

  The next wall for $E^{\bullet}_{W^{SS}}$ is an actual wall for $T_{1}$, but it is also the numerical wall defined by $T_{1}$ and $T_{1}^{S}$. Furthermore, $T_{1}^{SS}=\mathcal{O}_{X}[1]$ is also on the wall. We denote this wall by $W^{T}$. Then the two stable spherical objects of $W^{T}$ are
  $$S_{0}^{T}\in M_{H}((5, 2H, 1)), ~ T_{1}^{T}=\mathcal{O}_{X}[1]\in M_{H}((-1, 0, -1)), $$
  and $g(W^{T})=-\chi(S_{0}^{T}, T_{1}^{T})=6$. Using the notation of Proposition \ref{construction}, $T_{1}^{SS}=T_{1}^{T}$, $T_{1}^{S}=T_{3}^{T}$, and $T_{1}=T_{4}^{T}$. The filtration
  $$Q_{1}^{\bullet}=(g(W)g(W^{S})\cdot T_{1}^{T}, g(W)\cdot T_{3}^{T}, T_{4}^{T})$$
  is not two-step (Proposition \ref{injectivity}), hence Theorem \ref{local simplification} cannot be applied.

\end{example}

The next two examples use Theorem \ref{weak BN} to test weak Brill-Noether.

\begin{example}[weak Brill-Noether fails]\label{wBNfail}
Let $v=(10, 13H, 17)$. Under the notations in Theorem \ref{weak BN}, one checks that $v_{1}=(1,H,2)$ satisfies the conditions. In this case, $\frac{a_{1}d-ad_{1}}{r_{1}d-rd_{1}}=3 $, hence $y\geq 3>1$. By Theorem \ref{weak BN}, weak Brill-Noether fails for $E\in M_{H}(v)$. In fact, we have $\mbox{h}^{0}(E)=33$ and $\mbox{h}^{1}(E)=6$.
\end{example}

\begin{example}[weak Brill-Noether holds]\label{wBNhold}
Let $v=(195562, 59615H , 18173)$. Under the notations in Theorem \ref{weak BN}, one may check that those $v_{1}$ satisfying the conditions are $(10, 3H, 1)$, $(269, 82H, 1)$, and $(7253, 2211H, 674)$. The corresponding $y$ values are all $\frac{7}{13} $. Since $\frac{7}{13}<1 $, by Theorem \ref{weak BN}, weak Brill-Noether holds for $E\in M_{H}(v)$: we have $\mbox{h}^{0}(E)=\chi(E)=213735$ and $\mbox{h}^{1}(E)=0$.
\end{example}

Note that the relation of weak Brill-Noether and the complexity of the Mukai vector is delicate. Weak Brill-Noether can fail for a small Mukai vector such as Example \ref{wBNfail}, or even some smaller ones such as $(2, 3H, 5)$. Weak Brill-Noether can also hold for a large Mukai vector, such as Example \ref{wBNhold}.

As mentioned in Remark \ref{badev1}, next we give an example where $ev_{1}$ fails to be surjective.

\begin{example}[Non-surjective $ev_{1}$]\label{badev1ex} 
   Let $v=(-37666, 22095H, -12961)$ and $E\in M_{H}(v)$. Then the next wall $W$ for $E$ has stable spherical objects
  $$S_{0}\in M_{H}((12,5H,29)), ~T_{1}\in M_{H}((-29, 17H, -10)),$$
  and $g(W)=-\chi(S_{0}, T_{1})=1299$. Under the notations in Proposition \ref{construction}, $E=T_{2}$. The Harder-Narasimhan filtration of $E$ under $W$ is
  $$0 \longrightarrow S_{0} \longrightarrow E \longrightarrow T_{1}\otimes \mbox{Ext}^{1}(T_{1},S_{0}) \longrightarrow 0 .$$
  Under the notations in Theorem \ref{asymptotic}, $ev_{1}$ is the connecting homomorphsim
  $$ev_{1}:  \mbox{H}^{0}(T_{1})\otimes \mbox{Ext}^{1}(T_{1}, S_{0}) \longrightarrow  \mbox{H}^{1}(S_{0}).$$
  The height of $S_{0}$ and $T_{1}$ are 1 and 2, respectively. By similar arguments as Example \ref{height one} and Example \ref{height two}, we have
  $$\mbox{h}^{0}(T_{1})=6, ~\mbox{h}^{1}(S_{0})=1.$$
  Hence the maximal possible rank of $ev_{1}$ is 1. However, we will show that $ev_{1}$ is in fact zero, which is equivalent to the adjoint map $f: \mbox{Ext}^{1}(T_{1}, S_{0}) \longrightarrow  \mbox{H}^{0}(T_{1})^{*} \otimes \mbox{H}^{1}(S_{0})$ being zero. 

  The next wall $W^{T}$ for $T_{1}$ has stable spherical objects
  $$S_{0}^{T}\in M_{H}((1, 2H, 5)), ~T_{1}^{T}\in M_{H}((-2, H, -1)),$$
  and the next wall for $S_{0}$ has stable spherical objects
  $$S_{0}^{S}\in M_{H}((1, 2H, 5)), ~T_{i}^{S}=\mathcal{O}_{X}[1].$$
  Hence $S_{0}^{T}=S_{0}^{S}$. Under the notations in Proposition \ref{construction}, $S_{0}=S_{-1}^{S}$. Hence by Theorem \ref{numbers}, we have
  $$\mbox{Ext}^{1}(S_{0}^{T}, S_{0})=\mbox{Ext}^{1}(S_{0}^{S}, S_{-1}^{S})=0. $$
  Also note that $ \mbox{H}^{0}(T_{1}^{T})=0$ by a direct computation.
  
  By Corollary \ref{JH}, the map $f$ fits into a commutative diagram
  \\
\begin{adjustbox}{scale=0.9, center}
  \begin{tikzcd}
      \mbox{Hom}(T_{1}, T_{1}^{T})\otimes \mbox{Ext}^{1}(T_{1}^{T}, S_{0}) \arrow[r]\arrow[d] & \mbox{Ext}^{1}(T_{1}, S_{0}) \arrow[r]\arrow[d, "f"] & \mbox{Hom}(S_{0}^{T}, T_{1})^{*}\otimes \mbox{Ext}^{1}(S_{0}^{T}, S_{0})=0 \arrow[d] \\
      0=\mbox{Hom}(T_{1}, T_{1}^{T})\otimes  \mbox{H}^{0}(T_{1}^{T})^{*}\otimes  \mbox{H}^{1}(S_{0}) \arrow[r] &  \mbox{H}^{0}(T_{1})^{*}\otimes  \mbox{H}^{1}(S_{0}) \arrow[r] & \mbox{Hom}(S_{0}^{T}, T_{1})^{*}\otimes  \mbox{H}^{0}(S_{0}^{T})\otimes  \mbox{H}^{1}( S_{0}).
    \end{tikzcd}
  \end{adjustbox}
  Hence $f=0$, $ev_{1}$ is not surjective.
\end{example}

The following example shows that in Proposition \ref{lattice}, item (2) can happen.

\begin{example}[Negative definite rank 2 lattice]\label{ndlattice}
  Let $(X,H)$ be a K3 surface that contains a line $L$. Then $L$ has no deformation, $\mbox{dim}|L|=0$. Consider the following short exact sequence in the heart $\mathcal{A}_{\mathbf{b}}$ (Section \ref{section3}):
  $$0 \longrightarrow \mathcal{O}_{X}(L) \longrightarrow \mathcal{O}_{L}(-2)  \longrightarrow \mathcal{O}_{X}[1]  \longrightarrow 0. $$
  By Proposition \ref{criterion}, the sequence defines an actual wall for $\mathcal{O}_{L}(-2)$, whose stable spherical objects are $\mathcal{O}_{X}(L)$ and $\mathcal{O}_{X}[1]$. We have $\chi(\mathcal{O}_{X}(L), \mathcal{O}_{X}[1])=-1$, hence the associated lattice is negative definite.
\end{example}

\nocite{*}
\section{References}

\bibliographystyle{alpha}
\renewcommand{\section}[2]{} 
\bibliography{spherical}
\end{document}